\documentclass[11pt,reqno]{amsart}
\usepackage[a4paper,left=3.5cm,right=3.5cm,top=3cm,bottom=3cm]{geometry}
\usepackage{amsfonts}
\usepackage{amscd}
\usepackage{amssymb}
\usepackage{amsmath} 
\usepackage{hyperref}
\usepackage{mathrsfs}
\usepackage{dsfont}
\usepackage{enumerate}
\usepackage{xcolor}
\usepackage{float} 
\usepackage[pdftex]{graphicx}
\allowdisplaybreaks
\usepackage{mathtools}
\mathtoolsset{showonlyrefs}
\usepackage[most]{tcolorbox}
\usepackage[font=small,labelfont=bf]{caption}
\usepackage{soul}
\usepackage{multicol}

\theoremstyle{plain}
\newtheorem{theorem}{Theorem}[section]

\newtheorem{corollary}[theorem]{Corollary}
\newtheorem{definition}[theorem]{Definition}
\newtheorem{lemma}[theorem]{Lemma}
\newtheorem*{problem*}{Problem}

\newtheorem{proposition}[theorem]{Proposition}

\newtheorem*{theorem*}{Theorem}
\usepackage{mathtools}
\usepackage{amssymb}
\usepackage{graphicx}

\newcommand{\dotLongleftarrow}{%
  \mathrel{%
    \tikz[baseline=-0.55ex]{
      \draw[line width=0.35pt,
            dotted,
            line cap=round]
        (0.20,0.05) -- (0.65,0.05);
      \draw[line width=0.35pt,
            dotted,
            line cap=round]
        (0.20,-0.05) -- (0.65,-0.05);

      \draw[line width=0.35pt,
            line cap=round,
            line join=round]
        (0.22,0.11) -- (0.1,0) -- (0.22,-0.11);
    }%
  }%
}
\newcommand{\dotlongleftarrow}{%
  \mathrel{%
    \tikz[baseline=-0.55ex]{
      \draw[line width=0.35pt,
            dotted,
            line cap=round]
        (1.50,0.05) -- (2.15,0.05);
      \draw[line width=0.35pt,
            dotted,
            line cap=round]
        (1.50,-0.05) -- (2.15,-0.05);

      \draw[line width=0.35pt,
            line cap=round,
            line join=round]
        (1.52,0.11) -- (1.4,0) -- (1.52,-0.11);
    }%
  }%
}
\newcommand{\dotUparrow}{\rotatebox{-90}{$\dotlongleftarrow$}}
\newcommand{\dotDownarrow}{\rotatebox{90}{$\dotlongleftarrow$}}

\theoremstyle{definition}
\newtheorem{example}[theorem]{Example}

\newtheorem{remark}[theorem]{Remark}

\newtheorem{question}{Question}

\newcommand{\rr}[1]{\mathbb{R}^{#1}}
\newcommand{\inn}{\textnormal{in}\ }
\newcommand{\onn}{\textnormal{on}\ }
\newcommand{\norm}[2]{\left|\left| #1 \right| \right|_{#2}}

\newcommand{\pder}[2]{\frac{\partial #1}{\partial #2}}
\newcommand{\dint}{\textnormal{d}}
\newcommand{\gbox}[1]{\begin{tcolorbox}[breakable, enhanced]#1 \end{tcolorbox}}

\newcommand{\andd}{\textnormal{and}}

\newcommand{\dvol}{\ \textnormal{dv}}

\usepackage[english]{babel}

\usepackage{amsmath}
\usepackage{graphicx}

\title[]{$L^p$-Liouville properties and realizations of Schr\"odinger operators on incomplete Riemannian manifolds}
\author{Andrea Bisterzo, Salvatore Lincastri and Giona Veronelli}

\address{Andrea Bisterzo
  \newline \indent Centro di Ricerca Matematica Ennio De Giorgi
  \newline \indent
  Scuola Normale Superiore di Pisa
\newline\indent Piazza dei Cavalieri 3, 56126, Pisa, Italy.}
\email{andrea.bisterzo@sns.it}

\address{Salvatore Lincastri
  \newline \indent Dipartimento di Matematica e Applicazioni
  \newline \indent
  Università degli Studi di Milano-Bicocca
\newline\indent Via Roberto Cozzi 55, 20125, Milano, Italy.}
\email{s.lincastri@campus.unimib.it}

\address{Giona Veronelli
  \newline \indent Dipartimento di Matematica e Applicazioni
  \newline \indent
  Università degli Studi di Milano-Bicocca
\newline\indent Via Roberto Cozzi 55, 20125, Milano, Italy.}
\email{giona.veronelli@unimib.it}

\begin{document}

\begin{abstract}
The aim of the paper is to highlight and standardize in a self-contained framework the connections between some properties associated with the Laplace-Beltrami operator on possibly incomplete Riemannian manifolds. We present both new and well-known characterizations of the positivity-preserving property, essential self-adjointness, and Liouville-type results for $L^2$-functions, using the latter to establish a clear hierarchy among these properties.

Furthermore, we prove the stability of the essential self-adjointness of the Laplace-Beltrami operator on incomplete Riemannian manifolds under the removal of closed (possibly non-compact) subsets from a complete manifold, subject to suitable conditions on their (generalized) Hausdorff measure.

With appropriate modifications, this framework is adapted throughout the paper to the general $L^p$-case for $p\in (1,+\infty)$.
\end{abstract}
\maketitle

\noindent {\footnotesize {\bf Keywords.} $L^p$-Liouville properties; incomplete manfiolds; subharmonic functions; essential self-adjointness of Schroedinger operators; $L^p$-semigroups.}

\medskip 

\noindent {\footnotesize {\bf 2020 MSC classification.}
58J05, 35P05, 31C05, 31C12, 47B25, 47D03, 35B53.}

\section{Introduction}

Liouville-type theorems are among the most basic manifestations of rigidity in elliptic partial differential equations. In their classical form, they assert that a solution of a homogeneous elliptic equation satisfying a suitable global growth or integrability condition must be trivial. For the Laplace operator on Euclidean space, the underlying mechanism is remarkably simple. If \(u\geq 0\) is subharmonic on \(\mathbb{R}^n\) and belongs to \(L^p(\mathbb{R}^n)\), \(1\leq p<+\infty\), the mean value inequality and Hölder's inequality give
\begin{align}
    u(x)
 \leq \frac{1}{|B_r(x)|}\int_{B_r(x)}u
 \leq |B_r|^{-1/p}\|u\|_{L^p(\mathbb{R}^n)},
\end{align}
and letting \(r\to+\infty\) yields \(u(x)\leq0\), and thus \(u\equiv0\). In particular, every \(L^p\)-harmonic function on \(\mathbb{R}^n\) vanishes. Thus, in the Euclidean setting, global \(L^p\)-integrability by itself provides a very strong rigidity condition. The same result, i.e. the constancy of nonnegative subharmonic functions, holds on more general $geodesically$ $complete$ Riemannian manifolds: indeed, S.T. Yau proved (\cite{yau1976some}) the $L^{p}$-Liouville property for non-negative, smooth subharmonic functions for every $p\in(1,+\infty)$ (see also \cite{LiSchoen84}, where the property for $p\in(0,1]$ is also considered under additional geometric assumptions on the manifold, \cite{PigolaRS-Vanishing} for analogous results in the weighted setting and the more recent \cite{PVV} for the case of distributional subharmonic functions).

The situation becomes substantially subtler as soon as the underlying space is no longer complete: removing even a very small set may create singular harmonic or subharmonic functions that are globally integrable. The fundamental solution of the Laplacian already provides elementary examples: for instance, suitable truncations of the Green function yield nonconstant, nonnegative \(L^2\)-subharmonic functions on punctured Euclidean spaces, while combinations of Green functions with opposite poles can produce nontrivial \(L^2\)-harmonic functions. This naturally leads to the following question.
\begin{question}
Given a possibly incomplete Riemannian manifold \((M,g)\), which geometric or analytic properties prevent the existence of nontrivial \(L^p\)-harmonic or nonnegative \(L^p\)-subharmonic distributions?
\end{question}

One of our main motivations for studying this question is the connection between \(L^p\)-Liouville phenomena and uniqueness properties for realizations of the Laplacian and Schr\"odinger operators, a theory deeply rooted in non-relativistic quantum mechanics and in von Neumann's mathematical formulation of quantum mechanics; we refer to \cite[pp. 642-643]{BMS} for a nice introduction to the physical origin of the problem.

\medskip

Here and throughout the paper, unless otherwise specified, \(M\) is a smooth, connected Riemannian manifold without boundary, not necessarily geodesically complete. The Liouville properties are formulated in terms of the Laplace--Beltrami operator
$
\Delta=\operatorname{div}\nabla\cdot
$
with the sign convention for which \(-\Delta\) is nonnegative. For \(p\in(1,+\infty)\), we shall mainly consider the following two Liouville properties:
\begin{align}
    (L^p\text{-Liou}_H)&: & u\in L^p(M),\quad \Delta u=0
\quad\Longrightarrow\quad
u\ \text{is constant},\\
(L^p\text{-Liou}_{SH})&: &0\leq u\in L^p(M),\quad \Delta u\geq0
\quad\Longrightarrow\quad
u\ \text{is constant}.
\end{align}
We shall also consider their shifted counterparts
\((L^p\text{-Liou}_{\lambda-H})\) and
\((L^p\text{-Liou}_{\lambda-SH})\), obtained by replacing \(\Delta\) with \(\Delta-\lambda\), where \(\lambda\) is constant.

It is not difficult to show that these Liouville properties form a hierarchy of progressively weaker rigidity statements: for any $\lambda>0$
\begin{align}
(L^p\text{-Liou}_{SH})
\Longrightarrow
(L^p\text{-Liou}_{\lambda-SH})
\Longrightarrow
(L^p\text{-Liou}_{\lambda-H})
\Longrightarrow
(L^p\text{-Liou}_H).
\end{align}
\noindent It is particularly interesting to understand how the operator-theoretic properties alluded to above fit into the above chain of implications. Let us first restrict to the functionally simpler Hilbertian case \(p=2\). A standard criterion for essential self-adjointness shows that the essential self-adjointness of
\begin{equation*}
    -\Delta\big|_{C^\infty_c(M)}
\end{equation*}
in \(L^2(M)\), which we shall denote by \((ESA)\), is equivalent to the property
\begin{equation*}
    (L^2\text{-Liou}_{\lambda-H})
\end{equation*}
for any \(\lambda>0\). Essential self-adjointness of the Laplacian on (possibly incomplete) manifolds has by now been extensively studied; see, for instance, \cite{strichartz1983analysis} for the complete case, \cite{BGS} for the complete case with boundary, and \cite{DeVerd,CdVT10,BMS,hinz2023removable,masammune1999essential, PRS18} and the references therein for the incomplete case.

For the Laplacian, one does not need the full strength of the distributional \(L^2_{\lambda-SH}\)-Liouville property in order to obtain essential self-adjointness: it is enough to rule out suitable $L^2$-subsolutions enjoying additional Sobolev regularity. The situation changes when one considers a semibounded Schr\"odinger operator
\begin{align}
-\Delta+V,
\qquad
0\leq V\in L^2_{\mathrm{loc}}(M).
\end{align}
In this case the natural deficiency equation only gives, a priori, distributional solutions, and the positive and negative parts have to be treated at this low level of regularity. Consequently, the full \(L^2_{\lambda-SH}\)-Liouville property for distributional \(L^2\)-subsolutions becomes relevant.

This approach goes back, in different forms, to Kato and was later developed in the geometric setting in \cite{kato1972schrodinger,BMS}. It is usually formulated through a closely related property, the so-called \(L^2\)-positivity preservation property:
\begin{align}
    (L^2\text{-PP}):
\qquad
u\in L^2(M),\quad (-\Delta+1)u\geq0
\quad\Longrightarrow\quad
u\geq0.
\end{align}
A simple but useful observation is that
\begin{align}
    (L^2\text{-PP})
\quad\Longrightarrow\quad
(L^2\text{-Liou}_{1-SH}).
\end{align}
Indeed, if \(0\leq u\in L^2(M)\) satisfies \(\Delta u\geq u\), then
\begin{align}
    (-\Delta+1)(-u)\geq0,
\end{align}
and positivity preservation gives \(-u\geq0\), hence \(u\equiv0\).
Conversely, the distributional Brezis--Kato inequality shows that
\begin{align}
    (L^2\text{-Liou}_{1-SH})
\quad\Longrightarrow\quad
(L^2\text{-PP}),
\end{align}
so that the two properties are in fact equivalent. 

The question of establishing positivity preservation on complete manifolds turned out to be surprisingly nontrivial. In particular, Braverman, Milatovic and Shubin conjectured that every geodesically complete Riemannian manifold should satisfy the \(L^2\)-positivity preservation property. This problem, usually referred to as the BMS conjecture, motivated a substantial amount of work and was only recently settled; see \cite{BMS,BS18,guneysu2016sequences, guneysu2017covariant, gu-BMS,MV24,PVV}. One of the key difficulties is that the relevant subsolutions are merely distributional. The solution relies on a regularity theory for nonnegative subharmonic distributions, based in particular on smooth monotone approximation and local Sobolev estimates; see \cite{PVV} (moreover, we refer to \cite{GPSV24} for an extension to non-smooth Dirichlet spaces and \cite{bisterzo2024loc,alias2024growth} for the positivity preservation for locally integrable distributions satisfying suitable growth conditions).

A further observation is that, in the $L^2$ setting, positivity preservation is actually equivalent to the unshifted Liouville property for nonnegative subharmonic distributions:
\begin{align}
(L^2\text{-PP})
\quad\Longleftrightarrow\quad
(L^2\text{-Liou}_{SH}).
\end{align}
Thus, in the Hilbertian setting, positivity preservation and the \(L^2\)-Liouville property for nonnegative subharmonic distributions are two equivalent formulations of the same rigidity phenomenon. We can summarize the relations alluded to so far in the following scheme
\
\begin{center}

\begin{theorem}\label{thm_main_L^2} Let $(M,g)$ be a (possibly incomplete) Riemannian manifold. Then, the following implications hold\\ \ 

\scalebox{0.8}{\begin{minipage}{15cm}
\begin{align}
\left.\begin{array}{ccccccc}
&&&  &  \begin{array}{c}
\text{ESA}_\lambda\ \text{for every}  \\ \lambda \in L^\infty(M)\end{array}\\
&&&& \left. \text{\footnotesize{\quad \ }} \, \Big\Updownarrow \, \text{\footnotesize{(3)}}\right. \\
\left. \begin{array}{c}L^2-PP_\lambda \quad \text{for some} \\ \text{(equiv. any)}\ \lambda>0 \end{array}\right. & \left.\begin{array}{c}{\footnotesize \text{(1)}} \\ \Longrightarrow\end{array}\right. & \left. \begin{array}{c} \text{ESA}_V\ \text{for every}\\ V\in L^2_{loc}(M), V\geq C\end{array}\right. & \left.\begin{array}{c}{\footnotesize\text{(2)}} \\ \Longrightarrow\end{array}\right.& \left. \text{ESA}\right.   \\
\left. \text{\ \ \footnotesize{(4)}} \, \Big\Updownarrow \, \text{\footnotesize{\quad \quad \quad}} \right. && \vcenter{\offinterlineskip\vskip30pt}  &&\left. \text{\footnotesize{\quad \ }} \, \Big\Updownarrow \, \text{\footnotesize{(5)}} \right. \\
\left. \begin{array}{c}L^2-\text{Liou}_{\lambda-SH} \quad \text{for some} \\ \text{(equiv. any)}\ \lambda>0 \end{array}\right. &&  \left. \begin{array}{c}
    \overset{\text{\footnotesize{(6)}}}{\Longrightarrow} \\ \underset{\textcolor{black}{\text{\footnotesize{(7)}}}}{\textcolor{black}{\dotLongleftarrow}}
\end{array}\right.& & \left. \begin{array}{c}L^2-\text{Liou}_{\lambda-H} \quad \text{for some} \\ \text{(equiv. any)}\ \lambda>0 \end{array}\right. \\
\left. \text{\footnotesize{(8)}}\, \Big\Updownarrow  \, \text{\footnotesize{\quad \quad}} \right. &&&&\quad \quad \ \left. \begin{array}{cc}  \begin{minipage}{2cm}\text{\footnotesize{(9)}}\, \Big\Downarrow \end{minipage}& \begin{minipage}{2cm} \hspace{-1cm} \vspace{-0.1cm}\dotUparrow\, \end{minipage} \hspace{-2.5cm}\text{\footnotesize{(10)}}\end{array}\right. \\
L^2-Liou_{SH} && \vcenter{\offinterlineskip\vskip30pt} && L^2-Liou_{H}
\end{array}\right. 
\end{align}
\end{minipage}}
\hspace{-12.2cm}
\begin{minipage}{4cm}
\begin{tcolorbox}[breakable, enhanced, colback=white, opacityback = 0, opacityframe = .1] \ \\ \ \\ \ \\ \ \\ \ \\ \ \\ \ \\ \ \\ \ \\ \ \end{tcolorbox}\vspace{-0.8cm}
\end{minipage}
\hspace{5.5cm}
\begin{minipage}{4cm}
\begin{tcolorbox}[breakable, enhanced, colback=white, opacityback = 0, opacityframe = .1] \ \\ \ \\ \ \\ \ \\ \ \\ \ \\ \ \\ \ \\\ \end{tcolorbox}  \vspace{1.8cm}
\end{minipage}\\
where
\begin{enumerate}
    \item follows from \cite{BMS} (see Proposition \ref{prop_L2pp_ESA-D+V});

    \item follows from \cite{BMS} (see Corollary \ref{Cor_ESA_V_ESA_lambda});

    \item follows from combining some arguments contained in \cite{RS2, MT16, BV24} (see also Corollary \ref{cor_equiv_ESA});

    \item follows from Corollary \ref{cor_equivalence_L2PP_Liouville};

    \item follows from classical arguments, see \cite{RS2} (see also Proposition \ref{prop_L_l-H_ESA_-D+l});

    \item follows from Lemma \ref{lem_l-SH_implies_l-H};

    \item follows from Proposition \ref{Prop_positive cone}, under the additional assumption that $C^\infty_c(M)_+$ is dense in $H^2_+$;

    \item follows from Corollary \ref{cor_equivalence_L2SH_Liouville_L2PP};

    \item follows from combining some arguments contained in \cite{RS2} together with \cite[Corollary 4]{HMW} (see also Proposition \ref{prop_L_l-H_ESA_-D+l} and Proposition \ref{ESA-->L2H});

    \item follows from \cite[Corollary 4]{HMW}, under the assumptions that $-\Delta$ has positive spectrum and $\textnormal{vol}(M)=+\infty$ (see also Proposition \ref{ESA-->L2H}).
\end{enumerate}
\end{theorem}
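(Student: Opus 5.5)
The theorem collects ten implications, and I would prove them one arrow at a time, working with distributional solutions throughout. Two tools are used repeatedly. The first is the distributional Kato/Brezis--Kato inequality: if $u\in L^1_{loc}$ and $\Delta u\in L^1_{loc}$, then $\Delta u^+\geq \chi_{\{u>0\}}\Delta u$ in the sense of distributions. The second is the deficiency-space criterion, which says that a nonnegative symmetric operator $T$ on $C^\infty_c(M)$ is essentially self-adjoint if and only if $\ker(T^*+\lambda)=\{0\}$ for some (equivalently, any) $\lambda>0$.

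\textbf{Arrows (5), (9) and (10).}
\begin{itemize}
\item For (5), the adjoint of $-\Delta|_{C^\infty_c}$ is $-\Delta$ on the maximal domain $\{u\in L^2:\Delta u\in L^2\}$. Hence $\ker(T^*+\lambda)$ is exactly the space of $L^2$ solutions of $\Delta u=\lambda u$, and this gives (5).
\item For (9), take $u\in L^2$ with $\Delta u=0$. Then $u$ lies in the maximal domain, so under ESA it lies in the domain of the closure $\overline{-\Delta}$, and therefore $u\in\ker\overline{-\Delta}$. This kernel consists of constants by the Hilbert-space harmonic argument of \cite{HMW}: if $\Delta u=0$ and $u\in\mathrm{Dom}(\overline{\Delta})$, then $\|\nabla u\|^2=(-\Delta u,u)=0$, so $u$ is constant.
\item For (10), assume positive spectrum. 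Then $\ker\overline{-\Delta}$ is trivial, and I would invoke \cite[Corollary 4]{HMW}, which identifies the maximal and minimal domains when there are no $L^2$-harmonic functions and $\mathrm{vol}=\infty$.
\end{itemize}

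\textbf{Arrows (6), (4) and (8).}
\begin{itemize}
\item For (6), let $\Delta u=\lambda u$ with $u\in L^2$. Kato gives $\Delta u^+\geq\lambda u^+$, and similarly for $u^-$. Applying $L^2$-Liou$_{\lambda-SH}$ to each gives $u^\pm=0$.
\item For (4), the direction PP $\Rightarrow$ Liouville is the argument already given in the introduction. For the converse, if $(-\Delta+\lambda)u\geq0$, then $v=u^-$ satisfies $\Delta v\geq\lambda v$ by Brezis--Kato, so $v=0$.
\item For (8), the implication Liou$_{SH}\Rightarrow$ Liou$_{\lambda-SH}$ is trivial, since $\Delta u\geq\lambda u\geq0$ for $u\ge 0$. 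The converse is the delicate step. Given $0\le u\in L^2$ with $\Delta u\ge0$, I would use the PVV regularity theory, namely smooth monotone approximation and local $H^1$ estimates. The aim is to show that $u\in H^1_{loc}$ and then construct $w=(-\Delta+\lambda)^{-1}_{\mathrm{Fr}}(\lambda u)$ via the Friedrichs extension. One then has $(-\Delta+\lambda)(w-u)=-\Delta u\le 0$, so $u-w\geq0$ by PP (which is (4)), and $w\ge0$. The goal is to deduce $u\le w$ and hence, by maximality, that $u$ is in the Friedrichs domain with $\Delta u=0$ and $\nabla u=0$. I expect this to be the main obstacle: one needs $u\le w$, and $w\le u$ is obtained comparing $\lambda$-resolvents in $L^2$ energy. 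I would use a sequence of resolvents $\mu(\mu-\Delta)^{-1}u\uparrow$ to conclude $\int|\nabla u|^2=0$.
\end{itemize}

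\textbf{Arrows (1), (2), (3) and (7).}
\begin{itemize}
\item For (1), follow BMS. If $(-\Delta+V+\lambda)u=0$ with $u\in L^2$, Kato gives $(-\Delta+\lambda)|u|\le -V|u|\le0$ distributionally, so PP forces $-|u|\ge0$.
\item For (2), take $V$ constant.
\item For (3), use the bounded-perturbation (Kato--Rellich) argument: $\lambda\in L^\infty$ is a bounded symmetric perturbation, so ESA is preserved in both directions.
\item For (7), assume $C^\infty_c(M)_+$ is dense in $H^2_+$. Given $0\le u$ with $\Delta u\ge\lambda u$, test against $(-\Delta+\lambda)\varphi$ with $\varphi\ge0$. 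ESA together with the density assumption lets this pairing extend to $\varphi=(-\Delta+\lambda)^{-1}u\ge0$, giving $-\|u\|^2\ge0$.
\end{itemize}
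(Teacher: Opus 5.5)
The converse direction of (8) has a genuine gap, and it sits exactly at the step you flag as the main obstacle. The other arrows follow the paper closely; your (7), which tests directly with $\varphi=(-\Delta+\lambda)^{-1}u$ instead of $R\phi$ for arbitrary $\phi\ge0$, is a harmless variant.

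First, there is a sign error. Since $(-\Delta+\lambda)w=\lambda u$, you get $(-\Delta+\lambda)(w-u)=\lambda u-(-\Delta u+\lambda u)=\Delta u\ge 0$. So $(L^2\text{-PP}_\lambda)$ gives $w-u\ge0$, i.e.\ $0\le u\le w$, which is precisely the inequality you say is still missing.

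Second, the argument is closed not by a pointwise bound $w\le u$ but by the norm bound $\|w\|_{L^2}=\|\lambda(-\Delta_{\mathrm{Fr}}+\lambda)^{-1}u\|_{L^2}\le\|u\|_{L^2}$. Your ``comparison of resolvents in energy'' and the family $\mu(\mu-\Delta)^{-1}u$ are only hints, not an argument. Combined with $0\le u\le w$, the norm bound gives $\int_M(w^2-u^2)=0$ with a nonnegative integrand, hence $u=w$ a.e. Then $u$ lies in the Friedrichs domain with $\Delta_{\mathrm{Fr}}u=0$, so $\|\nabla u\|_{L^2}^2=(-\Delta_{\mathrm{Fr}}u,u)_{L^2}=0$ and $u$ is constant.

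This is the paper's argument in Proposition \ref{Prop_L^p-Liou_lambda-sh implies L^p-Liou_SH}. There $w$ comes from Proposition \ref{solution with non-neg Lp function}, and the conclusion from $\Delta u=0$ uses $L^2$-Liou$_H$; your Friedrichs-domain ending is a valid $p=2$ shortcut. No $H^1_{loc}$ regularity or monotone approximation is needed. With your sign, PP yields $u\ge w$, which is compatible with $\|w\|\le\|u\|$ and gives nothing, so as written the key step is absent.

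The gap propagates. The statement also asserts ``for some (equiv.\ any) $\lambda>0$'' for $L^2$-PP$_\lambda$ and $L^2$-Liou$_{\lambda-SH}$, and you never address this. Once (8) is correctly proved at a single $\lambda$, it follows from the trivial implication $L^2$-Liou$_{SH}\Rightarrow L^2$-Liou$_{\mu-SH}$ for all $\mu\ge0$, together with (4) at each fixed $\lambda$. The paper also gives a direct argument via Lemma \ref{lem_monotonicity_L_l-SH} and Proposition \ref{lambda tra 0 e 1}.

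Two smaller points:
\begin{itemize}
\item In (1), the inequality $-V|u|\le 0$ needs $V\ge0$, while the statement only assumes $V\ge C$. You must first shift: apply the deficiency criterion to $-\Delta+V$ at level $\lambda-C$, i.e.\ consider $(-\Delta+V-C+\lambda)u=0$, or use Proposition \ref{prop_invariance_translation}.
\item In (10), the relevant kernel is $\ker(-\Delta_{2,max})$, which is trivial by $L^2$-Liou$_H$ and infinite volume. Positivity of the spectrum is what makes $-\Delta|_{C^\infty_c(M)}$ strictly positive, so that Theorem \ref{thm_RS2_X26} applies. The triviality of $\ker\overline{-\Delta}$ that you mention is beside the point.
\end{itemize}
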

\end{center}
\medskip

In this manuscript, the goal is to determine conditions ensuring the essential self-adjointness (and related spectral properties) of quite general Schr\"odinger operators, requiring only minimal integrability assumptions and a constant lower bound on the potential. Other works in the literature have also investigated the interplay between the Cauchy boundary of an incomplete manifold and the behavior of the potential near it. Namely, an improved lower bound on $V$ allows for weaker assumptions on the singularity of $M$; see, e.g., \cite{MT16,PRS18,BV24}.
\medskip

When \(p\neq2\), the previous picture becomes more subtle. Essential self-adjointness is intrinsically a Hilbert space notion, and in \(L^p\) there are at least two natural properties playing an analogous role. The first states that \(C^\infty_c(M)\) is an operator core for the maximal realization of \(-\Delta+V\) in \(L^p(M)\), i.e. that the minimal realization coincides with the maximal realization; see Section \ref{Sec_basics} for the rigorous definitions. For \(V=0\), this is equivalent to the density of \(C^\infty_c(M)\) in
\begin{align}
H^p(M):=\{u\in L^p(M) : \Delta u\in L^p(M)\}
\end{align}
with respect to the graph norm
\begin{align}
\|u\|_{H^p}=\|u\|_{L^p}+\|\Delta u\|_{L^p}.
\end{align}
The second concerns semigroups: one may ask whether the minimal realization generates a contraction semigroup on \(L^p(M)\). Contrary to the Hilbertian case, these two properties do not coincide in general, the semigroup-generation property being weaker than the core property.

A further difficulty comes from duality. If \(p'=p/(p-1)\), the adjoint relation between minimal and maximal realizations naturally exchanges the exponents \(p\) and \(p'\). Consequently, an \(L^p\)-positivity preserving property does not directly yield a statement for the minimal realization on \(L^p\), but rather implies that
\begin{align}
(-\Delta+V)_{p',\min}
\end{align}
generates a contraction semigroup on \(L^{p'}(M)\). To recover the stronger core property on \(L^p\), one is naturally led to require positivity preservation at both conjugate exponents \(p\) and \(p'\). Under suitable assumptions on \(V\), the simultaneous \(L^p\)- and \(L^{p'}\)-positivity preservation properties imply that \(C^\infty_c(M)\) is a core for \((-\Delta+V)_{p,\max}\).

Thus, for \(p\neq2\), the theory necessarily involves an interplay between two different Banach spaces, \(L^p\) and \(L^{p'}\). When \(p=2\), this distinction disappears since \(p=p'\), and the two operator-theoretic viewpoints collapse into the familiar essential self-adjointness framework.
\medskip

\begin{theorem}\label{thm_main_L^p}
    Let $(M,g)$ be a (possibly incomplete) Riemannian manifold. Fix any $p\in (1,+\infty)$ and denote by $p'=\frac{p}{p-1}$ its conjugate exponent. Then, the following implications hold\\
\hspace{-1cm}  \scalebox{0.7}{\begin{minipage}{15cm}
\begin{align}
\left.\begin{array}{ccccc}
\left. \begin{array}{c}L^p-PP_\lambda \quad \text{for some} \\ \text{(equiv. any)}\ \lambda>0 \end{array}\right. & \left.\begin{array}{c}{\footnotesize \text{(1)}} \\ \Longrightarrow\end{array}\right. & \left. \begin{array}{c} (-\Delta+V)_{p',min}\ \text{generates a}\\ \text{contraction semigroup in } L^{p'}(M)\\ \text{for every } V\in L^{p'}_{loc}(M), V\geq 0\end{array}\right. & \left.\begin{array}{c}{\footnotesize\text{(2)}} \\ \Longrightarrow\end{array}\right.&  \left. \begin{array}{c} (-\Delta)_{p',min}\ \text{generates}\\ \text{a contraction semigroup}\\ \text{in } L^{p'}(M)\end{array}\right.   \\
\left. \text{\ \ \footnotesize{(3)}} \, \Big\Updownarrow \, \text{\footnotesize{\quad \quad \quad}} \right. && \vcenter{\offinterlineskip\vskip30pt}  &&\left. \text{\footnotesize{\quad \ }} \, \Big\Updownarrow \, \text{\footnotesize{(4)}} \right. \\
\left. \begin{array}{c}L^p-\text{Liou}_{\lambda-SH} \quad \text{for some} \\ \text{(equiv. any)}\ \lambda>0 \end{array}\right. &&  
    \overset{\text{\footnotesize{(5)}}}{\Longrightarrow}  && \left. \begin{array}{c}L^p-\text{Liou}_{\lambda-H} \quad \text{for some} \\ \text{(equiv. any)}\ \lambda>0 \end{array}\right. \vspace{0.2cm} \\
\left. \text{\footnotesize{(3)}}\, \Big\Updownarrow  \, \text{\footnotesize{\quad \quad}} \right. &&&&\quad \quad \ \left. \begin{array}{cc}  \begin{minipage}{2cm}\text{\footnotesize{(6)}}\, \Big\Downarrow \end{minipage}& \begin{minipage}{2cm} \hspace{-1cm} \vspace{-0.1cm}\dotUparrow\, \end{minipage} \hspace{-2.5cm}\text{\footnotesize{(7)}}\end{array}\right.\\
L^p-Liou_{SH} && \vcenter{\offinterlineskip\vskip30pt} && L^p-Liou_{H}\\
\left. \begin{minipage}{1cm}
\vspace{-0.5cm}    \text{\footnotesize{(8)}}
\end{minipage}\,\hspace{-0.4cm} \dotDownarrow  \, \text{\footnotesize{\quad \quad}} \right. \vspace{0.3cm}&&&&\\
\left. \begin{array}{c}\text{For any fixed}\\ V\in L^p_{loc}(M)\cap L^{p'}_{loc}(M),\\ \ V\geq 0:\end{array}\right. &&&&\\ 
\left.\begin{array}{c}C^\infty_c(M) \text{ is a core for}\\ (-\Delta+V)_{p,max}\end{array}\right. & \left.\begin{array}{c}{\footnotesize \text{(9)}} \\ \Longleftrightarrow\end{array}\right. &\left.\begin{array}{c}C^\infty_c(M) \text{ is a core for}\\ (-\Delta+V)_{p',max}\end{array}\right. & \left.\begin{array}{c}{\footnotesize \text{(9)}} \\ \Longleftrightarrow\end{array}\right. & \left. \begin{array}{c} (-\Delta+V)_{p,min} \text{ generates a}\\ \text{contraction semigroup in } L^p(M)\\ \text{and } (-\Delta+V)_{p',min} \text{ generates a}\\ \text{contraction semigroup in } L^{p'}(M)\end{array}\right.
\end{array}\right.
\end{align}
\end{minipage}}
\hspace{-10.6cm}
\begin{minipage}{3.5cm}\vspace{0.8cm}
\begin{tcolorbox}[breakable, enhanced, colback=white, opacityback = 0, opacityframe = .1]\vspace{0.3cm}\ \\ \ \\ \ \\ \ \\ \ \\ \  \\
\end{tcolorbox}\vspace{4cm}
\end{minipage}
\hspace{7cm}
\begin{minipage}{3.3cm}\vspace{0.6cm}
\begin{tcolorbox}[breakable, enhanced, colback=white, opacityback = 0, opacityframe = .1] \vspace{0.2cm}\ \\ \ \\ \ \\ \ \\  \end{tcolorbox} \vspace{3.4cm} \ \\ \ \\ \ \\
\end{minipage}

\hspace{-0.25cm}
\begin{minipage}{14.2cm}\vspace{-4cm}
\begin{tcolorbox}[breakable, enhanced, colback=white, opacityback = 0, opacityframe = .1]\vspace{0.7cm}\ \\ \ \\ \ \\ \  \end{tcolorbox}
\end{minipage}\\
where
\begin{enumerate}
    \item follows from combining some arguments contained in \cite{RS2, milatovic2010m, BV24} (see Proposition \ref{prop_LpPP_CSG-D+V});
    \item follows as the particular case $V\equiv 0$;
    \item follows from Corollary \ref{cor_equivalence_L2SH_Liouville_L2PP};
    \item follows from combining some arguments contained in \cite{RS2, milatovic2010m, brezis2011functional} (see Proposition \ref{Lp lambda armomiche e semigruppo});
    \item follows from Lemma \ref{lem_l-SH_implies_l-H};
    \item follows by Theorem \ref{Thm_L^p_lambda-H implies L^p_H};
    \item follows from Proposition \ref{Lp lambda armomiche e semigruppo}, under the assumption that $-\Delta$ has positive spectrum on $L^2$ and $\textnormal{vol}(M)=+\infty$;
    \item follows from Corollary \ref{prop_LpPP+Lp'PP_Core}, under the additional assumption that the $L^{p'}$-PP holds;
    \item follows from combining some arguments contained in \cite{RS2, Ka13, milatovic2010m} (see Proposition \ref{prop_semigroups equivalent cores}).
\end{enumerate}
\end{theorem}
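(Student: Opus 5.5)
Theorem \ref{thm_main_L^p} is a diagram of implications, so I will prove each arrow separately, in the order in which the results it depends on are naturally built. Throughout, $T_{p,\min}$ and $T_{p,\max}$ denote the minimal and maximal realizations of $-\Delta+V$ in $L^p$. The one structural fact I use repeatedly is the duality
\begin{align}
(T_{p,\min})^*=T_{p',\max}, \qquad (T_{p',\max})^*=T_{p,\min},
\end{align}
which holds by the definition of distributional derivatives together with $L^p$--$L^{p'}$ duality, and which requires $V\in L^{p'}_{loc}$ (resp.\ $V\in L^p_{loc}$) so that $VC_c^\infty\subset L^{p'}$.

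\textbf{The Liouville chain.}
\begin{itemize}
\item[(5)] Let $u\in L^p$ solve $\Delta u=\lambda u$. By Kato's inequality, $\Delta|u|\geq \operatorname{sgn}(u)\Delta u=\lambda|u|$, so $|u|$ is a nonnegative $L^p$ $\lambda$-subsolution. Liouville$_{\lambda-SH}$ then forces $|u|=0$.
\item[(6)] Given $u\in L^p$ harmonic, $u^+$ is subharmonic, but it is not a $\lambda$-subsolution. This arrow is the deepest step; Theorem \ref{Thm_L^p_lambda-H implies L^p_H} is presumably where the real work lies. My plan is to build, from an $L^p$-harmonic $u$, a nonzero $L^p$ solution of $\Delta w=\lambda w$, for instance $w=u-\lambda(\lambda-\Delta)^{-1}_{\mathrm{dual}}u$ obtained through a resolvent of a closed extension, or by a perturbation argument on the deficiency.
\item[(3)] I treat $\lambda$-independence first. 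A nonnegative $\lambda$-subsolution is also a $\mu$-subsolution for every $\mu\le\lambda$; for the reverse direction I rescale via the equivalence with PP$_\lambda$, whose $\lambda$-independence follows from resolvent identities. For PP$_\lambda\Leftrightarrow$ Liou$_{\lambda-SH}$: the forward direction applies PP to $-u$, and the backward direction uses the distributional Brezis--Kato inequality $\Delta u^-\ge \mathbf 1_{\{u<0\}}\Delta(u)$, which makes $u^-$ a nonnegative $L^p$ $\lambda$-subsolution. For Liou$_{SH}\Leftrightarrow$ Liou$_{\lambda-SH}$: one direction is trivial, since a $\lambda$-subsolution is in particular subharmonic, and a nonzero constant can lie in $L^p$ only if $\operatorname{vol}(M)<\infty$, in which case the constant fails $\Delta c\ge\lambda c$. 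For the converse, given a nonnegative $L^p$-subharmonic $u$, I expect to use the regularity of subharmonic distributions from \cite{PVV} to compare $u$ with resolvents and produce a $\lambda$-subsolution.
\end{itemize}

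\textbf{The semigroup arrows.}
\begin{itemize}
\item[(4)] By Lumer--Phillips, $T_{p',\min}$ generates a contraction semigroup if and only if it is dissipative, which holds by Kato's inequality on $C^\infty_c$ and passes to the closure, and $\lambda+T_{p',\min}$ has dense range. The annihilator of the range is $\ker(\lambda+T_{p,\max})$, which for $V=0$ is exactly the set of $L^p$ solutions of $\Delta u=\lambda u$.
\item[(1)] I argue in the same way. If $u\in L^p$ annihilates the range, then $(-\Delta+V+\lambda)u=0$. Since $V\ge0$, Kato gives $(-\Delta+\lambda)|u|\le0$, so $-|u|$ is a supersolution, and PP$_\lambda$ yields $|u|\le0$.
\item[(2)] This is immediate, taking $V=0$.
\item[(7)] Under $\operatorname{spec}>0$ and infinite volume, I expect to use the HMW-type argument: an $L^p$-harmonic function is fed through the $L^2$ spectral gap to produce a $\lambda$-harmonic one.
\end{itemize}

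\textbf{The core statements.}
\begin{itemize}
\item[(9)] $C_c^\infty$ is a core for $T_{p,\max}$ exactly when $T_{p,\min}=T_{p,\max}$. Taking adjoints, this is equivalent to $T_{p',\max}=T_{p',\min}$, which gives the first equivalence. If both minimal realizations are m-accretive, then $T_{p,\min}\subset T_{p,\max}=(T_{p',\min})^*$, and the latter is also m-accretive, being the adjoint of a generator. An m-accretive operator has no proper accretive extension since $\lambda+T$ is already onto, so $T_{p,\min}=T_{p,\max}$. Conversely, equality of the realizations makes $T_{p,\min}$ the adjoint of the m-accretive $T_{p',\min}$ (after duality), hence a generator.
\item[(8)] Combining (1) at both exponents $p$ and $p'$ with (9) gives the core property.
\end{itemize}

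The main obstacle will be (6) and the converse step in (3), where genuine regularity for distributional subsolutions is needed rather than soft duality.
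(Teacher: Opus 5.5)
Your treatment of the soft arrows (1), (2), (4), (5) and (9) is sound. Deriving (8) from (1) at both exponents plus (9) is cleaner than the paper's route through Proposition \ref{prop_LpPP_CSG-D+V_p,max}; it needs $V\in L^p_{loc}\cap L^{p'}_{loc}$, which the diagram assumes anyway. Two small repairs are needed.
In the converse of (9) you presuppose that $T_{p',\min}$ is m-accretive. Argue instead that $T_{p',\min}$ and its adjoint $T_{p,\max}=T_{p,\min}$ are both accretive, and invoke the corollary to Theorem \ref{t:X.48}.
In (3), apply Brezis--Kato to $-u$: from $\Delta(-u)\ge\lambda(-u)$ you get $\Delta u^-\ge\lambda u^-$.

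The genuine gap is that the arrows carrying the real content, namely (6), the nontrivial half of (3), and (7), are not proved. For (6) your construction yields nothing. If $(\lambda-\Delta)^{-1}_{\mathrm{dual}}$ is the resolvent of $-\Delta_{p,\max}=(-\Delta_{p',\min})^*$, then every harmonic $u\in L^p$ lies in $H^p$ with $(\lambda-\Delta_{p,\max})u=\lambda u$, so $w=u-\lambda(\lambda-\Delta_{p,\max})^{-1}u\equiv 0$. With any other realization (e.g.\ Friedrichs), $w$ is $\lambda$-harmonic, Liouville forces $w=0$, and you are left with a harmonic $u$ in the domain of that realization, which must still be shown constant. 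That domain-Liouville statement is the missing idea. The paper proves it for $H^p_0$ in Theorem \ref{H_0^p Liou H}, via $\|\nabla|\phi_k|^{p/2}\|_{L^2}^2\le C_p\|\Delta\phi_k\|_{L^p}\|\phi_k\|_{L^p}^{p-1}\to 0$. It then transfers $u$ there by duality: $P^{(p),\max}_t u=u$ for the adjoint semigroup, so testing with $v=|u|^{p-2}u$ and using contractivity of $P^{(p'),\min}_t$ forces equality in H\"older. Hence $P^{(p'),\min}_t v=v$, i.e.\ $v\in H^{p'}_0$ is harmonic, hence constant.

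Your sketch of the converse in (3) misses the two points that make it work, and in the end it rests on (6). Given $0\le u\in L^p$ subharmonic, take $w$ with $(-\Delta+1)w=u$ and $\|w\|_{L^p}\le\|u\|_{L^p}$ (Proposition \ref{solution with non-neg Lp function}, or the contractive resolvent of $-\Delta_{p,\max}$ obtained from (1) by duality). Then $(-\Delta+1)(w-u)=\Delta u\ge 0$, so PP gives $0\le u\le w$, and the norm bound forces $w=u$. Hence $\Delta u=0$, and constancy needs $L^p$-Liou$_H$, i.e.\ (5) and (6); no regularity from \cite{PVV} is needed.

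Your claim that the $\lambda$-independence of PP$_\lambda$ follows from resolvent identities is not justified as stated. PP$_\lambda$ concerns $u\in L^p$ for which $(-\Delta+\lambda)u$ is only a nonnegative measure, so $u$ need not lie in any operator domain where such identities apply. The paper instead uses the distributional comparison $u+(1-\lambda)v$ with $(-\Delta+1)v=u_-$ (Proposition \ref{lambda tra 0 e 1}). Alternatively, running the previous argument with $(-\Delta+\lambda)w=\lambda u$ and PP$_\lambda$ gives Liou$_{SH}$ directly, which trivially yields Liou$_{\mu-SH}$ for every $\mu\ge0$.

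Finally, (7) is Liou$_H\Rightarrow$ Liou$_{\lambda-H}$, whereas turning a harmonic function into a $\lambda$-harmonic one is the direction of (6). The mechanism is (4) run at $\lambda=0$. Infinite volume and Liou$_H$ give $\ker(-\Delta_{p,\max})=\{0\}$, hence dense range of $-\Delta|_{C^\infty_c}$ in $L^{p'}$. The spectral gap gives $C_p(M)\|\varphi\|_{L^{p'}}\le\|\Delta\varphi\|_{L^{p'}}$ (from $\|P^{(p')}_t\|\le e^{-C_p(M)t}$), so the range is closed. Then $-\Delta_{p',\min}$ coincides with the Friedrichs $p'$-realization, which is a contraction generator, and (4) concludes.
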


\begin{remark}
    We want to remark that in both Theorems \ref{thm_main_L^2} and \ref{thm_main_L^p} the range of admissible constants $\lambda$ can be enlarged to include (certain) negative values. We refer to the relative sections for further details.
\end{remark}

We stress that among the new results proved in the present paper, and used mainly as technical tools for proving some implications contained in Theorems \ref{thm_main_L^2} and \ref{thm_main_L^p}, some are particularly noteworthy and of independent interest. Among them, we highlight the following proposition, proved in Subsection \ref{sec_monotonicity}, concerning the monotonicity of the $L^p$-Liouville property for subharmonic functions (and, hence, of all properties equivalent to it) as the integration exponent $p$ increases.
\begin{proposition}
    Let $(M,g)$ be a (possibly incomplete) Riemannian manifold and fix $\lambda\geq 0$. If $M$ satisfies the $(L^p\textnormal{-Liou}_{\lambda-SH})$ property for $p\in [1,+\infty)$, then it satisfies the $(L^q\textnormal{-Liou}_{\lambda-SH})$ for every $q\in[p,+\infty)$.
\end{proposition}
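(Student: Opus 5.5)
The strategy is to reduce the $L^q$ statement to the $L^p$ one by a power change of the unknown. Let $0\le u\in L^q(M)$ satisfy $\Delta u\ge \lambda u$ in the distributional sense, and set $r:=q/p\ge 1$. Then $w:=u^{r}\in L^p(M)$, since $\int_M w^p=\int_M u^q<+\infty$. The formal computation
\begin{align}
\Delta (u^r)= r u^{r-1}\Delta u + r(r-1)u^{r-2}|\nabla u|^2 \ \ge\ r\lambda u^{r}\ \ge\ \lambda u^r
\end{align}
uses that $\varphi(t)=t^r$ is convex and nondecreasing on $[0,+\infty)$, and that $\lambda\ge 0$, $r\ge1$. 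It shows that $w$ is a nonnegative $L^p$ distributional solution of $\Delta w\ge\lambda w$. By $(L^p\textnormal{-Liou}_{\lambda-SH})$, $w$ is constant, hence so is $u=w^{1/r}$; when $\lambda>0$ this forces $u\equiv 0$. For $r=1$ there is nothing to prove, so from now on $r>1$.

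The work lies in justifying this computation for a merely distributional $u$, which is the main obstacle. I would first invoke the local regularity theory for nonnegative subharmonic distributions recalled in the introduction (\cite{PVV}). Since $\lambda\ge0$ and $u\ge0$, we have $\Delta u\ge \lambda u\ge 0$, so $u$ is a nonnegative subharmonic distribution. That theory then gives $u\in L^\infty_{loc}(M)\cap W^{1,2}_{loc}(M)$, and the inequality $\Delta u\ge \lambda u$ holds weakly:
\begin{align}
\int_M\langle\nabla u,\nabla\psi\rangle\le-\lambda\int_M u\psi \quad\text{for all } 0\le\psi\in W^{1,2}_c(M)\cap L^\infty(M).
\end{align}
Here the test class is enlarged from $C^\infty_c$ by density, which is legitimate because $u\in W^{1,2}_{loc}\cap L^\infty_{loc}$. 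If only a smooth monotone approximation $u_k\searrow u$ by local $\lambda$-subsolutions were available, I would run the argument below on each $u_k$ and pass to the limit instead.

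Because $\varphi''(t)=r(r-1)t^{r-2}$ blows up at $0$ when $1<r<2$, I would regularize with
\begin{align}
\varphi_\varepsilon(t)=(t+\varepsilon)^r-\varepsilon^r, \qquad \varepsilon>0.
\end{align}
The function $\varphi_\varepsilon$ is smooth on $[0,+\infty)$, convex and nondecreasing, with $\varphi_\varepsilon(0)=0$. Hence $\varphi_\varepsilon(u)\in W^{1,2}_{loc}\cap L^\infty_{loc}$. For $0\le\psi\in C^\infty_c(M)$, the function $\varphi_\varepsilon'(u)\psi$ is an admissible nonnegative test function, and using $\varphi''_\varepsilon\ge 0$,
\begin{align}
\int\langle\nabla\varphi_\varepsilon(u),\nabla\psi\rangle=\int\langle\nabla u,\nabla(\varphi_\varepsilon'(u)\psi)\rangle-\int\varphi_\varepsilon''(u)|\nabla u|^2\psi\le-\lambda\int u\,\varphi'_\varepsilon(u)\psi .
\end{align}
In distributional form, $\Delta\varphi_\varepsilon(u)\ge\lambda\,u\,\varphi_\varepsilon'(u)=\lambda r u(u+\varepsilon)^{r-1}$.

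Finally, let $\varepsilon\to0$. Locally $u$ is bounded, so $\varphi_\varepsilon(u)\to u^r$ and $u\varphi'_\varepsilon(u)\to ru^r$ uniformly on $\operatorname{supp}\psi$. Testing against $\Delta\psi$ then gives $\Delta(u^r)\ge r\lambda u^r\ge\lambda u^r$ in $\mathcal D'(M)$. Together with $u^r\in L^p(M)$ and $u^r\ge0$, this puts $w=u^r$ in the hypothesis of $(L^p\textnormal{-Liou}_{\lambda-SH})$, and the conclusion follows as in the first paragraph. The same argument covers $p=1$. The only delicate input is the $L^\infty_{loc}\cap W^{1,2}_{loc}$ regularity, or the monotone approximation, for nonnegative $\lambda$-subharmonic distributions, and this is exactly where $\lambda\ge0$ is used.
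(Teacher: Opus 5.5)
Your proof is correct. Its backbone is the same as the paper's: replace $u$ by $u^{q/p}\in L^p(M)$ and show that this function is again a nonnegative $\lambda$-subharmonic distribution. Where you differ is in how the chain-rule computation is justified.

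The paper fixes $0\le\varphi\in C^\infty_c(M)$ and a precompact smooth $\Omega\supset\operatorname{supp}\varphi$. It then uses Theorem~\ref{monotone approx thm} in its refined form, with \emph{strictly positive} smooth $\lambda$-subharmonic approximants $u_k\searrow u$. Because the $u_k$ are positive and smooth, $u_k^{\alpha}$ is smooth and the identity for $\Delta u_k^{\alpha}$ holds pointwise even when $1<\alpha<2$. The limit is then taken by dominated convergence, using $0\le u^\alpha\le u_k^\alpha\le u_1^\alpha\in L^\infty(\Omega)$.

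You instead work in the weak $W^{1,2}$ framework given by Proposition~\ref{Reg_sub_dist}. In your argument the shift $(t+\varepsilon)^r-\varepsilon^r$ does the job that strict positivity of the $u_k$ does in the paper: it tames the singularity of $t^{r-2}$ at $t=0$.

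What each route buys:
\begin{itemize}
\item Your route rests only on an explicitly stated regularity result. The paper's route needs the positivity refinement of the approximation theorem, which the paper itself admits is only implicit in the cited proofs.
\item The paper's route keeps every computation classical and pointwise. It needs no density argument to enlarge the test class and no Sobolev chain rule.
\end{itemize}

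A minor correction to your closing comment: $\lambda\ge0$ is used in two places, not ``exactly'' one. It makes $u$ subharmonic, which gives the regularity, and it is needed again in the final step $r\lambda u^r\ge\lambda u^r$.
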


Following the (almost) complete pictures presented in Theorem \ref{thm_main_L^2} and \ref{thm_main_L^p}, we investigate some sufficient conditions for some of the above properties in the wide class of incomplete Riemannian manifolds obtained by removing a closed (possibly non-compact) set from a complete manifold. As is well known in the literature (see \cite{hinz2023removable, PVV, BV24}), some conditions on the Hausdorff dimension have to be imposed. For fixed $p\in (1,+\infty)$, denoting by $\mathfrak{M}_p$ a generalized $(n-2p')$-dimensional Hausdorff measure (see Section \ref{Sec_removed sets}), the content of Theorems \ref{Liouville subharm on manif with cls rem} and \ref{Schrod on manif with cls rem} is the following

\begin{theorem}\label{thm_main_remove_closed_set}
    Let $(N,h)$ be a complete Riemannian manifold of dimension $n\ge3$, $p\in \left[\frac{n}{n-2},+\infty\right)$ and $K\subseteq N$ be a closed set. Consider the open manifold $M=N\setminus K$ and assume that   
    \begin{equation*}
        \mathfrak{M}_p(\widetilde{K})<+\infty \quad\quad \text{for every compact $\widetilde{K}\subseteq K$}.
    \end{equation*}
    Then, ($L^{q}\textnormal{-Liou}_{SH}$) holds on $M$ for every $q\in[p,+\infty)$.
\end{theorem}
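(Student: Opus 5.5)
First, by the monotonicity proposition stated above (with $\lambda=0$), it suffices to prove $(L^p\textnormal{-Liou}_{SH})$ on $M$ for the given exponent $p\ge \frac{n}{n-2}$; the property for all $q\in[p,+\infty)$ then follows automatically. So fix $0\le u\in L^p(M)$ with $\Delta u\ge 0$ in $\mathcal D'(M)$. The plan is to show that $u$, extended by zero on $K$, is a nonnegative $L^p$-subharmonic distribution on the complete manifold $N$. Then Yau's $L^p$-Liouville theorem (in its distributional form from \cite{PVV}) forces $u$ to be constant on $N$. Since $K$ has zero measure under the Hausdorff assumption (because $n-2p'<n$), this gives $u$ constant on $M$.

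\textbf{Step 1: regularity of $u$ on $M$.} By the regularity theory for nonnegative subharmonic distributions (smooth monotone approximation and local Sobolev/Caccioppoli estimates, \cite{PVV}), $u\in L^\infty_{loc}(M)\cap W^{1,2}_{loc}(M)$. Moreover, $u^{p/2}$ (or $u^{p-1}$) admits the usual Caccioppoli estimate $\int \phi^2 u^{p-2}|\nabla u|^2\le C\int u^p|\nabla\phi|^2$ for $0\le\phi\in C^\infty_c(M)$. We only use these facts locally, near compact pieces $\widetilde K=K\cap \overline{B}$, since $\Delta u\ge0$ on $N$ is a local statement and a partition of unity reduces it to test functions supported in a fixed ball $B$ of $N$.

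\textbf{Step 2: capacity cutoffs.} Using $\mathfrak M_p(\widetilde K)<\infty$ with gauge of dimension $n-2p'$, cover $\widetilde K$ by finitely many balls $B_{r_i}(x_i)$, $r_i<\varepsilon$, with $\sum_i h(r_i)\le C$, where $h$ is the generalized gauge, essentially $r^{n-2p'}$. Build $\chi_\varepsilon=\max_i \psi_i$ (or a product) with $\psi_i=1$ on $B_{r_i}$, $\psi_i=0$ off $B_{2r_i}$, and $|\nabla\psi_i|\lesssim r_i^{-1}$, $|\Delta\psi_i|\lesssim r_i^{-2}$. Then $\eta_\varepsilon=1-\chi_\varepsilon$ vanishes near $\widetilde K$, tends to $1$ a.e., and satisfies
\begin{equation*}
\|\Delta\eta_\varepsilon\|_{L^{p'}}^{p'}+\|\nabla\eta_\varepsilon\|_{L^{2p'}}^{2p'}\lesssim \sum_i r_i^{\,n-2p'}\le C.
\end{equation*}
(Smoothing of the max is routine via mollified cutoffs.) Since these norms are merely bounded, not infinitesimal, we combine this with the fact that the supports of $\nabla\eta_\varepsilon$ shrink to a null set: by absolute continuity, $\int_{\mathrm{supp}\nabla\eta_\varepsilon}u^p\to0$. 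This is exactly where the threshold $n-2p'$ enters. The condition $p\ge\frac{n}{n-2}$ is equivalent to $n-2p'\ge0$, which makes the gauge meaningful.

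\textbf{Step 3: passing to the limit.} For $0\le\phi\in C^\infty_c(B)$, $\phi\eta_\varepsilon\in C^\infty_c(M)$, so
\begin{equation*}
0\le\int u\Delta(\phi\eta_\varepsilon)=\int u\,\eta_\varepsilon\Delta\phi+2\int u\langle\nabla\phi,\nabla\eta_\varepsilon\rangle+\int u\,\phi\Delta\eta_\varepsilon .
\end{equation*}
The first term tends to $\int u\Delta\phi$. The last term is bounded by $\|u\|_{L^p(\mathrm{supp}\nabla\eta_\varepsilon\cap B)}\|\Delta\eta_\varepsilon\|_{L^{p'}}\to0$ by Hölder and Step 2. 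The middle term is handled the same way, since $\|\nabla\eta_\varepsilon\|_{L^{p'}}\le C\varepsilon\to0$ even more easily. Hence $\int_N u\Delta\phi\ge0$, that is, $u$ is subharmonic on $N$.

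\textbf{Main obstacle.} I expect the hard part to be Step 2: building cutoffs with uniformly bounded $L^{p'}$ norm of the Laplacian (not just the gradient) from a generalized Hausdorff measure on a manifold with possibly unbounded geometry. On manifolds one must work inside a fixed relatively compact ball with bounded geometry, where comparisons with Euclidean balls hold. Controlling $\Delta$ of a maximum or product of bump functions also requires replacing the max by a smooth composition, e.g. $\eta_\varepsilon=\prod_i(1-\psi_i)$, whose Laplacian contains cross terms. A bounded-overlap (Vitali/Besicovitch-type) choice of the cover should control these, and this is what I would try first.
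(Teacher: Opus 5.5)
\textbf{Where the proposal works.} Your architecture is the paper's: reduce to $q=p$ by monotonicity, remove $K$ by testing with $\phi\eta_\varepsilon$ and using H\"older together with $\|u\|_{L^p(T_\varepsilon)}\to0$, then apply the Liouville theorem on the complete manifold $N$. In the subcritical range $p>\frac{n}{n-2}$, your Step 2 reconstructs the Harvey--Polking cutoff lemma that the paper simply cites (Lemma \ref{Lemma cutoff Hausdorf}). The overlap problem you flag is manageable there. For example, with disjoint dyadic cubes, the sum $\sum_i r_i^{-2}\chi_{3Q_i}$ is pointwise a geometric series dominated by its largest term. Step 1 is never used, since only $u\in L^p_{loc}(N)$ matters.

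\textbf{The gap: the critical exponent.} The statement includes $p=\frac{n}{n-2}$, i.e.\ $n=2p'$, and there your Step 2 fails. In this case $\mathfrak M_p=\mathcal H^{h_{p'}}$ with $h_{p'}(r)=[1+\ln_+(1/r)]^{1-p'}$. This gauge tends to $0$ as $r\to0$; it is not ``essentially $r^{0}$''. A bump adapted to $B_r$ has $\|\Delta\psi\|_{L^{p'}}^{p'}\sim r^{n-2p'}=1$ for every $r$. So your bound $\|\Delta\eta_\varepsilon\|_{L^{p'}}^{p'}\lesssim\sum_i r_i^{n-2p'}$ is just the number of balls. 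The condition $\sum_i h_{p'}(r_i)\le C$ does not control that number: it must blow up as $\varepsilon\to0$ whenever $K\cap\overline B$ is infinite. Finite-$\mathcal H^{h_{p'}}$ sets can even be uncountable Cantor-type sets. Hence your cutoffs have unbounded $W^{2,p'}$ norm, and the H\"older estimate in Step 3 closes only for locally finite $K$.

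\textbf{What is missing.} At the critical exponent the cutoffs must be logarithmic, spread over scales from $r_i$ up to a fixed radius; the $W^{2,n/2}$-capacity of $B_r$ is $\sim h_{p'}(r)$. Such cutoffs overlap heavily, and products or maxima of $W^{2,p'}$ functions give no subadditivity. The paper gets around this with the Bessel capacity $C_{G_2,p'}$, which is countably subadditive. Adams--Hedberg's Theorem 5.1.9 gives $C_{2,p'}(K)=0$. Lemma \ref{equivalence capacities} then composes $T$ with $(1+\varepsilon)G_2*f$ and uses Adams--Hedberg's Theorem 3.3.3. This yields cutoffs $0\le\varphi_\varepsilon\le1$ with $\varphi_\varepsilon\equiv1$ near $K$ and $\|\varphi_\varepsilon\|_{W^{2,p'}}\le\varepsilon$ (Lemma \ref{Lem_h_p_Hausdorff_cut-off}).

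Monotonicity cannot rescue this case. The hypothesis $\mathcal H^{h_{p'}}(\widetilde K)<\infty$ forces $\mathcal H^{s}(\widetilde K)=0$ for every $s>0$. So your subcritical argument gives $(L^q\textnormal{-Liou}_{SH})$ for all $q>p$, but not for $q=p$ itself.
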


\begin{theorem}\label{thm_main_remove_closed_set2}
    Let $(N,h)$ be a complete Riemannian manifold of dimension $n\ge3$, $p\in \left(1,\frac{n}{2}\right]$ and $K\subseteq N$ a closed set. Consider the open manifold $M:=N\setminus K$ and let $0\le V\in L^{p}_{loc}(M)$. If
    \begin{equation*}
        \mathfrak{M}_{p'}(\widetilde{K})<+\infty \quad\quad \text{for every compact $\widetilde{K}\subseteq K$},
    \end{equation*}
    then $(-\Delta+V)_{q,min}$ generates a contraction semigroup in $L^{q}(M)$ for all $1<q\le p$.
\end{theorem}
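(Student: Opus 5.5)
The plan is to reduce Theorem \ref{thm_main_remove_closed_set2} to the $L^{p'}$-Liouville result of Theorem \ref{thm_main_remove_closed_set}, and then pass through positivity preservation and duality as in implication (1) of Theorem \ref{thm_main_L^p}.

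\textbf{Step 1: the exponents match.} Since $p\in(1,\frac n2]$, the conjugate exponent satisfies $p'\ge \frac{n}{n-2}$, because $\frac1{p'}=1-\frac1p\le 1-\frac2n$. Moreover the hypothesis $\mathfrak{M}_{p'}(\widetilde K)<+\infty$ for every compact $\widetilde K\subseteq K$ is exactly the assumption of Theorem \ref{thm_main_remove_closed_set} with $p$ replaced by $p'$. That theorem therefore gives $(L^{s}\textnormal{-Liou}_{SH})$ on $M=N\setminus K$ for every $s\in[p',+\infty)$.

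\textbf{Step 2: pass to the shifted Liouville property and to positivity preservation.} Fix $1<q\le p$. Then $q'\ge p'$, so Step 1 yields $(L^{q'}\textnormal{-Liou}_{SH})$. By the hierarchy recalled in the introduction, this gives $(L^{q'}\textnormal{-Liou}_{\lambda-SH})$ for every $\lambda>0$. The equivalence (3) in Theorem \ref{thm_main_L^p} (Corollary \ref{cor_equivalence_L2SH_Liouville_L2PP}, applied with exponent $q'$) then gives $L^{q'}$-PP$_\lambda$.

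\textbf{Step 3: duality gives the semigroup.} Implication (1) of Theorem \ref{thm_main_L^p}, that is Proposition \ref{prop_LpPP_CSG-D+V} with $p$ replaced by $q'$, states that $L^{q'}$-PP implies that $(-\Delta+V)_{q,\min}$ generates a contraction semigroup in $L^{q}(M)$. This holds for every $0\le V\in L^{q}_{loc}(M)$. Since $M$ has locally finite volume and $q\le p$, we have $L^p_{loc}(M)\subseteq L^q_{loc}(M)$, so the given $V$ is admissible. This proves the claim for every $1<q\le p$.

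\textbf{Main obstacle.} The delicate point is the duality bookkeeping in Step 3. I need to check that Proposition \ref{prop_LpPP_CSG-D+V} is stated with the integrability of $V$ at the exponent of the semigroup side ($q$), and not at the PP side ($q'$). Indeed, the proof of that proposition should go as follows. One tests $(-\Delta+V)_{q,\min}+\lambda$ for $m$-accretivity. Its range is dense iff no $u\in L^{q'}$ solves $(-\Delta+V+\lambda)u=0$ distributionally. That equation requires $Vu\in L^1_{loc}$, which needs $V\in L^q_{loc}$. A Kato inequality argument then gives $(-\Delta+\lambda)|u|\le 0$, and PP yields $u=0$.

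If instead the proposition were stated with $V\in L^{q'}_{loc}$, I would redo this argument directly. The steps are: apply the distributional Kato inequality $\Delta|u|\ge \operatorname{Re}(\operatorname{sgn}\bar u\,\Delta u)$ to get $\Delta|u|\ge (V+\lambda)|u|\ge\lambda|u|$. Then $(L^{q'}\textnormal{-Liou}_{\lambda-SH})$ forces $|u|=0$. Accretivity of the minimal operator on $C^\infty_c$ for $V\ge0$ is standard, so together with dense range the Lumer--Phillips theorem concludes.
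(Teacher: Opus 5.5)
Your proposal is correct and follows the route the paper intends. Apply Theorem \ref{Liouville subharm on manif with cls rem} at exponent $p'\ge\frac{n}{n-2}$ to get $(L^{q'}\textnormal{-Liou}_{SH})$ for every $1<q\le p$. Convert this to $(L^{q'}\textnormal{-PP})$ via Corollary \ref{cor_equivalence_L2SH_Liouville_L2PP}. Then conclude with Proposition \ref{prop_LpPP_CSG-D+V} at exponent $q$, which is stated with $V\in L^{q}_{loc}$ on the semigroup side, so your bookkeeping concern is resolved in your favor.

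The paper gives no separate proof and presents the theorem as an immediate consequence of these results. Your sketch of the range-density and accretivity argument just fills in the step the paper delegates to Milatovic.
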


The previous result extends \cite[Theorem 10.12.(ii)]{hinz2023removable} to the case of Schrodinger operators, and improves \cite[Proposition 7.1]{PVV}, generalizing the results to the case of incomplete Riemannian manifolds obtained by removing closed, possibly non compact, subsets, and giving better information when $n=2p'$. Moreover, we would like to point out that the statement of Proposition 7.1(a) in \cite{PVV} requires a minor correction. Indeed, as Example \ref{contract but not core} shows, under the assumption that $\textnormal{dim}_{\mathcal{H}}(K)<n-2p'$ one can only ensure that $(-\Delta+V)_{q,min}$ generates a contraction semigroup in $L^q(M)$ for every $1<q\leq p'$, without reaching the case $q=p$ if $p>2$.

It is worth noticing that the assumptions on $K$ in Theorems \ref{thm_main_remove_closed_set} and \ref{thm_main_remove_closed_set2} are essentially sharp, in the sense that as soon as the Hausdorff dimension of $K$ exceeds the indicated threshold, there are counterexamples; see for instance Theorem \ref{thm:negPVV} and \cite{hinz2023removable}. 

A crucial role in the proof of the Theorems \ref{thm_main_remove_closed_set} and \ref{thm_main_remove_closed_set2} is played by the following proposition on the removability of singularities for distributional subharmonic functions. We refer to Proposition \ref{Removability of singularity} for the proof.
\begin{proposition}
    Let $(M,g)$ be a (possibly incomplete) Riemannian manifold of dimension $n\ge3$, $p\in \left[\frac{n}{n-2},+\infty\right)$ and $K\subseteq M$ be a closed set. If
    \begin{equation*}
        \mathfrak{M}_{p}(\widetilde{K})<+\infty \quad\quad \text{for every compact $\widetilde{K}\subseteq K$},
    \end{equation*}
    then
    \begin{equation*}
        \begin{cases}
            \Delta u\ge0 &\text{in $\mathcal{D}'(M\setminus K)$}\\
            0\le u\in L^{p}_{loc}(M)
        \end{cases}
        \Longrightarrow \Delta u\ge0 \quad\text{in $\mathcal{D}'(M)$.}
    \end{equation*}
\end{proposition}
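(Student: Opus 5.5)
The plan is a capacity-type cutoff argument. Given $0\le \varphi\in C^\infty_c(M)$, we must show $\int_M u\,\Delta\varphi\ge 0$. Since the support of $\varphi$ meets $K$ only in a compact set $\widetilde K:=K\cap\operatorname{supp}\varphi$, and $\mathfrak{M}_p(\widetilde K)<+\infty$, it suffices to build cutoffs $\psi_\varepsilon\in C^\infty(M)$ with the following properties:
\begin{itemize}
\item[(i)] $0\le\psi_\varepsilon\le1$;
\item[(ii)] $\psi_\varepsilon\equiv1$ near $\widetilde K$;
\item[(iii)] $\operatorname{supp}\psi_\varepsilon\subseteq U_\varepsilon$, a neighbourhood of $\widetilde K$ with $\operatorname{vol}(U_\varepsilon)\to0$;
\item[(iv)] $\|\nabla\psi_\varepsilon\|_{L^{2p'}}^{2p'}+\|\Delta\psi_\varepsilon\|_{L^{p'}}^{p'}\le C$, uniformly in $\varepsilon$.
\end{itemize}

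The construction comes from the definition of the generalized $(n-2p')$-dimensional measure. We cover $\widetilde K$ by finitely many balls $B_{r_i}(x_i)$ with $r_i<\varepsilon$ and $\sum_i h(r_i)\lesssim \mathfrak{M}_p(\widetilde K)+1$, where $h(r)\sim r^{n-2p'}$ (with the generalized gauge, for instance a logarithmic one when $n=2p'$). On each ball we take a standard bump $\chi_i$ equal to $1$ on $B_{r_i}$, supported in $B_{2r_i}$, with $|\nabla\chi_i|\lesssim r_i^{-1}$ and $|\Delta\chi_i|\lesssim r_i^{-2}$. This uses local charts and the fact that $\widetilde K$ is compact, so the geometry is uniformly controlled. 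We then set $\psi_\varepsilon:=1-\prod_i(1-\chi_i)$. Each ball contributes $\int|\Delta\chi_i|^{p'}\lesssim r_i^{\,n-2p'}$, and the same bound holds for $|\nabla\chi_i|^{2p'}$. The main obstacle is that the cross terms in $\Delta$ of the product, namely products of gradients from overlapping balls, must also be controlled. I would handle them with a Besicovitch/Vitali-type bounded-overlap selection of the cover, so that pointwise $|\nabla\psi_\varepsilon|\lesssim\max_i|\nabla\chi_i|$ and similarly for the Laplacian. Summing then gives (iv). The borderline case $n=2p'$ requires a logarithmic cutoff, with the gauge $h$ designed to match it.

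Next we write $\varphi=\varphi\psi_\varepsilon+\varphi(1-\psi_\varepsilon)$. Since $\varphi(1-\psi_\varepsilon)\in C^\infty_c(M\setminus K)$ is nonnegative, the hypothesis gives
\[
\int u\,\Delta\bigl(\varphi(1-\psi_\varepsilon)\bigr)\ge0 .
\]
It therefore suffices to show that $\int u\,\Delta(\varphi\psi_\varepsilon)\to0$, or at least that its $\liminf$ is $\ge0$. Expanding,
\[
\Delta(\varphi\psi_\varepsilon)=\psi_\varepsilon\Delta\varphi+2\langle\nabla\varphi,\nabla\psi_\varepsilon\rangle+\varphi\Delta\psi_\varepsilon .
\]
By Hölder and (iii), the first two terms are bounded by $\|u\|_{L^p(U_\varepsilon)}$ times uniformly bounded quantities, using $\|\nabla\psi_\varepsilon\|_{L^{p'}}\le \|\nabla\psi_\varepsilon\|_{L^{2p'}}\operatorname{vol}(U_\varepsilon)^{1/(2p')}$. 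They tend to $0$ by absolute continuity of $\int|u|^p$.

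For the last term, Hölder only gives the bound $\|u\|_{L^p(U_\varepsilon)}\|\Delta\psi_\varepsilon\|_{L^{p'}}$, which tends to $0$ since the second factor is bounded and the first goes to zero. This is exactly where finiteness of $\mathfrak{M}_p$, rather than vanishing, suffices. Hence $\int u\Delta\varphi\ge0$.

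The restriction $p\ge n/(n-2)$, i.e. $2p'\le n$, enters by making the exponent $n-2p'\ge0$, so that the cover estimate is meaningful. For $2p'>n$ the set would in any case be empty or a single point with nonzero capacity. Finally, the step where we used $u\in L^p_{loc}(M)$ up to $K$ is the only place the assumption on $u$ near $K$ enters, and nonnegativity of $u$ is not even needed with this route, though I would keep it in reserve in case the overlap control only yields a one-sided bound on $\Delta\psi_\varepsilon$.
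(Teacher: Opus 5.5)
Your overall scheme is the paper's: split $\varphi=\varphi\psi_\varepsilon+\varphi(1-\psi_\varepsilon)$, use the hypothesis on the second piece, and use H\"older with $\|u\|_{L^p(U_\varepsilon)}\to0$ on the first. So everything rests on producing cutoffs with (i)--(iv). In the critical case $n=2p'$ you do not produce them, and this is a genuine gap.

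\textbf{Critical case.} A standard bump costs $\int|\Delta\chi_i|^{p'}\approx r_i^{n-2p'}=1$ per ball, while the number of balls is in general unbounded as $\varepsilon\to0$. So you need logarithmic profiles. These cost $\lesssim h_{p'}(r_i)$ only if the transition runs from $r_i$ to some $R_i$ with $\ln(R_i/r_i)\gtrsim\ln(1/r_i)$, i.e.\ $R_i/r_i\to\infty$. With such supports:
no Vitali or Besicovitch selection gives bounded overlap;
there is no geometric decay making $\sum_i|\nabla\chi_i|\lesssim\max_i|\nabla\chi_i|$;
nothing controls $\|\nabla\psi_\varepsilon\|_{L^{2p'}}$, or the cross terms $\langle\nabla\chi_i,\nabla\chi_j\rangle$ in $\Delta\psi_\varepsilon$, by $\sum_i h_{p'}(r_i)$.
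For $N\approx(\ln\frac1r)^{p'-1}$ balls of radius $r$, even the diagonal terms estimated by the triangle inequality give $\sum_i\|\Delta\chi_i\|_{L^{p'}}\approx(\ln\frac1r)^{(p'-1)^2/p'}\to\infty$.

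This cannot be fixed cheaply. Condition (iv) with $|\psi_\varepsilon|\le1$ bounds $\psi_\varepsilon$ in $W^{2,p'}$ (Calder\'on--Zygmund in a chart). Then $\operatorname{vol}(U_\varepsilon)\to0$ forces $\psi_\varepsilon\rightharpoonup0$, and Mazur's lemma yields admissible cutoffs with $W^{2,p'}$-norm tending to $0$. So you would in effect be proving that $\mathcal H^{h_{p'}}(\widetilde K)<\infty$ implies zero $(2,p')$-capacity, which is a genuine theorem of nonlinear potential theory.

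The paper imports exactly this:
\cite[Theorem 5.1.9]{AH12} gives $C'_{2,p'}(K)=0$;
Lemma~\ref{equivalence capacities} turns Bessel-capacity competitors $f$ into smooth cutoffs $0\le\varphi\le1$, $\varphi\equiv1$ near $K$, with arbitrarily small $W^{2,p'}$-norm, via $T((1+\varepsilon)G_2*f)$, truncation at infinity and mollification;
Lemma~\ref{Lem_h_p_Hausdorff_cut-off} multiplies by a fixed localizer $\eta_\varepsilon$, and the small norm absorbs $\|\eta_\varepsilon\|_{W^{2,\infty}}$, so the support shrinks.

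\textbf{Subcritical case.} Here your construction re-derives Harvey--Polking's lemma (which the paper quotes as Lemma~\ref{Lemma cutoff Hausdorf}) and it works, but the mechanism needs correcting. Vitali or Besicovitch selections give disjoint (resp.\ boundedly overlapping) balls $B_{r_i}$, not bounded overlap of the supports $B_{2r_i}$. For instance, the pairwise disjoint balls $B_{8^{-j}}(y+\tfrac32 8^{-j}e)$, $j\ge1$, all have doubles containing $y$.

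What does hold, and suffices, is the pointwise bound you state. Take a Vitali subfamily with disjoint cores $B_{r_i}(x_i)$, and let $\chi_i\equiv1$ on $B_{3r_i}(x_i)$ with support in $B_{6r_i}(x_i)$. By volume counting, for each dyadic range of radii only boundedly many $B_{6r_i}(x_i)$ contain a given point. Summing a geometric series then gives $\sum_i r_i^{-a}\mathbf 1_{B_{6r_i}(x_i)}\le C\max_i r_i^{-a}\mathbf 1_{B_{6r_i}(x_i)}$ for $a>0$. With $a=1,2$ this controls $|\nabla\psi_\varepsilon|$ and every term of $\Delta\psi_\varepsilon$, cross terms included. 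It gives (iv) with a bound $\lesssim\mathcal H^{n-2p'}(\widetilde K)+1$.
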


The paper is organized as follows. In Section \ref{Sec_basics}, we introduce the $L^p$-Liouville properties for (non-negative $\lambda$-sub)harmonic functions, and we recall some basic notions from potential theory and operator theory, providing the necessary background for the development of the paper. In Section \ref{Sec_LpPP_ESA_Core_CSG}, we investigate the connection between the $L^p$-positivity preserving property and certain functional analytic properties associated with Schrodinger operators: namely, we study the contraction property for the semigroup generated by the $p$-minimal extension of the operator and the density of $C^{\infty}_{c}(M)$ in the domain of the $p$-maximal realization of the operator. When $p=2$, these two properties are equivalent to the operator being essentially self-adjoint. In Section \ref{Sec_class_Lp_Liou_Monot}, we study the relationships between the various $L^p$-Liouville properties and the $L^p$-positivity preservation property, establishing a precise hierarchy among them and clarifying their connections with the functional analytic properties introduced in the previous sections. A monotonicity property for the $L^p$-Liouville property for non-negative subharmonic functions with respect to $p\in[1,+\infty)$  is also discussed. Finally, in Section \ref{Sec_removed sets} we investigate the validity of the $L^p$-Liouville property for non negative subharmonic functions on incomplete Riemannian manifolds obtained by removing a closed, not necessarily compact, subset from a complete manifold. We show that the property remains valid when the removed set satisfies suitable conditions on its (generalized) Hausdorff measure, and discuss the consequences for realizations of Schrodinger operators with locally integrable potential bounded from below.


\section{Basics on potential and operator theory}\label{Sec_basics}
Let $(M, g)$ be a connected, smooth, possibly incomplete Riemannian manifold without boundary, equipped with the Riemannian volume measure $\textnormal{dv}$ and the associated non-positive definite Laplace-Beltrami operator $\Delta$.  Unless otherwise specified, we’ll always assume this setting.

\subsection{Potential theory}\ \\
The scope of the present subsection is to introduce the notions of Liouville-type properties and positivity preservation. To this aim, we start recalling
some basic facts about distributional (sub/super)solutions to elliptic partial differential equations.

\subsubsection{Preliminaries on distributional solutions} Let us fix $V$ and $f$ in $L^1_{loc}(M)$.

\begin{definition}[Distributional (sub/super)solution]
    We say that $u\in L^{1}_{loc}(M)$ is a \textnormal{distributional supersolution} to $(-\Delta+V)u= f$ (or a distributional solution to  $(-\Delta+V)u\ge f$)  if $Vu\in L^1_{loc}(M)$ and
    \begin{equation}\label{distrib. supers}
        \int_{M}u(-\Delta+V)\varphi \dvol \ge \int_{M}f\varphi \dvol\quad \quad \forall \varphi\in C^{\infty}_{c}(M),\ \varphi\ge0.
    \end{equation}
    Analogously, we say that $u$ is a \textnormal{distributional subsolution} to $(-\Delta+V)u= f$ (or a distributional solution to $(-\Delta+V)u\le f$) if $Vu\in L^1_{loc}(M)$ and $u$ satisfies \eqref{distrib. supers} with the reversed inequality. Lastly, $u$ is said to be a \textnormal{distributional solution} to $(-\Delta+V)u=f$ if $u$ is both a supersolution and a subsolution.
\end{definition}

In the special case $f\equiv 0$, we say that $u\in L^1_{loc}(M)$ is a $V$\textit{-superharmonic distribution} (resp. $V$\textit{-subharmonic  distribution} or $V$\textit{-harmonic distribution}) if it is a distributional supersolution (resp. subsolution or solution) to $(-\Delta+V)u= 0$.

If the function $u$ is assumed to be more regular, for instance $u\in W^{1,1}_{loc}(M)$, we say that $u$ is a \textit{weak supersolution} to $(-\Delta+V)u= f\in L^{1}_{loc}(M)$ if 
    \begin{equation}\label{weak sol}
        \int_{M}\left[g(\nabla u,\nabla\varphi)+V u\varphi\right]\dvol\ge\int_{M}f\varphi \dvol \quad \quad \forall\varphi\in C^{\infty}_{c}(M),\ \varphi\ge 0.
    \end{equation}
Similarly, $u$ is a \textit{weak subsolution} to the same equation if it satisfies \eqref{weak sol} with the reversed inequality. Finally, it is a weak solution if it is both a subsolution and a supersolution. 

\begin{remark}
    Note that, by a density argument, the non-negative test functions in \eqref{weak sol} can also be taken in $W^{1,\infty}_{c}(M)$. Moreover, if $V\in L_{loc}^\infty$ and $u\in W^{1,2}_{loc}(M)$ then the test functions can be taken in $W^{1,2}_{c}(M)\cap L^\infty(M)$.
\end{remark}

Under suitable assumptions on the potential $V$, several tools can be employed to address the lack of regularity of the distributional solutions, which are initially assumed to belong to $L^{1}_{loc}$. A first key result is a local monotone approximation theorem through smooth $V$-subharmonic functions.

\begin{theorem}[\cite{bisterzo2023positivity, PVV}]\label{monotone approx thm}
    Let $(M,g)$ be a (possibly incomplete) Riemannian manifold, $0\le V\in C^{\infty}(M)$ and let $u\in L^{1}_{loc}(M)$ be a $V$-subharmonic distribution. Then, for every precompact open set $\Omega\subseteq M$ with smooth boundary there exists a sequence $\{u_{k}\}_{k\ge1}\subseteq C^{\infty}(\overline{\Omega})$ satisfying the following properties:
    \begin{enumerate}
        \item $u\le u_{k+1}\le u_{k}$ for every $k\ge1$;
        \item $u_{k}\to u$ almost everywhere in $\Omega$ as $k\to +\infty$;
        \item $u_{k}$ is a $V$-subharmonic function for every $k\ge1$;
        \item $u_k \to u$ in $L^1(\Omega)$  as $k\to +\infty$.
    \end{enumerate}
    Moreover, if $u\geq 0$, $u\not \equiv 0$, then the sequence $u_{k}$ can be chosen to be strictly positive. 
\end{theorem}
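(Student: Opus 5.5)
The plan is to reduce the statement to a Riesz-type decomposition on a slightly larger domain, and then regularise only the potential part, using the Dirichlet semigroup of $L:=-\Delta+V$.

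\emph{Step 1 (decomposition).} Fix a precompact open set $\Omega'$ with smooth boundary and $\Omega\Subset\Omega'$. Since $u$ is $V$-subharmonic, the distribution $\nu:=-(-\Delta+V)u=\Delta u-Vu$ is nonnegative, hence a nonnegative Radon measure on $\Omega'$; shrinking $\Omega'$ slightly if necessary, it has finite mass. Because $V\ge0$ is smooth, the Dirichlet Green function $G_{\Omega'}$ of $L$ on $\Omega'$ exists and is positive. The potential $w:=G_{\Omega'}[\nu]$ is therefore a nonnegative $L^1(\Omega')$ distributional solution of $Lw=\nu$. Set $h:=u+w$. Then $Lh=0$ in $\mathcal D'(\Omega')$, so by elliptic regularity (Weyl's lemma for $L$) $h$ is smooth in $\Omega'$. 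This gives
\begin{equation*}
u=h-w \quad \text{on } \Omega',
\end{equation*}
with $h\in C^\infty(\Omega')$ $V$-harmonic and $w\ge0$ $V$-superharmonic.

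\emph{Step 2 (monotone regularisation of $w$).} Let $P_t=e^{-tL_{\Omega'}}$ be the Dirichlet semigroup of $L$ on $\Omega'$. It is positivity preserving, strongly continuous on $L^1(\Omega')$, and $P_tw$ is smooth in $\Omega'$ for every $t>0$. Formally,
\begin{equation*}
\frac{d}{dt}P_tw=-P_t(Lw)=-P_t\nu\le0
\qquad\text{and}\qquad
L(P_tw)=P_t\nu\ge0 .
\end{equation*}
Hence $t\mapsto P_tw$ is nonincreasing, each $P_tw$ is a smooth $V$-superharmonic function, $P_tw\le w$, and $P_tw\to w$ in $L^1(\Omega')$ as $t\downarrow0$. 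Passing to a subsequence, the convergence also holds a.e.; by monotonicity the whole family converges a.e. Choose $t_k\downarrow0$ and put
\begin{equation*}
u_k:=h-P_{t_k}w .
\end{equation*}
Restricted to $\Omega$, these functions are $C^\infty(\overline\Omega)$ because $\overline\Omega\subset\Omega'$. Each $u_k$ is $V$-subharmonic, the sequence is nonincreasing with $u_k\ge u$, and $u_k\to u$ both a.e. and in $L^1(\Omega)$. This proves properties (1)--(4).

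\emph{Step 3 (strict positivity).} Assume $u\ge0$ and $u\not\equiv0$. Adding constants is not allowed, since $L(c)=Vc\ge0$ would spoil $V$-subharmonicity. Instead, let $\phi$ solve $L\phi=0$ in $\Omega'$ with $\phi=1$ on $\partial\Omega'$. By the weak and strong minimum principles, which hold because $V\ge0$, we have $\phi>0$ on $\overline\Omega$. Replace $u_k$ by $u_k+2^{-k}\phi$. This sequence is still smooth, $V$-subharmonic and nonincreasing, it still converges to $u$ a.e. and in $L^1$, and it satisfies $u_k+2^{-k}\phi\ge u+2^{-k}\phi>0$ on $\overline\Omega$.

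\emph{Main obstacle.} The delicate point is making Step 2 rigorous when $w$ is merely $L^1$ and $\nu$ is only a measure. One has to justify the identity $P_tw-w=-\int_0^tP_s\nu\,ds$ and show that it yields a.e. monotonicity. I would do this by testing against $P_t\varphi$ for $0\le\varphi\in C^\infty_c(\Omega')$ and using the symmetry of $P_t$. The smallness of $P_s\varphi$ near $\partial\Omega'$ (it lies in the Dirichlet domain) is what makes the distributional inequality $Lw=\nu$ usable there, via an approximation of $P_s\varphi$ by compactly supported test functions. The identification $h=u+w$ as smooth in Step 1 is then standard.
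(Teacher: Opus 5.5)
The paper does not prove this statement. Items (1)--(4) are imported from \cite{PVV, bisterzo2023positivity}, and the strict-positivity clause is only asserted to be extractable from those proofs, so there is no in-text argument to compare with. Judged on its own, your proof is correct.

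Working on a larger smooth $\Omega'$ with $\overline\Omega\subset\Omega'$ is what gives smoothness of $u_k$ up to $\partial\Omega$. Two small remarks on Step 1 and Step 3:
\begin{itemize}
\item No shrinking of $\Omega'$ is needed in Step 1: $\nu(\Omega')\le\nu(\overline{\Omega'})<\infty$ because $\overline{\Omega'}$ is compact in $M$.
\item Step 3 adds $2^{-k}\phi$ with $(-\Delta+V)\phi=0$ and $\phi|_{\partial\Omega'}=1$. This gives a self-contained proof of the ``moreover'' part, which the paper leaves unproved. It does not even use $u\not\equiv 0$.
\end{itemize}

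The one point to correct is the mechanism you propose for the delicate Step 2. Testing against $\psi=P_s\varphi$ and approximating $\psi$ by compactly supported functions cannot succeed if you only use $Lw=\nu$ in $\mathcal{D}'(\Omega')$. Indeed, $w+\phi$ satisfies the same distributional equation. Yet $\int_{\Omega'}(w+\phi)L\psi$ differs from $\int\psi\,d\nu$ by the boundary flux of $\psi$ across $\partial\Omega'$, which is nonzero by Hopf's lemma. The smallness of $\psi$ near $\partial\Omega'$ is therefore not enough; what makes the identity true is that $w$ is the Green potential.

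The clean route is through kernels. By Chapman--Kolmogorov and Tonelli,
$\int_{\Omega'}p_t(x,z)G_{\Omega'}(z,y)\,dz=\int_t^\infty p_s(x,y)\,ds=:G_t(x,y)$,
so $P_tw(x)=\int G_t(x,y)\,d\nu(y)$. Two facts then finish the argument:
\begin{itemize}
\item $G_t\uparrow G_{\Omega'}$ pointwise as $t\downarrow 0$.
\item $L_xG_t(\cdot,y)=p_t(\cdot,y)$, using $p_s\to 0$ as $s\to\infty$ because the Dirichlet spectrum of $L$ on $\Omega'$ is bounded away from $0$ when $V\ge 0$.
\end{itemize}
Together these give at once the monotonicity $P_tw\uparrow w$ everywhere, the identity $L(P_tw)=\int p_t(\cdot,y)\,d\nu(y)\ge 0$, and smoothness.

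Equivalently, since $\psi=P_s\varphi\in C^\infty(\overline{\Omega'})$ vanishes on $\partial\Omega'$, you have $\psi=G_{\Omega'}[L\psi]$. Then $\int w\,L\psi=\int\psi\,d\nu$ follows directly from Fubini and the symmetry of $G_{\Omega'}$, with no approximation needed.
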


\begin{remark}
    The fact that if $u\ge0$ then the sequence $u_{k}$ can be chosen to be strictly positive is not explicitly claimed in  \cite{bisterzo2023positivity} and \cite{PVV} but can be deduced from the proof of the theorem.
\end{remark}

\noindent The second result states that we can improve the $L^{1}_{loc}$-regularity of a subharmonic distribution.

\begin{proposition}[Theorem 3.1 in \cite{PVV}]\label{Reg_sub_dist}
    Let $(M,g)$ be a (possibly incomplete) Riemannian manifold and let $0\le u\in L^{1}_{loc}(M)$ be a subharmonic distribution. Then $u^{\frac{s}{2}}\in L^{\infty}_{loc}(M)\cap W^{1,2}_{loc}(M)$ for every $s\in(1,+\infty)$. 
\end{proposition}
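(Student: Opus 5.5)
The plan is to reduce everything to smooth subharmonic functions via the monotone approximation Theorem \ref{monotone approx thm} with $V\equiv 0$. Then I would prove uniform local estimates for the approximants and pass to the limit. Both claims are local, so it suffices to fix a precompact open set $\Omega'\Subset M$. I then choose a precompact smooth open set $\Omega$ with $\Omega'\Subset\Omega$. If $u\equiv 0$ there is nothing to prove. Otherwise Theorem \ref{monotone approx thm} gives strictly positive smooth subharmonic functions $u_k$ on $\overline{\Omega}$ with $u\le u_{k+1}\le u_k$, $u_k\to u$ a.e. and $u_k\to u$ in $L^1(\Omega)$. In particular $\|u_k\|_{L^1(\Omega)}$ is uniformly bounded, say by $\|u_1\|_{L^1(\Omega)}$.

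\emph{Step 1 ($L^\infty_{loc}$ bound).} For a nonnegative smooth subharmonic function on a small geodesic (or coordinate) ball $B_{2r}(x)\subset\Omega$, I would invoke the local mean value inequality
\begin{equation*}
\sup_{B_r(x)} u_k\le C\int_{B_{2r}(x)} u_k \dvol .
\end{equation*}
This follows from Moser iteration for nonnegative subsolutions of a uniformly elliptic divergence-form operator in a chart, in the version with the $L^1$ norm on the right-hand side (obtained from the $L^2$ version by the standard iteration/interpolation trick). The constant $C$ depends only on the metric coefficients on $\overline\Omega$. Covering $\overline{\Omega'}$ by finitely many such balls gives $\sup_{\Omega'}u_k\le C\|u_1\|_{L^1(\Omega)}$ uniformly in $k$. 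Since $0\le u\le u_k$, this yields $u\in L^\infty(\Omega')$, hence $u^{s/2}\in L^\infty_{loc}$ for all $s>1$. I expect this step to be the main obstacle, because the constant must be uniform in $k$ and must depend only on geometry.

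\emph{Step 2 (Caccioppoli for powers).} I fix $s>1$ and a cutoff $\varphi\in C^\infty_c(\Omega)$ with $\varphi=1$ on $\Omega'$. Since $u_k>0$ is smooth, $\varphi^2u_k^{s-1}$ is an admissible nonnegative test function, and subharmonicity gives $\int g(\nabla u_k,\nabla(\varphi^2u_k^{s-1}))\le 0$. Expanding, I obtain
\begin{equation*}
(s-1)\int\varphi^2u_k^{s-2}|\nabla u_k|^2\le 2\int \varphi\, u_k^{s-1}|\nabla u_k||\nabla\varphi| .
\end{equation*}
Using $u_k^{s-2}|\nabla u_k|^2=\tfrac{4}{s^2}|\nabla u_k^{s/2}|^2$ and Young's inequality, this becomes
\begin{equation*}
\int\varphi^2|\nabla u_k^{s/2}|^2\le C(s)\int u_k^{s}|\nabla\varphi|^2 .
\end{equation*}
Strict positivity of $u_k$ is what makes this legitimate when $1<s<2$. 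The right-hand side is bounded uniformly in $k$ by Step 1, applied on a slightly larger set containing $\operatorname{supp}\varphi$.

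\emph{Step 3 (passage to the limit).} By Steps 1 and 2, the functions $u_k^{s/2}$ are bounded in $W^{1,2}(\Omega')$. Up to a subsequence they converge weakly in $W^{1,2}(\Omega')$. They also converge a.e. to $u^{s/2}$, and by dominated convergence (using the uniform $L^\infty$ bound) they converge in $L^2(\Omega')$. Hence the weak limit is $u^{s/2}$, which therefore lies in $W^{1,2}(\Omega')$. Since $\Omega'$ was arbitrary, $u^{s/2}\in L^\infty_{loc}(M)\cap W^{1,2}_{loc}(M)$.
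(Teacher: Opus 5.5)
Your proof is correct and follows essentially the argument behind Theorem 3.1 in \cite{PVV}: monotone approximation by smooth subharmonic functions, a Caccioppoli estimate obtained by testing with $\varphi^2u_k^{s-1}$, and weak compactness in $W^{1,2}$ to pass to the limit. The step you flagged as the main obstacle is actually immediate, because $u_1\in C^\infty(\overline{\Omega})$ and monotonicity give $0\le u\le u_k\le u_1\le\max_{\overline{\Omega}}u_1$ for every $k$, so no Moser iteration is needed (your mean value argument is valid, just unnecessary).
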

\noindent Finally, we recall the Brezis-Kato inequality, which is contained in the following
\begin{proposition}[Theorem 4.1 in \cite{PVV}]\label{Thm_Brezis-Kato}
    Let $(M,g)$ be a (possibly incomplete) Riemannian manifold. If $f\in L^{1}_{loc}(M)$ and $u\in L^{1}_{loc}(M)$ satisfies $\Delta u\ge f$ in the sense of distributions, then $u_{+}$ is a distributional solution to $\Delta u_{+}\ge1_{u>0}f$. 
\end{proposition}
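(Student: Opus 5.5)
The plan is a Kato-type argument: reduce to smooth functions, apply a convex truncation, and pass to the limit. The property is local, so it suffices to prove, for every precompact open $\Omega\Subset M$ with smooth boundary, that $\int_\Omega u_+\Delta\varphi\dvol\ge\int_\Omega 1_{\{u>0\}}f\varphi\dvol$ for all $0\le\varphi\in C^\infty_c(\Omega)$. Fix such an $\Omega$ and choose a family of convex $C^2$ functions $\phi_\varepsilon:\mathbb{R}\to[0,+\infty)$ with the following properties:
\begin{itemize}
\item $\phi_\varepsilon\equiv0$ on $(-\infty,0]$ and $0\le\phi_\varepsilon'\le1$;
\item $\phi_\varepsilon'(t)>0$ for $t>0$, and $\phi_\varepsilon'(t)\to1_{\{t>0\}}$ pointwise;
\item $\phi_\varepsilon(t)\to t_+$ uniformly as $\varepsilon\to0$.
\end{itemize}
For instance, take $\phi_\varepsilon(t)=\sqrt{t^2+\varepsilon^2}-\varepsilon$ for $t>0$ and $\phi_\varepsilon(t)=0$ otherwise. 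Its second derivative is $\varepsilon^{-1}$ at $0^+$, so one should smooth the corner slightly. If $v$ is smooth and $\Delta v\ge g$ pointwise, then
\[
\Delta\phi_\varepsilon(v)=\phi_\varepsilon'(v)\Delta v+\phi_\varepsilon''(v)|\nabla v|^2\ge\phi_\varepsilon'(v)\,g .
\]

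The main obstacle is producing smooth approximants $v$ with $\Delta v\ge g$, where $g\to f$. Convolution does not commute with $\Delta$ on a manifold, so I will split off $f$ by solving a Dirichlet problem. Take smooth $f_j\to f$ in $L^1(\Omega)$, and let $w_j, w$ solve $\Delta w_j=f_j$ and $\Delta w=f$ in $\Omega$ with zero boundary data, both given by the Dirichlet Green operator $G_\Omega$. Since the Green kernel of a smooth bounded domain is integrable uniformly in each variable, $G_\Omega$ is bounded on $L^1(\Omega)$, hence $w_j\to w$ in $L^1(\Omega)$; moreover each $w_j$ is smooth in $\Omega$. Then $h:=u-w\in L^1_{loc}(\Omega)$ satisfies $\Delta h\ge0$ in $\mathcal D'(\Omega)$. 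By Theorem \ref{monotone approx thm} (with $V=0$, applied on a slightly smaller smooth domain containing $\operatorname{supp}\varphi$), there exist smooth subharmonic $h_k\downarrow h$ a.e. and in $L^1$. Set $v_{k,j}:=h_k+w_j$. These are smooth, satisfy $\Delta v_{k,j}\ge f_j$, and $v_{k,j}\to u$ in $L^1$ and, along a diagonal subsequence, a.e.

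For this subsequence and a test function $0\le\varphi\in C^\infty_c(\Omega)$, the pointwise inequality gives
\[
\int\phi_\varepsilon(v_{k,j})\Delta\varphi\dvol\ \ge\ \int\phi_\varepsilon'(v_{k,j})f_j\varphi\dvol .
\]
Pass to the limit in $k,j$ first:
\begin{itemize}
\item on the left, $\phi_\varepsilon$ is $1$-Lipschitz, so $\phi_\varepsilon(v_{k,j})\to\phi_\varepsilon(u)$ in $L^1$;
\item on the right, $\phi_\varepsilon'$ is continuous and bounded by $1$, so $\phi_\varepsilon'(v_{k,j})\to\phi_\varepsilon'(u)$ a.e.; together with $f_j\to f$ in $L^1$ and dominated convergence (passing to a further subsequence if needed), we get convergence to $\int\phi_\varepsilon'(u)f\varphi$.
\end{itemize}
Then let $\varepsilon\to0$. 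We have $\phi_\varepsilon(u)\to u_+$ in $L^1_{loc}$, and $\phi_\varepsilon'(u)f\varphi\to1_{\{u>0\}}f\varphi$ a.e. with domination by $|f|\varphi$. This yields $\int u_+\Delta\varphi\ge\int1_{\{u>0\}}f\varphi$, as claimed. A final partition-of-unity/exhaustion argument removes the restriction to $\Omega$.
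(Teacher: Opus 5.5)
Your proof is correct. The paper contains no proof of this statement; it is quoted from \cite{PVV}, so there is no in-text argument to compare against. What you give is a complete derivation from Theorem~\ref{monotone approx thm}. That theorem is recalled earlier in the paper and does not rely on the Brezis--Kato inequality, so the argument is not circular.

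Your scheme is:
\begin{enumerate}
\item localize to a smooth precompact domain;
\item subtract the Dirichlet Green potential $w=G_\Omega f$;
\item approximate the subharmonic remainder $u-w$ from above by smooth subharmonic $h_k$;
\item run the smooth Kato computation on $\phi_\varepsilon(h_k+w_j)$;
\item let first the approximation indices and then $\varepsilon$ go to their limits.
\end{enumerate}
This is the natural manifold replacement for the Euclidean mollification proof, where $\rho_\delta*u$ would satisfy $\Delta(\rho_\delta*u)\ge\rho_\delta*f$ directly.

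Two points carry the weight, and you handle both correctly.
\begin{itemize}
\item The $L^1$-boundedness of $G_\Omega$ gives $w_j\to w$ in $L^1(\Omega)$, and hence $\Delta w=f$ in $\mathcal{D}'(\Omega)$. A clean justification is $\sup_y\int_\Omega G_\Omega(x,y)\,\mathrm{dv}(x)=\sup_\Omega E$, where $E$ is the torsion function of $\Omega$.
\item You use a \emph{continuous} $\phi_\varepsilon'$ rather than $1_{\{v>0\}}$. Along your approximants, $1_{\{v_m>0\}}$ need not converge a.e.\ to $1_{\{u>0\}}$ on $\{u=0\}$, where $f$ may have either sign. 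By contrast, $\phi_\varepsilon'(v_m)\to\phi_\varepsilon'(u)$ a.e.\ along your diagonal sequence, and $\phi_\varepsilon'(u)\to 1_{\{u>0\}}$ pointwise as $\varepsilon\to0$.
\end{itemize}
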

\noindent In particular, the positive part of a $V$-subharmonic distribution is still a $V$-subharmonic distribution.

\subsubsection{Liouville properties and positivity preservation} Let us fix $\lambda\in \rr{}$.

\begin{definition}[Liouville property]
     Given $\mathcal{S}\subseteq L^1_{loc}(M)$, we say that the $\mathcal{S}$-\textnormal{Liouville property for $\lambda$-subharmonic functions} holds if
    \begin{align}
        \begin{cases}
            (-\Delta+\lambda) u\leq 0 & in\ \mathcal{D}'(M)\\
            u\geq 0 & a.e.\ in\ M\\
            u\in \mathcal{S}
        \end{cases} \quad \Rightarrow \quad u\equiv const.\ in\ M.
    \end{align}
    Similarly, we say that the $\mathcal{S}$-\textnormal{Liouville property for $\lambda$-harmonic functions} holds if
        \begin{align}
        \begin{cases}
            (-\Delta+\lambda) u= 0 & in\ \mathcal{D}'(M)\\
            u\in \mathcal{S} 
        \end{cases} \quad \Rightarrow \quad u\equiv const.\ in\ M.
    \end{align}
\end{definition}

\noindent In what follows, given $\mathcal{S}\subseteq L^1_{loc}(M)$ and $\lambda \in \rr{}$, we use
\begin{itemize}
    \item ($\mathcal{S}$-Liou$_{SH}$) to mean the $\mathcal{S}$-Liouville property for subharmonic functions;
    \item ($\mathcal{S}$-Liou$_{\lambda-SH}$) to mean the $\mathcal{S}$-Liouville property for $\lambda$-subharmonic functions;
    \item ($\mathcal{S}$-Liou$_{H}$) to mean the $\mathcal{S}$-Liouville property for harmonic functions;
    \item ($\mathcal{S}$-Liou$_{\lambda - H}$) to mean the $\mathcal{S}$-Liouville property for $\lambda$-harmonic functions.
\end{itemize}
\medskip

A second property we are going to study is the so-called \textit{positivity preservation}; see \cite[Section XIV.5]{guneysu2017covariant}.

\begin{definition}[Positivity preservation]
   Given $\mathcal{S}\subseteq L^1_{loc}(M)$, we say that the $\mathcal{S}$-\textnormal{positivity preservation for $-\Delta+\lambda$} holds if
    \begin{align}
        \begin{cases}
            (-\Delta+\lambda)u\geq 0 & in\ \mathcal{D}'(M)\\ u\in \mathcal{S}
        \end{cases}\quad \Rightarrow \quad u\geq 0\ a.e.
    \end{align}
\end{definition}

\noindent Given $\mathcal{S}\subseteq L^1_{loc}(M)$ and $\lambda \in \rr{}$, we use
\begin{itemize}
    \item $(\mathcal{S}$-$\textnormal{PP}_\lambda$) to mean the $\mathcal{S}$-positivity preservation for $-\Delta+\lambda$;
    \item $(\mathcal{S}$-PP) to mean $(\mathcal{S}$-$\textnormal{PP}_1$).
\end{itemize}

\begin{remark}\label{rmk_resolvent_L^2-PP}
    We observe that the terminology \textit{positivity preservation} comes from the fact that, in the case $\mathcal{S}=H^2(M):=\{u\in L^2(M)\ :\ \Delta u \in L^2(M)\}$, the ($H^2(M)$-PP) (which implies the essential self-adjointness of the Laplacian, and hence the fact that $(-\Delta, H^2(M))$ coincides with the Friedrichs Laplacian) is equivalent to the property of the resolvent $R:L^2(M)\to H^2(M)$ of $\mathcal{Q}:=(-\Delta+1)$ to preserve positivity (i.e. the order). Indeed, assume that $\mathcal{Q}$ is $H^2(M)$-positivity preserving and fix $0\leq f\in L^2(M)$. Then, defining $u=Rf$ we have
    \begin{align}
        \mathcal{Q} u=\mathcal{Q}Rf=f\geq 0,
    \end{align}
    implying $Rf=u\geq 0$ by the $H^2(M)$-positivity preservation of $\mathcal{Q}$. Hence $R$ preserves the positivity. Conversely, if $R$ preserves the positivity, then for any fixed $u\in H^2(M)$ so that $\mathcal{Q}u\geq 0$ one has $u=R\mathcal{Q}u\geq 0$. As a consequence, if the $H^2(M)$-positivity preservation for $\mathcal{Q}$ holds, then for every $f,h\in L^2(M)$
    \begin{align}
        f\geq h \quad \Rightarrow \quad Rf\geq Rh.
    \end{align}
\end{remark}

\subsection{Operator theory}\ \\
We proceed by collecting some essential concepts and results from operator theory that will prove useful in exploring the $L^2$-Liouville properties on general Riemannian manifolds.

\subsubsection{Basics on semigroups} We begin with the following definition.

\begin{definition}[Strongly continuous and contraction semigroup]
    A family of bounded operators $\{P_t\}_{t\geq 0}$, where $P_t:L^p(M)\to L^p(M)$ for $p\in [1,+\infty)$, is a \textnormal{strongly continuous semigroup} if
    \begin{itemize}
        \item $P_0=Id$;
        \item $P_sP_t=P_{s+t}$ for all $s,t\geq 0$;
        \item $\forall u \in L^p(M)$ the map $t\mapsto P_tu$ is continuous in $L^p(M)$.
    \end{itemize}
    A strongly continuous semigroup $\{P_t\}_{t\ge0}$ is said to be a \textnormal{contraction semigroup} if
    \begin{align}
        \norm{P_t}{\mathcal{L}(L^p)}\leq 1 \quad \forall t\geq 0,
    \end{align}
    where $\norm{\cdot}{\mathcal{L}(L^p)}$ denotes the operator norm.
\end{definition}

\noindent One of the main properties of strongly continuous semigroups is that they are generated by closed operators (in the following sense).

\begin{proposition}[P. 237 in \cite{RS2}]
    Let $\{P_t\}_{t\ge0}$ be a strongly continuous semigroup acting on $L^p(M)$ and set $T_t:=t^{-1} (Id-P_t)$ and $T:=\lim_{t\to 0} T_t$, defined over
    \begin{align}
        D(T):=\{u\in L^p(M) : \lim_{t\to 0}T_t u\ \text{exists in } L^p(M)\}.
    \end{align}
    Then, $T$ is closed and densely defined. The operator $T$ is called the infinitesimal generator of $P_t$ and we say that $T$ generates $P_t$.
\end{proposition}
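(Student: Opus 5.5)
The statement is the Reed--Simon fact that a strongly continuous semigroup has a closed, densely defined generator. The plan is the classical one: first establish local boundedness of the semigroup, then use time averages of orbits both to prove density of $D(T)$ and to obtain an integral identity from which closedness follows.

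\emph{Step 1: uniform bound on bounded time intervals.} By strong continuity, for each $u$ the orbit $\{P_tu\}_{t\in[0,1]}$ is bounded in $L^p(M)$. The uniform boundedness principle then gives $\sup_{t\in[0,1]}\norm{P_t}{\mathcal{L}(L^p)}=:C<\infty$. Using the semigroup law, $\norm{P_t}{\mathcal{L}(L^p)}\le C^{\lfloor t\rfloor+1}$ for every $t\geq 0$. This guarantees that the Bochner (or Riemann) integrals $\int_0^s P_r u\,dr$ are well defined for each $u\in L^p(M)$.

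\emph{Step 2: density.} For $u\in L^p(M)$ and $s>0$, set $u_s:=s^{-1}\int_0^s P_ru\,dr$. Strong continuity at $0$ gives $u_s\to u$ as $s\to0$. Using $P_tP_r=P_{t+r}$ and a change of variables,
\begin{align}
T_t\Big(\int_0^s P_ru\,dr\Big)=\frac1t\Big(\int_0^t P_ru\,dr-\int_s^{s+t}P_ru\,dr\Big),
\end{align}
and the right-hand side converges to $u-P_su$ as $t\to0$. Hence $u_s\in D(T)$ with $T\big(\int_0^sP_ru\,dr\big)=u-P_su$, so $D(T)$ is dense.

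\emph{Step 3: closedness.} The main point is the identity
\begin{align}
u-P_su=\int_0^s P_rTu\,dr\qquad\text{for all } u\in D(T).
\end{align}
To prove it, first note that $P_r$ commutes with $T_t$, so $P_ru\in D(T)$ and $TP_ru=P_rTu$. Next, integrate $T_tP_ru$ over $r\in[0,s]$. The integrand converges to $P_rTu$, and since $\norm{T_tu}{L^p}$ is bounded for small $t$ and Step 1 bounds $\norm{P_r}{\mathcal{L}(L^p)}$ uniformly on $[0,s]$, the convergence is dominated and we may pass to the limit under the integral. Comparing with Step 2 gives the identity.

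With the identity in hand, take $u_k\in D(T)$ with $u_k\to u$ and $Tu_k\to w$ in $L^p(M)$. Passing to the limit in the identity, using the uniform bound of Step 1, gives $u-P_su=\int_0^sP_rw\,dr$. Dividing by $s$ and letting $s\to0$, the right-hand side tends to $w$ by strong continuity. Therefore $u\in D(T)$ and $Tu=w$, so $T$ is closed.

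I expect Step 3 to be the main obstacle: justifying the interchange of the limit $t\to0$ with the integral, and identifying $TP_r=P_rT$ on $D(T)$. Both reduce to the uniform operator bound from Step 1 together with dominated convergence for vector-valued integrals.
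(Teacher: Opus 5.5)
Your proof is correct and is the classical argument. The time averages $s^{-1}\int_0^s P_r u\,dr$ give density, and the identity $u-P_su=\int_0^s P_r Tu\,dr$, justified by the local bound $\sup_{r\le s}\|P_r\|<\infty$, gives closedness; the paper does not prove the statement itself but only cites Reed--Simon \cite{RS2}, and your argument is the standard one behind that reference.
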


\begin{remark} We recall that an operator $(T,D(T))$ defined on $L^p(M)$ is said to be \textit{closed} if its graph
\begin{align}
    \Gamma(T):=\{(u,Tu)\ : u\in D(T)\}
\end{align}
is closed in $L^p(M)\times L^p(M)$ (or, equivalently, if $(\Gamma(T),\norm{\cdot}{T})$ is complete, where $\norm{u}{T}:=\norm{u}{L^p}+\norm{Tu}{L^p}$ for $u\in \mathcal{D}(T)$). An operator is \textit{closable} if it possesses a closed extension.
\end{remark}

The following theorem provides a necessary and sufficient condition for closed operators to generate a contraction semigroup (see \cite[Theorem X.47a]{RS2}).

\begin{theorem}[Hille-Yosida]\label{Hille_Yosida}
    A densely defined, closed linear operator $T$ on $L^p(M)$ generates a contraction semigroup if and only if both the following conditions are satisfied
    \begin{enumerate}
        \item $(-\infty,0)\subset \rho(T)$, where $\rho(T)$ denotes the resolvent set of $T$;
        \item $\norm{(T+\lambda)^{-1}}{\mathcal{L}(L^P)}\leq \lambda^{-1}$ for all $\lambda>0$.
    \end{enumerate}
\end{theorem}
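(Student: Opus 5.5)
Since this is the classical Hille--Yosida theorem, I would follow the standard two-step argument: a Laplace-transform representation of the resolvent for necessity, and Yosida approximation for sufficiency. Keep in mind the sign convention of the paper: $T=\lim_{t\to0}t^{-1}(Id-P_t)$, so formally $P_t=e^{-tT}$, and condition (1) says that $T+\lambda$ is invertible for every $\lambda>0$.

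\emph{Necessity.} Assume $T$ generates a contraction semigroup $\{P_t\}$ and fix $\lambda>0$. I would define the Bochner integral
\begin{equation*}
R_\lambda u:=\int_0^{+\infty}e^{-\lambda t}P_tu\,\dint t .
\end{equation*}
It converges in $L^p(M)$ by strong continuity and $\norm{P_t}{\mathcal{L}(L^p)}\le1$, and it gives $\norm{R_\lambda}{\mathcal{L}(L^p)}\le\int_0^\infty e^{-\lambda t}\,\dint t=\lambda^{-1}$.

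It remains to show $R_\lambda=(T+\lambda)^{-1}$.
\begin{itemize}
\item Computing $h^{-1}(Id-P_h)R_\lambda u$ by shifting the integration variable, one finds that it converges to $u-\lambda R_\lambda u$ as $h\to0$. Hence $R_\lambda u\in D(T)$ and $(T+\lambda)R_\lambda u=u$.
\item For $u\in D(T)$, $P_t$ commutes with $T$, which gives $R_\lambda(T+\lambda)u=u$.
\end{itemize}
This yields both (1) and (2).

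\emph{Sufficiency.} Assume (1) and (2), and write $J_\lambda:=\lambda(T+\lambda)^{-1}$, so that $\norm{J_\lambda}{}\le1$. Introduce the bounded Yosida approximants
\begin{equation*}
T_\lambda:=TJ_\lambda=\lambda-\lambda^2(T+\lambda)^{-1}.
\end{equation*}
\begin{itemize}
\item Since $T_\lambda$ is bounded, $e^{-tT_\lambda}$ is defined by the power series. It is a contraction semigroup, because $e^{-tT_\lambda}=e^{-\lambda t}e^{t\lambda J_\lambda}$ has norm at most $e^{-\lambda t}e^{\lambda t}=1$.
\item Next I would show $J_\lambda u\to u$ as $\lambda\to+\infty$ for all $u\in L^p(M)$. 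For $u\in D(T)$ this follows from $J_\lambda u-u=-(T+\lambda)^{-1}Tu$, which is $O(\lambda^{-1})$. Density of $D(T)$ and the uniform bound $\norm{J_\lambda}{}\le1$ then extend it to all $u$.
\item Consequently $T_\lambda u=J_\lambda Tu\to Tu$ for every $u\in D(T)$.
\end{itemize}

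\emph{Key step: constructing the limit semigroup.} The approximants $T_\lambda$ and $T_\mu$ commute, so I would write
\begin{equation*}
e^{-tT_\lambda}u-e^{-tT_\mu}u=\int_0^t e^{-(t-s)T_\mu}e^{-sT_\lambda}(T_\mu-T_\lambda)u\,\dint s .
\end{equation*}
This gives the Cauchy estimate $\norm{e^{-tT_\lambda}u-e^{-tT_\mu}u}{}\le t\norm{T_\lambda u-T_\mu u}{}$ for $u\in D(T)$. It makes $e^{-tT_\lambda}u$ converge uniformly on compact $t$-intervals, and by density and the uniform contraction bound the convergence extends to all of $L^p(M)$. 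The limit $P_t$ inherits the semigroup law, strong continuity (as a locally uniform limit of continuous maps) and $\norm{P_t}{}\le1$.

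\emph{Identifying the generator.} Let $A$ be the generator of $\{P_t\}$. I would pass to the limit in
\begin{equation*}
u-e^{-tT_\lambda}u=\int_0^t e^{-sT_\lambda}T_\lambda u\,\dint s ,\qquad u\in D(T),
\end{equation*}
to get $u-P_tu=\int_0^tP_sTu\,\dint s$. Dividing by $t$ and letting $t\to0$ shows $A\supseteq T$. By the necessity part applied to $A$, $A+1$ is injective, and by (1) $T+1$ is surjective. An operator extending a surjective one while being injective must coincide with it, so $A=T$.

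I expect this last step, together with the uniform-in-$t$ Cauchy estimate, to be the main obstacle. Everything else is routine bookkeeping with bounded operators and Bochner integrals.
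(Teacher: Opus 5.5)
Your proof is correct. It is the standard argument: the Laplace transform of the semigroup gives necessity, and for sufficiency you use the Yosida approximants $T_\lambda=\lambda-\lambda^2(T+\lambda)^{-1}$, the commuting Cauchy estimate, and you identify the generator via injectivity of $A+1$ and surjectivity of $T+1$. The paper gives no proof of its own and simply cites \cite[Theorem X.47a]{RS2}, which is proved along these lines. The only detail you leave implicit is that $R_\lambda Tu=TR_\lambda u$ needs the closedness of $T$, to pass $T$ through the Bochner integral.
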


Another characterization of generators of contraction semigroups is stated in the following part of the section. To this aim, we start by recalling some definitions.

\begin{definition}
    Given $u\in L^p(M)$, $p \in (1,+\infty)$, a function $w\in L^{p'}(M)$ ($p':=\frac{p}{p-1}$ is the conjugate exponent to $p$) is said to be a \textnormal{normalized tangent functional} to $u$ if it satisfies
    \begin{align}
        \norm{w}{L^{p'}}=\norm{u}{L^p} \quad and \quad w(u)=\norm{u}{L^p}^2
    \end{align}
    where
    \begin{align}
        w(u):=\int_M uw \dvol.
    \end{align}
    A densely defined operator $T$ over $L^p(M)$ is said to be \textnormal{accretive} if for any $u\in D(T)$ there exists a normalized tangent functional $w\in L^{p'}(M)$ to $u$ so that $w(Tu)\geq 0$. An accretive operator $T$ is said to be \textnormal{maximal accretive} ($m$-accretive) if it has no proper accretive extensions.
\end{definition}

\begin{remark}
    We observe that if $u\in L^p(M)$, then there always exists a normalized tangent functional to $u$. Indeed, the function $J_p^u:=\|u\|_{L^p}^{2-p} |u|^{p-2}u$ (with $J_p^0:=0$) is an element of $L^{p'}(M)$ and satisfies
    \begin{align}
        \norm{J_p^u}{L^{p'}}=\norm{u}{L^p} \quad \andd \quad J_p^u(u)=\norm{u}{L^p}^2.
    \end{align}
    In particular, by the remark before Theorem X.49 in \cite{RS2}, if $\mathrm{Ran}(T+1)=L^p(M)$, then $T$ is accretive if and only if for any $u\in D(T)$ it holds $J_p^u (Tu)\geq 0$.
\end{remark}

\begin{theorem}[Theorem X.48 in \cite{RS2}]\label{t:X.48}
    A closed operator $T$ on $L^p(M)$ is the generator of a contraction semigroup if and only if $T$ is accretive and $\mathrm{Ran}(\bar{\lambda}+T)=L^p(M)$ for some $\bar{\lambda}>0$.
\end{theorem}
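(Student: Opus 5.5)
The plan is to derive both directions from the Hille--Yosida Theorem \ref{Hille_Yosida}, using the normalized tangent functional $J_p^u$ as the link between accretivity and the resolvent bound. I use the paper's convention that accretive operators are densely defined. If complex scalars are used, real parts should be taken throughout.

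\emph{Necessity.} Assume $T$ generates a contraction semigroup $\{P_t\}$, and fix $u\in D(T)$ and $w=J_p^u$. By H\"older's inequality and contractivity,
\begin{align}
w(P_tu)\le \|w\|_{L^{p'}}\|P_tu\|_{L^p}\le \|u\|_{L^p}^2=w(u).
\end{align}
Hence $w(T_tu)=t^{-1}\bigl(w(u)-w(P_tu)\bigr)\ge 0$. Since $T_tu\to Tu$ in $L^p$ and $w\in L^{p'}$, letting $t\to0$ gives $w(Tu)\ge0$, so $T$ is accretive. Surjectivity of $\bar\lambda+T$ for every $\bar\lambda>0$ follows at once from condition (1) of Theorem \ref{Hille_Yosida}: $-\bar\lambda\in\rho(T)$ means exactly that $T+\bar\lambda$ is bijective onto $L^p(M)$.

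\emph{Sufficiency.} The key step is an a priori estimate. Take $u\in D(T)$, let $w$ be a tangent functional with $w(Tu)\ge0$, and let $\lambda>0$. Then
\begin{align}
\lambda\|u\|_{L^p}^2\le w\bigl((T+\lambda)u\bigr)\le \|u\|_{L^p}\,\|(T+\lambda)u\|_{L^p},
\end{align}
so $\|(T+\lambda)u\|_{L^p}\ge\lambda\|u\|_{L^p}$. It follows that $T+\lambda$ is injective for every $\lambda>0$. Because $T$ is closed, it also follows that $T+\lambda$ has closed range. At $\lambda=\bar\lambda$ the range is all of $L^p(M)$, so $-\bar\lambda\in\rho(T)$ and $\|(T+\bar\lambda)^{-1}\|\le\bar\lambda^{-1}$.

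It remains to propagate invertibility from $\bar\lambda$ to all of $(0,+\infty)$, which I expect to be the main (though standard) obstacle. Write
\begin{align}
T+\mu=\bigl(I+(\mu-\lambda)(T+\lambda)^{-1}\bigr)(T+\lambda).
\end{align}
Whenever $-\lambda\in\rho(T)$, the bound $\|(T+\lambda)^{-1}\|\le\lambda^{-1}$ and a Neumann series show that $T+\mu$ is invertible for all $|\mu-\lambda|<\lambda$, that is, for $\mu\in(0,2\lambda)$. The a priori estimate then gives $\|(T+\mu)^{-1}\|\le\mu^{-1}$ for each such $\mu$, with no deterioration of constants. Starting from $\bar\lambda$ and iterating, we cover $(0,2^k\bar\lambda)$ for every $k$, hence all of $(0,+\infty)$. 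Equivalently, the set $\{\lambda>0:-\lambda\in\rho(T)\}$ is nonempty, open, and closed in $(0,\infty)$, and is therefore the whole half-line.

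Both conditions of Theorem \ref{Hille_Yosida} now hold, and since $T$ is closed and densely defined, $T$ generates a contraction semigroup.
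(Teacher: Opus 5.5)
Your proof is correct, and it is essentially the standard argument behind \cite[Theorem X.48]{RS2}, which the paper cites rather than reproves. Necessity comes from differentiating $w(P_tu)\le w(u)$ at $t=0$ together with Theorem \ref{Hille_Yosida}; sufficiency comes from the a priori bound $\|(T+\lambda)u\|_{L^p}\ge\lambda\|u\|_{L^p}$ and a Neumann-series continuation of the resolvent from $\bar\lambda$ to all of $(0,+\infty)$, followed by Theorem \ref{Hille_Yosida}.
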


\begin{remark} We recall that
    \begin{enumerate}
        \item every accretive operator is closable and its closure (i.e. the minimal closed extension)  is accretive (\cite[Section X.8]{RS2});

        \item every accretive operator has a smallest closed accretive extension (\cite[Section X.48]{RS2});

        \item by Theorem \ref{Hille_Yosida}, if $T$ is the generator of a contraction semigroup, then $(-\infty, 0)\subset \rho(T)$ and it follows immediately that $\mathrm{Ran}(1+T)=L^p(M)$;

        \item the generators of contraction semigroups are maximal accretive since the condition $\mathrm{Ran}(1+T)=L^p(M)$ implies that $T$ has no proper accretive extensions. The converse ($T$ maximal accretive implies $T$ generates a contraction semigroup) holds if $p=2$ (Hilbert space) but not in general $L^p$. See \cite[Page 241]{RS2}.	
    \end{enumerate}
\end{remark}

\subsubsection{Essential self-adjointness and operator cores} Let $T:D(T)\subset L^2(M)\to L^2(M)$ be a densely defined linear operator with domain $D(T)$.

\begin{definition}[Adjoint, symmetric and (essentially) self-adjoint operator]
    The \textnormal{adjoint} of T, denoted with $T^*$, is defined as the linear operator acting on $L^2(M)$ whose domain is
    \begin{equation*}
        D(T^*):=\{v\in L^2(M) : \exists w \in L^2(M)\ s.t.\ (Tu,v)_{L^2(M)} =(u,w)_{L^2(M)} \quad \forall u \in D(T)\}
    \end{equation*}
    and whose action is given by
    \begin{align}
        T^*v=w.
    \end{align}
    Hence
    \begin{align}
        (Tu,v)=(u,T^*v) \quad\quad \forall u\in D(T),\ v \in D(T^*).
    \end{align}
    The operator $T$ is said to be \textnormal{symmetric} if $D(T)\subseteq D(T^*)$ and
    \begin{align}
        (Tu,v)=(u,Tv)\quad \forall u,v \in D(T).
    \end{align}
    A symmetric operator $T$ is said to be  \textnormal{self-adjoint} if $T=T^*$ (i.e., if $D(T)=D(T^*)$). A symmetric operator $T$ is said to be \textnormal{essentially self-adjoint} if its closure $\overline{T}$ is self-adjoint.
\end{definition}

\begin{remark}
    We stress that
    \begin{enumerate}
        \item the adjoint of an operator is a closed operator. In particular, if $T$ is symmetric (resp. self-adjoint), then $T$ is closable (resp. closed);

        \item it is well known that if $T$ is closable, then $(T^*)^*=\overline{T}$ (see \cite[Theorem 5.29]{Ka13});

        \item a symmetric operator $T$ is essentially self-adjoint if and only if it has a unique self-adjoint extension, i.e. its closure (see \cite[p. 256]{RS1}).
    \end{enumerate}
\end{remark}

\noindent In what follows we use $(ESA_{V})$ to mean the essential self-adjointness of the operator $-\Delta + V$ with domain $C^\infty_c(M)$, where $V\in L^2_{loc}(M)$ satisfies suitable assumptions.

\begin{remark}
    For fixed $p\in(1,+\infty)$, we denote the \textit{duality product} between $L^p(M)$ and its topological dual space $(L^{p}(M))^*$ as the bilinear form
    \begin{align}
        <\cdot ,\cdot> :(L^{p}(M))^*\times L^{p}(M) &\to \rr{}\\
        (\varphi,u)&\mapsto \varphi(u).
    \end{align}
    By the standard canonical isomorphism between $(L^{p}(M))'$ and $L^{p'}(M)$, one has that every $\varphi\in (L^{p}(M))'$ can be identified with a (unique) $v_\varphi \in L^{p'}(M)$, and vice versa. Under this identification, it holds
    \begin{align}
        <\varphi,u>=\int_M uv_\varphi \dvol \quad \forall \varphi \in (L^{p}(M))',\ \forall u\in L^p(M).
    \end{align}
    For a fixed densely defined linear operator $T:D(T)\subset L^p(M)\to L^p(M)$, let us denote by $T'$ its dual map, i.e. the linear operator $T':D(T')\subset (L^{p}(M))'\to (L^p(M))'$ with domain
    \begin{align}
        D(T'):=\{\varphi \in (L^p(M))' \mid \ \exists c>0\ :\ \vert<\varphi,T(u)>\vert\leq c \norm{u}{L^p} \ \forall u \in D(T)\}
    \end{align}
    and acting on $\varphi\in D(T')$ as
    \begin{align}
        T'(\varphi)(u)=\varphi(T(u)) \quad \forall u\in D(T),
    \end{align}
    i.e.
    \begin{align}
        <T'(\varphi),u>=<\varphi,T(u)> \quad \forall u\in D(T).
    \end{align}
    Observing that
    \begin{align}
        <\varphi,u>=(v_\varphi,u)_{L^2(M)} \quad \forall \varphi \in (L^p(M))', \forall u \in L^p(M)
    \end{align}
    and that
    \begin{align}
        (v_\varphi, T(u))_{L^2(M)}&=<\varphi, T(u)>\\
        &=<T'(\varphi),u>\\
        &=(v_{T'(\varphi)},u)_{L^2(M)},
    \end{align}
    we have that the following linear operator is well defined
    \begin{align}
        T^*:D(T^*)&\to L^{p'}(M)\\
        v_\varphi &\mapsto v_{T'(\varphi)}
    \end{align}
    with domain
    \begin{align}
        D(T^*):=\{v_\varphi \in L^{p'}(M) : \varphi \in D(T')\}
    \end{align}
    and it satisfies
    \begin{align}
        (T^*(v),u)_{L^2(M)}=(v, T(u))_{L^2(M)} \quad \forall v \in D(T^*), \quad \forall u\in D(T).
    \end{align}
We denote $T^*$ with the same notation of the adjoint of an operator defined on a Hilbert space, and we call $T^*$ the \textit{adjoint} to $T$ even if $p\neq 2$, due to the similarity with the notion of adjoint in the case $p=2$.  
\end{remark}

For general $p$, a possible definition which can be seen as a Banach generalization of the $(ESA)$ is the following notion of \textit{core} of a closed operator. Let us fix $p\in (1,+\infty)$.

\begin{definition}[Core in $L^p$]
    Given a densely defined, closed operator 
    \begin{equation*}
        T:D(T)\subseteq L^p(M)\to L^p(M),
    \end{equation*} 
    a subset $\mathcal{C}\subset D(T)$ is said to be a \textnormal{core} for $(T,D(T))$ if the set $\{(v,Tv) : v\in \mathcal{C}\}$ is dense in the graph $\Gamma(T)$ of $T$.
\end{definition}

\begin{remark}
    As already observed, in the case $p=2$, the essential self-adjointness can also be formulated in terms of the notion of core of an operator. Indeed, we can say that a symmetric closable operator $T$ on $L^2$ is essentially self-adjoint if and only if its domain is a core for $T^*$.
\end{remark} 

Let $V\in L^1_{loc}(M)$ and consider the differential operator $-\Delta+V$. If $p\in(1,+\infty)$, we define the operator $(-\Delta+V)_{p,max}$ associated to $-\Delta+V$ by the formula
\begin{align*}
(-\Delta+V)_{p,max} u = (-\Delta+V) u
\end{align*}
with domain
\begin{align}
D\left((-\Delta+V)_{p,max}\right)=\{u\in L^p(M)\ : Vu\in L^1_{loc}(M),\ (-\Delta+V)u\in L^p(M)\}.
\end{align}
If $V\in L^p_{loc}(M)$, then $C^\infty_c(M)\subset D((-\Delta+V)_{p,max}))$ and we can define the operator $(-\Delta+V)_{p,min}$ as the closure in $L^p(M)$ of $(-\Delta+V)_{p,max}\Big|_{C_c^\infty(M)}$.

\begin{remark}\label{rmk_ESA}
Let $V\in L^2_{loc}(M)$ and consider $P:=-\Delta+V$ with domain $D(P):=C^\infty_c(M)\subset L^2(M)$. Then, the domain of the adjoint $P^*$ of $P$ can be characterized as
\begin{align}
    D(P^*):=\{v\in L^2(M) \mid Pv\in L^2(M)\}.
\end{align}
Hence, $P^*=\left(\overline{P}\right)^*$ coincides with the maximal extension of $P$ (see for instance \cite[Theorem X.32]{RS2}). 
\end{remark}

In the particular case $V=0$, the fact that $C^\infty_c(M)$ is a core for  $-\Delta_{p,max}$ can be read as the identification of the Sobolev spaces $H^{p}_{0}(M)$ and $H^{p}(M)$, where
\begin{align}
    &H^p_0(M):=D(-\Delta_{p,min})=\overline{C^\infty_c(M)}^{\norm{\cdot}{-\Delta_{p,max}}},\\
    &H^p(M):=D(-\Delta_{p,max})=\{u\in L^p(M) : \Delta u \in L^p(M)\},
\end{align}
for $\norm{u}{-\Delta_{p,max}}$ the graph-norm associated to $-\Delta_{p,max}$. For a simplified notation, we denote
\begin{align}
    \norm{\cdot }{H^p}:=\norm{\cdot}{-\Delta_{p,max}}.
\end{align}
Namely, the following Lemma holds.
\begin{lemma}\label{H2,0 = H2}
    Let $(M,g)$ be a (possibly incomplete) Riemannian manifold. Then, $C^\infty_c(M)$ is a core for $-\Delta_{p,max}$ if and only if $H^p(M)=H^p_0(M)$.
\end{lemma}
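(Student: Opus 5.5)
This is essentially a matter of unwinding the definitions. By definition, $H^p_0(M)=D(-\Delta_{p,min})$ is the closure of $C^\infty_c(M)$ in the graph norm $\norm{\cdot}{H^p}$. Likewise, $H^p(M)=D(-\Delta_{p,max})$ is endowed with the same graph norm. We first observe that $-\Delta_{p,max}$ is a closed operator, so that $(H^p(M),\norm{\cdot}{H^p})$ is a Banach space and $H^p_0(M)$ is a closed subspace of it.

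To check closedness, take $u_k\to u$ and $\Delta u_k\to f$ in $L^p(M)$. For every $\varphi\in C^\infty_c(M)$ we have $\int u_k\Delta\varphi\dvol=\int \Delta u_k\,\varphi\dvol$. Since $L^p$-convergence implies convergence against compactly supported test functions (via H\"older), we can pass to the limit and obtain $\Delta u=f$ in $\mathcal{D}'(M)$. Hence $u\in H^p(M)$ and $\Delta u=f$. The inclusion $C^\infty_c(M)\subset H^p(M)$ is immediate, so $-\Delta_{p,min}\subseteq -\Delta_{p,max}$ and $H^p_0(M)\subseteq H^p(M)$.

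Now suppose $C^\infty_c(M)$ is a core for $-\Delta_{p,max}$. By definition, the set $\{(v,-\Delta v):v\in C^\infty_c(M)\}$ is dense in $\Gamma(-\Delta_{p,max})$, and this is the same as saying that $C^\infty_c(M)$ is dense in $H^p(M)$ for the graph norm. Taking closures gives $H^p(M)=\overline{C^\infty_c(M)}^{\norm{\cdot}{H^p}}=H^p_0(M)$.

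Conversely, suppose $H^p(M)=H^p_0(M)$. Then every $u\in H^p(M)$ is a graph-norm limit of functions in $C^\infty_c(M)$. This means exactly that $\{(v,-\Delta v):v\in C^\infty_c(M)\}$ is dense in $\Gamma(-\Delta_{p,max})$, i.e. $C^\infty_c(M)$ is a core.

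The only point requiring care is the closedness of $-\Delta_{p,max}$. It guarantees that the closure defining $H^p_0(M)$ stays inside $H^p(M)$, and that the closure of the graph of $-\Delta|_{C^\infty_c}$ is the graph of an operator contained in the maximal one. Once this is established, the equivalence is tautological.
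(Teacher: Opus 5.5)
Your proposal is correct and matches the paper's approach: the paper gives no written proof and presents the lemma as a direct rephrasing of the definitions of $H^p_0(M)$ and $H^p(M)$, which is exactly your unwinding. Your explicit check that $-\Delta_{p,max}$ is closed, which guarantees $H^p_0(M)\subseteq H^p(M)$ and that the graph-norm closure lives inside $H^p(M)$, is the one detail the paper leaves implicit.
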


\begin{remark}\label{weak convergence of H2=H20}
    Since every function $\varphi\in H^2_0(M)$ can be approximated by a sequence $\{\varphi_{k}\}_{k\ge1}\subset C^{\infty}_{c}(M)$ in the norm $\norm{\cdot}{H^2}$, it follows that on $H^2_0(M)$ the norm $\norm{\cdot}{H^2}$ is equivalent to
    \begin{align}
        |||\varphi|||:=\norm{\varphi}{L^2}+\norm{\nabla \varphi}{L^2}+\norm{\Delta \varphi}{L^2}.
    \end{align}
    Indeed, fixed any $\varphi\in H^2_0(M)$ and $\{\varphi_{k}\}_k\subset C^\infty_c(M)$ so that $\varphi_k\xrightarrow[]{\norm{\cdot}{H^2}}\varphi$, integrating by parts we get
    \begin{align}      \label{App0_norm_equivalence}
    \int_{M}\vert\nabla(\varphi_{k}-\varphi_{l})\vert^{2} \dvol &\le-\int_{M}(\varphi_{k}-\varphi_{l})\Delta(\varphi_{k}-\varphi_{l}) \dvol \\
    &\le\int_{M}(\varphi_{k}-\varphi_{l})^{2} \dvol +\int_{M}(\Delta(\varphi_{k}-\varphi_{l}))^{2} \dvol
    \end{align}
    and hence $\{\varphi_k\}_k$ is a Cauchy-sequence in $W^{1,2}(M)$. By the completeness of $W^{1,2}(M)$, there exists $\nabla \varphi \in L^2(M)$ so that $\nabla\varphi_k\to \nabla \varphi$ in $L^2(M)$. 
    
    As a consequence, if $\varphi_k$ converges to $\varphi$ in $H^2_0(M)$, then $\nabla\varphi_k$ converges strongly to $\nabla \varphi$ in $L^2$. In particular, if the Laplace-Beltrami operator (with domain $C^\infty_c(M)$) is essentially self-adjoint on $M$, then the same conclusion extends to $H^2(M)=H^2_0(M)$ thanks to Lemma \ref{H2,0 = H2}.
\end{remark}


Here we collect some conditions concerning the core property. The first follows by an argument presented in \cite{BV24} and claims that the core property (and thus the essential self-adjointness) is invariant by translation, in the sense that if $V\in L^{p}_{loc}(M)$ and $C^{\infty}_{c}(M)$ is a core for $(-\Delta+V)_{p,max}$, then $C^{\infty}_{c}(M)$ is a core for $(-\Delta+V+\lambda)_{p,max}$ for every $\lambda\in L^\infty(M)$. 
\begin{proposition}\label{prop_invariance_translation}
    Let $(M,g)$ be a (possibly incomplete) Riemannian manifold, $p\in (1,+\infty)$ and $V\in L^{p}_{loc}(M)$. The following statements are equivalent: 
    \begin{enumerate}
        \item $C^{\infty}_{c}(M)$ is a core for $(-\Delta+V)_{p,max}$; 
        \item $C^{\infty}_{c}(M)$ is a core for $(-\Delta+V+\lambda)_{p,max}$ for every $\lambda\in L^{\infty}(M)$.
    \end{enumerate} 
\end{proposition}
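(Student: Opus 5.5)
Since (2) gives (1) by taking $\lambda\equiv 0$, I only need to show (1) $\Rightarrow$ (2). Fix $\lambda\in L^\infty(M)$. I will compare the two maximal realizations directly, using that $\lambda$ is a bounded multiplication operator on $L^p(M)$.

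First, the domains agree. For $u\in L^p(M)$ we have $\lambda u\in L^p(M)\subset L^1_{loc}(M)$. Hence $Vu\in L^1_{loc}(M)$ if and only if $(V+\lambda)u\in L^1_{loc}(M)$. Likewise $(-\Delta+V)u\in L^p(M)$ if and only if $(-\Delta+V+\lambda)u=(-\Delta+V)u+\lambda u\in L^p(M)$. So
\[
D\big((-\Delta+V+\lambda)_{p,max}\big)=D\big((-\Delta+V)_{p,max}\big)=:D,
\]
and on $D$ the two operators differ by the bounded operator $u\mapsto\lambda u$.

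Second, the graph norms are equivalent on $D$. Write $\|u\|_{V}:=\|u\|_{L^p}+\|(-\Delta+V)u\|_{L^p}$ and $\|u\|_{V+\lambda}:=\|u\|_{L^p}+\|(-\Delta+V+\lambda)u\|_{L^p}$. The triangle inequality gives
\[
\|u\|_{V+\lambda}\le (1+\|\lambda\|_{L^\infty})\|u\|_{V},\qquad \|u\|_{V}\le (1+\|\lambda\|_{L^\infty})\|u\|_{V+\lambda}.
\]
Note also that $C^\infty_c(M)\subset D$, because $V\in L^p_{loc}(M)$ and $\lambda\in L^\infty(M)$ give $(V+\lambda)\varphi\in L^p(M)$ for every $\varphi\in C^\infty_c(M)$.

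Third, I conclude. Being a core means that $C^\infty_c(M)$ is dense in $D$ for the graph norm. By hypothesis (1) it is dense for $\|\cdot\|_V$, and by the equivalence above it is then dense for $\|\cdot\|_{V+\lambda}$. This is exactly (2).

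There is no real obstacle in this argument. The only point that needs care is the $L^1_{loc}$ requirement in the definition of the maximal domain. It is handled because $\lambda u$ is globally in $L^p(M)$, so $\lambda$ being merely bounded, not smooth, causes no difficulty.
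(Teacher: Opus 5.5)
Your proof is correct and takes essentially the paper's approach: the paper also observes that the two maximal domains coincide, then uses Minkowski's inequality with $\|\lambda\|_{L^\infty}$ to show that the approximating $C^\infty_c(M)$ sequence converges in the graph norm of the translated operator. Your explicit check of the $L^1_{loc}$ condition on $Vu$, and your remark that the converse follows by taking $\lambda\equiv 0$, supply details the paper leaves implicit.
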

\begin{proof}
    Assume that $C^{\infty}_{c}(M)$ is a core for $(-\Delta+V)_{p,max}$ and fix $\lambda\in L^{\infty}(M)$. Note that 
    \begin{equation*}
        D((-\Delta+V+\lambda)_{p,max}) = D((-\Delta+V)_{p,max}),
    \end{equation*}
    and that for every $u\in D((-\Delta+V)_{p,max})$ there exists a sequence $\{u_n\}_{n\ge1}\subseteq C^{\infty}_{c}(M)$ such that 
    \begin{equation*}
        u_{n}\xrightarrow{L^{p}} u \quad \text{and}\quad (-\Delta+V)u_{n}\xrightarrow{L^{p}}(-\Delta+V)u.
    \end{equation*}
    Having fixed $u\in D((-\Delta+V+\lambda)_{p,max})=D((-\Delta+V)_{p,max})$ and a sequence $\{u_n\}_{n\ge1}\subseteq C^{\infty}_{c}(M)$ as before, using Minkowski's inequality we get that
    \begin{align}
        \Vert (-\Delta+V&+\lambda)(u_{n}-u) \Vert_{L^{p}(M)} \\
        &\le \Vert (-\Delta+V)(u_{n}-u) \Vert_{L^{p}(M)}+\Vert\lambda\Vert_{L^\infty(M)} \Vert u_{n}-u\Vert_{L^{p}(M)}\longrightarrow 0,
    \end{align}
    so $u_n$ converges to $u$ in the graph norm of $(-\Delta+V+\lambda)_{p,max}$. This implies that $C^{\infty}_{c}(M)$ is a core for $(-\Delta+V+\lambda)_{p,max}$, and the proof is concluded.
\end{proof}

\begin{remark}
    We stress that the previous proposition can be slightly generalized by replacing $C^\infty_c(M)$ with a general core $\mathcal{C}$ for $(-\Delta+V)_{p,max}$.
\end{remark}

\begin{remark}
    In the case $p=2$ and $\lambda\in\mathbb{R}$ is constant, the previous Proposition is a particular case of Proposition 4.1 in \cite{MT16}. 
\end{remark}

\noindent When $p=2$ and $V\equiv0$, the result reads as follows.
\begin{corollary}\label{cor_equiv_ESA}
    Let $(M,g)$ be a (possibly incomplete) Riemannian manifold. The following statements are equivalent:
    \begin{enumerate}
        \item $(M,g)$ is $(ESA)$;
        \item $(M,g)$ is $(ESA_{\lambda})$ for \underline{every} $\lambda\in L^\infty(M)$.
    \end{enumerate}
\end{corollary}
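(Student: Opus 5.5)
This is the case $p=2$, $V\equiv 0$ of Proposition \ref{prop_invariance_translation}. The plan is to translate $(ESA_\lambda)$ into the core language of that proposition and then apply it.

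First, I would record the translation between essential self-adjointness and the core property. For $V\in L^2_{loc}(M)$, set $P=-\Delta+V$ with domain $C^\infty_c(M)$. By Remark \ref{rmk_ESA}, $P^*$ is the maximal realization $(-\Delta+V)_{2,max}$. By the remark following the definition of core, the symmetric closable operator $P$ is essentially self-adjoint if and only if $C^\infty_c(M)$ is a core for $P^*$. Hence $(ESA_V)$ holds exactly when $C^\infty_c(M)$ is a core for $(-\Delta+V)_{2,max}$.

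One point needs checking: for $\lambda\in L^\infty(M)$, the operator $-\Delta+\lambda$ on $C^\infty_c(M)$ is symmetric. This holds because $\lambda$ is real-valued and bounded, so multiplication by $\lambda$ is a bounded self-adjoint operator on $L^2$. Also $\lambda\in L^\infty\subset L^2_{loc}$, so Remark \ref{rmk_ESA} applies.

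Next, I would apply Proposition \ref{prop_invariance_translation} with $p=2$ and $V\equiv 0\in L^2_{loc}(M)$. It gives that $C^\infty_c(M)$ is a core for $(-\Delta)_{2,max}$ if and only if $C^\infty_c(M)$ is a core for $(-\Delta+\lambda)_{2,max}$ for every $\lambda\in L^\infty(M)$.

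Combining this with the translation in the first step, statement (1) is equivalent to statement (2). The implication (2)$\Rightarrow$(1) is also immediate by taking $\lambda=0$.

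There is no real obstacle here. The only care needed is the identification of the adjoint with the maximal operator for bounded potentials $\lambda$, and that is covered by Remark \ref{rmk_ESA}.
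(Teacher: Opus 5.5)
Your proposal is correct and matches the paper: the corollary is stated there as the case $p=2$, $V\equiv 0$ of Proposition \ref{prop_invariance_translation}. You also spell out the translation of $(ESA_\lambda)$ into the core property for $(-\Delta+\lambda)_{2,max}$, which the paper leaves implicit; this uses Remark \ref{rmk_ESA} and the remark following the definition of core.
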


\noindent The last result we recall in this subsection is very classical in the literature. 

\begin{theorem}[Theorem X.26 in \cite{RS2}]\label{thm_RS2_X26}
    Let $A$ be a strictly positive symmetric operator, i.e. $(A\varphi,\varphi)\geq c(\varphi,\varphi)$ for all $\varphi \in D(A)$ and some $c>0$. Then the following are equivalent:
    \begin{enumerate}
        \item $A$ is essentially self-adjoint;
        \item $\mathrm{Ran}(A)$ is dense;
        \item $\mathrm{ker}(A^*)=\{0\}$;
        \item $A$ has only one semibounded self-adjoint extension.
    \end{enumerate}
\end{theorem}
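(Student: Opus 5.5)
The statement immediately above is Theorem X.26 of \cite{RS2}, and I would prove it along the classical Hilbert-space lines. Throughout, $A$ is symmetric with $(A\varphi,\varphi)\ge c\|\varphi\|^2$, $c>0$. Hence $\|A\varphi\|\ge c\|\varphi\|$ for $\varphi\in D(A)$, by Cauchy--Schwarz. The same lower bound passes to the closure: $\|\overline{A}\varphi\|\ge c\|\varphi\|$ for $\varphi\in D(\overline{A})$.

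\textbf{Step 1: (2)$\Leftrightarrow$(3).} Since $\mathrm{Ran}(A)^\perp=\ker(A^*)$, which follows directly from the definition of the adjoint, these two conditions are equivalent.

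\textbf{Step 2: (1)$\Rightarrow$(2).} If $\overline{A}$ is self-adjoint and bounded below by $c>0$, then $0\in\rho(\overline{A})$, so $\mathrm{Ran}(\overline{A})=\mathcal{H}$. The lower bound shows that $\mathrm{Ran}(\overline{A})$ is closed and equals the closure of $\mathrm{Ran}(A)$. Hence $\mathrm{Ran}(A)$ is dense.

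\textbf{Step 3: (2)$\Rightarrow$(1).} Density of $\mathrm{Ran}(A)$, together with closedness of $\mathrm{Ran}(\overline{A})$, gives $\mathrm{Ran}(\overline{A})=\mathcal{H}$. A closed symmetric operator with full range is self-adjoint. To see this, take $v\in D(A^*)$ and choose $w\in D(\overline{A})$ with $\overline{A}w=A^*v$. Then $A^*(v-w)=0$, so $v-w\in\ker A^*=\mathrm{Ran}(\overline{A})^\perp=\{0\}$. Thus $v\in D(\overline{A})$, and so $A^*=\overline{A}$.

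\textbf{Step 4: (1)$\Rightarrow$(4).} This is immediate: an essentially self-adjoint operator has exactly one self-adjoint extension, namely its closure.

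\textbf{Step 5: (4)$\Rightarrow$(1).} This is the main obstacle, because I need two semibounded self-adjoint extensions that differ whenever $\ker A^*\neq\{0\}$. I would use the Friedrichs extension $A_F$ and the von Neumann--Krein extension
\begin{equation*}
A_K:=A^*\big|_{D(\overline{A})\dotplus\ker A^*}.
\end{equation*}
Here the sum is direct, since $\overline{A}$ is injective with closed range orthogonal to $\ker A^*$.

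To check that $A_K$ is symmetric, write $u=u_0+k$ and $v=v_0+k'$. Then
\begin{equation*}
(A^*u,v)=(\overline{A}u_0,v_0)+(\overline{A}u_0,k'),
\end{equation*}
using $A^*k=0$, and the last term vanishes because $\mathrm{Ran}(\overline{A})\perp\ker A^*$. The resulting expression is symmetric in $u,v$ and nonnegative. I expect self-adjointness of $A_K$ to follow because $A^*$ restricted to this domain is the maximal symmetric restriction, using the decomposition $D(A^*)=D(\overline{A})\dotplus A_F^{-1}\ker A^*\dotplus\ker A^*$; this is the delicate bookkeeping part.

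The Friedrichs extension satisfies $A_F\ge c$, so $\ker A_F=\{0\}$. On the other hand $\ker A_K\supseteq\ker A^*\neq\{0\}$. Therefore $A_F\neq A_K$ whenever (3) fails, and both are semibounded self-adjoint extensions, which contradicts (4).

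Alternatively, Step 5 can be shortened by citing the standard structure theorem for semibounded extensions in \cite{RS2}.
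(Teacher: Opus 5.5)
The paper gives no proof of this statement; it only quotes Theorem X.26 of \cite{RS2}, so there is no in-paper argument to compare against. Your outline is the standard one, and Steps 1--4 are correct as written. For the directness of the sum in Step 5, it suffices to note that $D(\overline{A})\cap\ker A^*=\ker\overline{A}=\{0\}$.

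The one step you leave open is the self-adjointness of $A_K$, and (4)$\Rightarrow$(1) rests on it. Your justification does not establish it. The appeal to a ``maximal symmetric restriction'' would require you to prove maximality, and then to use semiboundedness to get vanishing deficiency indices. The three-term decomposition of $D(A^*)$ is itself a theorem you would have to prove. The trick of your Step 3 closes the gap directly. Since $\overline{A}\subseteq A_K$, you have $A_K^*\subseteq A^*$. Let $v\in D(A_K^*)$. For every $k\in\ker A^*\subseteq D(A_K)$,
\begin{equation*}
(k,A^*v)=(k,A_K^*v)=(A_Kk,v)=(A^*k,v)=0,
\end{equation*}
so $A^*v\in(\ker A^*)^\perp=\overline{\mathrm{Ran}(A)}=\mathrm{Ran}(\overline{A})$, the last equality by the closed-range observation in your Step 2. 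Pick $w\in D(\overline{A})$ with $\overline{A}w=A^*v$. Then $A^*(v-w)=0$, hence $v=w+(v-w)\in D(\overline{A})\dotplus\ker A^*=D(A_K)$. Thus $A_K^*\subseteq A_K$, and together with the symmetry you already checked, $A_K=A_K^*$.

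With this inserted, and citing the standard fact that the Friedrichs extension keeps the lower bound $c$, your comparison $\ker A_F=\{0\}\neq\ker A_K$ gives (4)$\Rightarrow$(3). Hence (4)$\Rightarrow$(1), and the proof is complete.
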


\subsubsection{$L^{p}$-bounded solutions to $-\Delta u + \lambda u = f$}
The present subsection aims to prove a result which will be useful in the next sections. The proof builds on techniques developed in \cite{BMS,MV24}, which are suitably refined to account for the lower regularity of the function involved and for the bottom of the spectrum of the Laplacian.

\begin{proposition}\label{solution with non-neg Lp function}
    Let $(M,g)$ be a (possibly incomplete) Riemannian manifold and let $p\in(1,+\infty)$. For every $0\le f\in L^{p}(M)$ and $\lambda>0$ there exists a nonnegative  solution $v\in H^{p}(M)$ to the equation
    \begin{equation*}
        (-\Delta + \lambda)v=f \quad\text{on $M$}
    \end{equation*}
    such that
    \begin{equation*}
        \Vert v\Vert_{L^{p}(M)}\le \frac{1}{C_{p,\lambda}}\Vert f\Vert_{L^{p}(M)},
    \end{equation*}
    where $C_{p,\lambda}:=\frac{4}{pp'}\lambda_{1}(M)+\lambda$ and $\lambda_{1}(M)\ge0$ is the bottom of the spectrum of $-\Delta$.
\end{proposition}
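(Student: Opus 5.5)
We construct $v$ by exhaustion with Dirichlet problems and monotone convergence, in the spirit of \cite{BMS,MV24}. Fix an exhaustion $\Omega_1\Subset\Omega_2\Subset\cdots$ of $M$ by precompact open sets with smooth boundary, and truncate the datum: $f_k:=\min\{f,k\}\,1_{\Omega_k}$, so that $0\le f_k\in L^\infty_c(M)$, $f_k\uparrow f$ a.e. and in $L^p$. On each $\Omega_k$ we take the unique solution $v_k\in W^{1,2}_0(\Omega_k)$ of $(-\Delta+\lambda)v_k=f_k$, given by Lax--Milgram since $\lambda>0$. By elliptic regularity $v_k\in W^{2,q}_{loc}\cap C^{1,\alpha}(\overline{\Omega_k})$ for every $q<\infty$. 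We extend $v_k$ by zero outside $\Omega_k$.

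\textbf{Positivity and monotonicity.} These follow from the weak maximum principle. Testing with $v_k^-$ gives $v_k\ge0$. Next, $w:=v_{k+1}-v_k$ solves $(-\Delta+\lambda)w=f_{k+1}-f_k\ge0$ on $\Omega_k$ and satisfies $w=v_{k+1}\ge0$ on $\partial\Omega_k$. Testing with $w^-\in W^{1,2}_0(\Omega_k)$ gives $w\ge0$. Hence $0\le v_k\le v_{k+1}$, and $v:=\lim v_k$ exists pointwise.

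\textbf{Uniform $L^p$ bound, the main step.} Multiply the equation by $v_k^{p-1}$ (legitimate: $v_k$ is bounded and vanishes on $\partial\Omega_k$; for $p<2$ one uses $(v_k+\varepsilon)^{p-1}-\varepsilon^{p-1}$ and lets $\varepsilon\to0$). This gives
\begin{equation*}
(p-1)\int v_k^{p-2}|\nabla v_k|^2+\lambda\int v_k^p=\int f_kv_k^{p-1}.
\end{equation*}
Since $(p-1)v^{p-2}|\nabla v|^2=\frac{4(p-1)}{p^2}|\nabla v^{p/2}|^2=\frac{4}{pp'}|\nabla v^{p/2}|^2$ and $v_k^{p/2}\in W^{1,2}_0(\Omega_k)$, the variational characterization of $\lambda_1(M)\le\lambda_1(\Omega_k)$ yields
\begin{equation*}
\int|\nabla v_k^{p/2}|^2\ge\lambda_1(M)\int v_k^p.
\end{equation*}
By Hölder,
\begin{equation*}
C_{p,\lambda}\|v_k\|_p^p\le\|f\|_p\|v_k\|_p^{p-1},
\end{equation*}
so $\|v_k\|_p\le C_{p,\lambda}^{-1}\|f\|_p$. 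The obstacle here is justifying the chain rule and the test function at low regularity near $\{v_k=0\}$ when $p<2$; the $\varepsilon$-regularization together with dominated convergence handles it.

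\textbf{Conclusion.} By monotone convergence $v\in L^p$ with the same bound, and $v_k\to v$ in $L^p$ by dominated convergence. For $\varphi\in C^\infty_c(M)$, eventually $\operatorname{supp}\varphi\subset\Omega_k$, so
\begin{equation*}
\int v_k(-\Delta+\lambda)\varphi=\int f_k\varphi.
\end{equation*}
Passing to the limit gives $(-\Delta+\lambda)v=f$ in $\mathcal D'(M)$. Then $\Delta v=\lambda v-f\in L^p$, so $v\in H^p(M)$ and $v\ge0$.
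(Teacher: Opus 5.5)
Your proof is correct and follows the paper's route. Both arguments solve Dirichlet problems on an exhaustion, use the maximum principle for positivity and monotonicity, test with $v_k^{p-1}$ and combine $\lambda_1(\Omega_k)\ge\lambda_1(M)$ with H\"older to get the bound $C_{p,\lambda}^{-1}\|f\|_{L^p}$, and then pass to the limit by monotone convergence.

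The only difference is that you truncate the datum and the domain along a single diagonal sequence. This gives $v\in H^p(M)$ directly from the limit equation $\Delta v=\lambda v-f$, and it avoids the paper's second limit $n\to\infty$ via an $H^p$-Cauchy estimate on $v^{(n)}-v^{(m)}$. Your $\varepsilon$-regularization for $p<2$ also makes rigorous a test-function step that the paper passes over.
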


\begin{proof} 
Let $0\le f\in L^{p}(M)$ and assume $f\not\equiv 0$ (otherwise the claim trivially follows). Pick an increasing sequence $\{\Omega_{k}\}_{k\ge1}$ of precompact open sets of $M$ with smooth boundaries such that $\Omega_k\Subset\Omega_{k+1}$, $M=\bigcup_{k\ge1}\Omega_k$ and $\Omega_{k}\cap\{f>0\}\ne\emptyset$, and consider a sequence of smooth cutoff functions $\{\chi_{k}\}_{k\ge1}\subset C^{\infty}(M)$ such that 
    \begin{equation*}
        \begin{cases}
            0\le\chi_{k}\le1& \\
            \chi_{k}\equiv 1 & \text{on $\Omega_{k-1}$} \\
            \chi_{k}\equiv 0& \text{outside a neighborhood of $\overline{\Omega}_{k-1}$ contained in $\Omega_{k}$.}
        \end{cases}
    \end{equation*}
    For every $n\ge1$, consider the non-decreasing sequence $f_n :=\min(n,f)\chi_{n}$, and for every $k\ge n$ let $v^{(n)}_{k}$ be the solution of the Dirichlet problem
    \begin{equation}\label{system Lp}
        \begin{cases}
            (-\Delta+\lambda)v^{(n)}_{k}=f_{n} &\text{on $\Omega_{k}$}\\
            v^{(n)}_{k}=0 &\text{on $\partial\Omega_{k}$.}
        \end{cases}
    \end{equation}
    Since $0\le f_{n}\in L^{\infty}(M)$ and has compact support, then by elliptic regularity we have $v^{(n)}_{k}\in W^{2,q}_{loc}(\Omega_{k})\cap C(\overline{\Omega_{k}})$ for every $q>1$, and so $v^{(n)}_{k}\in C^{1,\alpha}(\overline{\Omega_{k}})$ by Morrey embedding (see Chapter 7 and 9 of \cite{gilbarg1998elliptic}). Moreover, every $v^{(n)}_{k}$ is $\lambda$-superharmonic, and by the maximum principle for strong solutions we have
    \begin{equation*}
        0< v^{(n)}_{k}(x)< v^{(n)}_{k+1}(x) \quad \forall k\ge n, \forall x\in \Omega_{k}.
    \end{equation*}    
    By monotonicity, we have the pointwise limit
    \begin{equation*}
        v^{(n)}(x):=\lim_{k\rightarrow+\infty}v^{(n)}_{k}(x)\in(0,+\infty] \quad \forall x\in M.
    \end{equation*}
    We want to prove that the function $v^{(n)}$ is finite almost everywhere and in $L^{p}(M)$. To do this, multiply the equation in \eqref{system Lp} by $(v_{k}^{(n)})^{p-1}$ and integrate over $\Omega_{k}$ to obtain \begin{equation*}
        \begin{split}
            \int_{\Omega_{k}}(v_{k}^{(n)})^{p-1}(-\Delta+\lambda)v^{(n)}_{k} &= (p-1)\int_{\Omega_{k}}(v_{k}^{(n)})^{p-2}\vert\nabla v_{k}^{(n)}\vert^{2}+\lambda\int_{\Omega_{k}}(v_{k}^{(n)})^{p}\\
            & =\frac{4(p-1)}{p^{2}}\int_{\Omega_{k}}\vert\nabla (v_{k}^{(n)})^{\frac{p}{2}}\vert^{2}+\lambda\int_{\Omega_{k}}(v_{k}^{(n)})^{p}
        \end{split}
    \end{equation*}
    and
    \begin{equation*}
        \begin{split}
            \int_{\Omega_{k}}(v_{k}^{(n)})^{p-1}(-\Delta+\lambda)v^{(n)}_{k} &= \int_{\Omega_{k}}(v_{k}^{(n)})^{p-1}f_{n} \\
            &\le\bigg(\int_{\Omega_{k}}(v_{k}^{(n)})^{p}\bigg)^{\frac{p-1}{p}} \bigg(\int_{\Omega_{k}}f_{n}^{p}\bigg)^{\frac{1}{p}}\\
            &\le\bigg(\int_{\Omega_{k}}(v_{k}^{(n)})^{p}\bigg)^{\frac{p-1}{p}} \bigg(\int_{M}f^{p}\bigg)^{\frac{1}{p}},
        \end{split}
    \end{equation*}
    so that
    \begin{equation}\label{stima prima di autovalore_approssimanti a livello n}
        \frac{4(p-1)}{p^{2}}\int_{\Omega_{k}}\vert\nabla (v_{k}^{(n)})^{\frac{p}{2}}\vert^{2}+\lambda\int_{\Omega_{k}}(v_{k}^{(n)})^{p} \le\bigg(\int_{\Omega_{k}}(v_{k}^{(n)})^{p}\bigg)^{\frac{p-1}{p}} \bigg(\int_{M}f^{p}\bigg)^{\frac{1}{p}}.
    \end{equation}
    In particular, since $v^{(n)}_{k}\in C^{1,\alpha}(\overline{\Omega_{k}})$ is positive in $\Omega_{k}$ and vanishes on $\partial\Omega_{k}$, the last estimate implies that the function $(v^{(n)}_{k})^{\frac{p}{2}}$ is in $W^{1,2}(\Omega_{k})$ and vanishes on $\partial\Omega_{k}$. From the variational characterization of the first Dirichlet eigenvalue and the monotonicity with respect to the inclusion of domains, it follows that 
    \begin{equation}\label{maggiorazione dal basso per v_k^n con primo autovalore}
        \int_{\Omega_{k}}\vert\nabla (v_{k}^{(n)})^{\frac{p}{2}}\vert^{2}\ge \lambda_{1}(\Omega_{k})\int_{\Omega_{k}}(v_{k}^{(n)})^{p} \ge \lambda_{1}(M)\int_{\Omega_{k}}(v_{k}^{(n)})^{p},
    \end{equation}
    and combining \eqref{stima prima di autovalore_approssimanti a livello n} and \eqref{maggiorazione dal basso per v_k^n con primo autovalore} we obtain
    \begin{equation*}
        \bigg(\frac{4(p-1)}{p^{2}}\lambda_{1}(M)+\lambda\bigg)\int_{\Omega_{k}}(v_{k}^{(n)})^{p} \le\bigg(\int_{\Omega_{k}}(v_{k}^{(n)})^{p}\bigg)^{\frac{p-1}{p}} \bigg(\int_{M}f^{p}\bigg)^{\frac{1}{p}},
    \end{equation*}
    so that
    \begin{equation*}
        \bigg(\int_{\Omega_{k}}(v_{k}^{(n)})^{p}\bigg)^{\frac{1}{p}} \le \frac{1}{C_{p,\lambda}} \bigg(\int_{M}f^{p}\bigg)^{\frac{1}{p}},
    \end{equation*}
    where $C_{p,\lambda}:=\frac{4(p-1)}{p^{2}}\lambda_{1}(M)+\lambda=\frac{4}{pp'}\lambda_{1}(M)+\lambda$.
    From the monotone convergence theorem and the definition of $v^{(n)}$ we deduce that $0\le v^{(n)}\in L^{p}(M)$ and 
    \begin{equation*}
        \Vert v^{(n)}\Vert_{L^{p}(M)}\le \frac{1}{C_{p,\lambda}}\Vert f\Vert_{L^{p}(M)}.
    \end{equation*}
    Moreover, testing the equation of $v_{k}^{(n)}$ with a general $0\le\varphi\in C^{\infty}_{c}(M)$ and using the dominated convergence theorem we also get that $v^{(n)}$ solves
    \begin{equation*}
        -\Delta v^{(n)}+\lambda v^{(n)} = f_{n}
    \end{equation*}
    on $M$ in the sense of distribution. This implies that $\Delta v^{(n)} = \lambda v^{(n)}-f_{n}\in L^{p}(M)$, hence $v^{(n)}\in H^{p}(M)$.
    
    Finally, we have to prove that $v^{(n)}$ converges in $H^{p}(M)$ to a solution of the equation
    \begin{equation*}
        -\Delta v +\lambda v = f.
    \end{equation*}
    To see this, note that the same computation used to obtain the bound on the $L^{p}$ norm of $v^{(n)}_{k}$ can be performed to obtain the same bound on the function $w_{n,m}:=v^{(n)}-v^{(m)}$, $n\ge m\ge1$, which is a nonnegative (by the maximum principle) $H^{p}$ solution to
    \begin{equation*}
        -\Delta w_{n,m}+\lambda w_{n,m} = f_{n}-f_{m}\ge0
    \end{equation*}
    on $M$. In particular, by using an exhaustion argument with the relatively compact open sets $\Omega_k$ and the functions $v^{(n)}_k$ defined at the beginning of the proof, we get the estimate \begin{equation*}
        \Vert v^{(n)}-v^{(m)}\Vert_{L^{p}(M)} = \Vert w_{n,m}\Vert_{L^{p}(M)}\le \frac{1}{C_{p,\lambda}}\Vert f_{n}-f_{m}\Vert_{L^{p}(M)},
    \end{equation*}
    which also implies
    \begin{equation*}
        \Vert \Delta v^{(n)}-\Delta v^{(m)}\Vert_{L^{p}(M)} = \Vert \Delta w_{n,m}\Vert_{L^{p}(M)}\le \bigg(\frac{\lambda}{C_{p,\lambda}}+1\bigg)\Vert f_{n}-f_{m}\Vert_{L^{p}(M)}.
    \end{equation*}
    From the $L^{p}$ convergence of $f_{n}$ to $f$, we know that $\{f_{n}\}_{n\ge1}\subseteq L^{p}(M)$ is a Cauchy sequence in $L^{p}(M)$, and so $\{v^{(n)}\}_{n\ge1}\subseteq H^{p}(M)$ is a Cauchy sequence in $H^{p}(M)$. From the completeness of $H^{p}(M)$ it follows that there exists a function $v\in H^{p}(M)$ such that $v^{(n)}\rightarrow v$ in $H^{p}(M)$, and so $v$ is a solution to the desired equation. In particular, since $v^{(n)}\ge0$ for every $n$ we have that $v\ge0$ almost everywhere.
\end{proof}

\begin{remark}\label{Rmk_p-Resolvent}
 The previous result can also be deduced by a purely operator-theoretic argument. Indeed, denoting by $-\Delta^{(p)}$ the $p$-realization of the Friedrichs Laplacian (see, for instance, \cite{hinz2023removable, grigoryan2009heat, dafermos2004handbook, GM}) and by $P^{(p)}_t$ the associated semigroup, as proved in \cite[Theorem 3.7]{ohta2009heat} one has
    \begin{align}
        \norm{P^{(p)}_t}{\mathcal{L}(L^p)}\leq e^{-\frac{4(p-1)}{p^2} \lambda_1(M) t}=e^{-\frac{4}{pp'} \lambda_1(M) t}, 
    \end{align}
    where $\norm{\cdot }{\mathcal{L}(L^p)}$ is denoting the operator norm of a linear operator from $L^p(M)$ to $L^p(M)$ and $\lambda_1(M)$ is the bottom of the spectrum of $-\Delta^{(2)}$. Hence, having fixed any $\lambda>-\frac{4}{pp'}\lambda_1(M)$, for $R_\lambda^{(p)}:=(-\Delta^{(p)}+\lambda)^{-1}$ one has
    \begin{align}
         \norm{R^{(p)}_\lambda}{\mathcal{L}(L^p)}&\leq \int_0^{+\infty} e^{-\lambda t} e^{-\frac{4}{pp'} \lambda_1(M) t} \dint t\\
         &\leq \frac{1}{\lambda+\frac{4}{pp'} \lambda_1(M)}\\
         &=\frac{1}{C_{p,\lambda}},
    \end{align}
    where $C_{p,\lambda}$ is defined as in Proposition \ref{solution with non-neg Lp function}. Thus, defining $v:=R^{(p)}_\lambda f$ it is easy to see that
    \begin{align}
        (-\Delta+ \lambda)v=f
    \end{align}
    in the sense of distributions and
    \begin{align}
        \norm{v}{L^p(M)}=\norm{R^{(p)}_\lambda f}{L^p(M)}\leq \frac{1}{C_{p,\lambda}} \norm{f}{L^p(M)}.
    \end{align}
\end{remark}

\begin{remark}
     We stress that if $(L^{p}$-Liou$_{\lambda-H})$ holds, then the solution constructed in Proposition \ref{solution with non-neg Lp function} is unique. Indeed, let $0\le f\in L^{p}(M)$ and $v,w\in H^{p}(M)$ satisfying
     \begin{equation*}
         (-\Delta+\lambda)v = f, \quad\quad
         (-\Delta+\lambda)w = f.
     \end{equation*}
     Then $u:=v-w$ is an $L^{p}$ solution to 
     \begin{equation*}
         (-\Delta+\lambda)u= 0,
     \end{equation*}
     so that $u\equiv 0$ by the $L^{p}$-Liouville property for $\lambda$-harmonic functions. In particular, it follows $v\equiv w$.
\end{remark}


\section{Positivity preservation and essential self-adjointness}\label{Sec_LpPP_ESA_Core_CSG}
As previously mentioned, the primary objective of this work is to strengthen the connections between certain functional analytic properties of Schr\"odinger operators.  Specifically, we’ll explore the following chain of implications:
\gbox{\begin{align}
    (L^{2}\textnormal{-PP})\Longrightarrow (ESA_{V}) \Longrightarrow (ESA_{\lambda})\Longleftrightarrow (ESA),
\end{align}}
\noindent where $V\in L^{2}_{loc}(M)$ is a general potential  bounded from below and $\lambda\in\mathbb{R}$. Analogous implications for the case $p\neq 2$ are also considered in the present section. 
\gbox{\begin{align}
    (L^{p}\textnormal{-PP})\Longrightarrow (Core_{V}) \Longrightarrow (Core_{\lambda})\Longleftrightarrow (Core),
\end{align}}
Even if most of the following results are well-known to experts, in particular for $p=2$, some connections with the contraction semigroup property seem to be hidden in the various references, so we have decided to highlight them and to present some proofs. 

\subsection{From ($L^2$-PP) to ($ESA_{V}$)}\ \\
We start from the first implication, whose proof is contained in the following 

\begin{proposition}[\cite{BMS}]\label{prop_L2pp_ESA-D+V}
    Let $(M,g)$ be a (possibly incomplete) Riemannian manifold, and fix $C\leq V\in L^2_{loc}(M)$, where $C\in \rr{}$. If $M$ is $(L^{2}$-\textnormal{PP}$)$, then it is $(ESA_{V})$.
\end{proposition}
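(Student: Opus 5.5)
By Proposition \ref{prop_invariance_translation} (the case $p=2$), essential self-adjointness is unchanged if we add a bounded function to $V$. So we may replace $V$ by $V-C+1$ and assume $V\geq 1$. The operator $P:=-\Delta+V$ on $C^\infty_c(M)$ is then symmetric and satisfies $(P\varphi,\varphi)_{L^2}\geq \|\varphi\|_{L^2}^2$. By Theorem \ref{thm_RS2_X26}, (ESA$_V$) is equivalent to $\ker(P^*)=\{0\}$. By Remark \ref{rmk_ESA}, $P^*$ is the maximal realization, so it suffices to show:
\begin{align}
u\in L^2(M),\quad Vu\in L^1_{loc}(M),\quad (-\Delta+V)u=0 \ \text{in } \mathcal{D}'(M) \quad\Longrightarrow\quad u\equiv 0.
\end{align}

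The key steps use Kato's inequality in the form of Proposition \ref{Thm_Brezis-Kato}. From $(-\Delta+V)u=0$ we have $\Delta u = Vu\in L^1_{loc}(M)$. Proposition \ref{Thm_Brezis-Kato} with $f=Vu$ gives
\begin{align}
\Delta u_+\geq 1_{\{u>0\}}Vu = Vu_+ \geq u_+
\end{align}
in $\mathcal{D}'(M)$, where the last inequality uses $V\geq 1$ and $u_+\geq 0$. Hence $(-\Delta+1)(-u_+)\geq 0$ in $\mathcal{D}'(M)$ with $-u_+\in L^2(M)$. The $(L^2$-PP$)$ property (i.e. (PP$_1$)) then yields $-u_+\geq 0$, so $u_+=0$. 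Applying the same argument to $-u$, which solves the same equation, gives $u_-=0$. Thus $u\equiv 0$, $\ker P^*=\{0\}$, and $P$ is essentially self-adjoint.

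The main point to check carefully is the legitimacy of the distributional manipulations at this low regularity. First, $\Delta u=Vu$ must be a genuine $L^1_{loc}$ function so that Proposition \ref{Thm_Brezis-Kato} applies; this holds because $V\in L^2_{loc}$ and $u\in L^2$. Second, the step $1_{\{u>0\}}Vu\geq u_+$ must be valid pointwise a.e., which requires the normalization $V\geq 1$. That normalization is exactly what the translation invariance provides, since the shift $1-C$ is a constant and hence lies in $L^\infty(M)$. Note also that (PP$_1$) is used directly, so no equivalence among the different values of $\lambda$ is needed. Finally, since $Vu_+\in L^1_{loc}$, $-u_+$ is an admissible distributional supersolution in the sense of the definition.
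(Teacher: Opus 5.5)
Your proposal is correct and follows essentially the same route as the paper: shift the potential via Proposition \ref{prop_invariance_translation}, reduce (ESA$_V$) to $\ker P^*=\{0\}$ via Theorem \ref{thm_RS2_X26}, apply the Brezis--Kato inequality to get $\Delta u_+\geq u_+$, and use $(L^2$-PP$)$ on $-u_+$, then repeat for $-u$. The only cosmetic difference is that the paper keeps $\widetilde V=V-C\geq 0$ and works with $Q=-\Delta+\widetilde V+1$, whereas you absorb the $+1$ into the potential.
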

\begin{proof}
    Set $\widetilde{V}:=V-C\geq 0$. Thanks to Proposition \ref{prop_invariance_translation}, it is sufficient to prove the essential self-adjointness of the operator $Q = -\Delta + \widetilde{V} + 1$.  Since $Q\ge 1$ in the sense of quadratic forms, Theorem \ref{thm_RS2_X26} tells us that we only need to show that the unique $L^2$ distributional solution to $Qu = 0$ is the constant zero function.  Therefore, given any $u \in L^2(M)$ satisfying $Qu = 0$, we have
    \begin{align}
        0=Qu=(-\Delta+\widetilde{V}+1)u,
    \end{align}
    and hence
    \begin{align}
        \Delta u= (1+\widetilde{V})u.
    \end{align}
    By Theorem \ref{Thm_Brezis-Kato}
    \begin{align}
        \Delta u_+ \geq (1+\widetilde{V})u_+ \geq u_+,
    \end{align}
    and thus
    \begin{align}
        (-\Delta+1)(-u^+)\geq 0,
    \end{align}
    which implies $u^+\leq 0$ by the ($L^2$-PP). Whence $u^+=0$ and $u\leq 0$. By replacing $u$ with $-u$, we similarly get $u\geq 0$, obtaining $u\equiv 0$.
\end{proof}

\noindent A straightforward consequence of Proposition \ref{prop_invariance_translation} and the previous result is the next

\begin{corollary}\label{Cor_ESA_V_ESA_lambda}
    Let $(M,g)$ be a (possibly incomplete) Riemannian manifold.  If $(M,g)$ is $(ESA_{V})$ for every $V\in L^2_{loc}(M)$ bounded from below, then $(M,g)$ is $(ESA_{\lambda})$ for every $\lambda \in L^\infty(M)$. In particular, $(M,g)$ is $(ESA_{\lambda})$ for every $\lambda\in \rr{}$.
\end{corollary}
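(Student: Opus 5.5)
The corollary is a direct specialization of Proposition \ref{prop_invariance_translation}, so the plan is just to check that its hypotheses fit. Fix any $\lambda\in L^\infty(M)$. Then $V:=\lambda$ belongs to $L^2_{loc}(M)$ and is bounded from below by the constant $-\norm{\lambda}{L^\infty}$. By hypothesis, $(M,g)$ is therefore $(ESA_\lambda)$. Since the constant function $\lambda\in\rr{}$ lies in $L^\infty(M)$, the final assertion is the special case of constant $\lambda$.

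I would also record a second route, which makes the connection with Proposition \ref{prop_invariance_translation} explicit. Take $V\equiv 0\in L^2_{loc}(M)$; it is bounded from below, so the hypothesis gives $(ESA)$. By Remark \ref{rmk_ESA}, for $p=2$ and $V\in L^2_{loc}(M)$, essential self-adjointness of $-\Delta+V$ on $C^\infty_c(M)$ is the same as $C^\infty_c(M)$ being a core for $(-\Delta+V)_{2,max}$. Indeed, the closure of a symmetric operator is self-adjoint exactly when it equals the adjoint, and here the adjoint is the maximal realization. Applying Proposition \ref{prop_invariance_translation} with $p=2$ and $V=0$ then gives the core property for $(-\Delta+\lambda)_{2,max}$ for every $\lambda\in L^\infty(M)$, which is $(ESA_\lambda)$. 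This is exactly Corollary \ref{cor_equiv_ESA}.

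No step is a real obstacle. The only point needing a line of justification is the identification in Remark \ref{rmk_ESA} of $(ESA_V)$ with the $L^2$-core property of $C^\infty_c(M)$ for the maximal operator. That identification rests on the symmetry of $-\Delta+\lambda$ on $C^\infty_c(M)$, which holds because $\lambda$ is real-valued and bounded.
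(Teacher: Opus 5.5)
Your proposal is correct. Your first argument is more direct than what the paper indicates. A bounded $\lambda$ lies in $L^2_{loc}(M)$ and satisfies $\lambda\ge -\norm{\lambda}{L^\infty}$, so it is itself an admissible potential. The conclusion is then a pure specialization of the hypothesis, and no translation argument is needed.

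The paper instead presents the corollary as a consequence of Proposition \ref{prop_invariance_translation}, which is your second route. There you use the hypothesis only for $V\equiv 0$ to get $(ESA)$. You then identify essential self-adjointness with the $L^2$-core property for the maximal realization via Remark \ref{rmk_ESA}, and translate by $\lambda$; this is exactly Corollary \ref{cor_equiv_ESA}.

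Your direct route is the shortest proof of the statement as written. The paper's route shows that much less is needed: $(ESA)$ alone already gives $(ESA_\lambda)$ for every bounded $\lambda$, which is the form used for the equivalence (3) in Theorem \ref{thm_main_L^2}.

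Your justification of the ESA/core identification is sound. The operator $-\Delta+\lambda$ on $C^\infty_c(M)$ is symmetric because $\lambda$ is real, and its adjoint is the maximal realization.
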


\subsubsection*{The case $p\ne2$}\label{subsec_core-semigroup-p different from 2} 
In the case $p\ne 2$, there are two natural properties associated to the $(L^{p}$-PP$)$ property to look at: the fact that the minimal extension of any Schrodinger operator with non-negative potential in $L^{p'}_{loc}(M)$ generates a contraction semigroup in $L^{p'}(M)$ (note that the conjugate exponent $p'$ must be considered) and the fact that $C^\infty_c(M)$ is a core for the maximal extension of $-\Delta+V$ (i.e. the $p$-maximal extension). See \cite{kato1986lp, BV24, GP19, milatovic2010m, MT16, PVV} and the references therein. 

As we shall see in the next sections, for an operator between $L^{p}$ spaces, the property of being the generator of a contraction semigroup is, in fact, weaker than that of $C^\infty_c(M)$ being a core for the maximal extension; see Example \ref{contract but not core}. In the particular case $p=2$, these two properties collapse into the same one, hence making them indistinguishable. See Proposition \ref{prop_semigroups equivalent cores}.

We start by observing the following relation between the core property for the $p$-maximal and the $p'$-maximal extensions of $-\Delta+V$, and by giving a characterization of the core property in terms of the semigroups generated by the minimal extensions.

\begin{proposition}\label{prop_semigroups equivalent cores}
    Let $(M,g)$ be a (possibly incomplete) Riemannian manifold, let $p\in(1,+\infty)$ and let $0\le V\in L^{p}_{loc}(M)\cap L^{p'}_{loc}(M)$. Then, the following are equivalent:
    \begin{enumerate}
        \item $C^{\infty}_{c}(M)$ is a core for $(-\Delta+V)_{p,max}$;
        \item $C^{\infty}_{c}(M)$ is a core for $(-\Delta+V)_{p',max}$;
        \item both $(-\Delta+V)_{p,min}$ and $(-\Delta+V)_{p',min}$ generate a contraction semigroup.
    \end{enumerate}
\end{proposition}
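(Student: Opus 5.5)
The plan is to run everything through the duality between minimal and maximal realizations. Write $P=-\Delta+V$ on $C^\infty_c(M)$, $A_q:=(-\Delta+V)_{q,min}$ and $B_q:=(-\Delta+V)_{q,max}$ for $q\in\{p,p'\}$. The first step is the identity
\begin{align}
    (A_p)^*=B_{p'} \quad \text{and} \quad (A_{p'})^*=B_p,
\end{align}
with adjoints in the sense of the duality remark of Section \ref{Sec_basics}. Since $V\in L^{p}_{loc}\cap L^{p'}_{loc}$, both minimal operators are defined. An element $v\in L^{p'}$ lies in $D(A_p^*)$ exactly when $\varphi\mapsto\int vP\varphi$ is $L^p$-bounded on $C^\infty_c$. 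This happens exactly when the distribution $Pv$ belongs to $L^{p'}$, so $D(A_p^*)=D(B_{p'})$, as in Remark \ref{rmk_ESA}. Because $L^p$ is reflexive and $A_p$ is closed, we also get $B_{p'}^*=A_p^{**}=A_p$, by \cite[Thm. III.5.29]{Ka13}.

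Next comes (1)$\Leftrightarrow$(3). Condition (1) says $A_p=B_p$. Accretivity of $A_p$ on $C^\infty_c$ is a Kato-type computation: test $P\varphi$ against $J_p^\varphi=\|\varphi\|^{2-p}|\varphi|^{p-2}\varphi$. This gives $(p-1)\int|\varphi|^{p-2}|\nabla\varphi|^2+\int V|\varphi|^p\ge0$, using $V\ge0$. For $p<2$ this is made rigorous by regularizing with $(\varphi^2+\varepsilon)^{(p-2)/2}\varphi$. Accretivity then passes to the closure, and the same holds for $A_{p'}$.

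By Theorem \ref{t:X.48}, $A_p$ generates a contraction semigroup iff $\mathrm{Ran}(1+A_p)=L^p$. Since $A_p$ is closed and accretive, $\|(1+A_p)u\|\ge\|u\|$, so the range is closed. Hence $\mathrm{Ran}(1+A_p)=L^p$ iff $\ker(1+A_p^*)=\ker(1+B_{p'})=\{0\}$. In short, $A_p$ generates a contraction semigroup iff there is no nonzero $v\in L^{p'}$ with $(-\Delta+V+1)v=0$ distributionally.

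Assume (1). The Kato/Brezis--Kato step is the main obstacle. The goal is to show that $B_p=A_p$ is accretive, and for this one must test $(-\Delta+V)u$ against $J_p^u$ for $u$ in the maximal domain, where $u$ has low regularity. I would avoid doing that computation directly. Instead, accretivity of $B_p$ follows at once from (1), since $B_p=A_p$ and $A_p$ is accretive. The key consequence is that $\ker(1+B_p)=\{0\}$, because accretivity gives $\|(1+B_p)u\|\ge\|u\|$. By the duality above, $\ker(1+B_p)=\ker(1+A_{p'}^*)$, so $\mathrm{Ran}(1+A_{p'})$ is dense. It is also closed by accretivity, hence equal to $L^{p'}$, and $A_{p'}$ generates.

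For $A_p$ itself, take $v\in\ker(1+B_{p'})$. Then for every $u\in D(B_p)=D(A_p)$ we have $\int v(1+B_p)u=\int (1+A_{p'}^{*})v\,u$, which I would justify using $B_p=A_p=B_{p'}^*$. This equals $0$. Next we need $\mathrm{Ran}(1+B_p)=L^p$. We already know $\mathrm{Ran}(1+A_p)$ is closed. Its annihilator is $\ker(1+B_{p'})$, and this kernel is zero if $\ker(1+A_p^*)=\ker(1+B_{p'})$ is trivial, which is circular. To break the circularity, I would use maximality. Since $A_{p'}$ generates, $1+A_{p'}$ is bijective. Then $1+B_{p'}=(1+A_{p'})^{**}$ is an extension of $1+A_{p'}$, and the question is whether it is injective. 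Its adjoint $1+A_p$ has closed range and is injective, and by the closed range theorem $\ker(1+B_{p'})=\mathrm{Ran}(1+A_p)^\perp$. This gives exactly the equivalence ``$A_p$ generates iff $\ker(1+B_{p'})=0$'' again.

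So the real content is the following. Under (1), $B_{p'}$ is an extension of the generator $A_{p'}$, and $B_{p'}=A_p^*=B_p^*$. Since $B_p=A_p$ is accretive with $\mathrm{Ran}(1+B_p)$ closed, $\ker(1+B_p)=0$ together with closed range gives $\mathrm{Ran}(1+B_p^*)=L^{p'}$ (closed range theorem). That is, $1+B_{p'}$ is surjective, while $1+A_{p'}$ is already bijective. I expect to conclude $B_{p'}=A_{p'}$ via a separate argument: the dual of an $m$-accretive operator is accretive, and one should check this via the semigroup adjoint, $P_t^*$ being a contraction semigroup on $L^{p'}$ with generator $A_p^*=B_{p'}$. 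Because $B_{p'}$ is then a generator extending the generator $A_{p'}$, the two coincide. This yields (2) and both halves of (3).

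For the converse (3)$\Rightarrow$(1), the dual semigroup of $e^{-tA_{p'}}$ has generator $A_{p'}^*=B_p$. Thus $B_p$ is a generator extending the generator $A_p$, and since the resolvents at $1$ agree on the whole space, $A_p=B_p$. The implication (2)$\Leftrightarrow$(3) follows by symmetry in $p\leftrightarrow p'$.
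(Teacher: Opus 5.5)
Your proof of (3)$\Rightarrow$(1) and your derivation, under (1), that $A_{p'}$ generates a contraction semigroup are both correct. The other half of (1)$\Rightarrow$(3), namely that $A_p$ generates, is circular as written, and so is your route to (2). The step you use to close it is ``$P_t^*$ being a contraction semigroup on $L^{p'}$ with generator $A_p^*=B_{p'}$''. This presupposes that $P_t=e^{-tA_p}$ exists, i.e.\ that $A_p$ already generates a contraction semigroup, which is exactly what remains to be shown. Likewise, ``the dual of an $m$-accretive operator is accretive'' is only available once $\mathrm{Ran}(1+A_p)=L^p$ is known. So, as written, neither (2) nor generation by $A_p$ is derived from (1).

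The missing observation is already in your first step. By reflexivity and the result of Kato you cite, $B_p^*=(A_{p'}^*)^*=A_{p'}^{**}=A_{p'}$. Hence (1) gives directly $B_{p'}=A_p^*=B_p^*=A_{p'}$, which is (2). Your own identity $1+B_{p'}=(1+A_{p'})^{**}$, valid under (1), therefore says $1+B_{p'}=1+A_{p'}$ because $A_{p'}$ is closed; it is not merely an extension.

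With $A_{p'}=B_{p'}$ in hand, your kernel argument runs verbatim with $p$ and $p'$ exchanged. Accretivity of $B_{p'}=A_{p'}$ gives $\ker(1+B_{p'})=\ker(1+A_p^*)=\{0\}$. So $\mathrm{Ran}(1+A_p)$ is dense, it is closed by accretivity, and $A_p$ generates. Alternatively, the adjoint of $e^{-tA_{p'}}$, which you already have, is a contraction semigroup on $L^p$ with generator $A_{p'}^*=B_p=A_p$.

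After this repair your proof is essentially the paper's:
\begin{itemize}
\item (1)$\Leftrightarrow$(2) by biduality;
\item (1) together with (2) gives (3), because $A_p$ and its adjoint $A_p^*=B_{p'}=A_{p'}$ are both accretive. This is the Corollary to Theorem X.48, which your closed-range and kernel argument reproves;
\item (3)$\Rightarrow$(1) via the adjoint semigroup, exactly as you do.
\end{itemize}
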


\begin{proof}
\underline{$(1)\Leftrightarrow (2)$}: Assume that $C^{\infty}_{c}(M)$ is a core for $(-\Delta+V)_{p,max}$ in $L^{p}(M)$, so that $(-\Delta+V)_{p,max}=(-\Delta+V)_{p,min}$. By the relation between the $p$-maximal extension and the $p'$-minimal extension, we have (see \cite{Ka13}, Theorem 5.29)
    \begin{equation*}
        \begin{split}
            (-\Delta+V)_{p',max}&=(-\Delta+V)_{p,min}^{*}=(-\Delta+V)_{p,max}^{*}\\
            &=(-\Delta+V)_{p'  ,min}^{**}=(-\Delta+V)_{p',min},
        \end{split}
    \end{equation*}
and so the core property for $p'$.

\underline{$(1)\Leftrightarrow (3)$:} Mimicking the arguments in \cite{milatovic2010m}, the steps of the proof are the following:
    \begin{itemize}
        \item since $(-\Delta+V)_{p,min}$ generates a contraction semigroup, it is $m$-accretive by Theorem \ref{t:X.48} and the remark before \cite[Theorem X.49]{RS2};
        \item since $L^{p}(M)$ is reflexive, the contraction property is inherited by the adjoint (see Remark \ref{Rmk_adjoint norms}), so that $(-\Delta+V)_{p,max}=(-\Delta+V)_{p',min}^*$ generates a contraction semigroup, and hence is $m$-accretive;
        \item by the $m$-accretivity of $(-\Delta+V)_{p,min}$, it then follows that $(-\Delta+V)_{p,min}=(-\Delta+V)_{p,max}$.
    \end{itemize}

Conversely, suppose that $C^\infty_c(M)$ is a core for both $(-\Delta+V)_{p,max}$ and $(-\Delta+V)_{p',max}$. Since $(-\Delta+V)_{p,min}$ and $(-\Delta+V)_{p',min}$ are both accretive, it then follows that
    \begin{align}
        (-\Delta+V)_{p,max}^*=(-\Delta+V)_{p',min}
    \end{align}
    and
    \begin{align}
        (-\Delta+V)_{p',max}^*=(-\Delta+V)_{p,min}
    \end{align}
    are both accretive. By the Corollary to Theorem \ref{t:X.48}, we conclude that both $(-\Delta+V)_{p,max}=(-\Delta+V)_{p,min}$ and $(-\Delta+V)_{p',max}=(-\Delta+V)_{p',min}$ generate a contraction semigroup.
\end{proof}

\begin{remark}\label{Rmk_adjoint norms}
    We observe that on a reflexive Banach space $(X,\norm{\cdot}{X})$, a densely defined, closed, linear operator $T$ generates a (strongly continuous) contraction semigroup if and only if $T^*$ generates a (strongly continuous) contraction semigroup. Indeed, on reflexive spaces we have the identity
    \begin{equation}\label{norm_of_operator_and_adj}
        \Vert A \Vert_{\mathcal{L}(X)} = \Vert A^{*} \Vert_{\mathcal{L}(X^{*})}
    \end{equation}
    for every bounded linear operator $A$ on $X$. Moreover, by Corollary 10.6 in \cite{pazy2012semigroups} we know that the adjoint of the semigroup generated by $T$ is the (strongly continuous) semigroup generated by $T^{*}$, and the claim follows by applying  \eqref{norm_of_operator_and_adj} to the semigroups generated by $T$ and $T^{*}$ and using the definition of contraction semigroup. 
\end{remark}

In the next step we are going to relate the positivity preservation to the core property for the operator $-\Delta+V$. As a first stage, we show that the ($L^{p'}$-PP) is strong enough to ensure that $(-\Delta+V)_{p,min}$ generates a contraction semigroup in $L^{p}(M)$.

\begin{proposition}\label{prop_LpPP_CSG-D+V}
    Let $(M,g)$ be a (possibly incomplete) Riemannian manifold, let $p\in(1,+\infty)$ and let $0\leq V\in L^{p}_{loc}(M)$. If $M$ is $(L^{p'}$-\textnormal{PP}$)$, then $(-\Delta+V)_{p,min}$ generates a contraction semigroup in $L^{p}(M)$. Moreover, if $V\in L^{p}_{loc}(M)\cap L^{p'}_{loc}(M)$ then $(-\Delta+V)_{p',max}$ generates a contraction semigroup in $L^{p'}(M)$.
\end{proposition}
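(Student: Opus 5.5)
The plan is to apply Theorem \ref{t:X.48} to the closed operator $T:=(-\Delta+V)_{p,min}$ with $\bar\lambda=1$. We need two facts: $T$ is accretive, and $\mathrm{Ran}(1+T)=L^p(M)$. The second assertion will then follow by duality from Remark \ref{Rmk_adjoint norms}.

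\emph{Accretivity.} By the remark following the definition of accretive operators, the closure of an accretive operator is accretive. So it suffices to check accretivity on $C^\infty_c(M)$, using the tangent functional $J_p^u=\|u\|_{L^p}^{2-p}|u|^{p-2}u$. For $u\in C^\infty_c(M)$ we must show
\begin{equation*}
\int_M |u|^{p-2}u\,(-\Delta u+Vu)\dvol\ \ge 0 .
\end{equation*}
The potential term is $\int V|u|^p\ge0$ since $V\ge0$. The Laplacian term equals $(p-1)\int |u|^{p-2}|\nabla u|^2\ge 0$ after integrating by parts. For $p\ge2$ this is immediate. For $1<p<2$ the function $|u|^{p-2}u$ is not $C^1$ near $\{u=0\}$. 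I would handle this with the regularization $(u^2+\varepsilon)^{(p-2)/2}u$, which gives a nonnegative integrand, and then let $\varepsilon\to0$ by dominated convergence. This is the only technical point in this step, and it is standard.

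\emph{Range.} Since $T$ is closed and accretive, $\|(1+T)u\|_{L^p}\ge\|u\|_{L^p}$, so $\mathrm{Ran}(1+T)$ is closed. It therefore suffices to prove density, and by Hahn–Banach we take $w\in L^{p'}(M)$ annihilating $\mathrm{Ran}(1+T)$. Then
\begin{equation*}
\int_M w\,(-\Delta+V+1)\varphi\dvol=0\qquad \forall\varphi\in C^\infty_c(M).
\end{equation*}
Since $V\in L^p_{loc}$ and $w\in L^{p'}$, Hölder gives $Vw\in L^1_{loc}$. Hence $w$ is a distributional solution of $\Delta w=(1+V)w$ with right-hand side in $L^1_{loc}$.

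By Proposition \ref{Thm_Brezis-Kato}, $\Delta w_+\ge 1_{\{w>0\}}(1+V)w=(1+V)w_+\ge w_+$, the last inequality using $V\ge0$. This says $(-\Delta+1)(-w_+)\ge0$ in $\mathcal D'(M)$ with $-w_+\in L^{p'}(M)$. The ($L^{p'}$-PP) then gives $-w_+\ge0$, hence $w_+=0$. Applying the same argument to $-w$ gives $w_-=0$, so $w=0$.

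Thus $\mathrm{Ran}(1+T)$ is dense, hence equal to $L^p(M)$. Theorem \ref{t:X.48} then shows that $T$ generates a contraction semigroup.

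\emph{Second assertion.} Assume in addition $V\in L^{p'}_{loc}(M)$, so that $C^\infty_c(M)\subset D((-\Delta+V)_{p',max})$. As in the proof of Proposition \ref{prop_semigroups equivalent cores} (via \cite[Theorem 5.29]{Ka13}), we have $(-\Delta+V)_{p',max}=((-\Delta+V)_{p,min})^*$. Because $L^p(M)$ is reflexive, Remark \ref{Rmk_adjoint norms} transfers the contraction semigroup from $T$ to its adjoint. I expect the main care to lie in the accretivity computation for $p<2$; the range argument is a direct copy of the proof of Proposition \ref{prop_L2pp_ESA-D+V}.
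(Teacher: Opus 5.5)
Your proof is correct and follows the same route as the paper. You prove accretivity of $(-\Delta+V)_{p,min}$, then show $\mathrm{Ran}(1+(-\Delta+V)_{p,min})=L^p(M)$ by Hahn--Banach, applying Brezis--Kato and the ($L^{p'}$-PP) to an annihilator $w\in L^{p'}(M)$, and conclude with Theorem \ref{t:X.48}; for the second claim you combine $(-\Delta+V)_{p',max}=((-\Delta+V)_{p,min})^*$ with Remark \ref{Rmk_adjoint norms}. The only difference is that you write out in full the steps the paper delegates to \cite{milatovic2010m} and \cite{BV24}, and those details, including the $\varepsilon$-regularization for $1<p<2$ and the closed-range argument, are all sound.
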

\begin{proof}
    We provide a short sketch of the proof. 
    
    Fix $\lambda>0$. Thanks to the ($L^{p'}$-PP), following the argument of \cite[Section 2]{milatovic2010m}, one can observe that $\textnormal{Ran}\left((-\Delta+V+\lambda)|_{C^\infty_c(M)}\right)$ is dense in $L^{p}(M)$ and hence that $\textnormal{Ran}\left((-\Delta+V)_{p,min}+\lambda\right)=L^{p}(M)$. Since $(-\Delta+V)_{p,min}$ is accretive in $L^{p}(M)$, see \cite[Remark 2.2]{milatovic2010m}, one concludes thanks to Theorem \ref{t:X.48}.
    
    For the last statement, supposing also that $V\in L^{p'}_{loc}(M)$ and observing that
    \begin{align}
        (-\Delta+V)_{p',max}=(-\Delta+V)_{p,min}^*,
    \end{align}
    arguing as in \cite[Theorem 4.12]{BV24} it follows that $(-\Delta+V)_{p',max}$ generates a contraction semigroup.
\end{proof}

\noindent Concerning the $p$-maximal extension, the contraction semigroup property requires both the $L^{p}$ and $L^{p'}$-positivity preserving properties. We have the following

\begin{proposition}\label{prop_LpPP_CSG-D+V_p,max}
    Let $(M,g)$ be a (possibly incomplete) Riemannian manifold, let $p\in(1,+\infty)$ and let $0\leq V\in L^{p}_{loc}(M)$. If $M$ is $(L^{p}$-\textnormal{PP}$)$ and $(L^{p'}$-\textnormal{PP}$)$, then $(-\Delta+V)_{p,max}$ generates a contraction semigroup in $L^{p}(M)$.
\end{proposition}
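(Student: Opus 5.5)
The plan is to show that the maximal realization $T:=(-\Delta+V)_{p,max}$ actually coincides with the minimal one $(-\Delta+V)_{p,min}$. Once this identity is available, the conclusion follows from Proposition \ref{prop_LpPP_CSG-D+V}: the $(L^{p'}$-PP$)$ guarantees that $(-\Delta+V)_{p,min}$ generates a contraction semigroup in $L^p(M)$. Since $T$ is a closed extension of the minimal realization, proving equality reduces to proving the inclusion $D(T)\subseteq D((-\Delta+V)_{p,min})$.

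\textbf{Step 1.} Fix $\lambda>0$. By Proposition \ref{prop_LpPP_CSG-D+V}, applied via the Hille--Yosida theorem (Theorem \ref{Hille_Yosida}), we have $\mathrm{Ran}((-\Delta+V)_{p,min}+\lambda)=L^p(M)$. Let $u\in D(T)$. We can then find $w\in D((-\Delta+V)_{p,min})$ satisfying
\begin{equation*}
(-\Delta+V+\lambda)w=(-\Delta+V+\lambda)u .
\end{equation*}
The difference $z:=u-w$ lies in $L^p(M)$, satisfies $Vz\in L^1_{loc}(M)$, and solves $(-\Delta+V+\lambda)z=0$ in $\mathcal{D}'(M)$.

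\textbf{Step 2.} We show that $z\equiv 0$ using the $(L^p$-PP$)$, following the argument of Proposition \ref{prop_L2pp_ESA-D+V}. The distributional equation gives $\Delta z=(V+\lambda)z$, and $(V+\lambda)z\in L^1_{loc}(M)$. The Brezis--Kato inequality (Proposition \ref{Thm_Brezis-Kato}) then yields
\begin{equation*}
\Delta z_+\ge 1_{z>0}(V+\lambda)z=(V+\lambda)z_+\ge\lambda z_+ .
\end{equation*}
Hence $(-\Delta+\lambda)(-z_+)\ge0$ with $-z_+\in L^p(M)$. The $(L^p$-PP$_\lambda)$ gives $-z_+\ge0$, so $z_+=0$. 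Applying the same argument to $-z$ gives $z_-=0$, and therefore $z=0$. This step needs $(L^p$-PP$_\lambda)$ for the chosen $\lambda$, whereas the hypothesis is stated for $\lambda=1$. I would simply take $\lambda=1$ throughout, since Step 1 works for any $\lambda>0$ and the range condition holds for all of them.

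\textbf{Step 3.} From Step 2, $u=w\in D((-\Delta+V)_{p,min})$, so $T=(-\Delta+V)_{p,min}$. This realization is closed and densely defined, and by Proposition \ref{prop_LpPP_CSG-D+V} it generates a contraction semigroup.

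The main obstacle I expect is justifying Step 1 under exactly the stated hypotheses. Proposition \ref{prop_LpPP_CSG-D+V} requires $0\le V\in L^p_{loc}(M)$, which we have. The delicate ingredient is the density of $\mathrm{Ran}((-\Delta+V+1)|_{C^\infty_c})$ in $L^p$, which is proved there via $(L^{p'}$-PP$)$ applied to distributional solutions $v\in L^{p'}$ of $(-\Delta+V+1)v=0$, where one only knows $Vv\in L^1_{loc}$. I would need to check that this duality argument is carried out with only $V\in L^p_{loc}$. It should be: since $v\in L^{p'}$ and $V\in L^p_{loc}$, Hölder gives $Vv\in L^1_{loc}$. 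If that is confirmed, the whole argument goes through. As a byproduct it also shows that $C^\infty_c(M)$ is a core for $T$.
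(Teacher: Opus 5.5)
Your argument is correct, but it is organized differently from the paper's.

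\textbf{Your route.} You first prove that $(-\Delta+V)_{p,max}=(-\Delta+V)_{p,min}$, using two inputs:
\begin{enumerate}
\item surjectivity of $(-\Delta+V)_{p,min}+1$, which comes from the ($L^{p'}$-PP) via Proposition \ref{prop_LpPP_CSG-D+V};
\item injectivity of $(-\Delta+V)_{p,max}+1$, which comes from Brezis--Kato plus the ($L^{p}$-PP), exactly as in Proposition \ref{prop_L2pp_ESA-D+V}.
\end{enumerate}
Generation of a contraction semigroup is then inherited from the minimal realization.

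\textbf{The paper's route.} The paper works directly with the maximal operator:
\begin{enumerate}
\item it shows via Kato's inequality that $(-\Delta+V)_{p,max}$ is closed;
\item it transfers surjectivity of $(\cdot)+\lambda$ from the minimal to the maximal realization;
\item it derives from the ($L^p$-PP) the quantitative bound $\lambda\|u\|_{L^p}\le\|((-\Delta+V)_{p,max}+\lambda)u\|_{L^p}$ on all of $D((-\Delta+V)_{p,max})$;
\item it concludes by Hille--Yosida.
\end{enumerate}
The identity max $=$ min, i.e.\ the core property, comes only afterwards, in Corollary \ref{prop_LpPP+Lp'PP_Core}, from the maximality of $m$-accretive operators.

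\textbf{What each buys.} Your version needs only the qualitative fact $\ker((-\Delta+V)_{p,max}+1)=\{0\}$, not a resolvent estimate for the maximal operator. The bound $1/\lambda$ comes for free from the minimal realization once the two operators coincide. Your version also yields the core property directly, so the separate maximality argument of that corollary becomes unnecessary. The paper's version, by contrast, establishes generation for the maximal operator without first identifying it with the minimal one.

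\textbf{One unproved point.} You take for granted, as the paper also does, that $(-\Delta+V)_{p,min}\subseteq(-\Delta+V)_{p,max}$. Concretely, for $w\in D((-\Delta+V)_{p,min})$ one needs $Vw\in L^1_{loc}(M)$ and that $(-\Delta+V)_{p,min}w$ equals the distributional $(-\Delta+V)w$. You use this when you assert that $z=u-w$ satisfies $Vz\in L^1_{loc}(M)$ and $(-\Delta+V+1)z=0$ in $\mathcal{D}'(M)$.

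For $p\ge 2$ this follows from H\"older's inequality. For $p<2$ it does not, since $V\in L^p_{loc}(M)$ need not lie in $L^{p'}_{loc}(M)$. This is where Kato's inequality enters. Applied to $\phi\in C^\infty_c(M)$ and tested against $0\le\psi\in C^\infty_c(M)$, it gives $\int_M V|\phi|\psi\,\mathrm{dv}\le\|\psi\|_{L^{p'}}\|(-\Delta+V)\phi\|_{L^p}+\|\Delta\psi\|_{L^{p'}}\|\phi\|_{L^p}$. So graph-norm Cauchy sequences in $C^\infty_c(M)$ have $V\phi_k$ Cauchy in $L^1_{loc}(M)$, which is what you need to pass to the limit.
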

\begin{proof}
    Again, we provide a short sketch of the proof, following the argument of \cite[Section 3]{milatovic2010m}. 
    
    First, using the Kato inequality one proves that $(-\Delta+V)_{p,max}$ is a closed operator. Moreover, since the domain of $(-\Delta+V)_{p,min}$ is contained in that of $(-\Delta+V)_{p,max}$, and from the fact that $\textnormal{Ran}\left((-\Delta+V)_{p,min}+\lambda\right)=L^{p}(M)$ for every $\lambda>0$, it follows that $\textnormal{Ran}\left((-\Delta+V)_{p,max}+\lambda\right)=L^{p}(M)$ for every $\lambda>0$.
    
    Secondly, thanks to the ($L^p$-PP) one obtains an estimate of the form
    \begin{equation*}
        \lambda \Vert u\Vert_{L^{p}(M)}\le \Vert ((-\Delta+V)_{p,max}+\lambda)u\Vert_{L^{p}(M)}
    \end{equation*}
    for every function $u$ in the domain of $(-\Delta+V)_{p,max}$. In particular, this implies the injectivity of the operator $(-\Delta+V)_{p,max}+\lambda$.
    
    Thus, $(-\infty,0)$ is contained in the resolvent set of $(-\Delta+V)_{p,max}$, and the resolvent $((-\Delta+V)_{p,max}+\lambda)^{-1}$ is a bounded linear operator defined on the whole $L^{p}(M)$ and satisfying the estimate
    \begin{equation*}
        \Vert ((-\Delta+V)_{p,max}+\lambda)^{-1}\Vert_{\mathcal{L}(L^{p})}\le \frac{1}{\lambda}
    \end{equation*}
    for every $\lambda>0$. Using the Hille-Yosida theorem, we conclude that $(-\Delta+V)_{p,max}$ generates a contraction semigroup.
\end{proof}

\begin{remark}
    Note that in the particular case of $p\geq 2$, in order to deduce that $(-\Delta+V)_{p,max}$ generates a contraction semigroup, it is enough to assume only the validity of the ($L^p$-PP), without requiring the ($L^{p'}$-PP). This is a consequence of the Proposition \ref{prop_LpPP_CSG-D+V}.
\end{remark}

We conclude by observing that, thanks to Proposition \ref{prop_LpPP_CSG-D+V} and Proposition \ref{prop_LpPP_CSG-D+V_p,max}, requiring both the ($L^p$-PP) and ($L^{p'}$-PP) is sufficient for the set of compactly supported smooth functions to be a core for $(-\Delta+V)_{p,max}$, recovering what happens in the case $p=2$.

\begin{corollary}\label{prop_LpPP+Lp'PP_Core}
    Let $(M,g)$ be a (possibly incomplete) Riemannian manifold, let $p\in(1,+\infty)$ and let $0\leq V\in L^{p}_{loc}(M)$. If $M$ satisfies $(L^{p}$-\textnormal{PP}$)$ and $(L^{p'}$-\textnormal{PP}$)$, then $C^{\infty}_{c}(M)$ is a core for $(-\Delta+V)_{p,max}$.
\end{corollary}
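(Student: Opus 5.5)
The plan is to combine Proposition \ref{prop_LpPP_CSG-D+V}, Proposition \ref{prop_LpPP_CSG-D+V_p,max} and the accretivity of the minimal realization. The aim is to prove $(-\Delta+V)_{p,min}=(-\Delta+V)_{p,max}$, which by definition says that $C^\infty_c(M)$ is a core for $(-\Delta+V)_{p,max}$.

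First, since $M$ is $(L^{p'}$-PP$)$, Proposition \ref{prop_LpPP_CSG-D+V} tells us that $T_{min}:=(-\Delta+V)_{p,min}$ generates a contraction semigroup on $L^p(M)$. By Theorem \ref{Hille_Yosida}, $T_{min}+\lambda$ is then a bijection from $D(T_{min})$ onto $L^p(M)$ for every $\lambda>0$.

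Second, since $M$ is both $(L^{p}$-PP$)$ and $(L^{p'}$-PP$)$, Proposition \ref{prop_LpPP_CSG-D+V_p,max} shows that $T_{max}:=(-\Delta+V)_{p,max}$ also generates a contraction semigroup. In particular $T_{max}+\lambda$ is injective for $\lambda>0$; the key input here is the a priori estimate $\lambda\|u\|_{L^p}\le\|(T_{max}+\lambda)u\|_{L^p}$ obtained in that proof from the $(L^p$-PP$)$.

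Now take $u\in D(T_{max})$ and fix $\lambda=1$. Set $f:=(T_{max}+1)u\in L^p(M)$. By the first step there is $w\in D(T_{min})$ with $(T_{min}+1)w=f$. Since $T_{min}\subset T_{max}$, we get $(T_{max}+1)(u-w)=0$, and injectivity gives $u=w\in D(T_{min})$. Therefore $D(T_{max})=D(T_{min})$, so the two operators coincide and $C^\infty_c(M)$ is a core.

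The point needing the most care is the injectivity of $T_{max}+1$. I would not rely only on the cited sketch, and would instead verify it directly. Suppose $u\in L^p$ with $Vu\in L^1_{loc}$ and $(-\Delta+V+1)u=0$ distributionally. The distributional Kato/Brezis--Kato inequality (Proposition \ref{Thm_Brezis-Kato}, applied with $f=(V+1)u\in L^1_{loc}$) gives $\Delta u_+\ge (V+1)u_+\ge u_+$. Hence $(-\Delta+1)(-u_+)\ge0$ with $-u_+\in L^p$, and the $(L^p$-PP$)$ yields $u_+=0$. Replacing $u$ by $-u$ gives $u_-=0$, so $u=0$. This is the same argument as in Proposition \ref{prop_L2pp_ESA-D+V} and only uses $V\ge0$.

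The $(L^{p'}$-PP$)$ enters only through the surjectivity of $T_{min}+1$ in the first step. That surjectivity is the duality step: a function $v\in L^{p'}$ annihilating $\mathrm{Ran}((-\Delta+V+1)|_{C^\infty_c})$ solves $(-\Delta+V+1)v=0$, hence vanishes by the same Kato argument at exponent $p'$. For that step to make sense we need $Vv\in L^1_{loc}$, which I expect to extract from the regularity of subharmonic distributions (Proposition \ref{Reg_sub_dist}), as in \cite{milatovic2010m}.
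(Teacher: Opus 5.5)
Your proof is correct and is essentially the paper's. The paper observes that both $(-\Delta+V)_{p,min}$ and $(-\Delta+V)_{p,max}$ generate contraction semigroups (Propositions \ref{prop_LpPP_CSG-D+V} and \ref{prop_LpPP_CSG-D+V_p,max}), hence are m-accretive, and concludes by maximality since $(-\Delta+V)_{p,max}$ extends $(-\Delta+V)_{p,min}$; your ``solve with $T_{min}+1$, subtract, use injectivity of $T_{max}+1$'' argument makes this explicit, and your direct Brezis--Kato check of injectivity is a self-contained substitute for the accretivity of $T_{max}$. In your last paragraph no regularity theory is needed: $Vv\in L^1_{loc}$ follows at once from H\"older, since $V\in L^p_{loc}$ and $v\in L^{p'}$.
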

\begin{proof}
    By Proposition \ref{prop_LpPP_CSG-D+V} and Proposition \ref{prop_LpPP_CSG-D+V_p,max}, both $(-\Delta+V)_{p,min}$ and $(-\Delta+V)_{p,max}$ generate a contraction semigroup, hence they are m-accretive. By maximality and using the fact that $(-\Delta+V)_{p,max}$ is an extension of $(-\Delta+V)_{p,min}$, the equality of the two operators follows, and so the claim.
\end{proof}


\section{Liouville properties and positivity preservation}\label{Sec_class_Lp_Liou_Monot}
After having established a first sequence of implications primarily involving functional analytic properties, we now explore the relationships between various notions of $L^2$-Liouville properties for $\lambda$-subharmonic distributions and the positivity preservation. This connection with the $(L^2$-PP$)$ serves as a bridge between essential self-adjointness and Liouville properties. Specifically, we prove the following chain of implications:
\begin{center}
    \gbox{\begin{align}
    \begin{array}{ccccccc}
    L^{2}\textnormal{-Liou}_{SH} &
    \Leftrightarrow &
    L^{2}\textnormal{-Liou}_{\lambda-SH} & \Rightarrow &  L^{2}\textnormal{-Liou}_{\lambda-H} & \Rightarrow &  L^{2}\textnormal{-Liou}_{H}\\
    & & \Updownarrow & & \Updownarrow & &\\
    & & L^2\textnormal{-PP} & \Rightarrow & ESA. & & \\
    \end{array}
\end{align}}
\end{center}

\noindent Moreover, we show that, under certain assumptions, the arrow between ($L^{2}$-Liou$_{\lambda-SH}$) and ($L^{2}$-Liou$_{\lambda-H}$) can be reversed.

Finally, the situation in the $p\ne2$ case will be discussed.
\begin{center}
    \gbox{\begin{align}
    \begin{array}{ccccccc}
    L^{p}\textnormal{-Liou}_{SH} &
    \Leftrightarrow &
    L^{p}\textnormal{-Liou}_{\lambda-SH} & \Rightarrow &  L^{p}\textnormal{-Liou}_{\lambda-H} & \Rightarrow &  L^{p}\textnormal{-Liou}_{H}\\
    & & \Updownarrow & & \Updownarrow & &\\
    & & L^p\textnormal{-PP} & \Rightarrow & CSG_{-\Delta_{p',min}}. & & \\
    \end{array}
\end{align}}
\end{center}

\subsection{Relation between ($L^{2}\textnormal{-Liou}_{SH}$), ($L^{2}\textnormal{-Liou}_{\lambda-SH}$) and ($L^2$-PP) - Part I}\ \\
For fixed $\bar{\lambda}\in\mathbb{R}$, we observe that the implication 
\begin{equation*}
    (L^2-\textnormal{Liou}_{\bar{\lambda}-SH}) \Rightarrow (L^{2}\textnormal{-Liou}_{\lambda-SH})
\end{equation*} 
for all $\lambda \geq \bar{\lambda}$ is a direct consequence of the definitions. Since the proofs in the case $p=2$ and $p\ne2$ are exactly the same, we state a result that includes both cases. 

\begin{lemma}\label{lem_monotonicity_L_l-SH}
    Let $(M,g)$ be a (possibly incomplete) Riemannian manifold and $p\in (1,+\infty)$. If there exists $\bar{\lambda}\in\mathbb{R}$ so that $M$ is \textnormal{(}$L^{p}\textnormal{-Liou}_{\bar{\lambda}-SH}$\textnormal{)}, then $M$ is \textnormal{(}$L^{p}\textnormal{-Liou}_{\lambda-SH}$\textnormal{)} for every $\lambda\geq \bar{\lambda}$.
\end{lemma}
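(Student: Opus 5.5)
The argument is a direct inclusion of solution classes: for $\lambda\ge\bar\lambda$, every nonnegative $L^p$ $\lambda$-subharmonic distribution is automatically a nonnegative $L^p$ $\bar\lambda$-subharmonic distribution. The Liouville property at level $\bar\lambda$ then applies verbatim.

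Fix $\lambda\ge\bar\lambda$ and let $u\in L^p(M)$, $u\ge0$ a.e., satisfy $(-\Delta+\lambda)u\le0$ in $\mathcal{D}'(M)$. By definition, for every $0\le\varphi\in C^\infty_c(M)$,
\begin{align}
\int_M u(-\Delta\varphi)\dvol+\lambda\int_M u\varphi\dvol\le 0 .
\end{align}
All integrals are finite because $u\in L^p(M)\subset L^1_{loc}(M)$, and the condition $\lambda u\in L^1_{loc}(M)$ holds trivially since $\lambda$ is constant. Since $u\ge0$ and $\varphi\ge0$, we have $\int_M u\varphi\dvol\ge0$, and hence $\bar\lambda\int_M u\varphi\dvol\le\lambda\int_M u\varphi\dvol$. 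This gives
\begin{align}
\int_M u(-\Delta\varphi)\dvol+\bar\lambda\int_M u\varphi\dvol\le \int_M u(-\Delta\varphi)\dvol+\lambda\int_M u\varphi\dvol\le 0
\end{align}
for all such $\varphi$, that is, $(-\Delta+\bar\lambda)u\le0$ in $\mathcal{D}'(M)$.

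Thus $u$ is a nonnegative $L^p$ $\bar\lambda$-subharmonic distribution, and $(L^p\textnormal{-Liou}_{\bar\lambda-SH})$ gives that $u$ is constant. This is exactly the conclusion required by $(L^p\textnormal{-Liou}_{\lambda-SH})$. The argument is the same for $p=2$ and $p\neq2$, and it places no sign restriction on $\bar\lambda$.

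There is no real obstacle. The only point that needs care is using the sign of $u$ together with the sign of the test functions to compare the zero-order terms; this comparison is what ties the monotonicity in $\lambda$ to the nonnegativity assumption.
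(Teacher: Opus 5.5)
Your proposal is correct and follows the paper's own argument: for $\lambda\ge\bar\lambda$, the nonnegativity of $u$ gives $(-\Delta+\bar\lambda)u\le0$ in $\mathcal{D}'(M)$, and $(L^{p}\textnormal{-Liou}_{\bar\lambda-SH})$ then forces $u$ to be constant. You write out the comparison of the zero-order terms against nonnegative test functions explicitly, while the paper states it in one line.
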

\begin{proof}
Assume that $(M,g)$ is ($L^{p}\textnormal{-Liou}_{\bar{\lambda}-SH}$). Having fixed $\lambda>\bar{\lambda}$ and $0\leq u\in L^p(M)$ satisfying $(-\Delta+\lambda)u\leq 0$ in the sense of distributions, then $u$ also satisfies $(-\Delta +\bar{\lambda}) u \leq 0$ in the sense of distributions. By ($L^{p}\textnormal{-Liou}_{\bar{\lambda}-SH}$), it follows that $u$ has to be constant, implying the validity of the ($L^{p}\textnormal{-Liou}_{\lambda-SH}$) for every $\lambda>\bar{\lambda}$.
\end{proof}

\noindent Hence, one can easily get the following

\begin{corollary}\label{cor_l_sh_l_lambda_sh}
     Let $(M,g)$ be a (possibly incomplete) Riemannian manifold and $p\in (1,+\infty)$. If $M$ is \textnormal{(}$L^{p}\textnormal{-Liou}_{SH}$\textnormal{)}, then it is \textnormal{(}$L^{p}\textnormal{-Liou}_{\lambda-SH}$\textnormal{)} for every $\lambda\geq 0$.
\end{corollary}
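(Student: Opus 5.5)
The plan is to apply Lemma \ref{lem_monotonicity_L_l-SH} with $\bar{\lambda}=0$, since $(L^{p}\textnormal{-Liou}_{SH})$ is by definition exactly $(L^{p}\textnormal{-Liou}_{0-SH})$. Concretely, I will first record that when $\lambda=0$ the operator $-\Delta+\lambda$ reduces to $-\Delta$. Hence the hypothesis "$(-\Delta+0)u\le 0$ in $\mathcal{D}'(M)$, $u\ge 0$, $u\in L^p(M)$ implies $u$ constant" is literally the $L^p$-Liouville property for nonnegative subharmonic distributions.

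I will then invoke the lemma, which yields $(L^{p}\textnormal{-Liou}_{\lambda-SH})$ for every $\lambda\ge 0$. For completeness, I will recall the one-line mechanism behind it. Let $0\le u\in L^p(M)$ satisfy $\int_M u(-\Delta+\lambda)\varphi\,\dvol\le 0$ for every $0\le\varphi\in C^\infty_c(M)$. Since $u\varphi\ge 0$, we get
\begin{equation*}
\int_M u(-\Delta\varphi)\,\dvol\le -\lambda\int_M u\varphi\,\dvol\le 0 .
\end{equation*}
So $u$ is a nonnegative $L^p$ subharmonic distribution and is therefore constant.

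No step here is a genuine obstacle. The only point to check is that the sign convention matches: $(-\Delta+\lambda)u\le 0$ is the $\lambda$-subharmonic condition in the definition, so nonnegativity of $u$ is what allows the term $\lambda u$ to be dropped.

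One might also note that for $\lambda>0$ the resulting constant must be $0$, unless $\mathrm{vol}(M)<\infty$ forces a contradiction otherwise. However, the statement only asks for constancy, so I will not pursue this.
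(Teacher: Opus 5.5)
Your proposal is correct and takes the same route as the paper: the paper gets this corollary directly from Lemma \ref{lem_monotonicity_L_l-SH} with $\bar\lambda=0$, using the same inequality $\int_M u(-\Delta\varphi)\,\dvol\le-\lambda\int_M u\varphi\,\dvol\le 0$ that you write down. Your closing aside is slightly muddled, since for $\lambda>0$ a nonnegative constant $c$ with $(-\Delta+\lambda)c\le 0$ satisfies $\lambda c\le 0$, hence $c=0$, and the volume plays no role; this does not affect the argument.
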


The next step is to highlight the relationship between ($L^{2}\textnormal{-Liou}_{\lambda-SH}$) and ($L^2$-PP) by showing that these two properties are equivalent. We begin with the following
\begin{proposition}\label{prop_L_l-SH_and_L^2-PP}
    Let $(M,g)$ be a (possibly incomplete) Riemannian manifold and fix $\lambda > 0$. Then, $M$ is \textnormal{(}$L^{p}\textnormal{-Liou}_{\lambda-SH}$\textnormal{)} if and only if it is $(L^{p}$\textnormal{-PP}$_{\lambda})$. If the volume of $M$ is infinite, the same conclusion holds for every $\lambda\in\mathbb{R}$.
\end{proposition}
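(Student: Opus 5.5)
Both directions go through the Brezis--Kato inequality (Proposition \ref{Thm_Brezis-Kato}) and the elementary observation from the introduction. Throughout, $p\in(1,+\infty)$ and $\lambda>0$. The key point is that a constant $c$ satisfying $(-\Delta+\lambda)c\le 0$ forces $\lambda c\le 0$, so the only admissible nonnegative constant is $c=0$. When $\lambda\le 0$ this argument fails, and there we will use $\mathrm{vol}(M)=+\infty$: a nonzero constant is not in $L^p(M)$.

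\emph{(PP$_\lambda$) $\Rightarrow$ (Liou$_{\lambda-SH}$).} Let $0\le u\in L^p(M)$ satisfy $(-\Delta+\lambda)u\le 0$ in $\mathcal D'(M)$. Then $-u\in L^p(M)$ and $(-\Delta+\lambda)(-u)\ge 0$, so $(L^p\text{-PP}_\lambda)$ gives $-u\ge 0$, hence $u\equiv 0$, which is a constant. This direction uses nothing about the sign of $\lambda$, so it also covers the case $\lambda\in\mathbb R$.

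\emph{(Liou$_{\lambda-SH}$) $\Rightarrow$ (PP$_\lambda$).} Let $u\in L^p(M)$ satisfy $(-\Delta+\lambda)u\ge 0$, i.e.\ $\Delta(-u)\ge -\lambda u=\lambda(-u)$ in $\mathcal D'(M)$. Since $u\in L^p\subset L^1_{loc}$, we can apply Proposition \ref{Thm_Brezis-Kato} to $w:=-u$ with $f=\lambda w\in L^1_{loc}$. This gives
\begin{align}
\Delta w_+\ \ge\ 1_{\{w>0\}}\lambda w=\lambda w_+ \quad\text{in } \mathcal D'(M).
\end{align}
Thus $w_+=u_-$ is a nonnegative $L^p$ function with $(-\Delta+\lambda)w_+\le 0$. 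By $(L^p\text{-Liou}_{\lambda-SH})$, $w_+\equiv c$ for some constant $c\ge 0$. To conclude $c=0$ we split into cases. If $\lambda>0$, the distributional inequality applied to constants reads $\lambda c\int\varphi\le 0$ for all $\varphi\ge 0$, so $c=0$. If $\mathrm{vol}(M)=+\infty$ and $\lambda$ is arbitrary, then $c\in L^p(M)$ forces $c=0$. In either case $u_-=0$, that is, $u\ge 0$.

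The only delicate point is the applicability of Brezis--Kato at the bare $L^1_{loc}$ level with right-hand side $\lambda w$. This is fine because Proposition \ref{Thm_Brezis-Kato} allows any $f\in L^1_{loc}$, and on $\{w>0\}$ we have $1_{\{w>0\}}\lambda w=\lambda w_+$ regardless of the sign of $\lambda$. The finite-volume case with $\lambda\le 0$ is genuinely excluded: there the positive constants are nonnegative $L^p$ $\lambda$-subharmonic functions, so the argument above cannot rule out $c>0$ without the infinite-volume hypothesis.
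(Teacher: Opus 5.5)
Your proof is correct and follows essentially the same route as the paper. You apply Brezis--Kato to $-u$ to see that $u_-$ is a nonnegative $L^p$ $\lambda$-subharmonic distribution, use the Liouville property to make it constant, and rule out a nonzero constant either via $\lambda>0$ or via $\mathrm{vol}(M)=+\infty$; the converse follows by applying positivity preservation to $-u$, exactly as the paper does, with your case split being just a rephrasing of the paper's split into finite and infinite volume.
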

\begin{proof}
    Assume that $M$ satisfies ($L^{p}\textnormal{-Liou}_{\lambda-SH}$), and let $u\in L^{p}(M)$ be a distributional solution to $(-\Delta+\lambda)u\ge0$. If we set $v:=-u$, then $v\in L^{p}(M)$ is a distributional solution to $\Delta v \ge \lambda v$, and by the Brezis-Kato inequality we have that $v_{+}\in L^{p}(M)$ is a non-negative, distributional solution to the same differential inequality. By the ($L^{p}\textnormal{-Liou}_{\lambda-SH}$) property, we can conclude that $u_{-}=v_{+}\equiv const.\geq 0$. Hence, either $u\equiv -u_-$ or $u\geq 0$. If $M$ has infinite volume, the only constant function in $L^p(M)$ is the null function, providing $u_-\equiv 0$. If $M$ has finite volume, then $\lambda>0$ and by $(-\Delta+\lambda)u_-\leq 0$ it follows that $u_- \equiv 0$. Thus, in both the cases, $u \geq 0$. 
    
    Conversely, assume that $M$ is $(L^{p}$-PP$_{\lambda})$ and consider $u\in L^{p}(M)$ a non-negative, distributional solution to $\Delta u\ge \lambda u$ for some $\lambda\in\mathbb{R}$. Then, the function $v:=-u$ satisfies
    \begin{equation*}
        \begin{cases}
            (-\Delta+\lambda)v\ge0 & \text{in $\mathcal{D}'(M)$}\\
            v\in L^{p}(M)
        \end{cases}
    \end{equation*}
    and by the $(L^{p}$-PP$_{\lambda})$ property we have $v\ge0$, which means $u\le0$, so $u$ is the null function.
\end{proof}

\noindent As observed in Lemma \ref{lem_monotonicity_L_l-SH}, if ($L^{p}\textnormal{-Liou}_{\bar{\lambda}-SH}$) holds for a fixed $\bar{\lambda}$, then it holds for every $\lambda\ge\bar{\lambda}$. By combining Lemma \ref{lem_monotonicity_L_l-SH} with Proposition \ref{prop_L_l-SH_and_L^2-PP}, we also get

\begin{corollary}\label{cor_translation L^p-PP}
    Let $(M,g)$ be a (possibly incomplete) Riemannian manifold. If there exists $\bar{ \lambda} \in \rr{}$ so that $M$ is $(L^p$\textnormal{-PP}$_{\bar{\lambda}})$, then $(M,g)$ is $(L^p$\textnormal{-PP}$_{\lambda})$ for every $\lambda\geq \bar{\lambda}$.
\end{corollary}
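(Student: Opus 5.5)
The statement to prove is Corollary \ref{cor_translation L^p-PP}. I would derive it by passing through the Liouville formulation, using Proposition \ref{prop_L_l-SH_and_L^2-PP} as a two-way bridge and Lemma \ref{lem_monotonicity_L_l-SH} for the monotonicity in the parameter. Fix $\bar\lambda$ such that $(L^p\textnormal{-PP}_{\bar\lambda})$ holds, and fix $\lambda\ge\bar\lambda$.

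\textbf{Step 1 (PP to Liouville at $\bar\lambda$).} Note that the converse direction in the proof of Proposition \ref{prop_L_l-SH_and_L^2-PP} uses no sign condition on $\bar\lambda$ and no volume assumption. Indeed, if $0\le u\in L^p(M)$ satisfies $\Delta u\ge\bar\lambda u$, then $v=-u$ satisfies $(-\Delta+\bar\lambda)v\ge0$. Positivity preservation gives $v\ge0$, so $u\equiv0$. Hence $(L^p\textnormal{-Liou}_{\bar\lambda-SH})$ holds for any real $\bar\lambda$.

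\textbf{Step 2 (raise the parameter).} Lemma \ref{lem_monotonicity_L_l-SH} then gives $(L^p\textnormal{-Liou}_{\lambda-SH})$.

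\textbf{Step 3 (Liouville back to PP at $\lambda$).} The forward direction of Proposition \ref{prop_L_l-SH_and_L^2-PP} is valid when $\lambda>0$, or for any $\lambda$ when $\textnormal{vol}(M)=+\infty$. The main obstacle is therefore the remaining case: finite volume and $\bar\lambda\le\lambda\le0$. There the Liouville property only forces $u_-$ to be constant, and constants are genuine $\lambda$-subharmonic functions when $\lambda\le0$.

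I would avoid this issue by tracking the sharper information from Step 1: PP at $\bar\lambda$ actually forces $u\equiv0$, not merely $u$ constant. Run the argument of Step 3 directly. Take $u\in L^p(M)$ with $(-\Delta+\lambda)u\ge0$. By the Brezis--Kato inequality (Proposition \ref{Thm_Brezis-Kato}), $w:=u_-\ge0$ lies in $L^p$ and satisfies $\Delta w\ge\lambda w\ge\bar\lambda w$, using $\lambda\ge\bar\lambda$ and $w\ge0$. Step 1 then gives $w\equiv0$, so $u\ge0$. This chain needs no sign or volume hypothesis, and it proves $(L^p\textnormal{-PP}_\lambda)$ directly. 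It is the concrete argument I would write down, citing Lemma \ref{lem_monotonicity_L_l-SH} and Proposition \ref{prop_L_l-SH_and_L^2-PP} as its conceptual source.
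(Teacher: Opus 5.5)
Your argument is correct and is essentially the paper's proof, which combines Lemma \ref{lem_monotonicity_L_l-SH} with Proposition \ref{prop_L_l-SH_and_L^2-PP}. By carrying the sharper conclusion $u_-\equiv 0$ directly through the Brezis--Kato step, you avoid the forward direction of that proposition, which is stated only for $\lambda>0$ or infinite volume. The case you flag as the main obstacle is in fact vacuous: if $\textnormal{vol}(M)<+\infty$ and $\bar\lambda\le 0$, the constant $-1\in L^p(M)$ satisfies $(-\Delta+\bar\lambda)(-1)=-\bar\lambda\ge 0$, so $(L^p\textnormal{-PP}_{\bar\lambda})$ cannot hold, and the paper's one-line combination already covers every case.
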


\noindent We now show that also the reversed monotonicity for $\lambda$ holds up to a certain threshold, which depends on the value of $p$ and the bottom of the spectrum of $-\Delta$.

\begin{proposition}\label{lambda tra 0 e 1}
    Let $(M,g)$ be a (possibly incomplete) Riemannian manifold, $p\in(1,+\infty)$, and
    \begin{equation*}
        C_{p}(M):=\frac{4}{pp'}\lambda_{1}(M),
    \end{equation*}
    where $\lambda_{1}(M)\ge0$ is the bottom of the spectrum of $-\Delta$ in $L^{2}(M)$. If $M$ is $(L^{p}$\textnormal{-PP}$_{\bar{\lambda}})$ for some $\bar{\lambda}>-C_p(M)$, then it is $(L^{p}$\textnormal{-PP}$_{\lambda})$ for every $\lambda\in(-C_p(M),\bar{\lambda})$, and hence for every $\lambda\in (-C_p(M), +\infty)$.
\end{proposition}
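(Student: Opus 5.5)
Fix $\lambda\in(-C_p(M),\bar\lambda)$ and $u\in L^p(M)$ with $(-\Delta+\lambda)u\ge0$ in $\mathcal{D}'(M)$. The aim is $u\ge0$. Following Proposition \ref{prop_L_l-SH_and_L^2-PP}, set $w:=u_-=(-u)_+$. By the Brezis--Kato inequality (Proposition \ref{Thm_Brezis-Kato}) applied to $-u$, which satisfies $\Delta(-u)\ge\lambda(-u)$, we get $0\le w\in L^p(M)$ with $\Delta w\ge \lambda w$ in $\mathcal{D}'(M)$. So the task reduces to showing that such a nonnegative $L^p$ $\lambda$-subharmonic distribution vanishes.

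The idea is to compare $w$ with a nonnegative solution of a $\bar\lambda$-equation coming from Proposition \ref{solution with non-neg Lp function}, and iterate. Since $\Delta w\ge\lambda w$, we can rewrite it as $(-\Delta+\bar\lambda)w\le(\bar\lambda-\lambda)w=:f$, with $0\le f\in L^p(M)$. The key points are:
\begin{itemize}
\item the resolvent-type solution $v$ of $(-\Delta+\bar\lambda)v=f$ has norm bounded by $\|f\|_{L^p}/C_{p,\bar\lambda}$;
\item $C_{p,\bar\lambda}=C_p(M)+\bar\lambda>\bar\lambda-\lambda$ precisely because $\lambda>-C_p(M)$.
\end{itemize}
Hence the ratio $q:=(\bar\lambda-\lambda)/(C_p(M)+\bar\lambda)<1$.

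\textbf{Step 1 (comparison).} Proposition \ref{solution with non-neg Lp function} applies only for $\bar\lambda>0$. When $\bar\lambda\le0$, we first use Corollary \ref{cor_translation L^p-PP} to replace $\bar\lambda$ by a larger positive value. This is harmless since the contraction ratio only improves: $(\mu-\lambda)/(C_p+\mu)<1$ for every $\mu$. Let $v\ge0$ solve $(-\Delta+\bar\lambda)v=f$ with $\|v\|_{L^p}\le\|f\|_{L^p}/C_{p,\bar\lambda}$. Then $(-\Delta+\bar\lambda)(v-w)\ge0$ with $v-w\in L^p$, and $(L^p\text{-PP}_{\bar\lambda})$ gives $w\le v$. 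Consequently
\[
\|w\|_{L^p}\le\|v\|_{L^p}\le q\|w\|_{L^p}.
\]
Since $q<1$, this forces $w\equiv0$, i.e. $u\ge0$. No iteration is even needed.

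The final clause follows by combining this with Corollary \ref{cor_translation L^p-PP}, which covers $\lambda\ge\bar\lambda$.

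\textbf{Main obstacle.} The delicate point is the legitimacy of the comparison step and the norm bound. We need the solution $v$ from Proposition \ref{solution with non-neg Lp function} to carry the sharp constant $C_{p,\bar\lambda}$ involving $\lambda_1(M)$ (equivalently, the resolvent bound of Remark \ref{Rmk_p-Resolvent}). We also need the requirement $\bar\lambda>0$ there to be handled. If $\bar\lambda\le0$ must be kept, I would instead use the semigroup estimate of Remark \ref{Rmk_p-Resolvent}, valid for any $\bar\lambda>-C_p(M)$, to produce $v=R^{(p)}_{\bar\lambda}f$. Its nonnegativity would come from the positivity of the Friedrichs semigroup.
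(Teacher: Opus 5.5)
Your proof is correct and follows essentially the same route as the paper: reduce to a positive $\bar\lambda$ by monotonicity in $\lambda$, compare with the nonnegative solution given by Proposition \ref{solution with non-neg Lp function} via $(L^p\textnormal{-PP}_{\bar\lambda})$, and conclude from the ratio $(\bar\lambda-\lambda)/(C_p(M)+\bar\lambda)<1$. The only difference is cosmetic. You first pass to $u_-$ via Brezis--Kato and argue directly. The paper instead avoids Brezis--Kato by applying the PP to $u+(1-\lambda)v$ with $(-\Delta+1)v=u_-$, and argues by contradiction.
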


\begin{proof} By monotonicity with respect to $\lambda$, we can assume $\bar{\lambda}>0$. Moreover, without loss of generality we can assume $\bar{\lambda}=1$. In the general case the proof follows by simply readapting the argument. 

By contradiction, assume that there exist a $\lambda\in(-C_p(M),1)$ and a function $u\in L^p(M)$ such that
    \begin{equation*}
        \begin{cases}
            (-\Delta+\lambda)u\ge0 &\text{in $\mathcal{D}'(M)$,}\\
            u_{-}\not\equiv0.
        \end{cases}
    \end{equation*}
    Note that 
    \begin{equation*}
        \begin{split}
            (-\Delta+1)u&=(-\Delta+\lambda)u+(1-\lambda)u\\
            &\ge-(1-\lambda)u_{-}
        \end{split}
    \end{equation*}
    in $\mathcal{D}'(M)$, and hence
    \begin{equation}\label{equ u e u-}
        (-\Delta+1)u+(1-\lambda)u_{-}\ge 0
    \end{equation}
    in $\mathcal{D}'(M)$. Since $u_{-}\in L^{p}(M)$, we can apply Proposition \ref{solution with non-neg Lp function} to obtain a non-negative, non-identically zero $v\in H^{p}(M)$ which solves the equation
    \begin{equation}\label{eq v per L2PP}
        (-\Delta+1)v=u_{-}
    \end{equation}
    and satisfying the estimate
    \begin{equation}\label{stima basso u-}
        (C_p(M)+1)\Vert v\Vert_{L^{p}(M)}\le\Vert u_{-}\Vert_{L^{p}(M)}.
    \end{equation}
    Setting $w:=u+(1-\lambda)v\in L^{p}(M)$, by \eqref{equ u e u-} we have
    \begin{equation*}
        \begin{split}
            (-\Delta+1)w&= (-\Delta+1)u + (1-\lambda)(-\Delta+1)v \\
            &=(-\Delta+1)u+(1-\lambda)u_{-} \\
            &\ge 0
        \end{split}
    \end{equation*}
    in $\mathcal{D}'(M)$, and by the $(L^{p}$-PP$)$ we have $w\ge0$. Now, we will get a contradiction by showing that $w_{-}\not\equiv0$. First, note that $w=u_{+}+(1-\lambda)v\ge0$ on $\{u>0\}$, and that $w=-u_{-}+(1-\lambda)v$ on $\{u<0\}$ (which has positive measure by assumption), so that 
    \begin{equation*}
        w_{-}=(-u_{-}+(1-\lambda)v)_{-}.
    \end{equation*}
    By contradiction, assume that $w_{-}\equiv0$ on $M$. Then 
    \begin{equation*}
        (1-\lambda)v\ge u_{-}\ge0 \quad \text{on $M$,}
    \end{equation*}
    so that 
    \begin{equation*}
        \Vert u_{-}\Vert_{L^{p}(M)}\le (1-\lambda)\Vert v\Vert_{L^{p}(M)}.
    \end{equation*}
    Combining this with \eqref{stima basso u-} we get
    \begin{equation*}
        (C_p(M)+1)\Vert v \Vert_{L^{p}(M)}\le\Vert u_{-} \Vert_{L^{p}(M)}\le(1-\lambda)\Vert v \Vert_{L^{p}(M)},
    \end{equation*}
    which implies
    \begin{equation*}
        \lambda\le -C_p(M),
    \end{equation*}
    contradiction. Then , we must have $w_{-}\not\equiv0$, but this contradicts the $(L^{p}$-PP$)$.
\end{proof}

\begin{remark}
    As we will see in Corollary \ref{cor_equivalence_L2SH_Liouville_L2PP}, $(L^{p}$-PP$_{\lambda})$ with $\lambda>0$ and $(L^{p}$-Liou$_{SH})$ are actually equivalent. However, Proposition \ref{lambda tra 0 e 1} is still interesting because it gives the $L^p$-positivity preservation property for $-\Delta+\lambda$ also for a range of negative values of $\lambda$ which is related to the bottom of the spectrum of $-\Delta$. 
\end{remark}

\noindent Summing up the previous discussion, we get

\begin{corollary}\label{cor_equivalence_L2PP_Liouville}
    Let $(M,g)$ be a (possibly incomplete) Riemannian manifold and let $p\in (1,+\infty)$. Then, the following are equivalent:
    \begin{enumerate}
        \item $(L^{p}$\textnormal{-PP}$_\lambda)$ holds for every $\lambda>0$;
        \item $(L^{p}$\textnormal{-PP}$_\lambda)$ holds for some $\lambda>0$;
        \item $(L^{p}\textnormal{-Liou}_{\lambda-SH})$ holds for every $\lambda>0$;
        \item $(L^{p}\textnormal{-Liou}_{\lambda-SH})$ holds for some $\lambda>0$.
    \end{enumerate}
    If the volume of $M$ is infinite, the same conclusion holds if we replace $\lambda>0$ with $\lambda>-C_p(M)$.
\end{corollary}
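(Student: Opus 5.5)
The plan is to assemble Corollary \ref{cor_equivalence_L2PP_Liouville} from the results already proved in this section, arranging the four statements into a cycle of implications. The trivial implications are (1)$\Rightarrow$(2) and (3)$\Rightarrow$(4), since ``for every $\lambda>0$'' implies ``for some $\lambda>0$''. It then suffices to show (2)$\Rightarrow$(1), (2)$\Leftrightarrow$(4), and (1)$\Leftrightarrow$(3).

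For (2)$\Rightarrow$(1), assume $(L^p\text{-PP}_{\bar\lambda})$ holds for some $\bar\lambda>0$.
\begin{enumerate}
\item Corollary \ref{cor_translation L^p-PP} gives $(L^p\text{-PP}_\lambda)$ for every $\lambda\ge\bar\lambda$.
\item Since $\bar\lambda>0\ge -C_p(M)$, Proposition \ref{lambda tra 0 e 1} gives $(L^p\text{-PP}_\lambda)$ for every $\lambda\in(-C_p(M),+\infty)$.
\item The interval $(-C_p(M),+\infty)$ contains $(0,+\infty)$, which is (1).
\end{enumerate}

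For the remaining links, Proposition \ref{prop_L_l-SH_and_L^2-PP} states that, for each fixed $\lambda>0$, $(L^p\text{-Liou}_{\lambda-SH})$ holds if and only if $(L^p\text{-PP}_\lambda)$ holds. Applying it pointwise in $\lambda$ gives (1)$\Leftrightarrow$(3) and (2)$\Leftrightarrow$(4). Chaining (4)$\Rightarrow$(2)$\Rightarrow$(1)$\Rightarrow$(3)$\Rightarrow$(4) closes the cycle.

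For the infinite-volume refinement, Proposition \ref{prop_L_l-SH_and_L^2-PP} asserts the equivalence between $(L^p\text{-Liou}_{\lambda-SH})$ and $(L^p\text{-PP}_\lambda)$ for every real $\lambda$. Proposition \ref{lambda tra 0 e 1} propagates positivity preservation from any single $\bar\lambda>-C_p(M)$ to the whole half-line $(-C_p(M),+\infty)$, with no restriction to positive values. The same cycle therefore runs verbatim with $\lambda>0$ replaced by $\lambda>-C_p(M)$.

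The only point needing care is that Proposition \ref{lambda tra 0 e 1} opens its proof by reducing to $\bar\lambda>0$ via monotonicity. In the infinite-volume case we may start from $\bar\lambda\in(-C_p(M),0]$, and Corollary \ref{cor_translation L^p-PP} moves PP upward to some positive value. The proposition then yields the full range $(-C_p(M),+\infty)$, so this reduction is consistent. No further obstacle is expected, since the corollary is a bookkeeping combination of earlier statements.
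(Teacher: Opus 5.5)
Your proposal is correct and follows the paper's route. The paper obtains this corollary by ``summing up'' exactly the ingredients you use: Proposition \ref{prop_L_l-SH_and_L^2-PP} applied at each fixed $\lambda$, the upward monotonicity of Corollary \ref{cor_translation L^p-PP}, and the downward propagation of Proposition \ref{lambda tra 0 e 1}, with the same modification in the infinite-volume case.
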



\subsection{From ($L^{2}\textnormal{-Liou}_{\lambda-SH}$) to ($L^{2}\textnormal{-Liou}_{\lambda-H}$)}\ \\
It is straightforward that for every positive $\lambda$, ($L^{p}\textnormal{-Liou}_{\lambda-SH}$) implies ($L^{p}\textnormal{-Liou}_{\lambda-H}$) for every $p\in(1,+\infty)$.
\begin{lemma}\label{lem_l-SH_implies_l-H}
    Let $(M,g)$ be a (possibly incomplete) Riemannian manifold, let $p\in(1,+\infty)$ and fix $\lambda\in \rr{}$. If $M$ is $(L^{p}\textnormal{-Liou}_{\lambda-SH})$, then it is $(L^{p}\textnormal{-Liou}_{\lambda-H})$.
\end{lemma}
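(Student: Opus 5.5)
Take $u\in L^p(M)$ with $(-\Delta+\lambda)u=0$ in $\mathcal{D}'(M)$. The plan is to split $u$ into its positive and negative parts, show with the Brezis--Kato inequality (Proposition \ref{Thm_Brezis-Kato}) that each part is a non-negative $\lambda$-subharmonic distribution in $L^p(M)$, and then apply $(L^{p}\textnormal{-Liou}_{\lambda-SH})$ to each part separately.

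\emph{Step 1 (positive part).} The equation gives $\Delta u=\lambda u$ in $\mathcal{D}'(M)$, with $f:=\lambda u\in L^1_{loc}(M)$ because $u\in L^p(M)\subset L^1_{loc}(M)$. Proposition \ref{Thm_Brezis-Kato} then gives
\begin{equation*}
\Delta u_+\ge 1_{\{u>0\}}\lambda u=\lambda u_+ .
\end{equation*}
Hence $0\le u_+\in L^p(M)$ satisfies $(-\Delta+\lambda)u_+\le 0$, and $(L^{p}\textnormal{-Liou}_{\lambda-SH})$ yields $u_+\equiv c_1$ for some constant $c_1\ge0$.

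\emph{Step 2 (negative part).} The function $-u$ satisfies the same equation. Repeating Step 1 gives $u_-\equiv c_2$ for some constant $c_2\ge 0$.

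\emph{Step 3 (conclusion).} Since $u_+u_-=0$ a.e., at least one of $c_1,c_2$ is zero. So either $u\equiv c_1$ or $u\equiv -c_2$, and in both cases $u$ is constant, which is exactly $(L^{p}\textnormal{-Liou}_{\lambda-H})$. The statement only requires constancy, so no separate volume or sign-of-$\lambda$ discussion is needed.

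\emph{Main obstacle.} The only delicate point is the validity of Brezis--Kato at the distributional level. The proposition as stated covers any $f\in L^1_{loc}(M)$, so the argument goes through for every real $\lambda$, positive or negative, since $1_{\{u>0\}}\lambda u=\lambda u_+$ holds pointwise regardless of the sign of $\lambda$.
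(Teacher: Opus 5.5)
Your proposal is correct and is essentially the paper's proof. The paper likewise applies the Brezis--Kato inequality to $u$ and to $-u$, shows that $u_+$ and $u_-$ are non-negative $\lambda$-subharmonic distributions in $L^p(M)$, and concludes from $(L^{p}\textnormal{-Liou}_{\lambda-SH})$ that both are constant. Your Step 3 does more than is needed, since $u=u_+-u_-=c_1-c_2$ is already constant.
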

\begin{proof}
Assume that $(M,g)$ is ($L^{p}\textnormal{-Liou}_{\lambda-SH}$). Having fixed $u\in L^p(M)$ so that $(-\Delta +\lambda)u=0$ in $M$, by the Brezis-Kato inequality we have
\begin{align}\label{app_3}
    (-\Delta  + \lambda)u_+\leq 0 \quad \inn M,
\end{align}
which implies that $u_+$ is constant by the ($L^{p}\textnormal{-Liou}_{\lambda-SH}$). By replacing $u$ with $-u$, we similarly get $u_-\equiv const$. Hence $u$ is constant.
\end{proof}

It is not yet clear to us if the properties ($L^{2}\textnormal{-Liou}_{\lambda-SH}$) and ($L^{2}\textnormal{-Liou}_{\lambda-H}$) are equivalent.  While equivalence is generally expected to be false (see \cite[Chapter X1V.5]{guneysu2017covariant}), using the language of Dirichlet forms, one can prove that if $-\Delta$ is essentially self-adjoint then ($W^{1,2}$-PP) holds and so does ($W^{1,2}\textnormal{-Liou}_{\lambda-SH}$), providing the equivalence between ($W^{1,2}\textnormal{-Liou}_{\lambda-SH}$) and ($L^{2}\textnormal{-Liou}_{\lambda-H}$). Indeed, the essential self-adjointness implies that the spaces $W^{1,2}(M)$ and $W^{1,2}_{0}(M)$ are equal, and therefore that $W^{1,2}(M)$ is contained in the domain of the Dirichlet form associated to $(-\Delta,W^{1,2}(M))$. Positivity preserving then follows from Theorem 2.2.1 in \cite{FOT11}, pp. 77–78. However, the problem remains open in the general case of nonnegative 1-subharmonic distributions in $L^2(M)$, which, a priori, are only $W^{1,2}_{loc}$ (Theorem \ref{Reg_sub_dist}).

A sufficient condition for the validity of the converse implication to Lemma \ref{lem_l-SH_implies_l-H} is given in the following 

\begin{proposition}\label{Prop_positive cone}
    Let $(M,g)$ be a (possibly incomplete) Riemannian manifold. Assume that the cone $C^\infty_c(M)_+:=\{\phi\in C^\infty_c(M) : \phi\geq 0\}$ is dense in the cone $H^2_+:=\{u\in H^2(M) : u\geq 0\ \textnormal{a.e.}\}$ with respect to the graph norm $\norm{\cdot}{H^2(M)}$.
    If $M$ is $(L^{2}\textnormal{-Liou}_{\lambda-H})$ for every $\lambda>0$, then it is $(L^{2}\textnormal{-Liou}_{\lambda-SH})$ for every $\lambda>0$.
\end{proposition}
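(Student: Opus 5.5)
The plan is to prove directly that any $0\le u\in L^2(M)$ with $(-\Delta+\lambda)u\le 0$ in $\mathcal{D}'(M)$ must vanish. The approach is a duality argument: $u$ is tested against nonnegative solutions of the resolvent equation supplied by Proposition \ref{solution with non-neg Lp function}. These solutions lie in $H^2_+$, and the density assumption lets us approximate them by admissible nonnegative test functions.

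Fix $\lambda>0$ and such a $u$. We apply Proposition \ref{solution with non-neg Lp function} with $p=2$ and the datum $f:=u\ge 0$, $f\in L^2(M)$. This gives a nonnegative $v\in H^2(M)$ with $(-\Delta+\lambda)v=u$ in the sense of distributions. Since $v\ge 0$ a.e., we have $v\in H^2_+$. By the hypothesis on the cone, there is a sequence $\{\phi_k\}_k\subset C^\infty_c(M)_+$ with $\phi_k\to v$ in the graph norm $\norm{\cdot}{H^2(M)}$, that is, $\phi_k\to v$ and $\Delta\phi_k\to\Delta v$ in $L^2(M)$. In particular,
\begin{align}
(-\Delta+\lambda)\phi_k\longrightarrow(-\Delta+\lambda)v=u \quad\text{in } L^2(M).
\end{align}

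Each $\phi_k$ is a nonnegative test function, so the distributional inequality $(-\Delta+\lambda)u\le 0$ gives
\begin{align}
\int_M u\,(-\Delta+\lambda)\phi_k \dvol\le 0 \qquad\forall k.
\end{align}
Since $u\in L^2(M)$, the Cauchy--Schwarz inequality lets us pass to the limit, which yields $\int_M u^2\dvol\le 0$. Hence $u\equiv 0$, which is in particular constant, and this proves $(L^2\textnormal{-Liou}_{\lambda-SH})$.

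There is no serious obstacle. The only delicate points are that the resolvent solution must be nonnegative and lie in $H^2$, which is guaranteed by Proposition \ref{solution with non-neg Lp function}, and that the approximating test functions must be nonnegative, which is exactly the density assumption. The assumption $(L^2\textnormal{-Liou}_{\lambda-H})$ is not needed for this argument. If one wants to use it, it yields uniqueness of $v$ by the remark after Proposition \ref{solution with non-neg Lp function}. It also identifies $H^2=H^2_0$ through Proposition \ref{prop_L_l-H_ESA_-D+l} and Lemma \ref{H2,0 = H2}, so $v$ is the Friedrichs resolvent of $u$. Neither fact is required for the conclusion.
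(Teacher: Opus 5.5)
Your argument is correct, and it takes a more direct route than the paper's.

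\textbf{The paper's route.} It first uses Corollary \ref{cor_equivalence_L2PP_Liouville} to reduce the claim to $(L^2\textnormal{-PP}_1)$. It then invokes the hypothesis $(L^2\textnormal{-Liou}_{1-H})$, via Proposition \ref{prop_L_l-H_ESA_-D+l}, to identify $(-\Delta+1,H^2(M))$ with the Friedrichs extension, so that its resolvent $R$ preserves positivity. Finally it tests a supersolution $u$ against nonnegative approximants of $\eta=R\phi$, for every $0\le\phi\in C^\infty_c(M)$, and obtains $\int_M u\phi\ge 0$.

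\textbf{Your route.} You keep the subsolution formulation and, for each fixed $\lambda>0$, test $u$ against approximants of the single function $v$ solving $(-\Delta+\lambda)v=u$. This gives $\|u\|_{L^2}^2\le 0$ at once. Proposition \ref{solution with non-neg Lp function} supplies a nonnegative $H^2$ solution on every manifold (equivalently, via the Friedrichs resolvent, cf.\ Remark \ref{Rmk_p-Resolvent}). So your argument needs neither the Liouville hypothesis nor the passage between PP and Liouville properties, nor between different values of $\lambda$.

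\textbf{What this shows.} You are right that the assumption $(L^2\textnormal{-Liou}_{\lambda-H})$ is redundant. Density of $C^\infty_c(M)_+$ in $H^2_+$ alone yields $(L^2\textnormal{-Liou}_{\lambda-SH})$ for all $\lambda>0$. By Lemma \ref{lem_l-SH_implies_l-H} and Corollary \ref{cor_equiv_L_l-harm}, it therefore also yields $(ESA)$. The same observation applies to the paper's proof: there $\eta$ could be taken from Proposition \ref{solution with non-neg Lp function} with $f=\phi$, without assuming any Liouville property.

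\textbf{What the paper's formulation buys.} It delivers directly the positivity-preservation statement $u\ge 0$ for $L^2$ supersolutions, which is the form used in the main diagram. In your version that statement is recovered afterwards through Corollary \ref{cor_equivalence_L2PP_Liouville}.
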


\begin{proof} Thanks to Corollary \ref{cor_equivalence_L2PP_Liouville}, we can equivalently prove that $(M,g)$ is $(L^2$-PP$_\lambda)$ for every $\lambda>0$. By the same corollary, we can fix $\lambda=1$. Let us denote by $R:L^2(M)\to H^2(M)$ the resolvent of $(-\Delta+1, H^2(M))$: since the validity of the $L^2_{1-H}$-Liouville property is equivalent to the essential self-adjointness of $(-\Delta+1, C^\infty_c(M))$, see Proposition \ref{prop_L_l-H_ESA_-D+l} below, it follows that $(-\Delta+1, H^2(M))$ coincides with the Friedrichs extension of $(-\Delta+1,C^\infty_c(M))$, which is positivity preserving. Hence, as already noticed, $R$ is positivity preserving. 

Now consider $u\in L^2(M)$ so that $(-\Delta+1)u\geq 0$ in the sense of distributions. For fixed $0\leq \phi\in C^\infty_c(M)$, set $0\leq \eta:=R\phi \in H^2(M)$. Since $\eta$ satisfies $(-\Delta+1)\eta=\phi \in C^\infty_c(M)$, by elliptic regularity it follows that $\eta \in C^\infty(M)$. By the assumption, there exists a sequence $\{\eta_k\}_k\subset C^\infty_c(M)$ of nonnegative functions such that $\eta_k \to\eta$ in the graph norm $\norm{\cdot}{H^2(M)}$, i.e.
        \begin{align}
            \norm{\eta-\eta_k}{L^2(M)}\to 0 \quad \andd \quad \norm{\Delta \eta- \Delta \eta_k}{L^2(M)}\to 0.
        \end{align}
        Hence
        \begin{align}
            \int_M u\phi \dvol &= \int_M u (-\Delta+1)\eta \dvol\\
            &=\lim_{k\to +\infty} \underbrace{\int_M u (-\Delta+1)\eta_k \dvol}_{\geq 0\ \textnormal{by dist. ineq. for }u}\\
            &\geq 0.
        \end{align}
        Since it holds for every $0\leq \phi \in C^\infty_c(M)$, it follows that $u\geq 0$.
\end{proof}

\begin{remark}
    It would be interesting to determine under which geometric or functional conditions (or possibly to characterize when) the density of $C^\infty_c(M)_+$ in $H^2_+$ assumed in Proposition \ref{Prop_positive cone} is satisfied.
\end{remark}

\subsection{Equivalence between ($L^{2}\textnormal{-Liou}_{\lambda-H}$) and $(ESA)$}\ \\
The fact that ($L^{2}\textnormal{-Liou}_{\lambda-H}$) is equivalent to the essential self-adjointness of the operator $-\Delta+\lambda$ is a consequence of the functional analytic result of Theorem \ref{thm_RS2_X26}.

\begin{proposition}\label{prop_L_l-H_ESA_-D+l}
    Let $(M,g)$ be a (possibly incomplete) Riemannian manifold. Then, for any fixed $\lambda>0$, $(M,g)$ is $(L^{2}\textnormal{-Liou}_{\lambda-H})$ if and only if $(M,g)$ is $(ESA_{\lambda})$. If the volume of $M$ is infinite, the same conclusion holds for every $\lambda>-\lambda_1(M)$. 
\end{proposition}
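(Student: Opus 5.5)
The plan is to apply Theorem \ref{thm_RS2_X26} to the symmetric operator $A:=-\Delta+\lambda$ with domain $C^\infty_c(M)\subset L^2(M)$. We only need to check strict positivity of $A$ in each regime and identify $\ker(A^*)$ with the space of $L^2$ $\lambda$-harmonic distributions.

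\textbf{Step 1 (strict positivity).} For $\varphi\in C^\infty_c(M)$, integration by parts gives
\begin{align}
(A\varphi,\varphi)_{L^2}=\int_M|\nabla\varphi|^2\dvol+\lambda\int_M\varphi^2\dvol\geq (\lambda_1(M)+\lambda)\norm{\varphi}{L^2}^2 .
\end{align}
Here we use the variational characterization of the bottom of the spectrum, $\int|\nabla\varphi|^2\geq\lambda_1(M)\int\varphi^2$. Hence $A$ is strictly positive with $c=\lambda_1(M)+\lambda>0$ whenever $\lambda>-\lambda_1(M)$. This covers every $\lambda>0$, since $\lambda_1(M)\geq 0$.

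\textbf{Step 2 (identify the kernel).} By Remark \ref{rmk_ESA} with $V\equiv\lambda\in L^2_{loc}(M)$, $A^*$ is the maximal realization, with domain $\{v\in L^2(M): (-\Delta+\lambda)v\in L^2(M)\}$. Thus $\ker(A^*)$ is exactly the set of $u\in L^2(M)$ with $(-\Delta+\lambda)u=0$ in $\mathcal{D}'(M)$. Theorem \ref{thm_RS2_X26} then gives
\begin{align}
(ESA_\lambda)\iff \ker(A^*)=\{0\}.
\end{align}

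\textbf{Step 3 (compare with the Liouville property).} We must match $\ker(A^*)=\{0\}$ with $(L^2\text{-Liou}_{\lambda-H})$, which only says that such $u$ are constant.

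If $\ker(A^*)=\{0\}$, every $L^2$ $\lambda$-harmonic $u$ vanishes, so it is in particular constant, and $(L^2\text{-Liou}_{\lambda-H})$ holds.

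Conversely, assume $(L^2\text{-Liou}_{\lambda-H})$ and let $u\in\ker(A^*)$; then $u\equiv c$ is constant. Plugging in gives $(-\Delta+\lambda)c=\lambda c=0$.
\begin{itemize}
\item If $\lambda>0$, this forces $c=0$.
\item If $\mathrm{vol}(M)=+\infty$, the only constant in $L^2(M)$ is $0$, so $c=0$ for any $\lambda$. Combined with Step 1, this covers all $\lambda>-\lambda_1(M)$.
\end{itemize}

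There is no real obstacle here. The only point needing care is the infinite-volume case with $\lambda\le 0$. There, strict positivity must come from $\lambda_1(M)$ rather than from $\lambda$, and the constant-versus-zero issue must be resolved by the volume assumption instead of by $\lambda>0$.
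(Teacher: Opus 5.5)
Your proposal is correct and follows essentially the same route as the paper. Both arguments use strict positivity of $-\Delta+\lambda$ on $C^\infty_c(M)$, which you usefully make explicit with the constant $c=\lambda_1(M)+\lambda$. Both then apply Theorem \ref{thm_RS2_X26}, together with the identification of the adjoint as the maximal realization, to reduce $(ESA_\lambda)$ to $\ker(-\Delta_{2,max}+\lambda)=\{0\}$, and rule out nonzero constant solutions either because $\lambda\neq 0$ or because the volume is infinite.
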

\begin{proof}
    Observe that for any fixed $\lambda >0$ (resp. $\lambda>-\lambda_1(M)$ if $M$ has infinite volume)  the operator $-\Delta_{2,min}+\lambda$ is positive. Hence, by Theorem \ref{thm_RS2_X26} the property $(ESA_{\lambda})$ is equivalent to the fact that $ker\left((-\Delta_{2,min}+\lambda)^*\right)=\{0\}$, i.e., since $(-\Delta_{2,min}+\lambda)^*=-\Delta_{2,max}+\lambda$, to
    \begin{align}
        \begin{cases}
            (-\Delta_{2,max}+\lambda)u=0 & \inn M,\\ u\in L^2(M), 
        \end{cases} \quad \Rightarrow \quad u\equiv0,
    \end{align}
    which implies that $(M,g)$ is ($L^{2}\textnormal{-Liou}_{\lambda-H}$).

    Conversely, assume that $(M,g)$ is ($L^2\text{-Liou}_{\lambda-H}$) for some $\lambda >0$ (resp. $\lambda>-\lambda_1(M)$ if $M$ has infinite volume). By the ($L^2\text{-Liou}_{\lambda-H}$) it follows that
    \begin{align}
        \begin{cases}
            (-\Delta_{2,max}+\lambda)u=0 & \inn M,\\ u\in L^2(M), 
        \end{cases} \quad \Rightarrow \quad u\equiv const.
    \end{align}
    If $\lambda\neq 0$, then the unique constant $L^2$-solution to $(-\Delta_{2,max}+\lambda)u=0$ is the constant null function. If $\lambda=0$, then we are in the case of a manifold $M$ with infinite volume and hence the unique $L^2$-integrable function over $M$ is the constant null function. Again by Theorem \ref{thm_RS2_X26}, the claim follows.
\end{proof}

\begin{remark}
    We observe that if $\lambda>-\lambda_1(M)$, $\lambda\neq 0$, then the previous argument works without any restriction on the volume of $M$. The only problematic part, if $M$ has finite volume, is the case $\lambda=0$ for the implication
    \begin{align}
        (L^2\text{-Liou}_{\lambda-H}) \quad \Longrightarrow \quad (ESA_\lambda).
    \end{align}
    Indeed, for $\lambda=0$, the Liouville property for harmonic functions tells us that $u$ is constant, but the equation satisfied by $u$ does not help to ensure that the unique $L^2(M)$ constant satisfying $-\Delta u=0$ is the null function; \cite[Example 3 (ii)]{HMW}.
\end{remark}

\noindent By combining Proposition \ref{prop_L_l-H_ESA_-D+l} with Corollary \ref{cor_equiv_ESA}, we get

\begin{corollary}\label{cor_equiv_L_l-harm}
    Let $(M,g)$ be a (possibly incomplete) Riemannian manifold. Then, the following are equivalent:
    \begin{enumerate}
        \item $(M,g)$ is $(L^{2}\textnormal{-Liou}_{\lambda-H})$ for some $\lambda>0$;
        \item $(M,g)$ is $(L^{2}\textnormal{-Liou}_{\lambda-H})$ for every $\lambda>0$;
        \item $(M,g)$ is $(ESA)$.
    \end{enumerate}
    If the volume of $M$ is infinite, the same conclusion holds if we replace $\lambda>0$ with $\lambda>-\lambda_1(M)$.
\end{corollary}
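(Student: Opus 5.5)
The statement is essentially a bookkeeping consequence of two facts already proved. The first is Proposition \ref{prop_L_l-H_ESA_-D+l}: for a fixed $\lambda>0$, $(L^2\textnormal{-Liou}_{\lambda-H})$ is equivalent to $(ESA_\lambda)$. For infinite volume the same holds for every $\lambda>-\lambda_1(M)$. The second is Corollary \ref{cor_equiv_ESA}: $(ESA)$ is equivalent to $(ESA_\lambda)$ for every $\lambda\in L^\infty(M)$, in particular for every constant $\lambda\in\mathbb{R}$. I would prove the cycle of implications $(2)\Rightarrow(1)\Rightarrow(3)\Rightarrow(2)$.

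The implication $(2)\Rightarrow(1)$ is trivial. For $(1)\Rightarrow(3)$, fix $\lambda_0>0$ such that $(L^2\textnormal{-Liou}_{\lambda_0-H})$ holds. Proposition \ref{prop_L_l-H_ESA_-D+l} gives $(ESA_{\lambda_0})$, i.e. $C^\infty_c(M)$ is a core for $(-\Delta+\lambda_0)_{2,max}$. Next apply Proposition \ref{prop_invariance_translation} with $p=2$, potential $V\equiv\lambda_0\in L^2_{loc}(M)$, and translation $-\lambda_0\in L^\infty(M)$. This shows that $C^\infty_c(M)$ is a core for $(-\Delta)_{2,max}$. By Remark \ref{rmk_ESA} the maximal realization is the adjoint of $-\Delta|_{C^\infty_c(M)}$, so this core property is exactly $(ESA)$.

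For $(3)\Rightarrow(2)$, assume $(ESA)$. Corollary \ref{cor_equiv_ESA} gives $(ESA_\lambda)$ for every constant $\lambda$. For each $\lambda>0$, the converse direction of Proposition \ref{prop_L_l-H_ESA_-D+l} then yields $(L^2\textnormal{-Liou}_{\lambda-H})$.

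The infinite-volume refinement uses the same chain, with $\lambda>0$ replaced by $\lambda>-\lambda_1(M)$ and the infinite-volume clause of Proposition \ref{prop_L_l-H_ESA_-D+l} invoked in both directions. The translation step is again Proposition \ref{prop_invariance_translation} with a constant shift, and it needs no sign condition on the shift.

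The only delicate point is making sure the hypotheses of Proposition \ref{prop_L_l-H_ESA_-D+l} hold for $\lambda\le 0$. That proposition relies on Theorem \ref{thm_RS2_X26}, which needs $-\Delta_{2,min}+\lambda$ to be strictly positive; this holds when $\lambda>-\lambda_1(M)$, because $-\Delta_{2,min}\geq\lambda_1(M)$ in the sense of quadratic forms. The case $\lambda=0$ is where infinite volume is used. Only when $\textnormal{vol}(M)=+\infty$ is the sole constant in $L^2(M)$ zero, so that Liouville (``constant'') upgrades to triviality of $\ker(-\Delta_{2,max})$. All of this is already handled inside the proof of Proposition \ref{prop_L_l-H_ESA_-D+l}, so no new obstacle arises.
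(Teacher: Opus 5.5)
Your proposal is correct and follows the paper's route. The paper obtains this corollary simply by combining Proposition \ref{prop_L_l-H_ESA_-D+l} with Corollary \ref{cor_equiv_ESA}. Your explicit use of Proposition \ref{prop_invariance_translation} with the constant potential $V\equiv\lambda_0$, to pass from $(ESA_{\lambda_0})$ for a single $\lambda_0$ to $(ESA)$, is just the argument underlying Corollary \ref{cor_equiv_ESA}, spelled out carefully.
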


\subsubsection*{The case $p\ne2$}
When $p\ne2$, the $L^{p}$-Liouville property for $\lambda$-harmonic functions is related to the fact that the $p$-minimal extension of the Laplacian generates a contraction semigroup.  We have the following
\begin{proposition}\label{Lp lambda armomiche e semigruppo}
    Let $(M,g)$ be a (possibly incomplete) Riemannian manifold, $\lambda>0$ and $p\in(1,+\infty)$. Then $(L^{p}\textnormal{-Liou}_{\lambda-H})$ holds if and only if $-\Delta_{p',min}$ generates a contraction semigroup in $L^{p'}(M)$. If the volume of $M$ is infinite, the same conclusion holds for every $\lambda>-C_p(M)$.
\end{proposition}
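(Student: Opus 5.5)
The plan is to argue by duality, in the spirit of Proposition \ref{prop_L_l-H_ESA_-D+l}, with Theorem \ref{t:X.48} replacing Theorem \ref{thm_RS2_X26}. Put $T:=-\Delta_{p',min}$ on $L^{p'}(M)$. As recalled in the proof of Proposition \ref{prop_LpPP_CSG-D+V} (see \cite[Remark 2.2]{milatovic2010m}), $T$ is closed, densely defined and accretive. By Theorem \ref{t:X.48}, $T$ generates a contraction semigroup if and only if $\mathrm{Ran}(T+\lambda)=L^{p'}(M)$ for some (equivalently every) $\lambda>0$; here ``equivalently every'' comes from Hille--Yosida, Theorem \ref{Hille_Yosida}. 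So the proof reduces to showing that $\mathrm{Ran}(T+\lambda)=L^{p'}(M)$ is equivalent to $(L^p\text{-Liou}_{\lambda-H})$.

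First I show that $\mathrm{Ran}(T+\lambda)$ is closed. Accretivity gives $\|(T+\lambda)u\|_{L^{p'}}\ge\lambda\|u\|_{L^{p'}}$: pair $(T+\lambda)u$ with the tangent functional $J^u_{p'}$ and apply H\"older. Since $T$ is closed, this lower bound makes the range closed. Consequently $\mathrm{Ran}(T+\lambda)=L^{p'}(M)$ holds exactly when the annihilator of the range in $L^p(M)=(L^{p'}(M))^*$ is trivial. That annihilator is $\ker\bigl((T+\lambda)^*\bigr)=\ker\bigl(-\Delta_{p,max}+\lambda\bigr)$, because the adjoint of the $p'$-minimal realization is the $p$-maximal one (as used in Proposition \ref{prop_semigroups equivalent cores}). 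Concretely, $v\in L^p$ annihilates $(-\Delta+\lambda)\varphi$ for every $\varphi\in C^\infty_c(M)$ precisely when $(-\Delta+\lambda)v=0$ in $\mathcal D'(M)$. It also suffices to test on $C^\infty_c$, since that set is dense in the graph norm of $T$.

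It then remains to match ``$\ker(-\Delta_{p,max}+\lambda)=\{0\}$'' with $(L^p\text{-Liou}_{\lambda-H})$. If the kernel is trivial, every $L^p$ $\lambda$-harmonic function is zero, in particular constant. Conversely, under the Liouville property any $L^p$ solution $u$ of $(-\Delta+\lambda)u=0$ is a constant $c$, and then $\lambda c=0$; with $\lambda\neq0$ this forces $c=0$. In the infinite-volume case, if $\lambda=0$, the only constant in $L^p$ is $0$.

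For the extension to $\lambda>-C_p(M)$ when $\mathrm{vol}(M)=+\infty$, the main obstacle is that accretivity alone gives only $\|(T+\lambda)u\|\ge\lambda\|u\|$. That bound is useless for $\lambda\le0$, so both the closedness of the range and the semigroup equivalence need a replacement. I would use the improved estimate $\|(T+\lambda)u\|_{L^{p'}}\ge C_{p',\lambda}\|u\|_{L^{p'}}$ with $C_{p',\lambda}=\frac{4}{pp'}\lambda_1(M)+\lambda>0$, noting that $C_{p'}(M)=C_p(M)$ since the constant is symmetric in $p,p'$. For $u\in C^\infty_c$ this estimate follows from the computation in Proposition \ref{solution with non-neg Lp function}: test with $|u|^{p'-2}u$ and use $\int|\nabla |u|^{p'/2}|^2\ge\lambda_1\int|u|^{p'}$. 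It then extends to $D(T)$ by closure. Given this bound, the range of $T+\lambda$ is closed, and its density is again equivalent to triviality of the kernel of the $p$-maximal operator. Density of the range for one such $\lambda$ propagates to all larger $\lambda$ by a resolvent perturbation (Neumann series) argument, and in particular reaches some positive $\lambda$, where Theorem \ref{t:X.48} applies. This connects back to the semigroup statement.
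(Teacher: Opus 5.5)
Your argument is correct, but it takes a different route from the paper. Both proofs share the duality step, which identifies the annihilator of $\mathrm{Ran}(-\Delta_{p',min}+\lambda)$ with $\ker(-\Delta_{p,max}+\lambda)$. The difference is in how the range is shown to be closed and how generation is obtained.

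The paper identifies $-\Delta_{p',min}$ with the $p'$-realization $-\Delta^{(p')}$ of the Friedrichs Laplacian. In one direction this follows from full range plus injectivity of $-\Delta^{(p')}+\lambda$; in the other, from m-accretivity. The paper then imports both contractivity and invertibility of $-\Delta^{(p')}+\lambda$, for every $\lambda>-C_p(M)$, from the heat semigroup bound $\|P^{(p')}_t\|\le e^{-C_p(M)t}$ of \cite{ohta2009heat}.

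You never use the Friedrichs $L^{p'}$-semigroup. You get the lower bound on $T+\lambda$ from accretivity, or from the $L^{p'}$ integration by parts combined with the $L^2$ variational characterization of $\lambda_1(M)$. You then obtain generation directly from Theorem \ref{t:X.48}. Your version is more elementary and self-contained. The paper's version gives extra information: the semigroup is the Friedrichs heat semigroup, with decay rate $e^{-C_p(M)t}$. It also treats all $\lambda>-C_p(M)$ at once, without any continuation in $\lambda$.

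Two small points should be completed.

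First, in the infinite-volume case you only propagate surjectivity upward. That covers only the implication from $(L^p\text{-Liou}_{\lambda-H})$ to generation. For the converse with $\lambda\in(-C_p(M),0]$, you must go downward from some $\mu>0$. The same Neumann series does this: the bound $\|(T+\mu)^{-1}\|\le (C_p(M)+\mu)^{-1}$ gives invertibility of $T+\nu$ whenever $\mu-\nu<C_p(M)+\mu$, that is, for every $\nu>-C_p(M)$.

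Second, for $p'<2$ the test function $|u|^{p'-2}u$ is not smooth at the zeros of $u$. It should be regularized as $(u^2+\varepsilon)^{(p'-2)/2}u$, as in the proof of Theorem \ref{H_0^p Liou H}, before passing to the limit.
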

\begin{proof}
    Assuming the validity of ($L^{p}\textnormal{-Liou}_{\lambda-H}$), arguing as in \cite{milatovic2010m}, one can follow the next steps to prove that $-\Delta_{p',min}$ is accretive and that $\mathrm{Ran}(-\Delta_{p',min}+\lambda)=L^{p'}(M)$ for any fixed $\lambda>-C_p(M)$ (respectively, $\lambda>0$ if $M$ has finite volume):
    \begin{itemize}
        \item by the ($L^{p}\textnormal{-Liou}_{\lambda-H}$) and \cite[Corollary 2.18]{brezis2011functional}, one has that
    \begin{align}
        \overline{\mathrm{Ran}((-\Delta_{p',min}+\lambda)|_{C^\infty_c(M)})}=\mathrm{ker}(-\Delta_{p,max}+\lambda)^\bot=L^{p'}(M),
    \end{align}
    implying that $\mathrm{Ran}((-\Delta_{p',min}+\lambda)|_{C^\infty_c(M)})$ is dense in $L^{p'}(M)$;

    \item having fixed $f\in L^{p'}(M)$, by the previous point there exists $\{u_k\}_k\subset C^\infty_c(M)$ such that $(-\Delta_{p',min} + \lambda)u_k\to f$ in $L^{p'}(M)$. By Remark \ref{Rmk_p-Resolvent}, since the Friedrichs extension of the Laplacian coincides with $-\Delta_{{p'},min}$ on $C^\infty_c(M)$, one has that
    \begin{align}
        (C_p(M)+\lambda)\norm{u_k-u_h}{L^{p'}(M)}\leq \norm{(-\Delta_{p',min} + \lambda)(u_k-u_h)}{L^{p'}(M)}
    \end{align}
    implying that $\{u_k\}_k$ is a Cauchy sequence in $L^{p'}(M)$ and hence $u_k\to u\in L^{p'}(M)$. Since $-\Delta_{p',min}+\lambda$ is closed, $u\in H_0^{p'}(M)$ and $(-\Delta_{p',min}+\lambda)u=f$, implying that
    \begin{align}
        \mathrm{Ran}(-\Delta_{p',min}+\lambda)=L^{p'}(M);
    \end{align}

    \item by the previous point, $(-\Delta_{p',min}+\lambda)$ is surjective. Since $(-\Delta_{p',min}+\lambda)\subseteq (-\Delta^{(p')}+\lambda)$, where $-\Delta^{(p')}$ is the $p'$-realization of the Friedrichs Laplacian, and $(-\Delta^{(p')}+\lambda)$ is injective, it follows that $(-\Delta_{p',min}+\lambda)= (-\Delta^{(p')}+\lambda)$. Indeed, fixed any $v$ in the domain of $(-\Delta^{(p')}+\lambda)$, by the surjectivity of $(-\Delta_{p',min}+\lambda)$ there exists $u\in H^{p'}_0(M)$ such that
    \begin{align}
        (-\Delta^{(p')}+\lambda)v=(-\Delta_{p',min}+\lambda)u=(-\Delta^{(p')}+\lambda)u,
    \end{align}
    where the last identity follows by the fact that $(-\Delta_{p',min}+\lambda)\subseteq (-\Delta^{(p')}+\lambda)$. By the injectivity of $(-\Delta^{(p')}+\lambda)$, it follows that $u=v$ and hence $v\in H^{p'}_0(M)$: thus the domains of $(-\Delta_{p',min}+\lambda)$ and $(-\Delta^{(p')}+\lambda)$ coincide and hence they are the same operator. By Remark \ref{Rmk_p-Resolvent}, the semigroup $P_t^{(p'),\lambda}$ associated to $(-\Delta^{(p')}+\lambda)$ satisfies
    \begin{align}
        \norm{P_t^{(p'),\lambda}}{\mathcal{L}(L^p)}\leq e^{(-\lambda-C_p(M))t}\leq 1,
    \end{align}
    and hence $(-\Delta_{p',min}+\lambda)=(-\Delta^{(p')}+\lambda)$ generates a contraction semigroup.
    \end{itemize}
    
    Conversely, assume that $-\Delta_{p',min}$ generates a contraction semigroup in $L^{p'}(M)$. Hence, $-\Delta_{p',min}$ is $m$-accretive and thus it coincides with $-\Delta^{(p')}$, since $-\Delta^{(p')}$ is accretive. Thus, for any fixed $\lambda>-C_p(M)$ we have that $(-\Delta^{(p')}+\lambda)=(-\Delta_{p',min}+\lambda)$ is bijective, and hence surjective. By Theorem 2.20 in \cite{brezis2011functional} and the fact that $-\Delta_{p',min}^*=-\Delta_{p,max}$, it follows that
    \begin{equation*}
        \{0\} = \mathrm{ker}((-\Delta_{p',min}+\lambda)^{*}) = \mathrm{ker}(-\Delta_{p,max}+\lambda),
    \end{equation*}
    i.e. ($L^{p}\textnormal{-Liou}_{\lambda-H}$) holds.
\end{proof}

\begin{remark}\label{rmk_inv_MasaHW_p_ne_2}
    When the manifold has infinite volume and positive bottom, the previous result gives an analogue of the implication
    \begin{equation*}
        \begin{cases}
            \text{$M$ is ($L^{2}$-Liou$_{H}$)}\\
            \text{Vol($M$)$=\infty$}\\
            \lambda_{1}(M)>0
        \end{cases}\Longrightarrow \textnormal{$-\Delta$ is e.s.a.}
    \end{equation*}
    in \cite[Corollary 4]{HMW} in the case of general $p>1$: indeed, for $p=2$ the fact that $-\Delta_{2,min}$ generates a contraction semigroup in $L^{2}(M)$ is equivalent to the essential self-adjointness.
\end{remark}

\noindent By combining the previous results with Proposition \ref{prop_semigroups equivalent cores}, we get the following
\begin{corollary}\label{cor_core_equivalence_harmonic_liouville}
    Let $(M,g)$ be a (possibly incomplete) Riemannian manifold, $\lambda>0$ and $p\in(1,+\infty)$. Then $C^{\infty}_{c}(M)$ is a core for $-\Delta_{p,max}$ if and only if both ($L^{p}\textnormal{-Liou}_{\lambda-H}$) and ($L^{p'}\textnormal{-Liou}_{\lambda-H}$) hold. If the volume of $M$ is infinite, the same conclusion holds for every $\lambda>-C_p(M)$.
\end{corollary}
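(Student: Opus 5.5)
The corollary follows by composing Proposition \ref{prop_semigroups equivalent cores} (with $V\equiv 0$) and Proposition \ref{Lp lambda armomiche e semigruppo}, applied once with exponent $p$ and once with $p'$. The hypothesis $0\le V\in L^p_{loc}(M)\cap L^{p'}_{loc}(M)$ is trivially satisfied by $V\equiv0$. Hence Proposition \ref{prop_semigroups equivalent cores} says that $C^\infty_c(M)$ is a core for $-\Delta_{p,max}$ if and only if both $-\Delta_{p,min}$ generates a contraction semigroup in $L^p(M)$ and $-\Delta_{p',min}$ generates a contraction semigroup in $L^{p'}(M)$.

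Next I translate each semigroup condition into a Liouville property.

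\begin{enumerate}
\item Proposition \ref{Lp lambda armomiche e semigruppo} with exponent $p$ gives that $-\Delta_{p',min}$ generates a contraction semigroup in $L^{p'}(M)$ if and only if $(L^{p}\textnormal{-Liou}_{\lambda-H})$ holds.
\item The same proposition with exponent $p'$ (so that $(p')'=p$) gives that $-\Delta_{p,min}$ generates a contraction semigroup in $L^{p}(M)$ if and only if $(L^{p'}\textnormal{-Liou}_{\lambda-H})$ holds.
\end{enumerate}

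Chaining these two equivalences with the first step yields the claim for every $\lambda>0$.

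For the infinite-volume refinement, the only point to check is the admissible range of $\lambda$. The threshold $C_p(M)=\frac{4}{pp'}\lambda_1(M)$ is symmetric in $p$ and $p'$, so $C_{p'}(M)=C_p(M)$. Therefore the same range $\lambda>-C_p(M)$ is allowed in both applications of Proposition \ref{Lp lambda armomiche e semigruppo}.

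There is no real obstacle here. The only care needed is bookkeeping: each Liouville property at exponent $p$ corresponds to the semigroup property at the conjugate exponent $p'$, and one must verify that Proposition \ref{Lp lambda armomiche e semigruppo} is genuinely valid with $p$ replaced by $p'$. It is, since it is stated for arbitrary $p\in(1,+\infty)$.
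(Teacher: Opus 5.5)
Your proposal is correct and is exactly the paper's argument: the corollary is stated there as a direct combination of Proposition \ref{prop_semigroups equivalent cores} (with $V\equiv 0$) and Proposition \ref{Lp lambda armomiche e semigruppo} applied at both exponents $p$ and $p'$. Your remark that $C_{p'}(M)=C_p(M)$ makes explicit the bookkeeping the paper leaves implicit for the infinite-volume range.
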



\subsection{From ($L^{2}\textnormal{-Liou}_{\lambda-H}$) to ($L^{2}\textnormal{-Liou}_{H}$)}\ \\
Finally, we discuss the relation between $(L^2\text{- Liou}_{\lambda-h})$ (equivalently, $(ESA)$) and ($L^{2}\textnormal{-Liou}_{H}$). The next proposition is contained in \cite[Corollary 4]{HMW}.

\begin{proposition}\label{ESA-->L2H}
Let $(M,g)$ be a (possibly incomplete) Riemannian manifold. The following properties hold: 
\begin{enumerate}
        \item If $(M,g)$ is $(ESA)$, then it is $(L^{2}\textnormal{-Liou}_{H})$.
        \item If $\lambda_1(M)>0$, $(M,g)$ has infinite volume and is $(L^{2}\textnormal{-Liou}_{H})$, then it is $(ESA)$.
\end{enumerate}
\end{proposition}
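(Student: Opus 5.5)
We plan to argue entirely at the operator level, using the identifications already established: by Corollary \ref{cor_equiv_L_l-harm}, $(ESA)$ is equivalent to $(L^2\text{-Liou}_{\lambda-H})$ for every $\lambda>0$. It is also equivalent to $H^2(M)=H^2_0(M)$ (Lemma \ref{H2,0 = H2}), and in this case Remark \ref{weak convergence of H2=H20} shows that every $u\in H^2(M)$ has $\nabla u\in L^2$, obtained as the $L^2$-limit of the gradients of approximating test functions.

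\emph{Part (1).} Let $u\in L^2(M)$ with $\Delta u=0$ in $\mathcal D'(M)$. Then $u\in H^2(M)=H^2_0(M)$, so we can pick $\varphi_k\in C^\infty_c(M)$ with $\varphi_k\to u$ and $\Delta\varphi_k\to\Delta u=0$ in $L^2$; by the remark, also $\nabla\varphi_k\to\nabla u$ in $L^2$. Integrating by parts with compact support,
\begin{align}
\int_M|\nabla\varphi_k|^2\dvol=-\int_M\varphi_k\Delta\varphi_k\dvol\longrightarrow -\int_M u\,\Delta u\dvol=0 .
\end{align}
Hence $\nabla u=0$. Since $M$ is connected, $u$ is constant, which is $(L^2\text{-Liou}_H)$. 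The only delicate point is justifying $\nabla u\in L^2$ and the passage to the limit, and Remark \ref{weak convergence of H2=H20} provides exactly this.

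\emph{Part (2).} Assume $\lambda_1(M)>0$, $\mathrm{vol}(M)=+\infty$ and $(L^2\text{-Liou}_H)$. Since the volume is infinite, the only constant in $L^2$ is $0$, so $\ker(-\Delta_{2,max})=\{0\}$. The operator $A:=-\Delta|_{C^\infty_c(M)}$ satisfies $(A\varphi,\varphi)=\|\nabla\varphi\|^2\ge\lambda_1(M)\|\varphi\|^2$, i.e.\ it is strictly positive with $c=\lambda_1(M)>0$. By Theorem \ref{thm_RS2_X26}, essential self-adjointness of $A$ is equivalent to $\ker A^*=\{0\}$, and $A^*=-\Delta_{2,max}$ by Remark \ref{rmk_ESA}. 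Hence $A$ is essentially self-adjoint, which is $(ESA)$. Equivalently, this is the case $\lambda=0>-\lambda_1(M)$ of Proposition \ref{prop_L_l-H_ESA_-D+l}, which is allowed under infinite volume.

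The main obstacle is part (1): one must ensure the Liouville conclusion uses the approximation in the graph norm together with gradient control, rather than the weaker distributional information. The argument avoids any Caccioppoli/cutoff estimates, which would fail on incomplete $M$.
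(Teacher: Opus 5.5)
Your proposal is correct and follows the paper's proof. For (1), the paper deduces $H^2(M)=H^2_0(M)$ from (ESA) and invokes the $H^p_0$-Liouville theorem (Theorem~\ref{H_0^p Liou H}), whose proof at $p=2$ is exactly your energy bound $\int_M|\nabla\varphi_k|^2\dvol=-\int_M\varphi_k\Delta\varphi_k\dvol\le\|\varphi_k\|_{L^2}\|\Delta\varphi_k\|_{L^2}\to0$; for (2), it uses $\ker(-\Delta_{2,max})=\{0\}$ together with Theorem~\ref{thm_RS2_X26} just as you do, and your explicit strict-positivity check with constant $\lambda_1(M)>0$ is the hypothesis the paper leaves implicit.
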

\begin{proof}
    (1): By the ($ESA$), we have that $H^2(M)=H_0^2(M)$ and the claim follows by Theorem \ref{H_0^p Liou H} below, since, by definition, every $u\in L^2(M)$ so that $-\Delta u=0$ is an element of $H^2(M)$.
    \\
    (2): By the ($L^{2}\textnormal{-Liou}_{H}$) and by the non-finiteness of the volume, the unique $L^2$ harmonic function is the zero function, i.e.
        \begin{align}
            \mathrm{ker}(-\Delta_{2,max})=\{0\}.
        \end{align}
        By Theorem \ref{thm_RS2_X26}, we get the claim.
\end{proof}

\begin{remark}
    We stress that, since $(ESA)$ is equivalent to ($L^{2}\textnormal{-Liou}_{\lambda-H}$) for every $\lambda>0$ (respectively, for every $\lambda>-\lambda_1(M)$ if $M$ has infinite volume), Proposition \ref{ESA-->L2H} also gives the relation between ($L^{2}\textnormal{-Liou}_{\lambda-H}$) and ($L^{2}\textnormal{-Liou}_{H}$).
\end{remark}

\noindent We provide two examples highlighting respectively that, in general, the two properties ($L^{2}\textnormal{-Liou}_{\lambda-H}$) and ($L^{2}\textnormal{-Liou}_{H}$) are not equivalent and that ($L^{2}\textnormal{-Liou}_{H}$) can fail.

\begin{example}[($L^{2}\textnormal{-Liou}_{H}$) but not ($L^{2}\textnormal{-Liou}_{\lambda-H}$)]\label{L2_k not ESA}
    Examples of incomplete manifolds satisfying ($L^{2}\textnormal{-Liou}_{H}$) but not  ($L^{2}\textnormal{-Liou}_{\lambda-H}$) for any $\lambda>0$ are given by $\mathbb{R}^{2}\setminus\{0\}$ and $\mathbb{R}^{3}\setminus\{0\}$ endowed with the standard flat metric. Indeed, by Theorem X.11 in \cite{RS2} we know that the Laplacian on $\mathbb{R}^{n}\setminus\{0\}$ is essentially self-adjoint if and only if $n\ge 4$, so the ($L^{2}\textnormal{-Liou}_{\lambda-H}$) holds only in dimension $n\ge4$. However, $\mathbb{R}^n\setminus\{0\}$ satisfies the ($L^{2}\textnormal{-Liou}_{H}$) property for every $n\ge1$ (actually, they satisfy ($L^{p}\textnormal{-Liou}_{H}$) for every $p\in(1,+\infty)$, see for instance \cite{axler2013harmonic}, Exercise 7, p. 188, Chapter 8).
\end{example}

\begin{example}[Failure of ($L^{2}\textnormal{-Liou}_{H}$)]
    Examples of manifolds for which ($L^{2}\textnormal{-Liou}_{H}$) fails are the hyperbolic disk $\mathbb{H}^{2}\setminus\{0\}$ and the hyperbolic space $\mathbb{H}^{3}\setminus\{0\}$ endowed with the standard hyperbolic metric. Indeed, in both cases the Green function is square-integrable (and actually in $L^{p}$ for every $p\in(1,+\infty)$ when $n=2$). More generally, it was proven in \cite{HMW} that if $(M,g)$ is a complete Riemannian manifold of dimension $2$ or $3$ and with positive bottom of the spectrum, then the incomplete manifold $(M\setminus\{o\},g)$ does not satisfy the $L^{2}$-Liouville property for harmonic functions for any $o\in M$.
\end{example}

\subsubsection*{The case $p\ne2$ }
In the case $p\ne2$, the previous relation can be stated in terms of the core property for the $p$-maximal realization of the Laplacian, as a corollary of the following Liouville property for the domain of the $p$-minimal extension.

\begin{theorem}\label{H_0^p Liou H}
    Let $(M,g)$ be a (possibly incomplete) Riemannian manifold, and let $p\in (1,+\infty)$. Then $(H^{p}_0\textnormal{-Liou}_{H})$ holds.
\end{theorem}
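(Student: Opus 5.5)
Fix $u\in H^p_0(M)$ with $\Delta u=0$ in $\mathcal{D}'(M)$. We show that $u$ is constant; in fact we aim for the stronger conclusion $\nabla u\equiv 0$. The approach is to use the definition of $H^p_0(M)$: choose $\{\varphi_k\}_k\subset C^\infty_c(M)$ with $\varphi_k\to u$ in $L^p(M)$ and $\Delta\varphi_k\to\Delta u=0$ in $L^p(M)$. The idea is to test the harmonicity of $u$ against a nonlinear function of $\varphi_k$, so that a gradient term appears, and then pass to the limit. For $p=2$ this is essentially Remark \ref{weak convergence of H2=H20}. There, $\int|\nabla\varphi_k|^2=-\int\varphi_k\Delta\varphi_k\to -\int u\cdot 0=0$, so $\nabla\varphi_k\to 0$ in $L^2$ and hence $\nabla u=0$ distributionally. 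Since $M$ is connected, $u$ is then constant.

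For general $p$ we replace the test function $\varphi_k$ by $J(\varphi_k):=|\varphi_k|^{p-2}\varphi_k$, which lies in $L^{p'}$ with $\|J(\varphi_k)\|_{L^{p'}}=\|\varphi_k\|_{L^p}^{p-1}$. Integrating by parts, which is allowed because $\varphi_k$ has compact support, gives the following. For $p\ge 2$, $J(\varphi_k)$ is $C^1$ and
\begin{align}
(p-1)\int_M|\varphi_k|^{p-2}|\nabla\varphi_k|^2\dvol=-\int_M J(\varphi_k)\,\Delta\varphi_k\dvol\le \|\varphi_k\|_{L^p}^{p-1}\|\Delta\varphi_k\|_{L^p}\to 0 .
\end{align}
For $1<p<2$ we regularize, using $(\varphi_k^2+\varepsilon)^{(p-2)/2}\varphi_k$ and letting $\varepsilon\to0$, and obtain the same identity. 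In both cases $\int|\nabla|\varphi_k|^{p/2}|^2=\frac{p^2}{4}\int|\varphi_k|^{p-2}|\nabla\varphi_k|^2\to 0$. Since $|\varphi_k|^{p/2}\to|u|^{p/2}$ in $L^2$, we conclude that $\nabla|u|^{p/2}=0$ distributionally on $M$. The same argument applies to $\psi_k:=\varphi_k-c$ on any precompact set, but a cleaner route is the following. On a precompact open set we have $(\varphi_k)_\pm\to u_\pm$, so $|u|^{p/2}$ is constant, and therefore $|u|$ is constant a.e. Since $u$ is harmonic, Weyl's lemma makes it smooth, so $|u|\equiv c$ together with continuity and connectedness forces $u\equiv \pm c$.

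The main obstacle is making the integration by parts and the limit rigorous for $1<p<2$, where $s\mapsto|s|^{p-2}s$ is not Lipschitz at $0$. The plan is to work with $J_\varepsilon(s)=(s^2+\varepsilon)^{(p-2)/2}s$, which is smooth with $J_\varepsilon'(s)\ge (p-1)(s^2+\varepsilon)^{(p-2)/2}$. This gives $(p-1)\int(\varphi_k^2+\varepsilon)^{(p-2)/2}|\nabla\varphi_k|^2\le\int|J_\varepsilon(\varphi_k)||\Delta\varphi_k|\le\|\varphi_k\|_{L^p}^{p-1}\|\Delta\varphi_k\|_{L^p}$, using $|J_\varepsilon(s)|\le|s|^{p-1}$. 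Monotone convergence as $\varepsilon\to0$ then yields the weighted gradient bound, with the set $\{\varphi_k=0\}$ harmless because $\nabla\varphi_k=0$ a.e. there. A second technical point is identifying the limit of the gradients. Here $\nabla|\varphi_k|^{p/2}\to0$ in $L^2$ and $|\varphi_k|^{p/2}\to|u|^{p/2}$ in $L^2$, so the distributional gradient of $|u|^{p/2}$ vanishes by passing to the limit against test vector fields. Then $|u|$ is constant on the connected manifold $M$, and smoothness of harmonic $u$ finishes the proof.
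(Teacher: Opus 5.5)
Your proposal is correct and follows essentially the same route as the paper: you test against the regularized $(\varphi_k^2+\varepsilon)^{(p-2)/2}\varphi_k$, bound the Dirichlet energy of $|\varphi_k|^{p/2}$ by a constant times $\|\Delta\varphi_k\|_{L^p}\|\varphi_k\|_{L^p}^{p-1}\to 0$, and use $|\varphi_k|^{p/2}\to|u|^{p/2}$ in $L^2$ to get $\nabla|u|^{p/2}=0$. Your final step, using Weyl's lemma and connectedness to pass from $|u|$ constant to $u$ constant, makes explicit a point the paper leaves implicit; the stray remarks about $\varphi_k-c$ and $(\varphi_k)_\pm\to u_\pm$ play no role and can be dropped.
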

\begin{proof}
    Having fixed $u\in H^{p}_0(M)$ such that $\Delta u=0$, let us consider $\{\phi_k\}_{k\ge1}\subset C^\infty_c(M)$ so that
    \begin{align}
        \phi_k\xrightarrow[]{L^p}u \quad \andd \quad \Delta\phi_k \xrightarrow[]{L^p}\Delta u=0.
    \end{align}
    Up to subsequences, we can also assume that $\phi_{k}$ converges pointwise a.e. to $u$. Now, fix $\varepsilon>0$ and use integration by parts to obtain
    \begin{align}\label{App3_L^p_l-H_implies_L^p_H}
        -\int_M \Delta\phi_k\ (\phi_k^2+\varepsilon)^{\frac{p-2}{2}}\phi_k \dvol &= \int_M  g\left(\nabla\phi_k, \nabla\left[(\phi_k^2+\varepsilon)^{\frac{p-2}{2}}\phi_k\right]\right) \dvol\\
        &=\int_M |\nabla \phi_k|^2 \left[(p-2)(\phi_k^2+\varepsilon)^{\frac{p-4}{2}} \phi_k^2+(\phi_k^2+\varepsilon)^{\frac{p-2}{2}}\right] \dvol\\
        &= \int_M |\nabla \phi_k|^2 (\phi_k^2+\varepsilon)^{\frac{p-4}{2}} \left[(p-2)\phi_k^2 + (\phi_k^2+\varepsilon)\right] \dvol\\
        &\ge (p-1)\int_M |\nabla \phi_k|^2 (\phi_k^2+\varepsilon)^{\frac{p-4}{2}} \phi_k^2 \dvol \\
        &=\frac{4(p-1)}{p^2} \int_M \left|\nabla \left((\phi_{k}^{2}+\varepsilon)^\frac{p}{4}\right)\right|^2 \dvol,
    \end{align}
    hence
    \begin{align}\label{stima energia wk,eps}
        \int_M \left|\nabla w_{k,\varepsilon}\right|^2 \dvol &\le C_{p}\int_M (-\Delta \phi_k) (\phi_{k}^{2}+\varepsilon)^{\frac{p-2}{2}} \phi_k \dvol \\
        &\le C_{p}\int_M \vert\Delta \phi_k\vert (\phi_{k}^{2}+\varepsilon)^{\frac{p-1}{2}} \dvol,
    \end{align}
    where $C_{p}:=\frac{p^2}{4(p-1)}>0$ and $w_{k,\varepsilon}:=(\phi_{k}^{2}+\varepsilon)^\frac{p}{4}\in C^{\infty}(M)$. From the dominated convergence theorem and the smoothness of $\phi_{k}$, the sequence $\{w_{k,\varepsilon}\}_{\varepsilon>0}$ converges to $\vert\phi_{k}\vert^{\frac{p}{2}}$ in $L^{2}_{loc}(M)$ as $\varepsilon\rightarrow 0^+$ for every fixed $k\ge1$. Moreover, from the lower semicontinuity of the energy and the monotone convergence theorem, we deduce that the weak gradient of $\vert\phi_{k}\vert^{\frac{p}{2}}$ exists and
     \begin{align}\label{stima uniforme energia w_k}
        \int_M \left|\nabla \vert\phi_{k}\vert^{\frac{p}{2}}\right|^2 \dvol 
        &\le \liminf_{\varepsilon\longrightarrow 0^{+}}\int_M \vert \nabla w_{k,\varepsilon}\vert^{2}\dvol  \\
        &\le C_{p}\liminf_{\varepsilon\longrightarrow 0^{+}}\int_M \vert\Delta \phi_k\vert(\phi_k^{2}+\varepsilon)^{\frac{p-1}{2}}\dvol \\
        &\le C_{p}\int_M \vert\Delta \phi_k\vert\vert\phi_k\vert^{p-1}\dvol \\
        &\le C_{p}\Vert \Delta \phi_k\Vert_{L^{p}(M)} \Vert\phi_{k}\Vert_{L^{p}(M)}^{\frac{p}{p'}}.
    \end{align}
   Note that the right-hand side of the last inequality is uniformly bounded in $k$ by the convergence in $H^{p}(M)$ of the $\phi_{k}$'s, hence  $\nabla \vert\phi_{k}\vert^{\frac{p}{2}}\in L^{2}(M)$ for every $k\ge1$. Moreover, $\vert\phi_{k}\vert^{\frac{p}{2}}\in L^{2}(M)$ since every $\vert\phi_{k}\vert^{\frac{p}{2}}$ is continuous and compactly supported, and thus $\vert\phi_{k}\vert^{\frac{p}{2}}\in W^{1,2}(M)$. Next step is to prove that $\vert\phi_{k}\vert^{\frac{p}{2}}$ converges in $W^{1,2}_{loc}(M)$ to $\vert u\vert^{\frac{p}{2}}$: to do this, first note that there exists a constant $D_{p}>0$ such that
   \begin{equation*}
       \vert\vert\phi_{k}\vert^{\frac{p}{2}}-\vert u\vert^{\frac{p}{2}}\vert^{2}\le D_{p}\vert \vert\phi_{k}\vert - \vert u\vert\vert^2(\vert\phi_{k}\vert+\vert u\vert)^{p-2}\le D_{p}\vert\phi_{k} - u\vert^2(\vert\phi_{k}\vert+\vert u\vert)^{p-2}
   \end{equation*}
   if $p\in[2,+\infty)$, and 
   \begin{equation*}
       \vert\vert\phi_{k}\vert^{\frac{p}{2}}-\vert u\vert^{\frac{p}{2}}\vert^{2}\le \vert \vert\phi_{k}\vert - \vert u\vert\vert^p
       \le \vert\phi_{k} - u\vert^p
   \end{equation*} 
   if $p\in(1,2)$. In the first case, using the convergence in $L^p$ of the $\phi_{k}$, and applying Holder and Minkowski inequalities, we find
   \begin{equation*}
       \begin{split}
           \Vert \vert\phi_{k}\vert^{\frac{p}{2}}-\vert u\vert^{\frac{p}{2}} \Vert_{L^2}^2 &\le D_{p} \int_{M}\vert\phi_{k} - u\vert^2(\vert\phi_{k}\vert+\vert u\vert)^{p-2} \dvol \\
           & \le D_{p}\Vert \phi_{k} - u \Vert_{L^p}^{2}\Vert \vert\phi_{k}\vert + \vert u\vert \Vert_{L^p}^{p-2} \\
           & \le D_{p}\Vert \phi_{k} - u \Vert_{L^p}^{2}(\Vert\phi_{k}\Vert_{L^p} + \Vert u \Vert_{L^p})^{p-2} \\
           &\le D'_{p}\Vert \phi_{k} - u \Vert_{L^p}^{2},
       \end{split}
   \end{equation*}
   where $D'_{p}$ is a positive constant. In the second case, we immediately get  
   \begin{equation*}
        \Vert \vert\phi_{k}\vert^{\frac{p}{2}}-\vert u\vert^{\frac{p}{2}} \Vert_{L^2}^2\le \Vert \phi_{k} - u \Vert_{L^p}^{2},
   \end{equation*}
   so there exists a positive constant $D'_{p}$ such that
   \begin{equation*}
        \Vert \vert\phi_{k}\vert^{\frac{p}{2}}-\vert u\vert^{\frac{p}{2}} \Vert_{L^2}^2\le D_{p}'\Vert \phi_{k} - u \Vert_{L^p}^{2}
   \end{equation*}
   for every $k\ge1$. Since $\phi_k\xrightarrow[]{H^p}u$, it follows that $\vert\phi_k\vert^{\frac{p}{2}}$ converges to $\vert u\vert^{\frac{p}{2}}$ in $L^{2}(M)$. Moreover, the semicontinuity of the energy implies
   \begin{align}
        \int_{K} \vert \nabla \vert u\vert^{\frac{p}{2}}\vert^{2} \dvol &\le \liminf_{k\rightarrow+\infty}\int_{K} \left|\nabla \vert\phi_{k}\vert^{\frac{p}{2}}\right|^2 \dvol \\
        &\le C_{p}\liminf_{k\rightarrow+\infty}\Vert \Delta \phi_k\Vert_{L^{p}(M)} \Vert\phi_{k}\Vert_{L^{p}(M)}^{\frac{p}{p'}}\\
        &=0.
    \end{align}
    It follows that $\vert \nabla \vert u\vert^{\frac{p}{2}}\vert^{2}=0$ on $M$, which implies that $u$ is constant.
\end{proof}

\noindent As a direct consequence of previous theorem, we obtain

\begin{corollary}\label{p-core -- Lp, Lp' Liou H}
    Let $(M,g)$ be a (possibly incomplete) Riemannian manifold, and let $p\in (1,+\infty)$. If $C^{\infty}_{c}(M)$ is a core for $-\Delta_{p,max}$, then $(L^{p}\textnormal{-Liou}_{H})$ and $(L^{p'}\textnormal{-Liou}_{H})$ hold.
\end{corollary}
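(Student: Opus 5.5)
The plan is to combine Theorem \ref{H_0^p Liou H} with the duality relation between the $p$-core and $p'$-core properties. First, by Lemma \ref{H2,0 = H2}, the assumption that $C^{\infty}_{c}(M)$ is a core for $-\Delta_{p,max}$ is equivalent to $H^p(M)=H^p_0(M)$.

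Now let $u\in L^p(M)$ with $\Delta u=0$ in $\mathcal{D}'(M)$. Then $\Delta u\in L^p(M)$ trivially, so $u\in H^p(M)=H^p_0(M)$. Theorem \ref{H_0^p Liou H} then says $u$ is constant. This gives $(L^{p}\textnormal{-Liou}_{H})$.

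For the conjugate exponent, I would apply the implication $(1)\Rightarrow(2)$ of Proposition \ref{prop_semigroups equivalent cores} with $V\equiv 0$, which trivially lies in $L^p_{loc}(M)\cap L^{p'}_{loc}(M)$. That implication is only the adjoint identity $(-\Delta)_{p',max}=(-\Delta)_{p,min}^*=(-\Delta)_{p,max}^*=(-\Delta)_{p',min}$. It yields that $C^\infty_c(M)$ is a core for $-\Delta_{p',max}$, so $H^{p'}(M)=H^{p'}_0(M)$. Repeating the previous argument with $p'\in(1,+\infty)$ in place of $p$ gives $(L^{p'}\textnormal{-Liou}_{H})$.

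There is no real obstacle. The only point to check is that the duality step does not depend on any hypothesis beyond closability and the identification of the maximal realizations as adjoints of the minimal ones, via \cite[Theorem 5.29]{Ka13}. For $V=0$ this holds because $-\Delta$ restricted to $C^\infty_c(M)$ is formally symmetric.
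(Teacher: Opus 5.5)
Your proposal is correct and follows the paper's proof. You use the core assumption to get $H^p(M)=H^p_0(M)$, note that every $L^p$-harmonic function lies in $H^p(M)$, and conclude with Theorem \ref{H_0^p Liou H}; then you transfer the core property to $p'$ via the adjoint identity in $(1)\Rightarrow(2)$ of Proposition \ref{prop_semigroups equivalent cores} with $V\equiv 0$ and repeat the argument.
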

\begin{proof}
    If $C^\infty_c(M)$ is a core for $-\Delta_{p,max}$, then $H^p(M)=H^p_0(M)$ and the claim follows by Theorem \ref{H_0^p Liou H}, since every harmonic $u\in L^p(M)$ is an element of $H^p(M)$. The ($L^{p'}\textnormal{-Liou}_{H}$) follows in the same way by duality of the core property, see Proposition \ref{prop_semigroups equivalent cores}.
\end{proof}

By Proposition \ref{prop_semigroups equivalent cores}, it follows that the previous corollary can be reformulated, in an equivalent way, by replacing the core property with the fact that both $-\Delta_{p,min}$ and $-\Delta_{p',min}$ generate a contraction semigroup. However, something stronger can be deduced, as the next theorem states: indeed, we can assert the validity of the ($L^{p}\textnormal{-Liou}_{H}$) requiring only the contractivity of the semigroup generated by $-\Delta_{p',min}$ (or, equivalently, the ($L^{p}\textnormal{-Liou}_{\lambda-H}$) for any $\lambda>0$).

\begin{theorem}\label{Thm_L^p_lambda-H implies L^p_H}
    Let $(M,g)$ be a (possibly incomplete) Riemannian manifold and $p\in (1,+\infty)$. If $(M,g)$ is $(L^{p}\textnormal{-Liou}_{\lambda-H})$ for some $\lambda>0$, then it is $(L^{p}\textnormal{-Liou}_{H})$. If the volume of $M$ is infinite, the same conclusion holds for $\lambda>-C_p(M)$.
\end{theorem}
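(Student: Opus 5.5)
The plan is to pass through the operator-theoretic reformulation, which avoids working directly with harmonic distributions. First, Proposition \ref{Lp lambda armomiche e semigruppo} turns the hypothesis $(L^{p}\textnormal{-Liou}_{\lambda-H})$ for some admissible $\lambda$ into the statement that $-\Delta_{p',min}$ generates a contraction semigroup in $L^{p'}(M)$. In the proof of that proposition this was shown to force the identity $-\Delta_{p',min}=-\Delta^{(p')}$, the $p'$-realization of the Friedrichs Laplacian. The same argument then gives $(L^{p}\textnormal{-Liou}_{\mu-H})$ for every $\mu>0$, and for every $\mu>-C_p(M)$ in the infinite-volume case. Equivalently, $\ker(-\Delta_{p,max}+\mu)=\{0\}$ for all such $\mu$.

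Now fix $u\in L^p(M)$ with $\Delta u=0$. The aim is to show that $u$ is constant. The natural idea is a duality argument. For every $\mu>0$ and every $f\in L^{p'}(M)$ we have $\mathrm{Ran}(-\Delta_{p',min}+\mu)=L^{p'}(M)$, so $f=(-\Delta_{p',min}+\mu)v_\mu$ with $v_\mu=R^{(p')}_\mu f\in H^{p'}_0(M)$. Since $-\Delta_{p,max}=(-\Delta_{p',min})^*$, this gives
\begin{align}
\int_M u f \dvol=\int_M u\,(-\Delta_{p',min}+\mu)v_\mu \dvol=\int_M (-\Delta_{p,max}+\mu)u\, v_\mu \dvol=\mu\int_M u\, R^{(p')}_\mu f\dvol .
\end{align}
The next step is to let $\mu\to0^+$. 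By the abelian/ergodic theorem for the contraction semigroup $P^{(p')}_t$ on the reflexive space $L^{p'}(M)$, $\mu R^{(p')}_\mu f$ converges in $L^{p'}$ as $\mu\to0^+$ to the projection $\Pi f$ of $f$ onto $\ker(-\Delta^{(p')})$, along the mean-ergodic splitting $L^{p'}=\ker\oplus\overline{\mathrm{Ran}}$. Hence $\int_M uf=\int_M u\,\Pi f$ for all $f$. It then remains to identify $\ker(-\Delta^{(p')})=\ker(-\Delta_{p',min})$. By Theorem \ref{H_0^p Liou H} (the $H^{p'}_0$-Liouville property), this kernel consists of constants, and it is $\{0\}$ if $\mathrm{vol}(M)=\infty$. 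If the kernel is $\{0\}$, then $\int_M uf=0$ for every $f$, so $u\equiv0$. If $\mathrm{vol}(M)<\infty$, then $\Pi f=\frac{1}{\mathrm{vol}(M)}\int_M f$, and we get $\int_M (u-\bar u)f=0$ for all $f$, where $\bar u$ is the mean of $u$, so $u$ is constant.

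When $\mathrm{vol}(M)=\infty$ and $\lambda_1(M)>0$, $\mu$ may be taken equal to $0$ directly: $-\Delta_{p',min}$ is then surjective, because $C_p(M)>0$ gives the bound $\|R^{(p')}_0\|\le 1/C_p(M)$ from Remark \ref{Rmk_p-Resolvent}, and the kernel computation above yields $u=0$ immediately.

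The main obstacle I expect is justifying the ergodic limit $\mu R^{(p')}_\mu f\to\Pi f$ strongly in $L^{p'}(M)$. I would use that bounded contraction semigroups on reflexive spaces are mean ergodic (Yosida), so $\mu R_\mu f$ converges for every $f$, and the limit lies in $\ker$ since $\mu(-\Delta^{(p')})R_\mu f=\mu f-\mu^2R_\mu f\to0$. If this step is awkward, a fallback is to mimic the proof of Theorem \ref{H_0^p Liou H}: approximate $u$ by $\mu R$-type regularizations in $H^p_0$ and use the energy estimate on $|\cdot|^{p/2}$ as $\mu\to0$.
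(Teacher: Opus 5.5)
Your proof is correct, but it takes a different route from the paper's. Both arguments start from Proposition \ref{Lp lambda armomiche e semigruppo}, so that $-\Delta_{p',min}$ generates a contraction semigroup on $L^{p'}(M)$. Both end with Theorem \ref{H_0^p Liou H} at exponent $p'$. They differ in how a harmonic $u\in L^p(M)$ is linked to $\ker(-\Delta_{p',min})$.

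\textbf{The paper's route.} It works with the semigroups themselves:
\begin{itemize}
\item $-\Delta_{p,max}=(-\Delta_{p',min})^*$ generates the adjoint semigroup, and this semigroup fixes $u$;
\item contractivity plus the equality case in H\"older's inequality then force $v=|u|^{p-2}u$ to satisfy $P^{(p'),min}_tv=v$;
\item hence $v$ is a harmonic element of $H^{p'}_0(M)$, so it is constant, and $u$ is constant too.
\end{itemize}
This gives directly that $|u|^{p-2}u$ is itself harmonic in $H^{p'}_0(M)$, and it needs no case distinction on the volume or on the kernel. It does use contractivity in an essential way, as well as the adjoint-semigroup identification.

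\textbf{Your route.} You use only linear duality, namely $u\perp\mathrm{Ran}(-\Delta_{p',min})$. You combine it with the Abel mean ergodic splitting $L^{p'}=\ker(-\Delta_{p',min})\oplus\overline{\mathrm{Ran}(-\Delta_{p',min})}$ on the reflexive space $L^{p'}(M)$. Thus $u$ is determined by its pairing with a kernel of dimension at most one. This needs only $\sup_{\mu>0}\|\mu R_\mu\|<\infty$, not contractivity, and no adjoint semigroup. It also shows that $\ker(-\Delta_{p,max})$ injects into the dual of $\ker(-\Delta_{p',min})$.

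\textbf{One step to tighten.} The formula $\Pi f=\frac{1}{\mathrm{vol}(M)}\int_M f$ holds only when the kernel is exactly the constants. If $\mathrm{vol}(M)<\infty$ but the kernel is $\{0\}$, you are in your first case and $\Pi=0$. The formula also needs a justification. For example, when $\mathrm{vol}(M)<\infty$ one has $\int_M\Delta w\dvol=0$ for every $w\in H^{p'}_0(M)$: approximate $w$ by $C^\infty_c$ functions in the graph norm and use $L^{p'}\subset L^1$. This gives $\int_M\mu R^{(p')}_\mu f\dvol=\int_M f\dvol$, hence $\int_M\Pi f\dvol=\int_M f\dvol$.

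Alternatively, you can avoid computing $\Pi$ altogether. The splitting makes the map sending $u\in\ker(-\Delta_{p,max})$ to its pairing with $\ker(-\Delta_{p',min})$ injective, so $\dim\ker(-\Delta_{p,max})\le 1$. When the volume is finite, $1\in\ker(-\Delta_{p,max})$, so that kernel is exactly the constants.
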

\begin{proof}
    By Proposition \ref{Lp lambda armomiche e semigruppo}, $-\Delta_{p',min}$ generates a contraction semigroup in $L^{p'}(M)$. Hence, by Remark \ref{Rmk_adjoint norms},
    \begin{align}
        -\Delta_{p,max}=\left(-\Delta_{p',min}\right)^* \quad \textnormal{generates a contraction semigroup on } L^p(M).
    \end{align}
    Let us denote by $P^{(p'),min}_t$ and $P^{(p),max}_t$ the one-parameter (contraction) semigroups generated by $-\Delta_{p',min}$ and $-\Delta_{p,max}$ respectively. By Corollary 10.6, pag. 41 in \cite{pazy2012semigroups}, we have
    \begin{align}\label{App1_Thm_L^p_{lambda-H} implies L^p_H}
        (P^{(p'),min}_t)^*=P^{(p),max}_t.
    \end{align}
    Now fix $u\in L^p(M)$ so that $\Delta u=0$. Let us observe that by \cite[Lemma 1.3]{EN00} one has
    \begin{align}
        \pder{}{t}P_t^{(p),max}u=\Delta P_t^{(p),max}u=P_t^{(p),max}\Delta u=0
    \end{align}
    implying
    \begin{align}\label{App2_Thm_L^p_{lambda-H} implies L^p_H}
        P_t^{(p),max}u=u \quad \forall t>0.
    \end{align}
    Hence, denoting by $v=|u|^{p-2}u$, exploiting \eqref{App1_Thm_L^p_{lambda-H} implies L^p_H}, \eqref{App2_Thm_L^p_{lambda-H} implies L^p_H}, the H\"older inequality and the contractivity of $P_t^{(p'),min}$, one has
    \begin{align}
        \norm{u}{L^p}^p &=<v,u>\\
        &=<v,P_t^{(p),max}u>\\
        &=<P_t^{(p'),min}v,u>\\
        &\leq \norm{P_t^{(p'),min}v}{L^{p'}} \norm{u}{L^p}\\
        &\leq \norm{P_t^{(p'),min}}{\mathcal{L}(L^{p'})} \norm{v}{L^{p'}} \norm{u}{L^p}\\
        &\leq \norm{v}{L^{p'}} \norm{u}{L^p}\\
        &=\norm{u}{L^p}^p
    \end{align}
    for every $t>0$. In particular, all the above inequalities are, in fact, equalities, implying that either $u\equiv 0$ or all the following statements hold:
    \begin{itemize}
        \item $\norm{P_t^{(p'),min}}{\mathcal{L}(L^{p'})}=1$ and $\norm{P_t^{(p'),min}v}{L^{p'}}= \norm{v}{L^{p'}}$;
        \item  by the equality in the H\"older inequality, $|P_t^{(p'),min}v|^{p'}$ and $|u|^p$ have to be proportional: for every $t>0$ there exists a constant $c_t$ so that
    \begin{align}
        \vert P_t^{(p'),min}v\vert=c_t |u|^\frac{p}{p'}=c_t |v| \quad \textnormal{a.e.},
    \end{align}
    hence $c_t=1$ by the previous point. So
    \begin{align}
        |P_t^{(p'),min}v|=|v| \quad \textnormal{a.e.}
    \end{align}
    Moreover, since $uv=|u|^p\geq 0$ and $\vert v\vert=|P_t^{(p'),min}v|$, we get $uv\geq uP_t^{(p'),min}v$ almost everywhere, i.e.
    \begin{align}
        \left[v-P_t^{(p'),min}v\right] u \geq 0 \quad \textnormal{a.e.},
    \end{align}
    and hence, since
    \begin{align}
        \int_M \left[v-P_t^{(p'),min}v\right] u \dvol =0,
    \end{align}
    it follows
    \begin{align}
        P_t^{(p'),min}v=v \quad \textnormal{a.e.}
    \end{align}
    \end{itemize}
    As a consequence, by the definition of the generator of a semigroup, the following limit exists (implying that $v$ is in the domain of the generator, i.e. in $H^{p'}_0(M)$) and it equals
    \begin{align}
        (-\Delta_{p',min})v=\lim_{t\to 0} \frac{v-P_t^{(p'),min}v}{t}=0,
    \end{align}
    hence getting that $v\in H^{p'}_0(M)$ is harmonic. By Theorem \ref{H_0^p Liou H}, we get $v\equiv const.$ and hence $u\equiv const$.
\end{proof}

Combining Theorem \ref{Thm_L^p_lambda-H implies L^p_H} with Remark \ref{rmk_inv_MasaHW_p_ne_2}, we get the following version of \cite[Corollary 4]{HMW} for a general $p\in(1,+\infty)$.
\begin{corollary}
    Let $(M,g)$ be a (possibly incomplete) Riemannian manifold.
    \begin{enumerate}
        \item If $(L^{p}$-\textnormal{Liou}$_{\lambda-H})$ holds (equivalently, $-\Delta_{p',min}$ generates a contraction semigroup in $L^{p'}(M)$), then $(L^{p}$-\textnormal{Liou}$_{H})$ holds.
        \item If $M$ has infinite volume, positive bottom and satisfies $(L^{p}$-\textnormal{Liou}$_{H})$, then $(L^{p}$-\textnormal{Liou}$_{\lambda-H})$ holds.
    \end{enumerate}
\end{corollary}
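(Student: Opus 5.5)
For part (1) there is nothing new to prove: it is exactly Theorem \ref{Thm_L^p_lambda-H implies L^p_H}. The parenthetical equivalence with the contractivity of the semigroup generated by $-\Delta_{p',min}$ is Proposition \ref{Lp lambda armomiche e semigruppo}. So I would just cite these two results.

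For part (2), I would use the infinite-volume version of Proposition \ref{Lp lambda armomiche e semigruppo}. It asserts the equivalence between $(L^p\textnormal{-Liou}_{\lambda-H})$ and the generation of a contraction semigroup by $-\Delta_{p',min}$ for every $\lambda>-C_p(M)$, where $C_p(M)=\frac{4}{pp'}\lambda_1(M)$. Since $\lambda_1(M)>0$, we have $C_p(M)>0$, so the value $\lambda=0$ lies in the admissible range. Moreover $(L^p\textnormal{-Liou}_{0-H})$ is literally $(L^p\textnormal{-Liou}_H)$.

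The plan therefore has three steps.
\begin{enumerate}
\item From $(L^p\textnormal{-Liou}_H)$ and $\textnormal{vol}(M)=+\infty$, deduce $\ker(-\Delta_{p,max})=\{0\}$, since nonzero constants are not in $L^p(M)$.
\item Apply the proposition with $\lambda=0$ to conclude that $-\Delta_{p',min}$ generates a contraction semigroup in $L^{p'}(M)$.
\item Apply the converse direction of the same proposition, for arbitrary $\lambda>-C_p(M)$ (in particular every $\lambda>0$), to obtain $(L^p\textnormal{-Liou}_{\lambda-H})$.
\end{enumerate}
This mirrors the argument of Proposition \ref{ESA-->L2H}(2) in the case $p=2$.

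The step requiring care is checking that the proof of Proposition \ref{Lp lambda armomiche e semigruppo} really goes through at the endpoint-type value $\lambda=0$, since that is where $\lambda_1(M)>0$ is used. In the first bullet of that proof, density of $\mathrm{Ran}(-\Delta_{p',min}|_{C^\infty_c(M)})$ follows from $\ker(-\Delta_{p,max})=\{0\}$ and \cite[Corollary 2.18]{brezis2011functional}. In the second bullet, the Cauchy estimate
\begin{equation*}
C_p(M)\,\|u_k-u_h\|_{L^{p'}}\le \|\Delta(u_k-u_h)\|_{L^{p'}}
\end{equation*}
comes from Remark \ref{Rmk_p-Resolvent} and is nondegenerate precisely because $C_p(M)>0$. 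The identification with the Friedrichs realization $-\Delta^{(p')}$ then needs injectivity of $-\Delta^{(p')}$, which again follows from the bound $\|P^{(p')}_t\|\le e^{-C_p(M)t}$.

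With these checks in place, the converse direction of the proposition, via \cite[Theorem 2.20]{brezis2011functional}, yields triviality of $\ker(-\Delta_{p,max}+\lambda)$ for each $\lambda>-C_p(M)$. This concludes part (2).
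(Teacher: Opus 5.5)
Your proposal is correct and follows the paper's route. Part (1) is Theorem \ref{Thm_L^p_lambda-H implies L^p_H} together with Proposition \ref{Lp lambda armomiche e semigruppo}. Part (2) is exactly Remark \ref{rmk_inv_MasaHW_p_ne_2}: the infinite-volume case of Proposition \ref{Lp lambda armomiche e semigruppo} applied at $\lambda=0\in(-C_p(M),+\infty)$, then read back for any $\lambda>-C_p(M)$. Your re-check of the individual bullets at $\lambda=0$ is harmless but unnecessary, since the proposition already covers every $\lambda>-C_p(M)$ in infinite volume.
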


\noindent We conclude by providing examples that highlight that the ($L^{p}\textnormal{-Liou}_{H}$) property is strictly weaker than the core property and the contraction semigroup property for the Laplacian.
\begin{example}[($L^{p}\textnormal{-Liou}_{H}$) without the core property]\label{contract but not core}
        As we saw in Example \ref{L2_k not ESA}, ($L^{p}\textnormal{-Liou}_{H}$) holds on $\mathbb{R}^{3}\setminus\{0\}$ for every $p\in(1,+\infty)$. Moreover, by \cite{PVV} we know that ($L^{p}\textnormal{-Liou}_{SH}$) holds for $p\ge3$. For $1<p<3$ we have that ($L^{p}\textnormal{-Liou}_{\lambda-H}$) fails: indeed, for every $\lambda>0$ let us consider the radial function $u(x)=\frac{e^{-\sqrt{\lambda}\vert x\vert}}{\vert x\vert}$, then $u\in L^{p}(\mathbb{R}^{3}\setminus\{0\})$, $u$ is non constant and $\Delta u = \lambda u$. In particular, $-\Delta_{p,min}$ generates a contraction semigroup for $p\le \frac{3}{2}$ but not for $p>\frac{3}{2}$, and $C^{\infty}_{c}(M)$ is not a core for the Laplacian for any $p\in(1,+\infty)$ by Corollary \ref{cor_core_equivalence_harmonic_liouville}. This implies both that the $L^{p}$-Liouville property for harmonic functions is strictly weaker than the core property, and that the property of $-\Delta_{p,min}$ being the generator of a contraction semigroup is strictly weaker than the core property.
\end{example}

\begin{example}[($L^{p}\textnormal{-Liou}_{H}$) without the contraction semigroup property]\label{Liou_H_no_semigr_R2}
    As we saw in Example \ref{L2_k not ESA}, ($L^{p}\textnormal{-Liou}_{H}$) holds on $\mathbb{R}^{2}\setminus\{0\}$ for every $p\in(1,+\infty)$. Moreover, ($L^{p}\textnormal{-Liou}_{\lambda-H}$) fails for every $p\in(1,+\infty)$: indeed, for every $\lambda>0$ let us consider the modified Bessel function of the second kind $u(x)=K_{0}(\sqrt{\lambda}\vert x\vert)$. Standard asymptotic estimates yield 
    \begin{align*}
        &K_{0}(\sqrt{\lambda}\vert x\vert)\sim -\log(\sqrt{\lambda}\vert x\vert) \quad\quad\text{as $\vert x\vert\rightarrow0^{+}$,} \\
        &K_{0}(\sqrt{\lambda}\vert x\vert)\sim \sqrt{\frac{\pi}{2\sqrt{\lambda}\vert x\vert}}e^{-\sqrt{\lambda}\vert x\vert} \quad\text{as $\vert x\vert\rightarrow+\infty$}
    \end{align*} 
    (see for instance \cite{abramowitz1948handbook}, pag. 375 and 378), hence $u\in L^{p}(\mathbb{R}^{2}\setminus\{0\})$ for every $p\in(1,+\infty)$. Moreover, $u$ is a non-constant, radial solution to $\Delta u = \lambda u$. This implies that ($L^{p}\textnormal{-Liou}_{\lambda-H}$) fails, and so $-\Delta_{p,min}$ does not generate a contraction semigroup for any $p\in(1,+\infty)$. In particular, this means that this property is not necessary for the validity of ($L^{p}\textnormal{-Liou}_{H}$).
\end{example}

\subsection{Relation between ($L^{2}\textnormal{-Liou}_{SH}$), ($L^{2}\textnormal{-Liou}_{\lambda-SH}$) and ($L^2$-PP) - Part II}\ \\
Thanks to Theorem \ref{Thm_L^p_lambda-H implies L^p_H} and to the properties recalled in Section \ref{Sec_basics}, we are ready to show that the $L^2$-Liouville property for $\lambda$-subharmonic functions is, in fact, equivalent to the $L^2$-Liouville property for subharmonic functions, reversing the implication proved in Corollary \ref{cor_l_sh_l_lambda_sh}.

\begin{proposition}\label{Prop_L^p-Liou_lambda-sh implies L^p-Liou_SH}
Let $(M,g)$ be a (possibly incomplete) Riemannian manifold, and let $p\in(1,+\infty)$. If $M$ satisfies ($L^{p}\textnormal{-Liou}_{\lambda-SH}$) for some $\lambda>0$, then ($L^{p}\textnormal{-Liou}_{SH}$) holds. Moreover, if the volume of $M$ is infinite, the same conclusion holds for every $\lambda\in \rr{}$.
\end{proposition}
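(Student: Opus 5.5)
The plan is to show that any $0\le u\in L^p(M)$ with $\Delta u\ge 0$ in $\mathcal{D}'(M)$ is in fact harmonic. Then $(L^p\textnormal{-Liou}_H)$, which we will have available, forces $u$ to be constant.

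First I would reduce to a fixed $\lambda>0$ and collect the available tools. If $\mathrm{vol}(M)=+\infty$ and the hypothesis holds for some $\lambda\in\mathbb{R}$, Lemma \ref{lem_monotonicity_L_l-SH} gives $(L^p\textnormal{-Liou}_{\mu-SH})$ for every $\mu\ge\lambda$, in particular for some $\mu>0$. So we may assume $\lambda>0$. Then:
\begin{itemize}
\item by Corollary \ref{cor_equivalence_L2PP_Liouville}, $(L^p\textnormal{-PP}_\lambda)$ holds;
\item by Lemma \ref{lem_l-SH_implies_l-H}, $(L^p\textnormal{-Liou}_{\lambda-H})$ holds;
\item by Theorem \ref{Thm_L^p_lambda-H implies L^p_H}, $(L^p\textnormal{-Liou}_H)$ holds;
\item by Proposition \ref{Lp lambda armomiche e semigruppo}, $-\Delta_{p',min}$ generates a contraction semigroup on $L^{p'}(M)$.
\end{itemize}
By Remark \ref{Rmk_adjoint norms}, $-\Delta_{p,max}=(-\Delta_{p',min})^*$ then generates a contraction semigroup on $L^p(M)$. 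Hence, by Hille--Yosida (Theorem \ref{Hille_Yosida}), the resolvent $R_\lambda:=(-\Delta_{p,max}+\lambda)^{-1}$ is defined on all of $L^p(M)$ and satisfies $\|R_\lambda\|_{\mathcal{L}(L^p)}\le \lambda^{-1}$.

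Next, given $0\le u\in L^p(M)$ with $\Delta u\ge0$, I would set $w:=\lambda R_\lambda u\in H^p(M)$. Then $(-\Delta+\lambda)w=\lambda u$ distributionally and $\|w\|_{L^p}\le\|u\|_{L^p}$. Therefore
\begin{equation*}
(-\Delta+\lambda)(w-u)=\lambda u-\lambda u+\Delta u=\Delta u\ge 0\quad\text{in }\mathcal{D}'(M),
\end{equation*}
with $w-u\in L^p(M)$, so $(L^p\textnormal{-PP}_\lambda)$ yields $w\ge u\ge0$ a.e. Consequently $|w|^p\ge |u|^p$ pointwise, while $\int|w|^p\le\int|u|^p$. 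This forces $|w|^p=|u|^p$ a.e., and since $w\ge u\ge 0$ we get $w=u$ a.e. Plugging back into the displayed identity gives $\Delta u=(-\Delta+\lambda)(w-u)=0$. So $u$ is an $L^p$-harmonic function, and $(L^p\textnormal{-Liou}_H)$ concludes that $u$ is constant.

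The step I expect to require care is the existence of a positivity-compatible, contractive resolvent on $L^p$ acting on the maximal domain. It is essential that $R_\lambda$ inverts $-\Delta_{p,max}+\lambda$, so that $w\in L^p$ solves the equation distributionally, and that its norm bound is exactly $\lambda^{-1}$ with no spectral shift. Both facts follow from the contraction semigroup generated by $-\Delta_{p,max}$ obtained by duality, as above. Alternatively, one could use the solution from Proposition \ref{solution with non-neg Lp function}, whose bound $C_{p,\lambda}^{-1}\le\lambda^{-1}$ is even better; together with the uniqueness coming from $(L^p\textnormal{-Liou}_{\lambda-H})$, this gives the same $w$. Either route makes the norm comparison work.
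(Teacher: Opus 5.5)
Your proof is correct and follows essentially the same route as the paper. You produce $w\in H^p(M)$ with $(-\Delta+\lambda)w=\lambda u$ and $\|w\|_{L^p}\le\|u\|_{L^p}$, use positivity preservation to get $w\ge u\ge 0$, force $w=u$ so that $u$ is harmonic, and conclude with $(L^p\textnormal{-Liou}_H)$. The differences are cosmetic: the paper normalizes to $\lambda=1$ via Lemma \ref{lem_monotonicity_L_l-SH} and Proposition \ref{lambda tra 0 e 1} and takes $w$ from Proposition \ref{solution with non-neg Lp function}, whereas you work with $\textnormal{PP}_\lambda$ directly and obtain $w$ from the Hille--Yosida resolvent of $-\Delta_{p,max}$, with the paper's construction noted as an alternative.
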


\begin{proof}
Preliminarily, recall that the assumption implies $(L^p-\text{PP})$ (by Lemma \ref{lem_monotonicity_L_l-SH}, if $\lambda\leq 1$, or by Proposition \ref{lambda tra 0 e 1}, if $\lambda>1$), and hence that ($L^{p}\textnormal{-Liou}_{H}$) holds.
Consider a function $u$ such that
\begin{equation*}
    \begin{cases}
        \Delta u\geq0 & \text{in $\mathcal{D}'(M)$,}\\
        0\leq u\in L^p(M),
    \end{cases}
\end{equation*}
we want to prove that $u$ is constant. Let $w\in H^{p}(M)$ be the non-negative solution to the equation 
\begin{equation*}
    (-\Delta +1)w = u
\end{equation*}
satisfying the estimate
\begin{equation*}
    \Vert w \Vert_{L^{p}(M)}\le \Vert u \Vert_{L^{p}(M)}.
\end{equation*}
The existence of this function follows by Proposition \ref{solution with non-neg Lp function}. Let us introduce the function $v\in L^{p}(M)$ defined as $v:= w-u$, and note that
\begin{equation*}
    \begin{split}
       (-\Delta+1)(w-u)&=u - (-\Delta u+u) \\
       &=\Delta u\\
       &\geq0
    \end{split}
\end{equation*}
in the sense of distributions. By the $L^p$-positivity preserving property, see Proposition \ref{prop_L_l-SH_and_L^2-PP}, we obtain $v\ge0$, hence
\begin{equation*}
    0\le u\le w.
\end{equation*}
In particular, we deduce the estimate
\begin{equation*}
    \Vert u\Vert_{L^{p}(M)}\leq \Vert w\Vert_{L^{p}(M)},
\end{equation*}
and since $0\leq u\leq w$, it follows that
\begin{equation*}
    w=u
\qquad \text{a.e. on }M.
\end{equation*}
Consequently, 
\begin{equation}
    (-\Delta+1)u=u \ \Longrightarrow \ \Delta u=0,
\end{equation}
hence $u\in L^p(M)$ is harmonic on $M$. The constancy of $u$ is then a consequence of ($L^p$-Liou$_H$).
\end{proof}
\noindent Combining Corollary \ref{cor_equivalence_L2PP_Liouville} with the previous result, we get the following
\begin{corollary}\label{cor_equivalence_L2SH_Liouville_L2PP}
    Let $(M,g)$ be a (possibly incomplete) Riemannian manifold,  let $p\in (1,+\infty)$ and 
    \begin{equation*}
        C_{p}(M):=\frac{4}{pp'}\lambda_{1}(M).
    \end{equation*}
    Then, the following are equivalent:
    \begin{enumerate}
        \item $(L^{p}$\textnormal{-PP}$_\lambda)$ holds for some (equivalently, every) $\lambda>0$;
        \item $(L^{p}\textnormal{-Liou}_{\lambda-SH})$ holds for some (equivalently, every) $\lambda>0$;
        \item $(L^{p}\textnormal{-Liou}_{SH})$ holds.
    \end{enumerate}
    If the volume of $M$ is infinite, the same conclusion holds if we replace $\lambda>0$ with $\lambda>-C_p(M)$.
\end{corollary}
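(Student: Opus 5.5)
This corollary is essentially a bookkeeping statement: I would assemble it from Corollary \ref{cor_equivalence_L2PP_Liouville}, Corollary \ref{cor_l_sh_l_lambda_sh} and Proposition \ref{Prop_L^p-Liou_lambda-sh implies L^p-Liou_SH}. The plan is to prove the cycle (1)$\Leftrightarrow$(2)$\Rightarrow$(3)$\Rightarrow$(2), and to handle the ``some (equivalently, every)'' clauses along the way.

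\textbf{Step 1: (1)$\Leftrightarrow$(2) and the some/every clauses.} These are exactly the content of Corollary \ref{cor_equivalence_L2PP_Liouville}. That corollary asserts the equivalence of $(L^p\text{-PP}_\lambda)$ and $(L^p\text{-Liou}_{\lambda-SH})$, each taken for some or for every $\lambda>0$. Behind it lie two ingredients. The first is Proposition \ref{prop_L_l-SH_and_L^2-PP}, which passes between PP and Liou$_{\lambda-SH}$ at a fixed $\lambda$ via the Brezis--Kato inequality. The second is the pair Lemma \ref{lem_monotonicity_L_l-SH} and Proposition \ref{lambda tra 0 e 1}, which propagate the property upward, and downward to $(-C_p(M),+\infty)$. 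So nothing new is needed here.

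\textbf{Step 2: (2)$\Rightarrow$(3).} This is Proposition \ref{Prop_L^p-Liou_lambda-sh implies L^p-Liou_SH}. Given $0\le u\in L^p$ subharmonic, one solves $(-\Delta+1)w=u$ with $w\ge0$ and $\|w\|_p\le\|u\|_p$ by Proposition \ref{solution with non-neg Lp function}. Positivity preservation then gives $u\le w$, so $u=w$ is harmonic. Finally, $(L^p\text{-Liou}_H)$ holds: it follows from Lemma \ref{lem_l-SH_implies_l-H} together with Theorem \ref{Thm_L^p_lambda-H implies L^p_H}.

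\textbf{Step 3: (3)$\Rightarrow$(2).} This is Corollary \ref{cor_l_sh_l_lambda_sh}: a nonnegative $L^p$ function with $\Delta u\ge\lambda u\ge0$, $\lambda\ge0$, is in particular subharmonic, hence constant. This gives (2) for every $\lambda\ge0$, and in particular for every $\lambda>0$.

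\textbf{Step 4: the infinite-volume refinement.} Replacing $\lambda>0$ by $\lambda>-C_p(M)$ is the only point requiring care. For (1)$\Leftrightarrow$(2) this is already the infinite-volume clause of Corollary \ref{cor_equivalence_L2PP_Liouville}, and (2)$\Rightarrow$(3) holds for any $\lambda$ by the last sentence of Proposition \ref{Prop_L^p-Liou_lambda-sh implies L^p-Liou_SH}.

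The converse (3)$\Rightarrow$(2) for negative $\lambda\in(-C_p(M),0)$ is the main obstacle, since a $\lambda$-subharmonic function is then no longer subharmonic. I would route it as follows:
\begin{itemize}
\item (3) gives (2) at $\lambda=1$;
\item this yields $(L^p\text{-PP}_1)$ by Proposition \ref{prop_L_l-SH_and_L^2-PP};
\item Proposition \ref{lambda tra 0 e 1} extends this to $(L^p\text{-PP}_\lambda)$ for all $\lambda>-C_p(M)$;
\item in infinite volume, Proposition \ref{prop_L_l-SH_and_L^2-PP} converts it back to $(L^p\text{-Liou}_{\lambda-SH})$ for those $\lambda$.
\end{itemize}
Since the only $L^p$ constant is $0$ when the volume is infinite, the identification of constants causes no trouble in this last step.
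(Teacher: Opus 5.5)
Your proposal is correct and follows the paper's own argument. The paper obtains the corollary by combining Corollary~\ref{cor_equivalence_L2PP_Liouville} with Proposition~\ref{Prop_L^p-Liou_lambda-sh implies L^p-Liou_SH}, with the converse $(3)\Rightarrow(2)$ being Corollary~\ref{cor_l_sh_l_lambda_sh}; your extra care in the infinite-volume step is harmless, since the implication $(L^p\text{-PP}_\lambda)\Rightarrow(L^p\text{-Liou}_{\lambda-SH})$ in Proposition~\ref{prop_L_l-SH_and_L^2-PP} needs no volume assumption, and your routing through $(L^p\text{-PP}_1)$ and Proposition~\ref{lambda tra 0 e 1} is exactly what the infinite-volume clause of Corollary~\ref{cor_equivalence_L2PP_Liouville} already encodes.
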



\subsection{Monotonicity properties}\label{sec_monotonicity}\ \\
In this subsection, we study a monotonicity property for ($L^{p}\textnormal{-Liou}_{\lambda-SH}$) with respect to $p$, establishing a hierarchy depending on $p$. We begin with the following
\begin{proposition}[Monotonicity of ($L^{p}\textnormal{-Liou}_{\lambda-SH}$)]\label{Prop_Monotonicity}
    Let $(M,g)$ be a (possibly incomplete) Riemannian manifold and fix $\lambda\geq 0$. If $M$ satisfies the $(L^{p}\textnormal{-Liou}_{\lambda-SH})$ property for $p\in[1,+\infty)$, then it satisfies the $(L^{q}\textnormal{-Liou}_{\lambda-SH})$ property for every $q\in[p,+\infty)$. 
\end{proposition}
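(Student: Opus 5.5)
The natural strategy is to reduce integrability by taking a power. Let $q\ge p$ and let $0\le u\in L^q(M)$ satisfy $\Delta u\ge \lambda u$ in $\mathcal{D}'(M)$. We want to show that $u$ is constant. Set $s:=q/p\ge 1$ and $w:=u^{s}$. Then $w\ge 0$ and $w\in L^p(M)$, because $\int_M w^p=\int_M u^q<+\infty$. So it suffices to show that $w$ is a nonnegative $\lambda$-subharmonic distribution. The hypothesis ($L^p$-Liou$_{\lambda-SH}$) then gives that $w$ is constant, hence $u=w^{1/s}$ is constant. The case $s=1$ is trivial, so we assume $s>1$.

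The core step is to prove that $\Delta(u^s)\ge \lambda u^s$ in the distributional sense. Formally, for a smooth function,
\begin{align}
\Delta u^s = s u^{s-1}\Delta u + s(s-1)u^{s-2}|\nabla u|^2\ge s\lambda u^s\ge \lambda u^s,
\end{align}
using $s\ge1$ and $\lambda\ge0$; this is where the restriction $\lambda\ge 0$ enters. To make this rigorous at the distributional level, I would localize. Fix a precompact smooth domain $\Omega$. By Theorem \ref{monotone approx thm} with $V\equiv\lambda$ (a nonnegative smooth constant), there are strictly positive $u_k\in C^\infty(\overline\Omega)$ with $u_k\downarrow u$ a.e. and in $L^1(\Omega)$, and with $\Delta u_k\ge \lambda u_k$. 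For these $u_k$ the computation above is legitimate pointwise, since $u_k>0$ makes $u_k^{s}$ smooth even when $s<2$. So for every $0\le\varphi\in C^\infty_c(\Omega)$,
\begin{align}
\int_\Omega u_k^s(\Delta\varphi-\lambda\varphi)\dvol\ge 0.
\end{align}

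The obstacle is passing to the limit $k\to\infty$, which requires $u_k^s\to u^s$ in $L^1_{loc}(\Omega)$. Monotonicity gives $u^s\le u_k^s\le u_1^s$, and $u_1$ is smooth on $\overline\Omega$, hence bounded. Dominated convergence therefore yields $u_k^s\to u^s$ in $L^1(\Omega)$. Thus $\Delta u^s\ge\lambda u^s$ in $\mathcal D'(\Omega)$ for every such $\Omega$, and by a partition of unity in $\mathcal D'(M)$. Alternatively, one could use Proposition \ref{Reg_sub_dist} to get $u\in L^\infty_{loc}$, but that applies only to the case $\lambda=0$ or needs reduction; the monotone approximation route handles all $\lambda\ge0$ at once.

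Finally, one must handle the endpoint $p=1$, which the statement allows. The argument above never used $p>1$: it needs only $w\in L^p$, and no positivity-preservation machinery is involved. It therefore covers $p=1$ directly.
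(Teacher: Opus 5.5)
Your proof is correct and follows essentially the same route as the paper: you set $w=u^{q/p}$, apply the monotone smooth approximation of Theorem~\ref{monotone approx thm} with $V\equiv\lambda$, check $\Delta u_k^{s}\ge\lambda u_k^{s}$ pointwise, and pass to the limit by dominated convergence using $u^s\le u_k^s\le u_1^s\in L^\infty(\Omega)$; the only difference is that the paper picks $\Omega\supset\mathrm{supp}\,\varphi$ directly instead of invoking a partition of unity. One small correction to your aside: since $u\ge 0$ and $\lambda\ge 0$, $u$ is already a nonnegative subharmonic distribution, so Proposition~\ref{Reg_sub_dist} applies for every $\lambda\ge 0$ without any reduction, although you do not need it.
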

\begin{proof}
    Having fixed $\lambda\geq 0$, let $u\in L^{1}_{loc}(M)$ satisfy
    \begin{equation*}
        \begin{cases}
            \Delta u\ge \lambda u \quad \textnormal{\text{in $\mathcal{D}'(M)$}}, \\
            u\ge0, \\
            u\in L^{q}(M),
        \end{cases}
    \end{equation*}
    and set $v:=u^{\alpha}$, where $\alpha:=\frac{q}{p}\ge1$. Clearly, the function $v$ satisfies $v\ge0$ and $v\in L^{p}(M)$, so it remains to prove that $v$ is a $\lambda$-subharmonic distribution, i.e.
    \begin{equation*}
        \int_{M}v(\Delta\varphi-\lambda\varphi)\ \mathrm{dv} \ge 0 \quad\quad \forall \varphi\in C^{\infty}_{c}(M),\ \varphi\ge0.
    \end{equation*}
    Fix $0\le\varphi\in C_{c}^{\infty}(M)$, let $K$ be the support of $\varphi$, and let $\Omega\subset M$ be an open, precompact set with smooth boundary containing $K$. By Theorem \ref{monotone approx thm}, there exists a sequence of positive functions $\{u_{k}\}_{k\ge1}\subset C^{\infty}(\Omega)$ such that
    \begin{enumerate}
        \item $0\le u \le u_{k+1} \le u_{k}\le \dots$ for every $k\ge1$;
        \item $u_{k}(x)\longrightarrow u(x)$ as $k\rightarrow+\infty$ for almost every $x\in \Omega$;
        \item $\Delta u_{k} \ge \lambda u_{k}$ in $\Omega$;
        \item $\Vert u_{k}-u\Vert_{L^{1}(\Omega)}\to 0$ as $k\to\infty$.
    \end{enumerate}
    If we set $v_{k}:=u_{k}^{\alpha}$, it is immediate to see that $0\le v\le v_{k+1} \le v_{k}\le \dots$ for every $k\ge1$, and that $v_{k}(x)\longrightarrow v(x)$ as $k\rightarrow+\infty$ for almost every $x\in \Omega$. Moreover,
    \begin{equation*}
        \begin{split}
            \Delta v_{k}-\lambda v_{k}&= \alpha u_{k}^{\alpha-1}\Delta u_{k} +\alpha(\alpha-1)u_{k}^{\alpha-2}\vert \nabla u_{k}\vert^{2} -\lambda u_{k}^{\alpha} \\
            &\ge \alpha\lambda u_{k}^{\alpha} +\alpha(\alpha-1)u_{k}^{\alpha-2}\vert \nabla u_{k}\vert^{2} -\lambda u_{k}^{\alpha} \\
            &\ge (\alpha-1) (\lambda u_{k}^{\alpha} + \alpha u_{k}^{\alpha-2}\vert \nabla u_{k}\vert^{2})\\
            &\ge 0
        \end{split}
    \end{equation*}
    on $\Omega$, so every $v_{k}$ is a $\lambda$-subharmonic function. It remains to prove the $L^{1}$-convergence of $v_{k}$ to $v$ on $\Omega$. From property (1) and (2) we have 
    \begin{equation*}
        0\le v(x) \le \dots \le v_{k}(x)\le \dots \le v_{1}(x)\le \Vert v_{1} \Vert_{L^{\infty}(\Omega)}<+\infty
    \end{equation*}
    for every $x\in \Omega$ and $k\ge1$. So, by the dominated convergence theorem we have
    \begin{equation*}
        \lim_{k\rightarrow+\infty}\Vert v_{k}-v\Vert_{L^{1}(K)} = \lim_{k\rightarrow+\infty}\int_{K} \vert u_{k}^{\alpha}-u^{\alpha}\vert =0.
    \end{equation*}
    Using the properties of $\{ v_{k} \}_{k\ge1}$ and integration by parts, we deduce that
    \begin{equation*}
        \int_{M}v(\Delta \varphi-\lambda\varphi) =\lim_{k\rightarrow+\infty}\int_{M}v_{k}(\Delta \varphi-\lambda\varphi) =\lim_{k\rightarrow+\infty}\int_{M}\varphi(\Delta v_{k}-\lambda v_{k}) \ge 0. 
    \end{equation*}
    Since $0\le\varphi\in C^{\infty}_{c}(M)$ is arbitrary, we get that $v$ is a distributional solution to $\Delta v\ge \lambda v$. Summing up, we have that 
    \begin{equation*}
        \begin{cases}
            \Delta v\ge \lambda v \quad \inn \mathcal{D}'(M), \\
            v\ge0, \\
            v\in L^{p}(M),
        \end{cases}
    \end{equation*}
    hence $v\equiv const$ by the ($L^{p}\textnormal{-Liou}_{\lambda-SH}$), and so $u\equiv const$.
\end{proof}
\begin{remark}\label{Rmk_Monotonicity L^p-PP}
    We stress that since $(L^{p}$-PP) is equivalent to ($L^{p}\textnormal{-Liou}_{\lambda-SH}$) for any $\lambda\ge0$ and $p\in(1,+\infty)$, then  also ($L^{p}$-PP) is monotone in $p$.
\end{remark}
\begin{remark}
    The case $p=+\infty$ cannot be included: counterexamples are given by complete, stochastically incomplete Riemannian manifolds.
\end{remark}


\section{$L^{p}$-Liouville theorem for subharmonic distributions on manifolds with a small closed set removed}\label{Sec_removed sets}
In this section, we address the problem of the validity of the $L^p$-Liouville property for subharmonic distributions on incomplete manifolds obtained by removing a closed set from a complete Riemannian manifold of dimension $n$, with a suitable control on the Hausdorff dimension. 

The starting point is the paper \cite{PVV}, where the authors solved the BMS conjecture by proving the following $L^p$-Liouville theorem for subharmonic distributions on possibly incomplete Riemannian manifolds.
\begin{theorem}[Theorem 5.1 in \cite{PVV}]\label{PVV Lp con fini}
    Let $(M,g)$ be a (possibly incomplete) non-compact Riemannian manifold with a finite number of ends $E_{1},\dots,E_{N}$. If each end $E_{j}$ is $q_j$-parabolic for some $q_{j}\in(2p',+\infty]$, then $(L^{p}\textnormal{-Liou}_{SH})$ holds.
\end{theorem}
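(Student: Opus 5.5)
The plan is a Caccioppoli-type energy estimate for a suitable power of $u$, with exponents chosen so that Hölder's inequality pairs $\|u\|_{L^p}$ against $\|\nabla\varphi\|_{L^q}$ for the capacitary cutoffs $\varphi$ supplied by $q_j$-parabolicity. This exponent choice is exactly where the threshold $q>2p'$ enters.

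Let $0\le u\in L^p(M)$ with $\Delta u\ge0$ in $\mathcal D'(M)$, and set $q:=\min_j q_j>2p'$ (with $q=+\infty$ allowed). Fix the exponent
\begin{equation*}
s:=p\Big(1-\frac{2}{q}\Big),
\end{equation*}
so that $s>1$ precisely because $q>2p'$. By Proposition \ref{Reg_sub_dist}, $u^{s/2}\in L^\infty_{loc}(M)\cap W^{1,2}_{loc}(M)$.

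\textbf{Step 1 (Caccioppoli inequality).} I first prove that for every Lipschitz $\varphi\ge0$ with compact support in $M$,
\begin{equation*}
\int_M \varphi^2\,\big|\nabla u^{s/2}\big|^2\dvol \le C_s\int_M u^s\,|\nabla\varphi|^2\dvol .
\end{equation*}
For smooth positive subharmonic $w$, this follows by testing $\Delta w\ge0$ against $w^{s-1}\varphi^2$. That gives $(s-1)\int\varphi^2w^{s-2}|\nabla w|^2\le 2\int\varphi w^{s-1}|\nabla w||\nabla\varphi|$, and Young's inequality then yields the bound with $C_s$ depending only on $s$.

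For the distributional $u$, I take a smooth precompact $\Omega\supset\operatorname{supp}\varphi$ and the strictly positive smooth subharmonic approximants $u_k\downarrow u$ of Theorem \ref{monotone approx thm}. The functions $u_k^{s/2}$ are bounded in $L^\infty(\operatorname{supp}\varphi)$, since $0\le u_k\le u_1$. The right-hand sides converge by dominated convergence. Lower semicontinuity of the energy under weak $W^{1,2}$ convergence, using the bound above, then passes the inequality to the limit.

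\textbf{Step 2 (choice of cutoffs and Hölder).} Fix a compact $K\subset M$. Using the finitely many ends, I build Lipschitz cutoffs $\varphi_k$ with $0\le\varphi_k\le1$, $\varphi_k\equiv1$ on $K$, and $\|\nabla\varphi_k\|_{L^{q}(M)}\to0$. Each is obtained by gluing, through a fixed compact core, the capacitary functions witnessing the $q_j$-parabolicity of each $E_j$ (i.e.\ $\operatorname{cap}_{q_j}=0$). Where $q_j>q$, I first pass from $q_j$ to $q$ via Hölder on the support of the gradient, or simply replace $q$ by each $q_j$ end by end. For $q=+\infty$, this means $\|\nabla\varphi_k\|_\infty\to0$.

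Hölder with exponents $p/s$ and $q/2$ (note $s/p+2/q=1$) gives
\begin{equation*}
\int_K\big|\nabla u^{s/2}\big|^2\dvol\le C_s\,\|u\|_{L^p(M)}^{s}\,\|\nabla\varphi_k\|_{L^q(M)}^2\longrightarrow0 .
\end{equation*}
Hence $\nabla u^{s/2}=0$ a.e. By connectedness, $u^{s/2}$, and therefore $u$, is constant.

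\textbf{Main obstacle.} I expect the hardest part to be Step 2 on an incomplete manifold. The notion of $q$-parabolic end must produce cutoffs that are compactly supported \emph{in $M$}, so that they stay away from the Cauchy boundary as well as from infinity. Gluing them across the finitely many ends must also keep $\|\nabla\varphi_k\|_{L^q}$ small, which is where finiteness of the number of ends is used. I also need to check that these cutoffs, which are only $W^{1,q}$ with compact support, are admissible in Step 1. I would handle this by approximating them by Lipschitz ones and using the local boundedness of $u^{s}$ from Proposition \ref{Reg_sub_dist}. The justification of the Caccioppoli inequality in Step 1 is the other delicate point, but it is covered by Theorem \ref{monotone approx thm}.
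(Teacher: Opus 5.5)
Your Step 1 is fine, and so is the bookkeeping $s=p(1-2/q)>1\iff q>2p'$. If all the $q_j$ are equal, the argument is complete.

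\textbf{The gap: Step 2 when the $q_j$ differ.} Your Caccioppoli inequality involves one global power $u^{s/2}$. The H\"older step therefore needs one and the same exponent $q$ on every end, and neither of your two options supplies it.
\begin{itemize}
\item Passing from $q_j$ down to $q=\min_i q_i$ by H\"older costs a factor $|E_j\cap\operatorname{supp}\nabla\varphi_k|^{1/q-1/q_j}$. This factor is unbounded on infinite-volume ends. Indeed $q$-parabolicity is not monotone in $q$: at infinity a larger $q$ helps, near the Cauchy boundary a smaller $q$ helps.
\item Working end by end with $q_j$ while keeping $s=p(1-2/q)$ does not close either. On $E_j$, H\"older then needs $u\in L^{sq_j/(q_j-2)}(E_j)$ with $sq_j/(q_j-2)<p$. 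This does not follow from $u\in L^p$ on an infinite-volume end.
\end{itemize}
A concrete case is $M=\mathbb H^n\setminus\{o\}$ with $n\ge3$ and $p>n/(n-2)$. The punctured end is $q$-parabolic exactly for $q\le n$. The hyperbolic end is $\infty$-parabolic but not $q$-parabolic for any finite $q$. So with $q=\min_j q_j\le n$ the cutoffs you claim in Step 2 do not exist, and no single $q$ serves both ends. This mixed situation, complete ends at infinity plus singular ends near a removed set, is exactly what Theorem \ref{PVV positive res on N-K} needs, so it cannot be set aside.

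\textbf{How to close it.} You need a device that decouples the integrability exponent from the ends. The cheapest fix uses only tools you already have.
\begin{enumerate}
\item For $t>0$, run your argument on $u_t:=(u-t)_+$. This is a nonnegative subharmonic distribution by Proposition \ref{Thm_Brezis-Kato}.
\item $u_t$ vanishes outside $\{u>t\}$, which has finite measure by Chebyshev. Hence $u_t\in L^r(M)$ for every $r\in[1,p]$.
\item Take $s:=p(1-2/q_{\min})$. On $E_j$, apply H\"older with exponents $q_j/(q_j-2)$ and $q_j/2$, or $L^1$--$L^\infty$ if $q_j=\infty$. This only requires $u_t\in L^{sq_j/(q_j-2)}$, and $sq_j/(q_j-2)\le p$ because $q_j\ge q_{\min}$.
\item So every $u_t$ is constant, which forces $u$ to be constant. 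If $u$ were not a.e.\ constant, some level $t>0$ would split $M$ into two sets of positive measure, on which $u_t$ vanishes and is positive respectively.
\end{enumerate}
Alternatively, test with $h(u)\varphi^2$, where $s_j:=p(1-2/q_j)$ and $h(r)=r^{s_{\max}-1}$ on $[0,1]$, $h(r)=r^{s_{\min}-1}$ on $[1,\infty)$. The same Young argument gives $\int h'(u)|\nabla u|^2\varphi^2\le 4\int \frac{h(u)^2}{h'(u)}|\nabla\varphi|^2\le C\int\min_i u^{s_i}|\nabla\varphi|^2$. On $E_j$ you then bound $\min_i u^{s_i}\le u^{s_j}$ and use H\"older with the exponent $q_j$ of that end.
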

\noindent In particular, since the completeness of $M$ is equivalent to the $\infty$-parabolicity of the manifold, or equivalently of every end (see \cite{PigolaSetti2014global}), they got the following 
\begin{theorem}[Corollary 5.2 in \cite{PVV}]
    Let $(M,g)$ be a complete, non-compact Riemannian manifold. Then $(L^{p}\textnormal{-Liou}_{SH})$ holds for every $p\in(1,+\infty)$.
\end{theorem}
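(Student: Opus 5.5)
This follows directly from Theorem \ref{PVV Lp con fini}, once we justify that a complete manifold satisfies its hypotheses.

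First I fix $p\in(1,+\infty)$. If $M$ is compact, $(L^p\textnormal{-Liou}_{SH})$ is immediate. A nonnegative subharmonic distribution on a closed manifold is constant: test the distributional inequality against $\varphi\equiv1$, which is admissible since $M$ is compact. This gives $\int_M u\,\Delta\varphi=0$ for all $\varphi$ up to constants, so $\Delta u=0$ as a distribution, and by Weyl's lemma $u$ is a harmonic function on a closed manifold, hence constant. The statement, however, concerns non-compact $M$, so I turn to that case.

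The key input is the characterization recalled from \cite{PigolaSetti2014global}: geodesic completeness is equivalent to $\infty$-parabolicity of $M$. I would use the end-wise version, namely that every end $E$ of a complete manifold is $\infty$-parabolic, because the exhaustion functions witnessing parabolicity restrict to each end. Since $2p'<+\infty$, the choice $q_j=+\infty$ lies in $(2p',+\infty]$ for every $p\in(1,+\infty)$.

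The main obstacle is that Theorem \ref{PVV Lp con fini} assumes finitely many ends, and a complete manifold may have infinitely many. I would handle this in one of two ways. The first is to exploit that completeness (via $\infty$-parabolicity) gives a single exhaustion function $r$ (e.g. the distance from a point), with $|\nabla r|\le 1$ and compact sublevel sets. With it one can run the cutoff argument of \cite{PVV} directly on all of $M$, without decomposing into ends. The argument I have in mind uses cutoffs $\psi_R=\psi(r/R)$ tested against $u^{p-1}\psi_R^2$; this is justified by Proposition \ref{Reg_sub_dist}, which gives $u^{p/2}\in W^{1,2}_{loc}\cap L^\infty_{loc}$. A Caccioppoli inequality then yields
\[
\int_{B_R}|\nabla u^{p/2}|^2\le \frac{C}{R^2}\int_{B_{2R}\setminus B_R}u^p\to0 ,
\]
so $u$ is constant.

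The second option is simply to regard $M$ itself as one "end" relative to the empty compact set, so that $N=1$ and the theorem applies verbatim. The only thing to check there is that the proof in \cite{PVV} only uses the parabolicity exhaustion near infinity, which is exactly what I expect.
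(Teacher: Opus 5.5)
Your first option gives a correct proof, but by a different route from the paper. The paper simply applies Theorem~\ref{PVV Lp con fini} with $q_j=+\infty$ for every end, using the equivalence, recalled from \cite{PigolaSetti2014global}, between completeness and $\infty$-parabolicity of every end.

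The obstacle you single out does not arise there. Take any relatively compact domain $\Omega$ with smooth boundary. Every component of $M\setminus\Omega$ meets $\partial\Omega$, since otherwise it would be open and closed in the connected $M$. So the number of such components is at most the (finite) number of components of $\partial\Omega$, whatever the number of topological ends of $M$. Each component is complete, hence $\infty$-parabolic. Your second option is therefore unnecessary. As written (an ``end relative to $\emptyset$'' plus the expectation that \cite{PVV} only uses the exhaustion near infinity), it is also not a verification.

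Your first option instead reproves the case $q=\infty$ of Theorem~\ref{PVV Lp con fini} by hand. It is Yau's Caccioppoli argument with cutoffs $\psi(r/R)$, extended to distributional subsolutions through Proposition~\ref{Reg_sub_dist}. It is self-contained and bypasses ends entirely. It also isolates exactly what completeness is used for: compactly supported cutoffs with $\|\nabla\psi_R\|_\infty\to0$. The paper's route is instead a one-line specialization of a criterion that also covers incomplete manifolds whose ends are only $q$-parabolic with $q<\infty$.

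One technical correction to your version. The test function $u^{p-1}\psi_R^2$ is not justified by $u^{p/2}\in W^{1,2}_{loc}$ alone. First use Proposition~\ref{Reg_sub_dist} with $s=2$ to get $u\in W^{1,2}_{loc}\cap L^\infty_{loc}$. Then $\Delta u\ge0$ holds in weak form against nonnegative compactly supported $W^{1,2}$ functions. For $1<p<2$, the map $t\mapsto t^{p-1}$ is not Lipschitz at $0$, so test with $(u+\varepsilon)^{p-1}\psi_R^2$ instead, which gives
\[
\int_M (u+\varepsilon)^{p-2}|\nabla u|^2\psi_R^2 \le \frac{4}{(p-1)^2}\int_M (u+\varepsilon)^p|\nabla\psi_R|^2 .
\]
Then let $\varepsilon\to0$, using Fatou on the left and dominated convergence on the right.

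Finally, the compact case is irrelevant, since $M$ is assumed non-compact. The argument you wrote there is also garbled. The correct reasoning is that $\Delta u$ is a nonnegative measure with total mass $\langle\Delta u,1\rangle=\int_M u\,\Delta 1=0$, hence $\Delta u=0$.
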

\begin{remark}
    The cases $p=1$ and $p=\infty$ cannot be included: indeed, $(L^{\infty}$-PP) fails on geodesically complete but stochastically incomplete Riemannian manifolds, while counterexamples to $(L^{1}$-PP) can be constructed on some complete manifolds with negative Ricci curvature that explodes to $-\infty$ more than quadratically. For both results we refer to \cite{bisterzo2023positivity}. Moreover, for a result ensuring the validity of $(L^{1}$-PP) on complete manifolds we refer to \cite[Theorem II]{MV24}, where a lower bound on the Ricci curvature is required.
\end{remark}

\noindent From classical relations between capacity of closed sets and Hausdorff dimension (see for instance \cite{troyanov1999parabolicity}), Theorem \ref{PVV Lp con fini} also implies the following 
\begin{theorem}[Corollary 5.7 and Proposition 5.8 in \cite{PVV}]\label{PVV positive res on N-K}
    Let $(N,h)$ be a complete, non-compact Riemannian manifold of dimension $n$, and consider the open manifold $M=N\setminus K$. Let $K\subseteq N$ be a compact subset and fix $p\in(1,+\infty)$. Assume one of the following controls on the size of $K$:
    \begin{itemize}
        \item $\mathrm{dim}_{\mathcal{H}}(K)<n-2p'$, or
        \item $\vert B_{r}(K)\vert \le Cr^{2p'}$ for some $C>0$, where $B_{r}(K):=\{x\in M: d_{g}(x,K)\le r\}$ is the tubular neighborhood of radius $r$ of $K$.
    \end{itemize} 
    Then $(L^{p}\textnormal{-Liou}_{SH})$ holds on $M$.
\end{theorem}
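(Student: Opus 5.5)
The plan is to deduce the statement from Theorem \ref{PVV Lp con fini}, or more precisely from the cut-off argument behind it. In that argument, completeness at infinity and $q$-parabolicity near the singular set enter only through the existence of suitable cut-off functions. The incomplete part of $M=N\setminus K$ is concentrated near the compact set $K$. Near infinity, $M$ coincides with $N$, which is complete and hence $\infty$-parabolic there. So the only task is to show that the "end at $K$" is $q$-parabolic for some $q>2p'$, i.e. that $K$ has zero $q$-capacity in $N$.

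\textbf{Step 1: capacity of $K$.} Fix a relatively compact open set $\Omega\Supset K$. In the first case, since $\dim_{\mathcal H}(K)<n-2p'$, choose $q\in(2p',\,n-\dim_{\mathcal H}(K))$. The classical estimate relating capacity and Hausdorff measure (see \cite{troyanov1999parabolicity}) gives $\mathcal H^{n-q}(K)=0$, hence $\mathrm{cap}_q(K,\Omega)=0$. Concretely, I would exhibit functions $\chi_j\in C^\infty_c(\Omega)$ with $0\le\chi_j\le1$, $\chi_j\equiv1$ near $K$, $\chi_j\to0$ a.e., and $\int|\nabla\chi_j|^q\to0$. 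These are built by covering $K$ with finitely many balls $B_{r_i}$ satisfying $\sum r_i^{n-q}<\varepsilon$, taking a cut-off equal to $1$ on $B_{r_i}$ and $0$ outside $B_{2r_i}$ for each ball, and taking their maximum. In the second case the tubular bound $|B_r(K)|\le Cr^{2p'}$ is borderline ($q=2p'$). There I would instead use logarithmic cut-offs of $d(x,K)$ on dyadic shells $\{e^{-2j}\le d\le e^{-j}\}$; these give $\int|\nabla\chi_j|^{2p'}\lesssim j^{1-2p'}\to0$, which is exactly the $2p'$-parabolicity needed.

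\textbf{Step 2: the Liouville argument.} Let $0\le u\in L^p(M)$ with $\Delta u\ge0$ in $\mathcal D'(M)$. By Proposition \ref{Reg_sub_dist}, $u^{p/2}\in W^{1,2}_{loc}(M)\cap L^\infty_{loc}(M)$. Using Theorem \ref{monotone approx thm} to justify the computation, test the inequality against $u^{p-1}\varphi^2$ with $\varphi=\psi_R(1-\chi_j)$, where $\psi_R$ is a distance cut-off of $N$ with $|\nabla\psi_R|\le 1/R$. This yields the Caccioppoli-type estimate
\begin{equation*}
\int_M\varphi^2|\nabla u^{p/2}|^2\le C_p\int_M u^p|\nabla\varphi|^2 .
\end{equation*}
The contribution of $\psi_R$ is at most $R^{-2}\|u\|_{L^p}^p\to0$. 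For the contribution near $K$, Hölder's inequality with exponents $q/2$ and $q/(q-2)$ gives
\begin{equation*}
\int u^p|\nabla\chi_j|^2\le\Big(\int_{\mathrm{supp}\nabla\chi_j}u^{p\frac{q}{q-2}}\Big)^{\frac{q-2}{q}}\|\nabla\chi_j\|_{L^q}^2 ,
\end{equation*}
which is not directly controlled by $\|u\|_{L^p}$. To fix this I would follow \cite{PVV} and test with a lower power: use $u^{s-1}\varphi^2$ with $s$ chosen so that $s\,q/(q-2)\le p$. This is possible precisely because $q>2p'$, since that gives $q/(q-2)<p$. One then obtains $\nabla u^{s/2}=0$ on compact subsets after letting $j\to\infty$ and then $R\to\infty$, and hence $u$ is constant on the connected manifold $M$.

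\textbf{Main obstacle.} The main difficulty is Step 2 near $K$. The energy estimate has to be run with the reduced exponent $s$, while keeping the right-hand side controlled only by $\|u\|_{L^p}$ on the shrinking support of $\nabla\chi_j$. In the borderline tubular case, where only $q=2p'$ is available, the Hölder step must be replaced by the bound $\int u^p|\nabla\chi_j|^2\le\|u\|_{L^p(\mathrm{supp}\nabla\chi_j)}^p\|\nabla\chi_j\|_\infty^2$. One then balances the logarithmic shells against the volume of the tubular neighbourhood, which is where the hypothesis $|B_r(K)|\le Cr^{2p'}$ is used sharply.
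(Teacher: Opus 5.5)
Your Step 1 for the first bullet is what the paper does. It notes, via the capacity--Hausdorff relation of \cite{troyanov1999parabolicity}, that the end at $K$ is $q$-parabolic for some $q>2p'$, while the part at infinity is $\infty$-parabolic by completeness. It then quotes Theorem \ref{PVV Lp con fini}, and you could stop there.

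Your self-contained Step 2, however, does not close as written. Once you test with $u^{s-1}\varphi^2$ and $s<p$, the $\psi_R$-term is $\int u^s|\nabla\psi_R|^2\le R^{-2}\int_{B_{2R}\setminus B_R}u^s$, not $R^{-2}\|u\|_{L^p}^p$. H\"older gives at best $R^{-2}|B_{2R}|^{2/q}\|u\|_{L^p}^s$, which does not tend to $0$ unless $|B_{2R}|=o(R^q)$. This already fails on $N=\mathbb{R}^n$, since here $q<n$. Completeness pairs with the exponent $p$ and the end at $K$ with the exponent $s$; a single Caccioppoli inequality cannot use both. One repair is to run the argument on $v_c:=(u-c)_+$ for $c>0$:
\begin{itemize}
\item $v_c$ is a non-negative subharmonic distribution by Proposition \ref{Thm_Brezis-Kato};
\item $v_c\in L^s(M)\cap L^p(M)$, since $|\{u>c\}|\le c^{-p}\|u\|_{L^p}^p$;
\item with $s=p(q-2)/q>1$ both error terms vanish, so each $v_c$ is constant, and hence so is $u$.
\end{itemize}

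The second bullet is a genuine gap. Your logarithmic cut-offs only give $\mathrm{cap}_{2p'}(K)=0$, which does not feed into Theorem \ref{PVV Lp con fini}, since that result needs $q_j>2p'$ strictly. The replacement you sketch also fails. For your $\chi_j$ one has $\|\nabla\chi_j\|_\infty\simeq e^{2j}/j$, so making $\|u\|^p_{L^p(\mathrm{supp}\nabla\chi_j)}\|\nabla\chi_j\|_\infty^2$ small would need a decay rate of the $L^p$-mass of $u$ near $K$ that nothing provides; the volume hypothesis does not even enter this bound. H\"older with $q=2p'$ instead forces $s=1$, where the Caccioppoli inequality degenerates, since its constant is $s^2/(s-1)^2$.

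The missing idea is to stop doing energy estimates near $K$ and remove the singularity at the distributional level, as in Proposition \ref{Removability of singularity}. Since $|K|\le|B_r(K)|\le Cr^{2p'}$ for every $r$, $|K|=0$ and $u\in L^p(N)$. Take a smoothed distance cut-off $\chi_r\in C^\infty_c(N)$ with $0\le\chi_r\le1$, $\chi_r\equiv1$ on $B_r(K)$, $\mathrm{supp}\,\chi_r\subset B_{2r}(K)$, $|\nabla\chi_r|\lesssim r^{-1}$ and $|\nabla^2\chi_r|\lesssim r^{-2}$. For $0\le\psi\in C^\infty_c(N)$:
\begin{itemize}
\item $\int u\,\Delta(\psi(1-\chi_r))\ge0$, because $\psi(1-\chi_r)\in C^\infty_c(M)$;
\item $|\int u\,\Delta(\psi\chi_r)|\lesssim r^{-2}\|u\|_{L^p(B_{2r}(K))}|B_{2r}(K)|^{1/p'}\lesssim\|u\|_{L^p(B_{2r}(K))}\to0$.
\end{itemize}
Thus $\Delta u\ge0$ in $\mathcal{D}'(N)$, and the complete case (Corollary 5.2 in \cite{PVV}) gives that $u$ is constant. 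The tubular bound is used exactly to keep $\|\chi_r\|_{W^{2,p'}}$ bounded. Boundedness suffices because $\|u\|_{L^p(B_{2r}(K))}\to0$.
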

\noindent Note that the dimensional threshold is sharp, because of the following
\begin{theorem}[Proposition 6.2 in \cite{PVV}]\label{thm:negPVV}
    Let $(N,h)$ be a complete, non-compact Riemannian manifold of dimension $n$, and consider the open manifold $M=N\setminus K$. Let $K\subseteq N$ be a closed subset and fix $p\in(1,+\infty)$. If $\mathrm{dim}_{\mathcal{H}}(K)>n-2p'$, then $(L^{p}\textnormal{-Liou}_{SH})$ and $(L^{p}$-$\textnormal{PP})$ do not hold on $M$.
\end{theorem}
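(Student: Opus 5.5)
The plan is to build an explicit counterexample to ($L^{p}\textnormal{-Liou}_{SH}$) on $M=N\setminus K$. Failure of ($L^{p}$-PP) then follows at once from Corollary \ref{cor_equivalence_L2SH_Liouville_L2PP}, which makes ($L^{p}$-PP) and ($L^{p}\textnormal{-Liou}_{SH}$) equivalent. The counterexample is a truncated Green potential of a Frostman measure carried by $K$.

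\textbf{Step 1 (Frostman measure).} Choose $s$ with $n-2p'<s<\mathrm{dim}_{\mathcal H}(K)$. Then $\mathcal H^{s}(K)>0$, and by Frostman's lemma (applied in a chart, since the statement is local) there are a compact set $K'\subseteq K$ inside a small coordinate ball and a nonzero finite Borel measure $\mu\geq 0$ with $\mathrm{supp}\,\mu\subseteq K'$ and
\begin{equation*}
\mu(B_r(x))\le C r^{s}\qquad \forall x\in N,\ r>0 .
\end{equation*}
Next fix a precompact open set $\Omega\Subset N$ with smooth boundary containing $K'$. Let $G_\Omega$ be its Dirichlet Green function, and set
\begin{equation*}
w(x):=\int_{K'}G_\Omega(x,y)\,\mathrm{d}\mu(y).
\end{equation*}
This $w$ is positive in $\Omega$, harmonic in $\Omega\setminus K'$, continuous up to $\partial\Omega$ and equal to $0$ there. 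It is also superharmonic in $\Omega$ with $-\Delta w=\mu$.

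\textbf{Step 2 (integrability, the main obstacle).} I must show $w\in L^{p}(\Omega)$. Since $G_\Omega(x,y)\le C\,d(x,y)^{2-n}$, it suffices to bound the Riesz potential $I_2\mu(x)=\int d(x,y)^{2-n}\mathrm{d}\mu(y)$ in $L^p$ of a Euclidean ball. My first attempt is the layer-cake decomposition
\begin{equation*}
I_2\mu(x)\lesssim \int_0^{R}\frac{\mu(B_r(x))}{r^{n-2}}\frac{\mathrm{d}r}{r}.
\end{equation*}
Split the $r$-integral at $r\approx d(x,K')$. The Frostman bound then gives $I_2\mu(x)\lesssim 1+d(x,K')^{\,s+2-n}$ when $s<n-2$ (and boundedness when $s>n-2$). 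Covering estimates for $|\{d(\cdot,K')<\rho\}|$ are not directly controlled by $s$, so this route may be too crude. The cleaner fallback is duality. Finiteness of $\|I_2\mu\|_{L^p}$ is equivalent to $\mu$ having finite $(2,p')$-energy, i.e. $\mu\in W^{-2,p}$ in Adams--Hedberg terminology. By Wolff's inequality this energy is comparable to $\int W^{\mu}_{2,p'}\,\mathrm{d}\mu$, with
\begin{equation*}
W^{\mu}_{2,p'}(x)=\int_0^{1}\Big(\frac{\mu(B_r(x))}{r^{n-2p'}}\Big)^{p-1}\frac{\mathrm{d}r}{r}.
\end{equation*}
The Frostman bound makes the integrand at most $C r^{(s-n+2p')(p-1)-1}$, which is integrable near $0$ precisely because $s>n-2p'$. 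This step is where the threshold $n-2p'$ enters.

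\textbf{Step 3 (the subharmonic function).} Pick $0<t<\sup_\Omega w$ and set $v:=(w-t)_+$ on $M$, extended by $0$ outside $\Omega$. Since $w$ is continuous, vanishes on $\partial\Omega$ and is positive inside, the set $\{w\ge t\}$ stays at positive distance from $\partial\Omega$. Hence $v$ vanishes on a neighbourhood of $\partial\Omega$ and outside $\Omega$.

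On $\Omega\setminus K'\supseteq\Omega\cap M$ the function $w-t$ is harmonic, so by the Brezis--Kato inequality (Proposition \ref{Thm_Brezis-Kato}) $v$ is a nonnegative subharmonic distribution there. Near $\partial\Omega$ and outside $\Omega$ we have $v\equiv 0$, so gluing gives $\Delta v\ge 0$ in $\mathcal D'(M)$. By Step 2, $v\le w$ lies in $L^p(M)$.

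Finally, $v$ vanishes on a nonempty open set and is positive where $w>t$, a set of positive measure. So $v$ is a nonconstant nonnegative $L^p$ subharmonic distribution on $M$, and ($L^{p}\textnormal{-Liou}_{SH}$) fails. Corollary \ref{cor_equivalence_L2SH_Liouville_L2PP} then yields the failure of ($L^p$-PP) as well.
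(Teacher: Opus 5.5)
The paper gives no proof of this statement; it is quoted from \cite{PVV}, so there is no internal argument to compare with. Your argument is a valid self-contained proof along the standard potential-theoretic route. Frostman's lemma plus Wolff's inequality is the usual way (cf.\ \cite{AH12}) to show that $\mathrm{dim}_{\mathcal H}(K)>n-2p'$ yields a nonzero positive measure on $K$ of finite $(2,p')$-energy. The truncated Green potential $(w-t)_+$ is then a nonnegative subharmonic distribution on $M$ lying in $L^p(M)$.

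Deducing the failure of ($L^p$-PP) from Corollary \ref{cor_equivalence_L2SH_Liouville_L2PP} is legitimate, with no circularity. You can also get it directly: if $w_1$ is the Dirichlet Green potential of $\mu$ for $-\Delta+1$, then off $K'$ one has $\Delta(w_1-t)_+\ge 1_{\{w_1>t\}}w_1\ge (w_1-t)_+$. Hence $u=-(w_1-t)_+$ violates ($L^p$-PP), and $0\le w_1\le w$ gives $u\in L^p$.

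Wolff's inequality can be replaced by a H\"older estimate,
\begin{equation*}
\int d(x,y)^{2-n}\,\mathrm{d}\mu(y)\le\Big(\int d(x,y)^{-\beta p'}\,\mathrm{d}\mu(y)\Big)^{\frac{1}{p'}}\Big(\int d(x,y)^{(2-n+\beta)p}\,\mathrm{d}\mu(y)\Big)^{\frac{1}{p}}.
\end{equation*}
The first factor is bounded uniformly in $x$ by the Frostman bound if $\beta=0$ or $0<\beta p'<s$. The $p$-th power of the second factor is integrable in $x$ over bounded sets if $(n-2-\beta)p<n$. A $\beta\ge 0$ meeting both conditions exists precisely when $s>n-2p'$.

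Three details need repair.

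First, suppose $\mathrm{dim}_{\mathcal H}(K)=0$, e.g.\ $K$ a point and $n<2p'$; this is exactly how the statement is used in Remark \ref{not_n=2}. Then no $s\ge 0$ satisfies $n-2p'<s<\mathrm{dim}_{\mathcal H}(K)$. For $s<0$, the bound $\mu(B_r(x))\le Cr^s$ for all $r>0$ forces $\mu=0$. Take instead $s=0$ and $\mu=\delta_{x_0}$ with $x_0\in K$; this works because $2p'>n$.

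Second, the bound $G_\Omega(x,y)\le C\,d(x,y)^{2-n}$ holds only for $n\ge 3$. For $n=2$ use $G_\Omega(x,y)\le C(1+|\log d(x,y)|)$. Then $w\in L^p(\Omega)$ for every finite $\mu$ and every $p$ by Jensen's inequality.

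Third, the choice $0<t<\sup_\Omega w$ does not make $v$ nonconstant on $M$. Only a lower bound on the dimension is assumed, so $K$ may have interior. For example, take $N=\mathbb{R}^n$, $\mu$ the normalized Lebesgue measure on a closed ball $K'\subseteq K$, and $\Omega$ a concentric ball. Then $w$ is radially decreasing, so $\{w>t\}\subseteq K'$ for $t$ close to $\sup_\Omega w$, and $v\equiv 0$ on $M$. Instead, choose $\Omega$ connected with $\Omega\cap M\neq\emptyset$ and $M\setminus\overline{\Omega}\neq\emptyset$, and choose $t<w(x_0)$ for some $x_0\in\Omega\setminus K$. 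Then $v>0$ near $x_0$ and $v=0$ on the nonempty open set $M\setminus\overline{\Omega}$.
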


\begin{remark}\label{not_n=2}
    Note that when $n=2$, if we remove a point $p$ from a complete surface, all the Liouville properties except for ($L^{p}$-Liou$_{H}$) fail in general. Indeed, in this case $\text{dim}_\mathcal{H}(K)\geq 0>n-2p'$ for every $p\in(1,+\infty)$, and Theorem \ref{thm:negPVV} implies that ($L^{p}$-Liou$_{SH}$) does not hold. On the other hand, by \cite{masammune1999essential} the Laplacian is not essentially self adjoint if we remove a closed submanifold of codimension less than or equal to 3 (see also \cite{hinz2023removable} and Example \ref{Liou_H_no_semigr_R2} for the case $p\ne2$).
\end{remark}

\noindent The latter result naturally suggests the question whether only complete manifolds satisfy ($L^{p}\textnormal{-Liou}_{SH}$) for all $p$; see \cite[Section 1.4]{PVV}. The following example shows that it is not the case. 
\begin{example}
    Consider the warped product $(M,g)=((0,\infty)\times \mathbb S^{n-1}, \text{d}t^2 + j^2(t)\mathring{g})$, where $\mathring{g}$ denotes the round metric on $\mathbb{S}^{n-1}$. Choose a positive warping function $j\in C^{\infty}((0,+\infty))$ which decays more than polynomially to $0$ both as $t\to 0^+$ (such as for instance $j(t)=\exp(-1/t^2)$ when $t<1$) and as $t\to\infty$ (so that $M$ has finite volume). 
Let $B^\ast_r(0)$ be the punctured ball centered at the origin, i.e. $B^\ast_r(0)=(0,r)\times \mathbb S^{n-1}\subset M$. Then, for every $\alpha\in\mathbb{R}$ we have 
\begin{equation*}
    \lim_{r\to 0}r^\alpha \operatorname{vol}_g(B^\ast_r(0)) = 0.
\end{equation*} 
Let $\varphi_\varepsilon\in C^\infty (M)$  be a cutoff function such that 
\[\varphi_\varepsilon(t,\theta) = \begin{cases}1 & \text{if $t>\varepsilon$,}\\
0 & \text{if $t<\varepsilon/2$},
\end{cases}
\]
with $\|\nabla\varphi_\varepsilon\|_\infty\le 4/\varepsilon$. Then 
\[
\int_{M} |\nabla \varphi_\varepsilon|^p \dvol_g \le 4^p \varepsilon^{-p}  \operatorname{vol}_g(B^\ast_\varepsilon(0)) \to 0,\ \text{as }\varepsilon\to 0.
\]
In particular, the end of $M$ corresponding to $t=0$ is $p$-parabolic for all $p<\infty$ (but not complete) while the end corresponding to $t=\infty$  is $p$-parabolic for all $p\in(1, \infty]$ as it is complete and has finite volume. An application of Theorem 5.1 in \cite{PVV} implies the ($L^p$-PP) for every $p\in(1,+\infty)$. 
\end{example}

\begin{remark}
    A complementary, potential-theoretic approach to Liouville-type properties was developed by Grigor'yan and collaborators, who related Liouville phenomena to recurrence and non-explosion of Brownian motion, finite Dirichlet integrals, Green formulae, and parabolicity, and more recently to $L^p$-parabolicity and $L^q$-Liouville properties for positive superharmonic functions; see \cite{Grigor1982,Grigor1988,Grigor1999,GM,GrigorPessoaSetti}. These results provide characterizations of Liouville-type properties from a potential-theoretic and probabilistic viewpoint. We have to emphasize, however, that the $L^q$-Liouville property studied in the latter work, \cite{GrigorPessoaSetti}, is formulated in terms of non-negative superharmonic functions, whereas in this paper we focus on the corresponding Liouville property for non-negative subharmonic functions. This distinction is not merely terminological: for finite $q$, the two notions are genuinely different in general and are naturally connected with different potential-theoretic and operator-theoretic properties.
\end{remark}

\subsection{Removing closed sets with locally finite Hausdorff measure}\ \\
The aim of this section is twofold: first, we want to remove the compactness assumption in Theorem \ref{PVV positive res on N-K}, considering possibly unbounded closed sets. Secondly, we want to give a more precise condition in the threshold case $n=2p'$, related to a generalized notion of Hausdorff measure. The argument relies on the construction of suitable cutoff functions, whose existence is guaranteed by the control on the Hausdorff dimension. Moreover, in view of Remark \ref{not_n=2} we will restrict to manifolds of dimension greater than 2.

\subsubsection{Subcritical case} For the case $\mathrm{dim}_{\mathcal{H}}(K)<n-2p'$, the construction of the cutoff function is a consequence of the following
\begin{lemma}[Lemma 3.2 in \cite{harvey1970removable}]\label{Lemma cutoff Hausdorf}
    Let $p\in \left(\frac{n}{n-2},+\infty\right)$ and $K\subseteq \mathbb{R}^{n}$ be a compact set with finite $k$-dimensional Hausdorff measure, where $k:=n-2p'$. Then, for every $\varepsilon>0$ there exists $\varphi_{\varepsilon}\in C^{\infty}_{c}(\mathbb{R}^{n})$ such that:
    \begin{enumerate}
        \item $\varphi_{\varepsilon}\equiv1$ in a neighborhood of $K$;
        \item $0\le\varphi_{\varepsilon}\le1$; 
        \item $\mathrm{supp}\ \varphi_{\varepsilon}\subseteq B_{\varepsilon}(K)$;
        \item $\Vert D^{\alpha}\varphi_{\varepsilon}\Vert_{L^{p'}}\le C_{\alpha}\varepsilon^{2-\vert\alpha\vert}(\mathcal{H}^{k}(K)+\varepsilon)^{\frac{1}{p'}}$ for every multi-index $\alpha \in \mathbb{N}^{n}$ with $\vert\alpha\vert\le2$, where $C_{\alpha}$ is a constant independent of $\varepsilon$.
    \end{enumerate}
\end{lemma}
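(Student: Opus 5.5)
The plan is the classical Harvey--Polking covering construction: cover $K$ by small balls, build a smooth bump adapted to each ball, and glue the bumps with a maximum-type operation. Set $k=n-2p'\ge 0$ (we need $p>\frac{n}{n-2}$, i.e. $2p'<n$). Fix $\varepsilon\in(0,1)$. Since $\mathcal{H}^k(K)<\infty$ and $K$ is compact, the definition of Hausdorff measure gives a finite cover of $K$ by open balls $B_{r_j}(x_j)$, $j=1,\dots,N$, with $r_j<\varepsilon/4$ and
\begin{equation*}
\sum_{j} r_j^{k}\le C_n\left(\mathcal{H}^k(K)+\varepsilon\right).
\end{equation*}
Here we use the spherical measure, which is comparable to $\mathcal{H}^k$, and compactness to pass to a finite subcover. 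Fix a radial profile $\psi\in C^\infty_c(B_2(0))$ with $0\le\psi\le1$ and $\psi\equiv1$ on $B_1(0)$, and put $\psi_j(x):=\psi((x-x_j)/r_j)$. Then $\psi_j\equiv1$ on $B_{r_j}(x_j)$, $\operatorname{supp}\psi_j\subseteq B_{2r_j}(x_j)\subseteq B_\varepsilon(K)$ (up to shrinking constants), and scaling gives
\begin{equation*}
\|D^\alpha\psi_j\|_{L^{p'}}^{p'}\le C_\alpha^{p'}\, r_j^{\,n-|\alpha|p'} .
\end{equation*}

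\textbf{Gluing.} A plain sum $\sum_j\psi_j$ does not stay in $[0,1]$, and a plain maximum is not smooth. The standard remedy is to take
\begin{equation*}
\varphi:=1-\prod_{j}(1-\psi_j),
\end{equation*}
which is smooth, takes values in $[0,1]$, equals $1$ on $\bigcup_j B_{r_j}(x_j)$ (a neighbourhood of $K$), and is supported in $\bigcup_j B_{2r_j}(x_j)\subseteq B_\varepsilon(K)$. This gives properties (1)--(3).

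\textbf{Derivative estimates.} By the Leibniz rule, each $D^\alpha\varphi$ with $|\alpha|\le2$ is a sum of terms of two kinds. The first kind is $D^\alpha\psi_j$ times a product of factors bounded by $1$. The second kind (only when $|\alpha|=2$) is $D\psi_i\, D\psi_j$ times a bounded product, with $i\neq j$. The second kind is the main obstacle, because overlaps of many balls could make these cross terms large. I would handle it in one of two ways.
\begin{itemize}
\item First try: arrange that the balls $B_{2r_j}(x_j)$ have bounded overlap, with multiplicity depending only on $n$, via the Besicovitch or Vitali covering lemmas. Then only boundedly many terms are nonzero at each point, and pointwise
\begin{equation*}
|D^2\varphi|\le C\sum_j \big(|D^2\psi_j|+|D\psi_j|^2\big)\le C\sum_j r_j^{-2}\,1_{B_{2r_j}(x_j)} .
\end{equation*}
\item Alternative, following Harvey--Polking: order the balls and use the telescoping form $\varphi=\sum_j \psi_j\prod_{i<j}(1-\psi_i)$.
\end{itemize}
In either case, bounded overlap converts the $p'$-th power of the sum into a sum of $p'$-th powers, up to a constant.

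\textbf{Summing.} With bounded overlap we get
\begin{equation*}
\|D^\alpha\varphi\|_{L^{p'}}^{p'}\le C\sum_j r_j^{\,n-|\alpha|p'}
= C\sum_j r_j^{k}\,r_j^{(2-|\alpha|)p'}
\le C\,\varepsilon^{(2-|\alpha|)p'}\left(\mathcal{H}^k(K)+\varepsilon\right).
\end{equation*}
This holds because $n-|\alpha|p'=k+(2-|\alpha|)p'$ and $r_j<\varepsilon$. Taking $p'$-th roots gives (4). For $|\alpha|\le1$ the bound is even easier, since $|D\varphi|\le\sum_j|D\psi_j|$ needs no cross terms. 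Finally, set $\varphi_\varepsilon:=\varphi$.
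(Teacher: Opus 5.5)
The paper does not reprove this lemma; it simply quotes Harvey--Polking. Your construction $\varphi=1-\prod_j(1-\psi_j)$, properties (1)--(3) and the scaling of each $\psi_j$ are fine. However, the step you yourself call the main obstacle is not resolved.

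\textbf{The gap: bounded overlap.} Neither Besicovitch nor Vitali gives bounded overlap of the \emph{enlarged} balls $B_{2r_j}(x_j)$ that carry the supports of the $\psi_j$. Besicovitch only bounds the overlap of the selected balls themselves. Vitali gives disjoint balls whose dilates cover. Neither property survives dilation.
\begin{itemize}
\item For instance, the balls $B(x_j,8^{-j})$ with $|x_j|=\tfrac32\,8^{-j}$, $j=1,\dots,J$, are pairwise disjoint. Yet their doubles (and their $3$- or $5$-fold dilates) all contain the origin, so the multiplicity is $J$, not a constant depending only on $n$.
\item Non-overlapping dyadic cubes do not cure this either. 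In $\mathbb{R}$ the intervals $[0,1]$, $[-\tfrac18,0]$, $[-\tfrac{9}{64},-\tfrac{8}{64}],\dots$ have disjoint interiors, but all their $\tfrac32$-dilates contain $-\tfrac17$.
\end{itemize}
Your ``alternative'' does not help: $\sum_j\psi_j\prod_{i<j}(1-\psi_i)$ is literally the same function as $1-\prod_j(1-\psi_j)$, and you again sum using bounded overlap. The case $|\alpha|=1$ is also not easier in this respect. Passing from $\|\sum_j|D\psi_j|\|_{L^{p'}}^{p'}$ to $\sum_j\|D\psi_j\|_{L^{p'}}^{p'}$ needs the same overlap control. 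As written, (4) is unproved for $|\alpha|=1,2$.

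\textbf{What is missing: summing without overlap control.} The pointwise bound $|D^2\varphi|\le\sum_j|D^2\psi_j|+\big(\sum_j|D\psi_j|\big)^2$ holds for any family. The cross term should be measured as $\|(\sum_j|D\psi_j|)^2\|_{L^{p'}}=\|\sum_j|D\psi_j|\|_{L^{2p'}}^2$, and $\int|D\psi_j|^{2p'}\approx r_j^{\,n-2p'}=r_j^k$ has exactly the Hausdorff scaling. There are two clean ways to finish.
\begin{itemize}
\item \emph{Vitali plus a maximal inequality.} Use Vitali to get disjoint $B_j=B(x_j,r_j)$ with $K\subset\bigcup_j 3B_j$. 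Take $\psi_j\equiv1$ on $3B_j$ with support in $6B_j$. Then use the maximal-function inequality $\|\sum_j\lambda_j1_{cB_j}\|_{L^s}\le C(n,c,s)\|\sum_j\lambda_j1_{B_j}\|_{L^s}$ for $\lambda_j\ge0$, $1<s<\infty$; it follows by duality and the Hardy--Littlewood maximal theorem. Apply it with $s=2p'$, $\lambda_j=r_j^{-1}$ for the cross term, with $s=p'$, $\lambda_j=r_j^{-2}$ for $\sum_j|D^2\psi_j|$, and with $s=p'$, $\lambda_j=r_j^{-1}$ for first derivatives. Disjointness then turns the right-hand sides into sums $\sum_j r_j^{-|\alpha| p'}|B_j|\lesssim \varepsilon^{(2-|\alpha|)p'}\sum_j r_j^k$.
\item \emph{Smooth maximum.} Replace the product by $\varphi=h\big(\sum_j\psi_j^q\big)$, with $q\ge 2p'$ an even integer and $h$ smooth, $0\le h\le1$, $h=0$ near $0$, $h=1$ on $[\tfrac12,\infty)$. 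H\"older on $\{\sum_j\psi_j^q\le 1\}$ gives $|D\varphi|\lesssim(\sum_j|D\psi_j|^q)^{1/q}$ and $|D^2\varphi|\lesssim(\sum_j|D\psi_j|^q)^{2/q}+(\sum_j|D^2\psi_j|^q)^{1/q}$. Since $q\ge 2p'$, the integrals of the $p'$-th powers are bounded by $C\sum_j\int\big(|D\psi_j|^{2p'}+|D^2\psi_j|^{p'}+|D\psi_j|^{p'}\big)$, with no covering lemma needed.
\end{itemize}
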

\begin{remark}
    We stress that, although condition (2) is not explicitly stated in the original paper, an inspection of the proof shows that also this property holds if we choose wisely the first cutoff in the proof of Lemma 3.2 of \cite{harvey1970removable}. 
\end{remark}
\noindent The construction of similar cutoff functions can be done also on Riemannian manifolds, assuming that the compact set $K$ is contained in a coordinate chart, as shown by the following key lemma.

\begin{lemma}\label{Lemma cutoff Hausdorff in carte locali}
    Let $(M,g)$ be a Riemannian manifold of dimension $n\geq 3$ and let $K\subseteq M$ be a compact set which is contained in the domain of a local chart of $M$. Assume that $K$ has finite $k$-dimensional Hausdorff measure for $k:=n-2p'$, where $p\in \left(\frac{n}{n-2},+\infty\right)$. Then, for every $\widetilde{K}\subset M$ compact with $\textnormal{int}(\widetilde{K})\neq \emptyset$ so that $K\Subset \textnormal{int}(\widetilde{K})$ and for every $\varepsilon>0$ small enough there exists $\varphi_\varepsilon\in C^\infty_c(M)$ such that:
    \begin{enumerate}
        \item $\varphi_\varepsilon\equiv 1$ in a neighborhood $K_\varepsilon$ of $K$;
        \item $0\leq \varphi_\varepsilon\leq 1$;
        \item $\mathrm{supp}\ \varphi_{\varepsilon} \subseteq  T_{\varepsilon}\subset \widetilde{K}$, where $T_{\varepsilon}$ are open neighborhoods of $K$ such that $\vert T_{\varepsilon}\vert\longrightarrow0$ as $\varepsilon\rightarrow 0^{+}$;
        \item there exists a positive constant $C$ which does not depend on $\varepsilon$ such that 
        \begin{equation*}
           \Vert \varphi_{\varepsilon}\Vert_{W^{2,p'}(M)}\le C(\mathcal{H}^{k}   (K)+\varepsilon)^{\frac{1}{p'}},
        \end{equation*}
        where $\Vert\cdot\Vert_{W^{2,p'}(M)}:=\Vert\cdot\Vert_{L^{p'}(M)}+\Vert\nabla\cdot\Vert_{L^{p'}(M)}+\Vert\mathrm{Hess}(\cdot)\Vert_{L^{p'}(M)}$.
    \end{enumerate}
\end{lemma}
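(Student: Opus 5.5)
The strategy is to transfer the Euclidean cutoffs of Lemma \ref{Lemma cutoff Hausdorf} to $M$ through the chart. Let $(U,x)$ be the chart whose domain contains $K$. First shrink it: choose a precompact open set $U'$ with $K\Subset U'\Subset U\cap\textnormal{int}(\widetilde{K})$. This is possible because $K$ is compact and $K\Subset\textnormal{int}(\widetilde K)$. On $\overline{U'}$ the metric coefficients $g_{ij}$, the inverse $g^{ij}$, $\sqrt{\det g}$ and the Christoffel symbols are bounded, and $g$ is uniformly equivalent to the Euclidean metric, with constants depending only on $U'$.

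Next, control the Hausdorff measure in coordinates. The chart map $x$ is bi-Lipschitz on $\overline{U'}$ between $d_g$ and the Euclidean distance, with some constant $L$. Hence $\mathcal{H}^k_{\mathrm{eucl}}(x(K))\le L^k\mathcal{H}^k(K)<+\infty$, and $x(K)\subset\mathbb{R}^n$ is compact.

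Then build the cutoffs. For $\varepsilon$ small enough that $B^{\mathrm{eucl}}_\varepsilon(x(K))\Subset x(U')$, take the function $\psi_\varepsilon$ given by Lemma \ref{Lemma cutoff Hausdorf} for $x(K)$ and set $\varphi_\varepsilon:=\psi_\varepsilon\circ x$, extended by zero outside $U'$. This is smooth with compact support, and properties (1) and (2) are immediate. Set $T_\varepsilon:=x^{-1}(B^{\mathrm{eucl}}_\varepsilon(x(K)))\subset U'\subset\widetilde K$. These sets are open neighborhoods of $K$ decreasing to $K$. Moreover $\mathcal{H}^k(K)<\infty$ with $k=n-2p'<n$ gives $|K|=0$, so by continuity of measure from above (all the sets lie in the finite-measure set $U'$) we get $|T_\varepsilon|\to0$. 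This proves (3).

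For (4), write the Riemannian norms in coordinates:
\begin{align}
|\nabla\varphi_\varepsilon|_g^2=g^{ij}\partial_i\psi_\varepsilon\partial_j\psi_\varepsilon,\qquad (\mathrm{Hess}\,\varphi_\varepsilon)_{ij}=\partial_{ij}\psi_\varepsilon-\Gamma^m_{ij}\partial_m\psi_\varepsilon,
\end{align}
and $\dvol=\sqrt{\det g}\,dx$. Using the bounds on $\overline{U'}$, each norm $\|\varphi_\varepsilon\|_{L^{p'}}$, $\|\nabla\varphi_\varepsilon\|_{L^{p'}}$ and $\|\mathrm{Hess}\,\varphi_\varepsilon\|_{L^{p'}}$ is bounded by a constant times $\sum_{|\alpha|\le2}\|D^\alpha\psi_\varepsilon\|_{L^{p'}(\mathbb{R}^n)}$. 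By Lemma \ref{Lemma cutoff Hausdorf}(4) this sum is at most
\begin{align}
C\sum_{|\alpha|\le 2}\varepsilon^{2-|\alpha|}(\mathcal{H}^k_{\mathrm{eucl}}(x(K))+\varepsilon)^{1/p'}\le C'(\mathcal{H}^k(K)+\varepsilon)^{1/p'}
\end{align}
for $\varepsilon\le1$. Here the constant $L^k$ is absorbed: $L^k\mathcal H^k(K)+\varepsilon\le \max(L^k,1)(\mathcal H^k(K)+\varepsilon)$.

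The step I expect to need the most care is the bookkeeping of constants. They must be uniform in $\varepsilon$, which is why the chart is shrunk first so that every support lies in the fixed compact $\overline{U'}$. The comparison of Hausdorff measures under the bi-Lipschitz chart, including the normalization constants of $\mathcal H^k$, must also be absorbed into $C$.
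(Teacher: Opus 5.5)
Your proposal is correct and follows essentially the same route as the paper: shrink into the chart, pull back the Euclidean cutoffs of Lemma \ref{Lemma cutoff Hausdorf} via $\varphi_\varepsilon=\psi_\varepsilon\circ\Phi$, and control the $W^{2,p'}$ norm through the bounded metric coefficients and Christoffel symbols on a fixed compact set. You are slightly more careful than the paper, which silently replaces $\mathcal{H}^k(\Phi(K))$ by $\mathcal{H}^k(K)$ and asserts $|T_\varepsilon|\to 0$ without noting $|K|=0$; your bi-Lipschitz comparison and your continuity-from-above argument fill exactly these two points.
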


\begin{proof}
    Let $\widetilde{K}\subset M$ be a compact subset such that $\textnormal{int}(\widetilde{K})\neq \emptyset$ and $K\Subset \textnormal{int}(\widetilde{K})$, and let $(\mathcal{U},\Phi)$ be a local chart for $M$ such that $K\subset\mathcal{U}$. After shrinking $\widetilde{K}$, we can assume $K\Subset \textnormal{int}(\widetilde{K})\subset\widetilde{K}\Subset\mathcal{U}$. Consider the compact sets $F:=\Phi(K)\subseteq\mathbb{R}^{n}$ and $\widetilde{F}:=\Phi(\widetilde{K})$. Having fixed $\varepsilon>0$ small enough, by Lemma \ref{Lemma cutoff Hausdorf} we can find a cutoff function $\psi_{\varepsilon}\in C^{\infty}_{c}(\mathbb{R}^{n})$, $0\le\psi_{\varepsilon}\le1$, such that $\psi_{\varepsilon}\equiv1$ in a neighborhood $V$ of $F$, $\mathrm{supp}\ \psi_{\varepsilon}\subseteq 
    B_{\varepsilon}(F)\subseteq\widetilde{F}\subseteq \Phi(\mathcal{U})$, and satisfying
    \begin{equation*}
        \Vert D^{\alpha}\psi_{\varepsilon}\Vert_{L^{p'}}\le C_{\alpha}\varepsilon^{2-\vert\alpha\vert}(\mathcal{H}^{k}(F)+\varepsilon)^{\frac{1}{p'}}
    \end{equation*}
    for every multi-index $\alpha$ with $\vert\alpha\vert\le2$ and where $C_{\alpha}$ is a constant independent of $\varepsilon$. Define the function
    \begin{equation*}
        \varphi_{\varepsilon}(x):=
        \begin{cases}
            \psi_{\varepsilon}(\Phi(x)) & \text{if $x\in\mathcal{U}$,}\\
            0 & \text{if $x\in M\setminus\mathcal{U}$,}
        \end{cases}
    \end{equation*}
    then $\varphi_{\varepsilon}\in C^{\infty}_{c}(M)$, $0\le\varphi_{\varepsilon}\le1$, $\varphi_{\varepsilon}\equiv1$ in a neighborhood $K_{\varepsilon}:=\Phi^{-1}(V)$ of $K$, and $\mathrm{supp}\ \varphi_{\varepsilon}\subseteq T_{\varepsilon}:=\Phi^{-1}(B_{\varepsilon}(F))\subset\widetilde{K}$. Note that the measure of $T_{\varepsilon}$ goes to zero as $\varepsilon\rightarrow0^{+}$ since the measure of $B_{\varepsilon}(F)$ goes to zero and the Lebesgue measures of $M$ and $\mathbb{R}^{n}$ are comparable. 
    
    Finally, to prove the estimates on the $W^{2,p'}$-norm it is enough to write the integrals in local coordinates and to use the regularity of the metric coefficients and the previous Lemma on the local representation of the function $\varphi_{\varepsilon}$.
\end{proof}

\subsubsection{Critical case}
In the critical case $n=2p'$ the previous construction does not work, so we need to follow a different path to obtain an analogous cutoff function. Due to the lack of information provided by the standard $0$-dimensional Hausdorff measure, we are led to consider a generalized Hausdorff measure which is better suited to capture the different ``sizes" of 0-dimensional subsets. The measure we consider is  $\mathcal{H}^{h_{p'}}$, where
\begin{align}
    h_{p'}(r):=\left[1+\ln_+\left(\frac{1}{r}\right)\right]^{1-{p'}}.
\end{align}
For further details on generalized Hausdorff measures, see \cite[Section 8]{hinz2023removable} or \cite{AH12}.

The technical lemma we are going to prove is the analogue of Lemma \ref{Lemma cutoff Hausdorf} for the measure $\mathcal{H}^{h_{p'}}$.

\begin{lemma}\label{Lem_h_p_Hausdorff_cut-off}
    Let $K\subset \rr{n}$ be a compact set satisfying $\mathcal{H}^{h_{p'}}(K)<+\infty$, where $p'=\frac{n}{2}$. Then, for every $\widetilde{K}\subset \mathbb{R}^n$ compact with $\textnormal{int}(\widetilde{K})\neq \emptyset$ so that $K\Subset \textnormal{int}(\widetilde{K})    $ and for every $\varepsilon>0$ there exists $\varphi_\varepsilon\in C^\infty_c(\rr{n})$ such that:
    \begin{enumerate}
        \item $\varphi_\varepsilon\equiv 1$ in a neighborhood $K_\varepsilon$ of $K$;
        \item $0\leq \varphi_\varepsilon\leq 1$;
        \item $\mathrm{supp}\ \varphi_{\varepsilon} \subseteq  T_{\varepsilon}\subset \widetilde{K}$, where $T_{\varepsilon}$ are open neighborhoods of $K$ such that $\vert T_{\varepsilon}\vert\longrightarrow0$ as $\varepsilon\rightarrow 0^{+}$;
        \item $\norm{\varphi_\varepsilon}{W^{2,p'}(\rr{n})}\leq \varepsilon$.
    \end{enumerate}
\end{lemma}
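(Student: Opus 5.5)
Since $K$ is compact with $\mathcal{H}^{h_{p'}}(K)<+\infty$ and $n=2p'$, the plan is a covering argument with logarithmic cutoffs. These cutoffs are the standard extremals for the $W^{2,n/2}$-capacity of a point in dimension $n$.

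\emph{Step 1: one ball.} For a ball $B_r(x)$ with $r<1$, I would build a function $\eta_{x,r}\in C^\infty_c(B_{R}(x))$ that equals $1$ on $B_{r}(x)$, with $0\le\eta\le 1$. Here $R=R(r)\gg r$, for instance $R=r^{1/2}$ or a fixed multiple $\delta$ chosen later. The radial profile is
\begin{equation*}
\eta(y)=\theta\Big(\frac{\ln(R/|y-x|)}{\ln(R/r)}\Big),
\end{equation*}
where $\theta\in C^\infty(\mathbb{R})$ is nondecreasing, vanishes on $(-\infty,0]$ and equals $1$ on $[1,\infty)$. A direct computation gives $|D\eta|\lesssim (\ln(R/r))^{-1}|y-x|^{-1}$ and $|D^2\eta|\lesssim (\ln(R/r))^{-1}|y-x|^{-2}$ on the annulus. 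Integrating in polar coordinates with exponent $p'=n/2$, and using that $|y|^{-2p'}=|y|^{-n}$, I expect
\begin{equation*}
\|D^2\eta\|_{L^{p'}}^{p'}\lesssim (\ln(R/r))^{-p'}\ln(R/r)=(\ln(R/r))^{1-p'}.
\end{equation*}
The gradient term is bounded by $R^{p'}$ times the same quantity, and the $L^{p'}$ norm by $R^{n}$. Choosing $R=\delta$ (with $\delta$ small and fixed) and $r\ll\delta$, the cost of one ball is $C\,[\ln(\delta/r)]^{1-p'}$. This is comparable to $h_{p'}(r)$ up to constants depending on $\delta$, or at least bounded by $C_\delta h_{p'}(r)$ once $r\le\delta^2$. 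A cleaner variant is to take $R=\sqrt r$, so that $\ln(R/r)=\tfrac12\ln(1/r)$ and the cost is $\lesssim h_{p'}(r)$ uniformly.

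\emph{Step 2: covering.} Since $\mathcal{H}^{h_{p'}}(K)<\infty$ and $h_{p'}(r)\to0$ as $r\to0$, for every $\rho>0$ there is a finite cover of $K$ (finite by compactness, after slightly enlarging the balls) by balls $B_{r_i}(x_i)$ with $r_i<\rho$ and $\sum_i h_{p'}(r_i)\le \mathcal{H}^{h_{p'}}(K)+1$. The key difficulty is that the bound in (4) must be $\le\varepsilon$, not merely bounded. I will obtain this by the standard trick of splitting the logarithm. Take $R_i=r_i^{1/m}$ with large $m$, or more precisely use a double-log profile so that the per-ball cost becomes $\lesssim h_{p'}(r_i)\cdot(\text{small factor})$. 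A cleaner route, and the one I would try first, is to use that $\mathcal{H}^{h}(K)<\infty$ implies $\mathcal{H}^{\tilde h}(K)=0$ for any $\tilde h$ with $\tilde h/h\to0$. Choose $\tilde h(r)=[\ln(1/r)]^{1-p'}/\ln\ln(1/r)$ and build the single-ball cutoff with cost $\lesssim\tilde h(r)$ by increasing the ratio $R/r$ appropriately. Failing that, I would argue directly from the definition of capacity: finite $\mathcal{H}^{h_{p'}}$ measure forces zero $W^{2,p'}$-capacity, as in \cite[Section 8]{hinz2023removable} and \cite{AH12}, and zero capacity gives (4).

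\emph{Step 3: gluing and the main obstacle.} Set $\varphi=1-\prod_i(1-\eta_i)$, or rather $\varphi=\Theta\big(\sum_i\eta_i\big)$ with a smooth $\Theta$ that equals $1$ on $[1,\infty)$, vanishes near $0$, and satisfies $0\le\Theta\le1$. This gives (1) and (2), and the support lies in $\bigcup_i B_{R_i}(x_i)\subset B_{\rho'}(K)=:T_\varepsilon\subset\widetilde K$ once $\rho$ is small, which gives (3). The hard part is the second-order estimate for the composition, since
\begin{equation*}
D^2\Theta(S)=\Theta'(S)D^2S+\Theta''(S)DS\otimes DS.
\end{equation*}
The term $|DS|^2$ in $L^{p'}$ requires $\|DS\|_{L^{2p'}}^2=\|DS\|_{L^n}^2$ to be small. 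For the logarithmic profile this holds by the same polar computation, giving $\|D\eta\|_{L^n}^n\lesssim(\ln(R/r))^{1-n}$. The overlaps of the balls I would handle with a Besicovitch-type bounded-overlap refinement of the cover, so that the sums of $L^{p'}$ norms control the norm of the sum up to a dimensional constant. I expect this overlap control, together with extracting smallness (rather than mere boundedness) in (4), to be the main obstacle.
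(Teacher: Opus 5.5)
There is a genuine gap, and it sits exactly where you flag the ``main obstacle'': your construction can at best give a \emph{bounded} $W^{2,p'}$-norm, and none of the proposed fixes can turn this into smallness.

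\textbf{Why per-ball tuning cannot help.} In the critical case $2p'=n$ the capacity of a small ball satisfies $C'_{2,p'}(B_r)\asymp(\ln(2/r))^{1-p'}\asymp h_{p'}(r)$ for $r\le 1$ (see \cite[Section 5.1]{AH12}). Hence every function that is $\equiv1$ on $B_{r_i}(x_i)$ has $W^{2,p'}$-cost at least $c\,h_{p'}(r_i)$. Since this lower bound holds for every admissible function, no choice of $R_i=r_i^{1/m}$, of a double-log profile, or of the ratio $R_i/r_i$ can make the per-ball cost $o(h_{p'}(r_i))$. Making $\ln(R_i/r_i)\gg\ln(1/r_i)$ forces $R_i\gg 1$, which destroys (3) and makes the lower-order terms blow up. So bounding the glued function by the sum of per-ball costs yields at most $\lesssim\mathcal{H}^{h_{p'}}(K)+1$.

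The $\tilde h$ idea is also backwards. $\mathcal{H}^{\tilde h}(K)=0$ for a smaller gauge $\tilde h=o(h_{p'})$ is automatic and useless. What you would need is covers with $\sum_i h_{p'}(r_i)$ small, i.e.\ $\mathcal{H}^{h_{p'}}(K)=0$, which is not assumed.

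\textbf{The uniform bound is also missing.} Besicovitch controls overlaps of the balls $B_{r_i}(x_i)$, not of the much larger supports $B_{R_i}(x_i)$ (e.g.\ $R_i=\sqrt{r_i}$). Without bounded overlap, $\|\sum_i D^2\eta_i\|_{L^{p'}}$ is not controlled by the individual norms.

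\textbf{How smallness can be obtained.} If you keep a covering approach, the missing idea is weak compactness rather than a better profile. Suppose $\varphi_j$ satisfy (1)--(3) for shrinking $T_j$ with $\sup_j\|\varphi_j\|_{W^{2,p'}}<\infty$. Then $\|\varphi_j\|_{L^{p'}}\le |T_j|^{1/p'}\to 0$, so $\varphi_j\rightharpoonup 0$ in the reflexive space $W^{2,p'}$. By Mazur's lemma, convex combinations of tails tend to $0$ in norm while preserving (1)--(3); this is the classical first-order argument, e.g.\ in Evans--Gariepy. You would still owe the overlap estimate, though.

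\textbf{What the paper does instead.} It takes $C'_{2,p'}(K)=0$ from \cite[Theorem 5.1.9]{AH12}, whose proof uses potential-theoretic methods rather than a covering. It then supplies the step your fallback ``zero capacity gives (4)'' silently skips. Zero capacity only yields small-norm $\varphi$ with $\varphi\ge 1$ on $K$, and truncation at level $1$ does not preserve $W^{2,p'}$. Lemma \ref{equivalence capacities} shows $N'_{2,q}\asymp C_{G_2,q}$ as follows:
\begin{enumerate}
\item apply a smooth truncation $T$ to $(1+\varepsilon)G_2*f$, which is bounded in $W^{2,q}$ for $f\ge0$ by \cite[Theorem 3.3.3]{AH12};
\item use lower semicontinuity of $G_2*f$ to get $\equiv1$ on an open set containing $K$;
\item cut off at infinity and mollify.
\end{enumerate}
Property (3) then comes from multiplying by a fixed $\eta_\varepsilon\in C^\infty_c(T_\varepsilon)$ with $\eta_\varepsilon\equiv1$ near $K$, and choosing $\delta(\varepsilon)$ so that $3\|\eta_\varepsilon\|_{W^{2,\infty}}\,\delta<\varepsilon$.
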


\noindent In order to prove Lemma \ref{Lem_h_p_Hausdorff_cut-off} we start introducing the following notation. We denote by $G_2$ the $2$-Bessel kernel defined, in terms of the Fourier transform, as
\begin{align}
    G_\alpha=\mathcal{F}^{-1}\left[(1+|\xi|^2)^{-1}\right].
\end{align}
Denoting by $\mathcal{S}$ the Schwartz class, we recall that 
\begin{align}\label{G_a_bijection}
G_2:\mathcal{S}\to \mathcal{S}\quad \textnormal{is a bijection},
\end{align}
where $G_{2}$ acts by convolution on $\mathcal{S}$. For a reference, see \cite[Section 1.1.7]{AH12}. For a fixed compact set $K\subset \rr{N}$ and $q>1$, let us define the following capacities
\begin{align}
    & C_{2,q}'(K):=\inf\left\{\norm{\varphi}{W^{2,{q}}}^{q} : \varphi\in C_c^\infty(\rr{n}),\ \varphi\geq 1\ \onn K \right\}\\
    & C_{G_{2},q}(K):=\inf\left\{\norm{f}{L^{q}}^{q} : 0\leq f \in L^{q}(\rr{n}),\ G_2 *f\geq 1\ \onn K\right\}.
\end{align}
Thanks to results from \cite{AH12}, namely Theorem 1.2.3, the observations at page 20 and at pages 47-48 and Proposition 2.3.13, the capacities defined above are equivalent. In particular, there exist two positive constants $A_1$ and $A_2$, depending only on $n$ and $q$, such that
\begin{align}\label{Eq_Equiva_capacities_0}
    C_{2,q}'(K)\leq A_1 C_{G_{2},q}(K)\leq A_2 C_{2,q}'(K)
\end{align}
for every compact subset $K\subseteq\rr{n}$. Now let us define
\begin{align}
    N_{2,q}'(K):=\inf\{\norm{\varphi}{W^{2,{q}}}^{q} : \varphi \in C^\infty_c(\rr{n}),\ 0\leq \varphi\leq 1,\ \varphi=1\ \textnormal{in a neigh. of } K\}.
\end{align}
Clearly, it holds
\begin{align}\label{C' leq N'}
    C_{2,q}'(K)\leq  N_{2,q}'(K).
\end{align}
Next lemma states that, up to a multiplicative constant, the above inequality can be reversed.

\begin{lemma}\label{equivalence capacities}
    The capacity $N_{2,q}'$ is equivalent to $C_{G_{2},q}$ (and hence to $C'_{2,q}$).
\end{lemma}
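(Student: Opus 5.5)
The inequality $C'_{2,q}\le N'_{2,q}$ is already recorded in \eqref{C' leq N'}. Combined with \eqref{Eq_Equiva_capacities_0}, it remains to find a constant $A_3=A_3(n,q)$ with
\begin{align}
N'_{2,q}(K)\le A_3\, C_{G_2,q}(K)
\end{align}
for every compact $K\subset\rr{n}$. My plan is to take an almost-optimal competitor for $C_{G_2,q}(K)$ and convert it into an admissible competitor for $N'_{2,q}(K)$, i.e. a smooth function with values in $[0,1]$ that equals $1$ near $K$, at the cost of a multiplicative constant.

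First I regularize the competitor. Fix $\delta>0$ and $0\le f\in L^q$ with $G_2*f\ge 1$ on $K$ and $\norm{f}{L^q}^q\le C_{G_2,q}(K)+\delta$. I replace $f$ by $(1+\delta)f$, then truncate it to a compactly supported function and mollify it. Since $G_2$ is positive, integrable and lower semicontinuous, $G_2*f$ is lower semicontinuous, and on the compact set $K$ it is $\ge 1+\delta$. Approximating $f$ from below on large balls keeps $G_2*f\ge 1$ on $K$ by Dini/monotone convergence plus compactness. Mollification produces $f_j\in C^\infty_c$, and $G_2*f_j\to G_2*f$ uniformly, because $G_2\in L^{q'}$ only near the origin; to handle this I would split $G_2=G_2\chi_{B_1}+G_2\chi_{B_1^c}$ and use lower semicontinuity on the singular part. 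In the end I obtain $0\le f\in C^\infty_c$ with $u:=G_2*f\ge 1$ on a neighbourhood $U$ of $K$ and the same norm bound. Then $u\in\mathcal S$-like, smooth, decays exponentially, and $\norm{u}{W^{2,q}}\le c\norm{f}{L^q}$ by the Calderón–Zygmund/Bessel potential identification $W^{2,q}=G_2*L^q$ (with \eqref{G_a_bijection}). A cutoff $\eta_R$ at large scale makes $u$ compactly supported while changing the $W^{2,q}$ norm only by $o(1)$ as $R\to\infty$.

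Next I arrange $0\le\varphi\le1$ and $\varphi=1$ near $K$. I compose with a smooth function $H:\rr{}\to[0,1]$ with $H(t)=t$ for $t$ small, $H\equiv1$ for $t\ge1$, and $H(0)=0$, and set $\varphi=H\circ u$. Then $\varphi\in C^\infty_c$, $\varphi=1$ on $U$, and $0\le\varphi\le1$. The main obstacle is the second-order estimate. We have $\nabla^2\varphi=H'(u)\nabla^2u+H''(u)\nabla u\otimes\nabla u$, and the term $|\nabla u|^2$ is not controlled in $L^q$ by $\norm{u}{W^{2,q}}$ in general. This is the classical Maz'ya/Adams–Hedberg truncation difficulty, and a naive $H$ fails. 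I would follow the Adams–Hedberg strategy for $\alpha=2$ (their Theorem 3.3.3 / Proposition 3.3.? on "smooth truncation"): use the nonnegativity $f\ge0$ and the pointwise inequality $|\nabla G_2*f|^2\le c\,(G_2*f)\,M(\ldots)$, or more concretely the Maz'ya-type estimate
\begin{align}
\int |\nabla u|^{2q}u^{-q}\,dx\le c\int|\nabla^2u|^q+|u|^q\,dx \quad (u=G_2*f,\ f\ge0).
\end{align}
Since $|H''(t)|\le c/t$, this gives $\norm{H''(u)|\nabla u|^2}{L^q}^q\le c\int|\nabla u|^{2q}u^{-q}\le c\norm{f}{L^q}^q$. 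Alternatively, I could apply $H$ to a nonlinear potential $G_2*f$ with a smooth concave $H$ and use the Hedberg–Wolff inequality; if the paper allows, I would simply cite the Adams–Hedberg result that $N'_{2,q}$-type refined capacities (with $0\le\varphi\le1$ and $\varphi=1$ near $K$) are equivalent to $C_{2,q}$ for $\alpha=2$.

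Collecting the bounds gives $N'_{2,q}(K)\le \norm{\varphi}{W^{2,q}}^q\le A_3(C_{G_2,q}(K)+\delta)+o(1)$. Letting $\delta\to0$ and $R\to\infty$ yields the claim, and with \eqref{C' leq N'} and \eqref{Eq_Equiva_capacities_0} this gives the equivalence of $N'_{2,q}$, $C_{G_2,q}$ and $C'_{2,q}$.
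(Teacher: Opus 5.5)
Your proposal is correct and follows essentially the same route as the paper. You take an almost-optimal $0\le f$ for $C_{G_2,q}(K)$ and compose a slight dilation of $G_2*f$ with a smooth truncation function equal to $1$ on $[1,\infty)$. The $W^{2,q}$ norm is then controlled by the Adams--Hedberg smooth-truncation theorem \cite[Theorem 3.3.3]{AH12}, which is exactly where $f\ge0$ enters, e.g.\ via a pointwise bound of the type $|\nabla u|^2\lesssim u\,(Mf+u)$ for $u=G_2*f$, and you finish by cutting off at a large scale and smoothing.

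The only difference is the order of the steps. The paper truncates the lower semicontinuous potential directly, so $\varphi=1$ near $K$ comes from the openness of $\{(1+\varepsilon)G_2*f>1\}$, and it mollifies only at the very end. This avoids your preliminary regularization of $f$: that step is slightly delicate, since $G_2*f$ need not be continuous when $q\le n/2$, so you would also have to truncate $f$ in height before mollifying.
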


\begin{proof}Let us fix $T:\rr{}\to [0,1]$ a smooth function so that
    \begin{align}
        T(s)=\begin{cases}
            0 & \textnormal{if}\ s\leq \frac{1}{2}\\
            1 & \textnormal{if}\ s\geq 1.
        \end{cases}
    \end{align}
For any fixed $0\leq f \in L^q(\rr{n})$ such that $G_2*f\geq 1$ on $K$ and $\varepsilon>0$, we denote
\begin{align}
    F^\varepsilon:=(1+\varepsilon)G_2*f,
\end{align}
which is lower semicontinuous by \cite[Proposition 2.3.2]{AH12}. Hence, the set 
\begin{align}
U_\varepsilon:=\{x\in \rr{n} \mid F^\varepsilon>1\}    
\end{align}
is open and $K\subset U_\varepsilon$. Next, consider
\begin{align}
    T^\varepsilon:=T(F^\varepsilon)
\end{align}
and observe that $0\leq T^\varepsilon \leq 1$, $T^\varepsilon=1$ on $U_\varepsilon$ and, by \cite[Theorem 3.3.3]{AH12}, $T^\varepsilon \in W^{2,q}(\rr{n})$. Moreover, by \cite[Theorem 1.2.3 and Theorem 3.3.3]{AH12} there exists a positive constant $A_1$, depending only on $q$ and $n$, such that
\begin{align}
    \norm{T^\varepsilon}{W^{2,q}}\leq A_1 (1+\varepsilon) \norm{f}{L^q}
\end{align}
Now consider a one-parameter family of compactly supported, smooth, cut-off functions $\{\psi_R\}_{R>0}\subset C^\infty_c(\rr{n})$ such that
\begin{itemize}
    \item for every $R>0$ we have $\psi_R\equiv 1$ on $B_R$ and $\textnormal{supp}(\psi_R)\Subset B_{2R}$;
    \item $\{\psi_R\}_R$ has equibounded gradient and Hessian.
\end{itemize}
Without loss of generality, we assume $R>R_0$, where $R_0>0$ is such that $K\subset B_{R_0}$. We define
\begin{align}
    T^\varepsilon_{R}:=\psi_R T^\varepsilon,
\end{align}
which is compactly supported, in $W^{2,q}(\rr{n})$ and such that
\begin{align}
    \norm{T_R^\varepsilon}{W^{2,q}}\leq A_2 \norm{T^\varepsilon}{W^{2,q}}
\end{align}
for a positive constant $A_2$, depending only on $q$ and $n$.
Let $\{\rho_\delta\}_{\delta>0}$ be the family of Friedrichs mollifiers and denote
\begin{align}
    \varphi_{R,\delta}^\varepsilon:=\rho_\delta*T^\varepsilon_R.
\end{align}
Moreover, by the properties of the mollification and of the Friedrichs mollifiers
\begin{itemize}
    \item $\varphi_{R,\delta}^\varepsilon\in C^\infty_c(\rr{n})$;
    \item $0\leq \varphi_{R,\delta}^\varepsilon\leq 1$;
    \item  there exists a positive constant $A_3$, depending only on $q$ and $n$, such that
    \begin{align}
        \norm{\varphi_{R,\delta}^\varepsilon}{W^{2,q}}\leq A_3 \norm{T^\varepsilon_R}{W^{2,q}}.
    \end{align}
\end{itemize}
Lastly, for $\delta>0$ small enough, we have
\begin{align}
    \varphi_{R,\delta}^\varepsilon\equiv 1 \quad \text{in a neighbourhood of } K.
\end{align}
Thus, $\varphi^\varepsilon_{R,\delta}$ is an admissible competitor in the definition of $N'_{2,q}(K)$. It then follows that for every sufficiently large $R>0$, and for every $\varepsilon>0$ and $\delta>0$ small enough,
\begin{align}
    N'_{2,q}(K) & \leq \norm{\varphi_{R,\delta}^\varepsilon}{W^{2,q}}^{q}\\
    &\leq A_3 \norm{T^\varepsilon_R}{W^{2,q}}^{q}\\
    &\leq A_2 A_3 \norm{T^\varepsilon}{W^{2,q}}^{q}\\
    &\leq A_1 A_2 A_3 (1+\varepsilon) \norm{f}{L^q}^{q}\\
    &=:A_4 (1+\varepsilon) \norm{f}{L^q}^{q}.
\end{align}
By taking the infimum over all admissible $f$ in the definition of $C_{G_{2},q}$, and letting $\varepsilon \to 0$, it then follows that
\begin{align}
    N'_{2,q}(K)\leq A_4 C_{G_{2},q}(K).
\end{align}
By putting together the above inequality with \eqref{Eq_Equiva_capacities_0} and \eqref{C' leq N'}, the claim follows.
\end{proof}

\begin{proof}[Proof of Lemma \ref{Lem_h_p_Hausdorff_cut-off}]
    By \cite[Theorem 5.1.9]{AH12}, $C_{2,p'}'(K)=0$, since $C_{2,p'}'$ is equivalent to the capacity appearing in \cite[Page 20]{AH12} as $C_{2,p'}$. By Lemma \ref{equivalence capacities} it follows that $N_{2,p'}'(K)=0$. Hence, for every $\varepsilon >0$ there exists $\widetilde{\varphi}_\varepsilon \in C^\infty_c(\rr{n})$ such that
    \begin{itemize}
         \item $\widetilde{\varphi}_\varepsilon\equiv 1$ in a neighbourhood $K_\varepsilon$ of $K$;
        \item $0\leq \widetilde{\varphi}_\varepsilon\leq 1$;
        \item $\norm{\widetilde{\varphi}_\varepsilon}{W^{2,p'}}\leq \varepsilon$.
    \end{itemize}
    To conclude, we have to show that for every $\varepsilon>0$ we can construct $\varphi_\varepsilon$ with support in an open neighbourhood $T_\varepsilon$ of $K$ such that $|T_\varepsilon|\to 0$ as $\varepsilon \to 0^+$. To this aim, consider a nested family $\{T_\varepsilon\}_\varepsilon$ of bounded, open neighbourhoods of $K$ so that
    \begin{align}
        \widetilde{K}\supset T_\varepsilon \supset T_\delta \supset K \quad \forall \varepsilon>\delta>0
    \end{align}
    and satisfying
    \begin{align}
        |T_\varepsilon|\to 0 \quad \textnormal{as}\ \varepsilon\to 0^+.
    \end{align}
    Let $\eta_\varepsilon\in C^\infty_c(T_\varepsilon)$ so that
    \begin{align}
        0\leq \eta_\varepsilon \leq 1 \quad \andd \quad \eta_\varepsilon\equiv 1 \ \text{in a neighbourhood of } K
    \end{align}
    and denote
    \begin{align}
        C_\varepsilon:=\norm{\eta_\varepsilon}{W^{2,\infty}}<+\infty.
    \end{align}
    Given $\{\widetilde{\varphi}_\delta\}_\delta$ the minimizing sequence for $N_{2,p'}'(K)$ introduced above, we define
    \begin{align}
        \varphi_{\varepsilon,\delta}:=\eta_\varepsilon \widetilde{\varphi}_\delta.
    \end{align}
    We have that $0\leq \varphi_{\varepsilon,\delta}\leq 1$, $\varphi_{\varepsilon,\delta}=1$ in a neighborhood of $K$ and
    \begin{align}
        \textnormal{supp}\ \varphi_{\varepsilon,\delta}\subseteq \textnormal{supp}\ \eta_\varepsilon \subset T_\varepsilon.
    \end{align}
    Moreover
    \begin{align}
        \norm{\varphi_{\varepsilon,\delta}}{W^{2,p'}} &\leq 3C_\varepsilon \norm{\widetilde{\varphi}_\delta}{W^{2,p'}}\\
        &\leq 3C_\varepsilon \delta.
    \end{align}
    Hence, choosing $\delta=\delta(\varepsilon)>0$ small enough so that $3C_\varepsilon \delta<\varepsilon$, we only have to define
    \begin{align}
        \varphi_\varepsilon:=\varphi_{\varepsilon, \delta(\varepsilon)}.
    \end{align}
    The claim follows.
\end{proof}

\noindent The next lemma is a straightforward consequence of Lemma \ref{Lem_h_p_Hausdorff_cut-off} and of the smoothness of $(M,g)$, and it is the analogue of Lemma \ref{Lemma cutoff Hausdorff in carte locali} for the measure $\mathcal{H}^{h_{p'}}$.

\begin{lemma}\label{Lemma cutoff Hausdorff in carte locali caso h_p}
    Let $(M,g)$ be a Riemannian manifold of dimension $n\geq 3$ and let $K\subseteq M$ be a compact set which is contained in the domain of a local chart of $M$. Assume that $\mathcal{H}^{h_{p'}}(K)$ is finite, where $p'=\frac{n}{2}$. Then, for every $\widetilde{K}\subset M$ compact with $\textnormal{int}(\widetilde{K})\neq \emptyset$ so that $K\Subset \textnormal{int}(\widetilde{K})$ and for every $\varepsilon>0$ small enough there exists $\varphi_\varepsilon\in C^\infty_c(M)$ such that:
    \begin{enumerate}
        \item $\varphi_\varepsilon\equiv 1$ in a neighborhood $K_\varepsilon$ of $K$;
        \item $0\leq \varphi_\varepsilon\leq 1$;
        \item $\mathrm{supp}\ \varphi_{\varepsilon} \subseteq  T_{\varepsilon}\subset \widetilde{K}$, where $T_{\varepsilon}$ are open neighborhoods of $K$ such that $\vert T_{\varepsilon}\vert\longrightarrow0$ as $\varepsilon\rightarrow 0^{+}$;
        \item there exists a positive constant $C$, not depending on $\varepsilon$, such that
        \begin{align}
            \norm{\varphi_\varepsilon}{W^{2,p'}(M)}\leq C\varepsilon.
        \end{align}
    \end{enumerate}
\end{lemma}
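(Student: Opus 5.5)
The argument transplants Lemma \ref{Lem_h_p_Hausdorff_cut-off} to $M$ through the chart, exactly as Lemma \ref{Lemma cutoff Hausdorff in carte locali} was obtained from Lemma \ref{Lemma cutoff Hausdorf}.

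\textbf{Setup in the chart.} Let $(\mathcal{U},\Phi)$ be a chart with $K\subset\mathcal{U}$. After shrinking $\widetilde{K}$, we may assume $K\Subset\textnormal{int}(\widetilde{K})\subset\widetilde{K}\Subset\mathcal{U}$. Set $F:=\Phi(K)$ and $\widetilde{F}:=\Phi(\widetilde{K})$. Then $F$ is compact and $F\Subset\textnormal{int}(\widetilde{F})$.

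\textbf{Finiteness of the generalized measure of $F$.} We need $\mathcal{H}^{h_{p'}}(F)<+\infty$. On the compact set $\widetilde{K}$, the map $\Phi$ is bi-Lipschitz between the Riemannian distance and the Euclidean one, with some constant $L\ge1$. Hence a $\delta$-cover of $K$ by sets of diameter $r_i$ yields an $L\delta$-cover of $F$ with diameters at most $Lr_i$. The gauge $h_{p'}(r)=[1+\ln_+(1/r)]^{1-p'}$ is doubling: for small $r$,
\begin{align}
h_{p'}(Lr)=\left[1+\ln(1/r)-\ln L\right]^{1-p'}\le C_L\, h_{p'}(r),
\end{align}
since the ratio of the brackets tends to $1$ as $r\to0$. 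Therefore $\mathcal{H}^{h_{p'}}(F)\le C_L\,\mathcal{H}^{h_{p'}}(K)<+\infty$. Checking this doubling is the only mildly non-routine point. If one prefers, one can simply adopt the convention that $\mathcal{H}^{h_{p'}}$ on $M$ is computed in charts, which makes the step immediate.

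\textbf{Euclidean cutoff.} Apply Lemma \ref{Lem_h_p_Hausdorff_cut-off} to $F\Subset\textnormal{int}(\widetilde{F})$. This gives $\psi_\varepsilon\in C^\infty_c(\mathbb{R}^n)$ with the following properties:
\begin{align}
&0\le\psi_\varepsilon\le1,\quad \psi_\varepsilon\equiv1 \text{ near } F,\\
&\mathrm{supp}\,\psi_\varepsilon\subseteq T'_\varepsilon\subset\widetilde{F}, \quad |T'_\varepsilon|\to0,\\
&\norm{\psi_\varepsilon}{W^{2,p'}(\mathbb{R}^n)}\le\varepsilon.
\end{align}

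\textbf{Pulling back to $M$.} Define $\varphi_\varepsilon:=\psi_\varepsilon\circ\Phi$ on $\mathcal{U}$, extended by $0$ outside. Since $\mathrm{supp}\,\psi_\varepsilon$ lies in $\widetilde{F}$, the function $\varphi_\varepsilon$ is smooth and compactly supported in $\widetilde{K}$. Properties (1) and (2) are inherited directly. For (3), take $T_\varepsilon:=\Phi^{-1}(T'_\varepsilon)$. Then $|T_\varepsilon|\to0$, because on $\widetilde{F}$ the density $\sqrt{\det g_{ij}}$ is bounded.

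For (4), write the norm in coordinates on $\widetilde{K}$. There
\begin{align}
|\nabla\varphi|\le C|D\psi|,\qquad |\mathrm{Hess}\,\varphi|\le C\left(|D^2\psi|+|\Gamma|\,|D\psi|\right),\qquad \textnormal{dv}\le C\,dx,
\end{align}
with $C$ depending only on bounds for $g_{ij}$, $g^{ij}$ and the Christoffel symbols $\Gamma$ on $\widetilde{F}$. It follows that
\begin{align}
\norm{\varphi_\varepsilon}{W^{2,p'}(M)}\le C\norm{\psi_\varepsilon}{W^{2,p'}(\mathbb{R}^n)}\le C\varepsilon,
\end{align}
with $C$ independent of $\varepsilon$. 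This proves (4).
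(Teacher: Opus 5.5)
Your proof is correct and follows the paper's route. The paper treats this lemma as a direct consequence of Lemma \ref{Lem_h_p_Hausdorff_cut-off}: it pulls the Euclidean cutoff back through the chart exactly as in the proof of Lemma \ref{Lemma cutoff Hausdorff in carte locali}, with the $W^{2,p'}$ bound coming from bounds on $g_{ij}$, $g^{ij}$ and the Christoffel symbols on a compact subset of the chart. Your explicit check that $\mathcal{H}^{h_{p'}}(\Phi(K))<+\infty$, via local bi-Lipschitz comparability and the doubling property of $h_{p'}$, is a detail the paper leaves implicit, and you handle it correctly.
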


\subsection{Liouville property and consequences on Schrodinger operators}\ \\
In the remainder of this section, for every $p\geq \frac{n}{n-2}$ let $\mathfrak{M}_{p}$ be the measure given by 
\begin{equation*}
    \mathfrak{M}_{p}:=\begin{cases}
        \mathcal{H}^{n-2p'} & \text{if $n>2p'$}\\
        \mathcal{H}^{h_{p'}} & \text{if $n=2p'$.}
    \end{cases}
\end{equation*}
Thanks to the existence of good cutoff functions, we can prove the following proposition.

\begin{proposition}\label{Removability of singularity}
    Let $(M,g)$ be a (possibly incomplete) Riemannian manifold of dimension $n\ge3$, $p\in \left[\frac{n}{n-2},+\infty\right)$ and $K\subseteq M$ be a closed set. If
    \begin{equation*}
        \mathfrak{M}_{p}(\widetilde{K})<+\infty \quad\quad \text{for every compact $\widetilde{K}\subseteq K$},
    \end{equation*}
    then
    \begin{equation*}
        \begin{cases}
            \Delta u\ge0 &\text{in $\mathcal{D}'(M\setminus K)$}\\
            0\le u\in L^{p}_{loc}(M)
        \end{cases}
        \Longrightarrow \Delta u\ge0 \quad\text{in $\mathcal{D}'(M)$.}
    \end{equation*}
\end{proposition}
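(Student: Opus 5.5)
We prove the statement by a cutoff argument, using the cutoffs of Lemma \ref{Lemma cutoff Hausdorff in carte locali} (subcritical case $n>2p'$) and Lemma \ref{Lemma cutoff Hausdorff in carte locali caso h_p} (critical case $n=2p'$). Fix $0\le\varphi\in C^\infty_c(M)$; we must show $\int_M u\Delta\varphi\ge0$.

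\textbf{Localization.} Since $\mathrm{supp}\,\varphi$ is compact, so is $\mathrm{supp}\,\varphi\cap K$. Using a partition of unity subordinate to finitely many coordinate balls, we may write $\varphi=\sum_j\varphi\chi_j$. A sum of nonnegative test functions is still tested correctly, so it suffices to treat a test function $\varphi\ge0$ whose support lies inside one chart $\mathcal U$. Set $K_0:=K\cap\mathrm{supp}\,\varphi$. This is compact and contained in $\mathcal U$, and by hypothesis $\mathfrak M_p(K_0)<+\infty$. Choose a compact $\widetilde K\subset\mathcal U$ with $K_0\Subset\mathrm{int}\,\widetilde K$.

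\textbf{Splitting the test function.} Let $\varphi_\varepsilon$ be the cutoffs for $K_0$ given by the appropriate lemma. Then $(1-\varphi_\varepsilon)\varphi\ge0$ vanishes near $K_0$. It also vanishes near the rest of $K\cap\mathrm{supp}\,\varphi$, which is just $K_0$. Hence $(1-\varphi_\varepsilon)\varphi\in C^\infty_c(M\setminus K)$, and the hypothesis gives $\int u\,\Delta((1-\varphi_\varepsilon)\varphi)\ge0$. Consequently
\begin{align}
\int_M u\Delta\varphi \ \ge\ \int_M u\,\Delta(\varphi_\varepsilon\varphi)
=\int_M u\big(\varphi_\varepsilon\Delta\varphi+2g(\nabla\varphi_\varepsilon,\nabla\varphi)+\varphi\Delta\varphi_\varepsilon\big).
\end{align}
It remains to show that the right-hand side tends to $0$ as $\varepsilon\to0^+$.

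\textbf{Showing the error terms vanish.} By H\"older's inequality each term is bounded by $\|u\|_{L^p(T_\varepsilon)}$ times a constant depending on $\|\varphi\|_{C^2}$, multiplied by $\|\varphi_\varepsilon\|_{L^{p'}}$, $\|\nabla\varphi_\varepsilon\|_{L^{p'}}$ or $\|\Delta\varphi_\varepsilon\|_{L^{p'}}$ respectively. All three norms are controlled by $\|\varphi_\varepsilon\|_{W^{2,p'}}$. Two facts make the product go to zero:
\begin{itemize}
\item[] (i) $\|u\|_{L^p(T_\varepsilon)}\to0$, because $u\in L^p(\widetilde K)$, $T_\varepsilon\subset\widetilde K$ and $|T_\varepsilon|\to0$ (absolute continuity of the integral);
\item[] (ii) $\|\varphi_\varepsilon\|_{W^{2,p'}}$ stays bounded: it is $\le C(\mathcal H^{n-2p'}(K_0)+\varepsilon)^{1/p'}$ in the subcritical case and $\le C\varepsilon$ in the critical case.
\end{itemize}
Hence the right-hand side tends to $0$, and $\int u\Delta\varphi\ge0$.

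\textbf{Main obstacle.} In the subcritical case the $W^{2,p'}$ norm of the cutoff does not go to zero, only stays bounded. The vanishing therefore has to come from (i), i.e.\ from $|T_\varepsilon|\to0$ combined with $u\in L^p_{loc}$. This is precisely why property (3) of the cutoff lemmas is needed. One must check that the lemma's estimate on $\Delta\varphi_\varepsilon$ (the coordinate Laplacian, involving Christoffel terms) is indeed covered by the $W^{2,p'}$ bound; this holds on the compact $\widetilde K$. Note that the sign $u\ge0$ is not essential in this route.
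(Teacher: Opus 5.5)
Your proposal is correct and follows essentially the same route as the paper: localize to a single chart via a partition of unity, split the test function as $\varphi\varphi_\varepsilon+\varphi(1-\varphi_\varepsilon)$ using the cutoffs of Lemmas \ref{Lemma cutoff Hausdorff in carte locali} and \ref{Lemma cutoff Hausdorff in carte locali caso h_p}, and kill the first term by H\"older's inequality via the bound $C\|u\|_{L^p(T_\varepsilon)}\|\varphi_\varepsilon\|_{W^{2,p'}}$. Your remark that in the subcritical case the decay comes from $|T_\varepsilon|\to0$ together with absolute continuity of $\int|u|^p$, and not from the cutoff norm, makes explicit a step the paper leaves implicit.
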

\begin{proof}
    For a fixed $0\le\psi\in C^{\infty}_{c}(M)$, we want to show that 
    \begin{equation*}
        \langle \Delta u,\psi\rangle\ge0.
    \end{equation*}
    We begin by considering the case of $\psi$ having support contained in the domain of a coordinate chart $(\mathcal{U},\Phi)$. Set $\widetilde{K}:=\text{supp}  \,\varphi \cap K$, then $\widetilde{K}$ is a compact set contained in the local chart $\mathcal{U}$ and $\mathfrak{M}_{p}(\widetilde{K})<+\infty$ by hypothesis, so we can apply Lemma \ref{Lemma cutoff Hausdorff in carte locali} or Lemma \ref{Lemma cutoff Hausdorff in carte locali caso h_p}, depending on whether $2p'<n$ or $2p'=n$ respectively, to find for every $\varepsilon\in(0,1]$ a cutoff function $\varphi_{\varepsilon}\in C^{\infty}_{c}(M)$ satisfying the properties of the cited lemmas. 
    
    Consider the distributional identity
    \begin{equation*}
        \langle \Delta u, \psi\rangle = \langle \Delta u, \psi\varphi_{\varepsilon}\rangle + \langle \Delta u, \psi(1-\varphi_{\varepsilon})\rangle.
    \end{equation*}
    Since $\psi(1-\varphi_{\varepsilon})\in C^{\infty}_{c}(M\setminus K)$ is non-negative, we have $\langle \Delta u, \psi(1-\varphi_{\varepsilon})\rangle\ge0$ for every $\varepsilon>0$. It remains to analyze the first term on the right-hand side:
    \begin{equation*}
        \begin{split}
            \vert \langle \Delta u, \psi\varphi_{\varepsilon}\rangle\vert &\le\int_{M}\vert u\vert \Big[\vert \varphi_{\varepsilon}\vert\Vert\Delta\psi\Vert_{L^{\infty}}+\Vert \psi\Vert_{L^{\infty}}\vert\Delta\varphi_{\varepsilon}\vert+2\Vert\nabla\psi\Vert_{L^{\infty}}\vert\nabla \varphi_{\varepsilon}\vert \Big] \dvol \\
            &\le C \int_{T_{\varepsilon}}\vert u\vert \Big[\vert \varphi_{\varepsilon}\vert+\vert\Delta\varphi_{\varepsilon}\vert+\vert\nabla \varphi_{\varepsilon}\vert \Big] \dvol\\
            &\le C \Vert u\Vert_{L^{p}(T_{\varepsilon})}\norm{\varphi_\varepsilon}{W^{2,p'}(M)}.
        \end{split}
    \end{equation*}
    Since $u\in L^{p}_{loc}(M)$, the right-hand side of the inequality tends to zero as $\varepsilon\rightarrow0^{+}$, hence
    \begin{equation*}
        \lim_{\varepsilon\rightarrow0^{+}}\langle \Delta u, \psi\varphi_{\varepsilon}\rangle = 0,
    \end{equation*}
    and then 
    \begin{equation*}
        \langle \Delta u, \psi\rangle = \liminf_{\varepsilon\rightarrow0^{+}} (\langle \Delta u, \psi\varphi_{\varepsilon}\rangle + \langle \Delta u, \psi(1-\varphi_{\varepsilon})\rangle)\ge \lim_{\varepsilon\rightarrow0^{+}} \langle \Delta u, \psi\varphi_{\varepsilon}\rangle = 0.
    \end{equation*}
    To conclude, consider a general $0\le\psi\in C^{\infty}_{c}(M)$, and let $\Omega:=\mathrm{supp}\psi$. Let $\{(\mathcal{U}_i,\Phi_{i})\}_{i=1,\dots,N}$ be an open cover of $\Omega$ made of local charts, let $\Omega_{1}$ be an open neighborhood of $\Omega$ such that $\overline{\Omega}_{1}\subseteq\bigcup_{i=1}^{N}\mathcal{U}_{i}$, and set $\mathcal{V}:=M\setminus\overline{\Omega}_{1}$. Then $\{\mathcal{U}_{i}\}_{i}\cup\{\mathcal{V}\}$ is an open cover of $M$, so we can find a partition of unity $\{\rho_{1},\dots,\rho_{N},\rho\}$ subordinate to this covering. Note that
    \begin{equation*}
        \psi=\sum_{i=1}^{N}\psi\rho_{i}=:\sum_{i=1}^{N}\psi_{i},
    \end{equation*}
    and $\mathrm{supp}\psi_{i}\subseteq \mathcal{U}_{i}$. From the previous discussion, we have
    \begin{equation*}
        \langle \Delta u,\psi\rangle = \sum_{i=1}^{N}\langle\Delta u,\psi_{i}\rangle \ge0,
    \end{equation*}
    and this concludes the proof.
\end{proof}

As a straightforward consequence of Proposition \ref{Removability of singularity} and the monotonicity of ($L^{p}\textnormal{-Liou}_{SH}$) with respect to $p$, we have the following 
\begin{theorem}\label{Liouville subharm on manif with cls rem}
    Let $(N,h)$ be a complete Riemannian manifold of dimension $n\ge3$, $p\in \left[\frac{n}{n-2},+\infty\right)$ and $K\subseteq N$ be a closed set. Consider the open manifold $M=N\setminus K$ and  and assume that   
    \begin{equation*}
        \mathfrak{M}_p(\widetilde{K})<+\infty \quad\quad \text{for every compact $\widetilde{K}\subseteq K$}.
    \end{equation*}
    Then, ($L^{q}\textnormal{-Liou}_{SH}$) holds on $M$ for every $q\in[p,+\infty)$.  
\end{theorem}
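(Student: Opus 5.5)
By the monotonicity in $p$ (Proposition \ref{Prop_Monotonicity} with $\lambda=0$), it suffices to establish ($L^{p}\textnormal{-Liou}_{SH}$) on $M=N\setminus K$ for the given exponent $p$. The plan is to transfer the problem to the complete manifold $N$, where the Liouville property is already known.

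First, take $0\le u\in L^p(M)$ with $\Delta u\ge 0$ in $\mathcal{D}'(M)$. Since $K$ has zero Riemannian measure, extend $u$ to $N$ by setting $u=0$ on $K$. This measure-zero claim holds because, on each compact piece $\widetilde K$ inside a chart, Lemma \ref{Lemma cutoff Hausdorff in carte locali} or Lemma \ref{Lemma cutoff Hausdorff in carte locali caso h_p} provides open neighbourhoods $T_\varepsilon\supset\widetilde K$ with $|T_\varepsilon|\to0$, and $K$ is a countable union of such pieces. The extension then satisfies $u\in L^p(N)\subset L^p_{loc}(N)$ and $u\ge0$, while $\Delta u\ge 0$ in $\mathcal{D}'(N\setminus K)$.

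Next, apply Proposition \ref{Removability of singularity} on the manifold $N$ with the closed set $K$. Its hypotheses are $n\ge3$, $p\ge \frac{n}{n-2}$, and $\mathfrak{M}_p(\widetilde K)<\infty$ for every compact $\widetilde K\subseteq K$, and all of them are satisfied. It gives $\Delta u\ge0$ in $\mathcal{D}'(N)$.

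Finally, $N$ is complete, so Corollary 5.2 of \cite{PVV} (stated above) yields ($L^p\textnormal{-Liou}_{SH}$) on $N$ when $N$ is non-compact. If $N$ is compact, any nonnegative subharmonic distribution on $N$ is constant by the standard argument: test with $\varphi\equiv1$, which gives $\int u\,\Delta 1=0$, so $\Delta u$ is a nonnegative distribution of total mass zero, hence $\Delta u=0$ and $u$ is constant. Alternatively, one can use the monotone approximation of Theorem \ref{monotone approx thm} together with the maximum principle. In both cases $u$ is constant a.e. on $N$, and therefore constant on $M$. This gives ($L^p\textnormal{-Liou}_{SH}$) on $M$, and monotonicity extends it to every $q\ge p$.

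The main difficulty is already contained in Proposition \ref{Removability of singularity}. The only remaining subtle point is checking that $K$ is negligible and that the extended function is a genuine $L^p_{loc}(N)$ function to which the removability result applies. I expect this to be routine, following from the cutoff lemmas as described in the first step.
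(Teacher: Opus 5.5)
Your proposal is correct and matches the route the paper intends when it calls the theorem a straightforward consequence of Proposition \ref{Removability of singularity} and monotonicity in $p$. You extend $u$ across the null set $K$, remove the singularity on $N$, apply the $L^p$-Liouville property on the complete manifold $N$ (Corollary 5.2 of \cite{PVV}), and pass to $q\ge p$ via Proposition \ref{Prop_Monotonicity}.

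Your explicit treatment of the compact case of $N$ and of the negligibility of $K$ fills in details the paper leaves implicit. For negligibility, it is cleaner to argue directly: finiteness of $\mathfrak{M}_p(\widetilde K)$ forces $\mathcal{H}^n(\widetilde K)=0$, because the gauge of $\mathfrak{M}_p$ decays more slowly than $r^n$. Appealing to the cutoff lemmas instead is slightly circular, since their construction of $T_\varepsilon$ already presupposes this.
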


\noindent In particular, the above result allows us to deduce important functional properties of Schrödinger operators with potential $0\le V\in L^{p}_{loc}(M)$ on incomplete manifolds.

\begin{theorem}\label{Schrod on manif with cls rem}
    Let $(N,h)$ be a complete Riemannian manifold of dimension $n\ge3$, $p\in \left(1,\frac{n}{2}\right]$ and $K\subseteq N$ a closed set. Consider the open manifold $M:=N\setminus K$ and let $0\le V\in L^{p}_{loc}(M)$. If
    \begin{equation*}
        \mathfrak{M}_{p'}(\widetilde{K})<+\infty \quad\quad \text{for every compact $\widetilde{K}\subseteq K$},
    \end{equation*}
    then $(-\Delta+V)_{q,min}$ generates a contraction semigroup in $L^{q}(M)$ for all $1<q\le p$.
\end{theorem}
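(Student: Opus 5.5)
Since $1<p\le \frac n2$, the conjugate exponent satisfies $p'\in\left[\frac{n}{n-2},+\infty\right)$, which is exactly the admissible range in Theorem \ref{Liouville subharm on manif with cls rem}. The plan is to pass through the positivity preservation at the conjugate exponent and then apply Proposition \ref{prop_LpPP_CSG-D+V}.

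\textbf{Step 1: Liouville property at all exponents $\ge p'$.} Apply Theorem \ref{Liouville subharm on manif with cls rem} with $p'$ in place of $p$. The hypothesis $\mathfrak{M}_{p'}(\widetilde K)<+\infty$ for every compact $\widetilde K\subseteq K$ is precisely the one required there. This gives ($L^{r}$-Liou$_{SH}$) on $M=N\setminus K$ for every $r\in[p',+\infty)$.

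\textbf{Step 2: reduction to positivity preservation.} Fix $q\in(1,p]$. Then $q'\ge p'$, so Step 1 gives ($L^{q'}$-Liou$_{SH}$). By Corollary \ref{cor_equivalence_L2SH_Liouville_L2PP}, this is equivalent to ($L^{q'}$-PP$_\lambda$) for every $\lambda>0$, in particular ($L^{q'}$-PP).

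\textbf{Step 3: contraction semigroup.} Since $V\in L^p_{loc}(M)\subseteq L^q_{loc}(M)$ for $q\le p$ (local integrability improves downward), the hypotheses of Proposition \ref{prop_LpPP_CSG-D+V} hold with exponent $q$ and potential $0\le V\in L^q_{loc}(M)$. That proposition then gives that $(-\Delta+V)_{q,min}$ generates a contraction semigroup in $L^q(M)$.

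The main work has already been done in Proposition \ref{Removability of singularity}, which feeds Theorem \ref{Liouville subharm on manif with cls rem}. That theorem is stated before this one without proof, and I would take it as given. Should it need justification, it follows from Proposition \ref{Removability of singularity} applied on $N$. A nonnegative $L^{p'}$ subharmonic distribution on $N\setminus K$, extended by zero on $K$, is in $L^{p'}_{loc}(N)$. Here $K$ has measure zero, because locally finite $\mathfrak M_{p'}$-measure with $n-2p'<n$ forces Lebesgue measure zero. The extension is therefore subharmonic on the complete manifold $N$, and by \cite[Corollary 5.2]{PVV} it is constant. The monotonicity of Proposition \ref{Prop_Monotonicity} then propagates the property to all larger exponents.

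The only delicate point is the exponent bookkeeping. One must check that the Liouville property is needed at $q'$, not at $q$, and that $q\le p$ gives $q'\ge p'$, which is the right direction for monotonicity.
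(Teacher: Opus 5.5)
Your argument is correct and follows the route the paper intends. The paper gives no separate proof and presents the result as a consequence of Theorem \ref{Liouville subharm on manif with cls rem}: apply that theorem at exponent $p'$, pass to ($L^{q'}$-PP) via Corollary \ref{cor_equivalence_L2SH_Liouville_L2PP} since $q'\ge p'$, and conclude with Proposition \ref{prop_LpPP_CSG-D+V} using $V\in L^p_{loc}(M)\subseteq L^q_{loc}(M)$.

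One slip in your side remark: $\mathfrak{M}_{p'}$ is $\mathcal{H}^{n-2p}$ (or $\mathcal{H}^{h_p}$ when $n=2p$), not a measure of dimension $n-2p'$. $K$ is Lebesgue-null in either case, so your extension-by-zero justification is unaffected.
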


As a direct consequence of the above theorem for $q=2$, we obtain the following corollary, which extends Theorem 10.12.(II) from \cite{hinz2023removable} to the case of general potentials $V$.

\begin{corollary}
    Let $(N,h)$ be a complete Riemannian manifold of dimension $n\ge 4$ and $K\subseteq N$ a closed set. Consider the open manifold $M:=N\setminus K$ and let $0\le V\in L^{2}_{loc}(M)$. If
    \begin{align}
        \mathfrak{M}_{2}(\widetilde{K})<+\infty \quad\quad \text{for every compact $\widetilde{K}\subseteq K$},
    \end{align}
    then $(M,g)$ is $(ESA_V)$.
\end{corollary}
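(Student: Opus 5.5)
The corollary is a direct specialization of Theorem \ref{Schrod on manif with cls rem} to the Hilbertian exponent, followed by the equivalence between the contraction-semigroup property and essential self-adjointness when $p=2$.

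\textbf{Step 1: apply the theorem with $p=2$.} Take $p=2$ in Theorem \ref{Schrod on manif with cls rem}. The admissibility condition $p\in\left(1,\frac n2\right]$ becomes $2\le \frac n2$, which holds exactly because $n\ge 4$. Since $p'=2$, the hypothesis $\mathfrak{M}_{p'}(\widetilde K)<+\infty$ is precisely the assumption $\mathfrak{M}_2(\widetilde K)<+\infty$ on every compact $\widetilde K\subseteq K$. Here $\mathfrak{M}_2=\mathcal{H}^{n-4}$ if $n>4$, and $\mathfrak{M}_2=\mathcal{H}^{h_2}$ if $n=4$. The potential satisfies $0\le V\in L^2_{loc}(M)$. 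Taking $q=2$, the theorem gives that $(-\Delta+V)_{2,min}$ generates a contraction semigroup in $L^2(M)$.

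\textbf{Step 2: pass to the core property.} Apply Proposition \ref{prop_semigroups equivalent cores} with $p=p'=2$. In that case condition (3) reduces to the single statement that $(-\Delta+V)_{2,min}$ generates a contraction semigroup, which Step 1 provides. The proposition then gives that $C^\infty_c(M)$ is a core for $(-\Delta+V)_{2,max}$, that is, $(-\Delta+V)_{2,min}=(-\Delta+V)_{2,max}$.

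\textbf{Step 3: identify with essential self-adjointness.} By Remark \ref{rmk_ESA}, $(-\Delta+V)_{2,max}$ is the adjoint $P^*$ of $P=(-\Delta+V)|_{C^\infty_c(M)}$. Step 2 therefore says $\overline{P}=P^*$. Since $P$ is symmetric, this is exactly $(ESA_V)$.

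\textbf{Alternative to Step 2.} One can also argue directly in the Hilbert setting. A generator of a contraction semigroup on $L^2$ is $m$-accretive, and a symmetric $m$-accretive operator is self-adjoint. Hence the closed symmetric operator $\overline{P}$ is self-adjoint.

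\textbf{Main obstacle.} The only point needing care is Step 2: confirming that at $p=2$ the two conditions in (3) collapse into one, and that "$\min=\max$" coincides with $\overline{P}=P^*$. Once Remark \ref{rmk_ESA} is invoked this is immediate, so the corollary follows with no further estimates.
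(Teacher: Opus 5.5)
Your proposal is correct and follows the same route as the paper. The paper obtains the corollary by applying Theorem \ref{Schrod on manif with cls rem} with $p=q=2$, which is admissible since $n\ge 4$ and gives $\mathfrak{M}_{p'}=\mathfrak{M}_2$, and then using the $L^2$ equivalence between contraction-semigroup generation by $(-\Delta+V)_{2,min}$ and $(ESA_V)$. You justify that last equivalence correctly, either through Proposition \ref{prop_semigroups equivalent cores} and Remark \ref{rmk_ESA}, or through your direct symmetric $m$-accretive argument.
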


\ \\

\noindent \textbf{AI Statement.} The authors acknowledge the use of artificial intelligence tools during the preparation and drafting of this manuscript. In particular, AI has been used to review the manuscript, identifying typos, minor errors, and inconsistencies, and suggesting simplifications to certain proofs, in particular Proposition \ref{Prop_L^p-Liou_lambda-sh implies L^p-Liou_SH} and Lemma \ref{equivalence capacities}. All final outputs were critically reviewed and approved by the authors, who maintain sole responsibility for the scientific content.

\ \\

\noindent \textbf{Acknowledgments.} A. Bisterzo and S. Lincastri are partially supported by the 2026 INdAM-GNAMPA project \emph{Asymptotic analysis of variational problems}, n. \texttt{CUP\_E53C25002010001}. A. Bisterzo is also supported by ``Centro di Ricerca Matematica Ennio De Giorgi''. G.Veronelli is also partially supported
by INdAM-GNAMPA. Moreover, this project was partially developped while G.V. being hosted at Simons semester in Geometric Analysis at Impan in June and July 2026: G.V. acknowledge that this work
was partially supported by the Simons Foundation grant (award no. SFI-MPS-T-Institutes-00010825) and from
State Treasury funds as part of a task commissioned by the Minister of Science and Higher Education under the 
project “Organization of the Simons Semesters at the Banach Center - New Energies in 2026-2028”(agreement no. MNiSW/2025/DAP/491).

\bibliographystyle{alpha}
\bibliography{References}
\end{document}